\documentclass{amsart}

\usepackage{amsmath, amssymb, amsthm, amscd, amssymb, color, latexsym, tikz, hyperref}
\usepackage[margin=1in]{geometry}
\usetikzlibrary{calc, external, fit, shapes}

\numberwithin{equation}{section}
\newtheorem{theorem}{Theorem}[section]
\newtheorem{proposition}[theorem]{Proposition}
\newtheorem{corollary}[theorem]{Corollary}
\newtheorem{lemma}[theorem]{Lemma}

\newtheorem*{theorem*}{Theorem}
\newtheorem*{proposition*}{Proposition}
\newtheorem*{corollary*}{Corollary}
\newtheorem*{lemma*}{Lemma}
\newtheorem*{question*}{Question}
\newtheorem*{claim*}{Claim}
\newtheorem*{conjecture*}{Conjecture}

\theoremstyle{definition}
\newtheorem{definition}[theorem]{Definition}
\newtheorem{example}[theorem]{Example}

\newtheorem*{definition*}{Definition}
\newtheorem*{example*}{Example}
\newtheorem*{remark*}{Remark}
\newtheorem*{problem*}{Problem}
\newtheorem*{assumption*}{Assumption}

\title[Asymptotics of Brownian occupation measures with unusually large intersections]{Asymptotics of Brownian occupation measures \\ with unusually large intersections}

\author{Jiyun Park}
\address{Department of Mathematics, Stanford University, USA. \href{mailto:jiyunp@stanford.edu}{\tt jiyunp@stanford.edu}}

\begin{document}

\begin{abstract}
    We prove that the occupation measures of Brownian motions conditioned to have large intersections converge weakly, up to spatial shifts, to the measure whose density is the square of an optimizer of the Gagliardo-Nirenberg inequality. We do so by proving a large deviation principle (LDP) for Brownian occupation measures conditioned either on large self-intersections or large mutual intersections. To this end, we derive a compact LDP for unconditioned Brownian occupation measures, generalizing the work of Mukherjee and Varadhan \cite{MukherjeeVaradhan2016}. We also prove the LDP for Brownian occupation measures tilted by their intersections in the same topology. A key tool of independent interest is an exponentially good approximation of the intersection measure tested against all bounded measurable functions, from which we further get the LDP for the intersection measure of independent Brownian motions.
\end{abstract}

\maketitle

\section{Introduction}

\subsection{Intersections of Brownian motions}

Given a Brownian motion $W_t$, it is very natural to ask how much its path intersects itself. This is measured by the $q$-fold \emph{self-intersection local time}\footnote{This is only well-defined in $\mathbb{R}$, as the integral blows up to infinity in higher dimensions.}, formally defined\footnote{For convenience, we often write formal statements involving delta functions. All such statements can be made rigorous by replacing the delta functions with a sequence of mollifiers and taking limits. We omit these details, as they are now standard techniques in the literature (e.g., see Le Gall's moment identity \cite{LeGall1994} or the constructions of intersection local times in \cite{Chen2010}).} by
\[
\beta([0, t]^q) := \int_{\mathbb{R}} \left( \int_0^t \delta (W_s - x) \mathrm{d} s \right)^q \mathrm{d} x
\]
for any $q > 1$ (where $\delta$ is the Dirac delta measure). Similarly, we define the \emph{mutual intersection local time}\footnote{It is known that $\alpha([0, t]^p)$ is positive and finite when $d(p-1) < 2p$. On the other hand, it is zero when $d(p-1) \ge 2p$.} for $p$ independent Brownian motions $W^1 , \dots , W^p$ in $\mathbb{R}^d$, now for any $d \ge 1$ and $p \ge 2$ such that $d(p-1) < 2p$.
\[  
\alpha([0, t]^p) := \int_{\mathbb{R^d}} \int_{[0, t]^p} \delta(W^1(s_1) - x) \delta(W^2(s_2) - x) \dots \delta(W^p(s_p) - x) \mathrm{d} s_1 \dots \mathrm{d} s_p  \mathrm{d} x.
\]
Early works on intersections of Brownian paths date back to Dvoretzsky, Erd\"{o}s, Kakutani, and Taylor in the 1950s \cite{DvoretzkyErdoesKakutani1950} and have been studied extensively since (see \cite{Lawler2013, FreidlinLeGall1992} for a survey).

In particular, the monograph by Chen \cite{Chen2010} provides comprehensive information on the \emph{upper tail large deviations} of intersection local times. Among his main results are the following.
\begin{equation}
\label{eq:self-var-problem}
\lim_{t \to \infty} \frac{1}{t} \log \mathbb{P}( \beta([0, t]^q) \ge t^q) = -\inf_{\substack{\psi \in H^1(\mathbb{R}) \\ \| \psi \|_2 =1}} \left\{ \frac{1}{2} \| \nabla \psi \|_2^2 : \| \psi \|_{2q} = 1 \right\} =: -\Theta_{1, q},
\end{equation}
and when $p \ge 2$ and $d(p-1) < 2p$,
\begin{equation}
\label{eq:mutual-var-problem}
\lim_{t \to \infty} \frac{1}{t} \log \mathbb{P}( \alpha([0, t]^p) \ge t^p) = -\inf_{\substack{\psi^j \in H^1(\mathbb{R}^d) \\ \| \psi^j \|_2 =1}} \bigg\{ \frac{1}{2} \sum_{j=1}^p \| \nabla \psi^j \|_2^2 : \Big\| \prod_{j=1}^p \psi^j \Big\|_2 = 1 \bigg\} = -p \cdot \Theta_{d, p}.
\end{equation}
We remark that \eqref{eq:mutual-var-problem} is optimized when $\psi^1 = \dots = \psi^p$, which explains the repetition of the constant $\Theta_{d, p}$.

The solutions to the optimization problems are also known to be unique up to spatial translations. In fact, they are precisely the functions that achieve equality in the Gagliardo-Nirenberg inequality,
\begin{equation}
\label{eq:GN-inequality}
\|  \psi \|_{2q} \le \kappa_{d, q} \| \nabla  \psi \|_{2}^{\frac{d(q-1)}{2q}} \| \psi \|_2^{1 - \frac{d(q-1)}{2q}} \quad \text{for all }\psi \in H^1(\mathbb{R}^d).
\end{equation}

Given~\eqref{eq:self-var-problem}--\eqref{eq:GN-inequality}, it is natural to expect that the path of the Brownian motion(s) conditioned on $\{\beta([0, t]^q) \ge t^q \}$ or $\{\alpha([0, t]^p) \ge t^p \}$ relate to solutions of the optimization problems. In this context, we consider Brownian occupation measure
\[  
L_t(A) = \frac{1}{t} \int_0^t \mathbf{1}_A(W_s) \mathrm{d} s
\]
conditioned on the event $\{ \beta([0, t]^q) \ge \lambda t^q \}$. We prove the convergence of $L_t(\cdot)$ in the weak topology \emph{up to spatial shifts}, i.e., in the topology $\widetilde{\mathcal{M}}_1(\mathbb{R}) = \mathcal{M}_1(\mathbb{R}) / \sim$ where $\mathcal{M}_1(\mathbb{R})$ is the set of probability measures equipped with the weak topology and the equivalence relation $\mu \sim \nu$ (denoted by $\widetilde{\mu} = \widetilde{\nu}$) implies $\mu = \nu(\cdot - x)$ for some $x \in \mathbb{R}$. Similar definitions may be made for sub-probability measures $\widetilde{\mathcal{M}}_{\le 1}(\mathbb{R})$ or finite measures $\widetilde{\mathcal{M}}(\mathbb{R})$.

\begin{theorem}
    \label{thm:self-cond-conv}
    Conditioned on $\{ \beta([0, t]^q) \ge t^q \}$,
    \[  
    \lim_{t \to \infty} \widetilde{L}_t = \widetilde{\mu}_{1, q} \quad \text{in } \widetilde{\mathcal{M}}_1(\mathbb{R}),
    \]
    where $\mu_{1, q}$ has density $\psi_{1, q}^2$ and $\psi_{1, q}$ uniquely solves \eqref{eq:self-var-problem} (up to spatial shifts).
\end{theorem}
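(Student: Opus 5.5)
We will derive Theorem~\ref{thm:self-cond-conv} from a large deviation principle for $\widetilde{L}_t$ under the conditional law, and then identify the unique zero of the conditional rate function. The natural framework is the compactification of Mukherjee and Varadhan. We replace $\widetilde{\mathcal{M}}_1(\mathbb{R})$ by the compact space $\widetilde{\mathcal{X}}$ of collections $\xi=\{\widetilde{\alpha}_i\}$ of orbits of sub-probability measures with $\sum_i \alpha_i(\mathbb{R})\le 1$. This space is metrized by test functionals that are translation invariant. On it we will establish a full LDP for $\widetilde{L}_t$ at speed $t$ with rate function
\[
I(\xi)=\sum_i \tfrac12\|\nabla \psi_i\|_2^2, \qquad \alpha_i(\mathrm{d}x)=\psi_i^2(x)\,\mathrm{d}x ,
\]
and $I(\xi)=\infty$ if some $\alpha_i$ fails to have such a density. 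In one dimension this is a generalization of \cite{MukherjeeVaradhan2016} from $\mathbb{R}^d$ with the torus trick.

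The key functional is $\Lambda(\xi)=\sum_i \|\psi_i\|_{2q}^{2q}$, which equals $\sum_i\int (\mathrm{d}\alpha_i/\mathrm{d}x)^q\,\mathrm{d}x$. Formally $t^{-q}\beta([0,t]^q)=\int \ell_t^q$, where $\ell_t$ is the density of $L_t$. The next step is to show that $\beta([0,t]^q)/t^q$ is exponentially well approximated by a \emph{continuous} functional on $\widetilde{\mathcal{X}}$.

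We mollify, setting $\Lambda_\varepsilon(\xi)=\sum_i\int (\alpha_i*\phi_\varepsilon)^q\,\mathrm{d}x$. This functional is continuous on $\widetilde{\mathcal{X}}$, since it is shift invariant and expressible through the defining test functionals. We then prove
\[
\lim_{\varepsilon\to0}\limsup_{t\to\infty}\frac1t\log\mathbb{P}\Big(\big|t^{-q}\beta([0,t]^q)-\Lambda_\varepsilon(\widetilde{L}_t)\big|>\delta\Big)=-\infty .
\]
This is the main obstacle. We would attack it with high moment bounds using Le Gall's moment identity, as in Chen \cite{Chen2010}. The approach is to bound $\mathbb{E}\big[\,|\int \ell_t^q-\int(\ell_t*\phi_\varepsilon)^q|^m\big]$ by $m!^{q}\,(C\varepsilon^{\gamma} t^{q})^m$, using subadditivity to pass to exponential moments of order $t$. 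In $d=1$ the local time is a genuine function, so the estimate should reduce to a modulus-of-continuity bound of $\ell_t$ at scale $\varepsilon$ at the exponential scale.

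Given these ingredients, the contraction principle together with the exponential approximation gives that the conditioned laws satisfy an LDP. The conditioning event is $\{\Lambda\ge 1\}$. The rate function is $I(\xi)-\Theta_{1,q}$ on the set $\{\Lambda(\xi)\ge1\}$, and $+\infty$ off it. This uses \eqref{eq:self-var-problem} and the continuity of $\lambda\mapsto\Theta_{1,q}(\lambda)$, which comes from scaling, so that the upper and lower bounds for the open and closed versions of the event match.

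It then remains to show that the only zero of this rate function is the single orbit $\widetilde{\mu}_{1,q}$ with total mass one. Suppose the mass is split as $\sum_i m_i\le1$ with $\psi_i=\sqrt{m_i}\,\phi_i$. By the Gagliardo--Nirenberg scaling we have $\|\nabla\psi_i\|_2^2\ge m_i^{\,\theta}\,c\,\|\psi_i\|_{2q}^{2q\cdot s}$ for suitable exponents. Superadditivity then forces the minimizer to concentrate in one component carrying full mass, with equality in \eqref{eq:GN-inequality}. Uniqueness up to translation of the Gagliardo--Nirenberg optimizer then identifies that component as $\psi_{1,q}$.

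Finally, compactness of $\widetilde{\mathcal{X}}$ and the LDP upper bound give that the conditional law of $\widetilde{L}_t$ concentrates exponentially on every neighborhood of $\{\widetilde{\mu}_{1,q}\}$ in $\widetilde{\mathcal{X}}$. Near a single full-mass orbit, the topology of $\widetilde{\mathcal{X}}$ coincides with that of $\widetilde{\mathcal{M}}_1(\mathbb{R})$, which yields the convergence claimed in Theorem~\ref{thm:self-cond-conv}.
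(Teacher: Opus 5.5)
The overall architecture of your proposal matches the paper's: the Mukherjee--Varadhan LDP on $\widetilde{\mathcal{X}}_{\le 1}$, continuity of the mollified functional, an exponential approximation, the extended contraction principle, conditioning, and the Gagliardo--Nirenberg uniqueness argument of Lemma~\ref{lem:self-var-problem}. However, the step you single out as the main obstacle does not work as proposed. Your target estimate is fine; the route to it is not.

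\textbf{The proposed moment bound does not suffice.} Suppose $\mathbb{E}|\beta([0,t]^q)-\beta_\epsilon([0,t]^q)|^m\le (m!)^q(C\epsilon^{\gamma}t^q)^m$ is used at all $t$. Chebyshev then yields only $\mathbb{P}(|\beta-\beta_\epsilon|>\delta t^q)\le\inf_m (m!)^q(C\epsilon^{\gamma}/\delta)^m$. This does not depend on $t$, so $\frac1t\log$ of it tends to $0$, not $-\infty$. Indeed, $t^qZ$ with $Z$ a fixed random variable satisfies your bound yet violates the approximation. A usable bound must encode Brownian scaling and the volume of the time simplices, as in Lemma~\ref{lem:self-approx-moment}. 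For $\|\ell_t-\ell_{t,\epsilon}\|_q$ one needs moments of order $C^m\epsilon^{\theta m}\,m!\,(t^m/m!)^{a}$ with some $a\in(0,1)$; the exponential series of these is $\exp\{O(\epsilon^{c}t)\}$.

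\textbf{Subadditivity does not apply to your quantity.} If instead the bound is meant at unit time and propagated by subadditivity, that also fails for general $q$. The quantity $|\beta-\beta_\epsilon|^{1/q}$ obeys no triangle inequality along the path. It does for $q=2$, where it is a seminorm of $\ell_t$. It fails for $q=3$: a flat background of height $c$ under a spike $g$ adds $3c\int(g^2-(g\ast p_\epsilon)^2)$ to the cube.

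\textbf{The repair.} Work with $Y_t=\|\ell_t-\ell_{t,\epsilon}\|_q$. This is what the lifted LDP needs anyway, since $|\beta^{1/q}-\beta_\epsilon^{1/q}|\le Y_t$. You can follow the paper: bridge conditioning and Dirichlet integrals for even integer $q$, then interpolation. Alternatively, and this is where your subadditivity instinct genuinely pays off, argue as follows.
\begin{enumerate}
\item Write $\ell_{t+s}=\ell_t+\ell'_s(\cdot-W_t)$, where $\ell'_s$ is independent of $W[0,t]$.
\item Translation invariance of $\|\cdot\|_q$ and the triangle inequality give $Y_{t+s}\le Y_t+Y'_s$, with $Y'_s$ an independent copy of $Y_s$. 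Hence $t\mapsto\mathbb{E}e^{\lambda Y_t}$ is submultiplicative.
\item It then suffices that $\mathbb{E}e^{\lambda Y_1}\to1$ as $\epsilon\to0$. This follows by dominated convergence: $Y_1\le 2\|\ell_1\|_q$ has all exponential moments, and $\ell_1$ is continuous with compact support.
\end{enumerate}
In $d=1$ this is much shorter than the paper's moment computations and handles every real $q>1$ at once.

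\textbf{Two smaller omissions.}
\begin{enumerate}
\item The extended contraction principle (Lemma~\ref{lem:exp-approx}) also requires $\sup_{\mathcal{I}(\xi)\le c}\bigl|\|\xi\ast p_\epsilon\|_q-\|\xi\|_q\bigr|\to0$ as $\epsilon\to0$, which is Lemma~\ref{lem:rate-approx}. Without it you cannot identify the conditioning event with your $\{\Lambda\ge1\}$ at the level of rate functions.
\item For non-integer $q$, your $\Lambda_\epsilon$ is not expressible through test functionals. Its continuity needs the profile-decomposition and interpolation argument of Corollary~\ref{cor:conti-functional}.
\end{enumerate}
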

We obtain similar results for the $p$-fold mutual intersections, where we now quotient under \emph{diagonal} spatial shifts. That is, two tuples of measures $\mu^{\otimes p} = (\mu^1, \dots , \mu^p)$ and $\nu^{\otimes p} = (\nu^1, \dots , \nu^p)$ in $(\mathcal{M}_1(\mathbb{R}^d))^p$ are equivalent (denoted by $\widetilde{\mu}^{\otimes p} = \widetilde{\nu}^{\otimes p}$) if there exists $x \in \mathbb{R}^d$ such that $\mu^j (\cdot) = \nu^j(\cdot - x)$ for all $j = 1, 2, \dots , p$. We denote this quotient space as $\widetilde{\mathcal{M}}_1^{\otimes p}(\mathbb{R}^d)$. This is \emph{not} the same as $(\widetilde{\mathcal{M}}_1(\mathbb{R}^d))^p$, since we are only allowing diagonal shifts.

\begin{theorem}
    \label{thm:mutual-cond-conv}
    Suppose $p \ge 2$ and $d(p-1) < 2p$. Conditioned on $\{ \alpha([0, t]^p) \ge t^p \}$, the tuple of occupation measures $L_t^{\otimes p} = (L_t^1 , \dots , L_t^p)$ satisfies
    \[  
    \lim_{t \to \infty} \widetilde{L}_t^{\otimes p} = \widetilde{\mu}_{d, p}^{\otimes p} \quad \text{in } \widetilde{\mathcal{M}}_1^{\otimes p}(\mathbb{R}),
    \]
    where $\mu^{\otimes p}_{d, p} = (\mu_{d, p}^1 , \dots , \mu_{d, p}^p)$. Each $\mu_{d, p}^j$ has density $\psi_{d, p}^2$ where $(\psi_{d, p} , \dots , \psi_{d, p})$ uniquely solves \eqref{eq:mutual-var-problem} (up to diagonal shifts).
\end{theorem}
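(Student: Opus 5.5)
The plan mirrors the abstract. First, derive from an LDP for the conditioned occupation measures a concentration statement around the minimizer set, then use uniqueness up to diagonal shifts to obtain convergence. Throughout I work in the compactified quotient space of Mukherjee--Varadhan, suitably extended to $p$-tuples with diagonal shifts. Concretely, elements are collections $\xi=\{(\widetilde{\alpha}_i^{1},\dots,\widetilde{\alpha}_i^{p})\}_i$ of diagonally shifted tuples of sub-probability measures, with total masses at most $1$ in each coordinate, metrized by the usual $\mathbf{D}$-metric built from integrals of translation-invariant test functions $\int f(x_1-x_2,\dots)\prod \alpha(\mathrm{d}x_k)$.

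\textbf{Step 1 (compact LDP).} I would first establish that $\widetilde{L}_t^{\otimes p}$ satisfies a full LDP on this compact space with speed $t$. The rate function is
\[
I(\xi)=\sum_i\sum_{j=1}^p \tfrac12\|\nabla\psi_i^j\|_2^2, \qquad \alpha_i^j=(\psi_i^j)^2\,\mathrm{d}x .
\]
Since the $p$ motions are independent, this should follow from the single-motion Mukherjee--Varadhan LDP. The key point is to check that diagonal-shift orbits are handled correctly: the product of the single quotient spaces is coarser than the diagonal quotient, so I would redo the Mukherjee--Varadhan projective/periodization argument jointly for the $p$ paths.

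\textbf{Step 2 (exponentially good approximation of the intersection).} Next I need the functional
\[
\Phi(\xi)=\sum_i\int\prod_j \psi_i^j(x)^2\,\mathrm{d}x ,
\]
which represents $t^{-p}\alpha([0,t]^p)$. It is not continuous on the compact space, since the delta function is singular. I would therefore mollify: write
\[
\alpha_\varepsilon=\int \prod_j (L_t^j*\phi_\varepsilon)(x)\,\mathrm{d}x\cdot t^p ,
\]
which is continuous in the $\mathbf{D}$-topology as a translation-invariant test integral. Then I need to show
\[
\limsup_{\varepsilon\to0}\limsup_{t\to\infty}\tfrac1t\log\mathbb{P}\big(|\alpha-\alpha_\varepsilon|\ge\delta t^p\big)=-\infty .
\]
This uses Le Gall's moment identities and the subadditivity/scaling of intersection local times, as in Chen. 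I expect this to be the main obstacle. Exponential approximation is delicate in the quotient topology because mass can split into far-apart pieces, and superexponential bounds are needed uniformly. I would first try high-moment bounds on $\alpha-\alpha_\varepsilon$ via Fourier representation, $\int |\hat\phi_\varepsilon(\xi)-1|^{2}\prod\ldots$, combined with the scaling $W_{ts}\overset{d}{=}\sqrt t W_s$. With this in hand, the conditioned laws satisfy an LDP with rate $I(\xi)-\inf\{I:\Phi\ge1\}$ on $\{\Phi\ge1\}$, via the contraction/approximation lemma and continuity of the infimum. Continuity of the infimum in the level $\lambda$ at $\lambda=1$ follows from scaling $\psi\mapsto a^{d/2}\psi(a\cdot)$.

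\textbf{Step 3 (identify the minimizers).} I would show that the minimizers of $I$ on $\{\Phi\ge1\}$ consist of a single tuple ($i=1$) with full mass, equal to $\widetilde{\mu}_{d,p}^{\otimes p}$. Splitting mass is strictly suboptimal: if the masses are $m_i<1$, scaling shows the cost of achieving intersection $c_i$ with masses $m_i$ behaves like $p\Theta_{d,p}\,c_i^{\gamma}m_i^{\eta}$. The exponents coming from Gagliardo--Nirenberg give strict superadditivity of the reverse map, so concentrating into one piece is strictly cheaper. Likewise, leftover mass deficit strictly increases the cost by scaling. Combining this with the stated uniqueness of \eqref{eq:mutual-var-problem} up to diagonal shifts (equality in \eqref{eq:GN-inequality} plus Hölder equality forcing $\psi^1=\dots=\psi^p$) identifies a unique zero of the conditioned rate function.

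\textbf{Step 4 (conclusion).} Given the LDP upper bound with a rate function vanishing only at $\xi_0=\{\widetilde{\mu}_{d,p}^{\otimes p}\}$, the conditioned law of $\widetilde{L}_t^{\otimes p}$ assigns exponentially small mass to the complement of any $\mathbf{D}$-neighborhood of $\xi_0$, using compactness and lower semicontinuity. Finally, since $\xi_0$ consists of a single tuple of probability measures, convergence to $\xi_0$ in the $\mathbf{D}$-metric implies convergence in $\widetilde{\mathcal{M}}_1^{\otimes p}$: no mass escapes, which is tightness after a diagonal shift. This gives Theorem~\ref{thm:mutual-cond-conv}. Theorem~\ref{thm:self-cond-conv} follows in the same way with $p=1$ and $\Phi(\xi)=\sum_i\int\psi_i^{2q}$.
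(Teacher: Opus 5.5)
Your proposal is correct, and its overall architecture matches the paper's. Both use a full LDP for $\widetilde{L}_t^{\otimes p}$ on the compact space $\widetilde{\mathcal{X}}_{\le 1}^{\otimes p}$, a superexponential approximation of $\alpha([0,t]^p)$ by the continuous functional $\int\prod_j \ell^j_{t,\epsilon}\,\mathrm{d}x$, and the approximation lemma applied to the lifted pair $(\widetilde{L}_t^{\otimes p}, t^{-p}\alpha([0,t]^p))$. Both then use uniqueness of the zero of the conditioned rate function, and the fact that $\widetilde{\mathcal{M}}_1^{\otimes p}$ is a topological subspace of $\widetilde{\mathcal{X}}_{\le 1}^{\otimes p}$.

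\textbf{Where you differ: the joint LDP.} The paper does not redo the Mukherjee--Varadhan argument for $p$ paths. It keeps tuples with zero coordinates precisely so that the marginal map $\pi$ into $(\widetilde{\mathcal{X}}_{\le 1})^p$ is continuous and $\mathcal{I}(\xi^{\otimes p})=\sum_j\mathcal{I}(\xi^j)$ (Lemma~\ref{lem:projection}). The local upper bound then follows by pulling back a product of single-path neighborhoods and using independence. The lower bound comes from Donsker--Varadhan plus the approximating sequence of Lemma~\ref{lem:decomposition-convergence}. The coarseness of the product of single quotients that worries you is harmless: the upper bound only has to be local, and the rate function does not see the relative alignment of the coordinates.

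\textbf{Where you differ: the exponential approximation.} You propose the classical Fourier/Le Gall route of Chen. The paper instead proves moment bounds $C^m\|f\|_{\sup}^m\epsilon^{\theta m}(m!)^p(t^m/m!)^{\frac{2p-d(p-1)}{2}-\theta}$ directly. It conditions on trisection points, which moves $|\mathbb{E}\Delta_\epsilon|$ inside a nonnegative expectation, then chains Gaussian and Riesz-kernel estimates and integrates with Dirichlet's formula. This buys arbitrary bounded measurable $f$, which Proposition~\ref{prop:intersection-measure-LDP} needs but this theorem does not. Only $f\equiv 1$ enters here, and your route is viable for it. You do need a moment bound of that factorial form, uniform in $t$; Brownian scaling alone only reformulates the problem (fixed $\epsilon$ becomes $\epsilon/t$). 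You then sum the series as in Corollary~\ref{cor:sketch-exp-approx}.

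\textbf{Two points to tighten.} First, the approximation lemma (Lemma~\ref{lem:exp-approx}) also needs a deterministic hypothesis you do not state: $\sup_{\mathcal{I}(\xi^{\otimes p})\le\lambda}|\Phi_\epsilon(\xi^{\otimes p})-\Phi(\xi^{\otimes p})|\to0$. This, not the probabilistic estimate, is where splitting into far-apart pieces matters; the estimate on $|\alpha-\alpha_\epsilon|$ concerns real random variables and involves no topology. The paper gets this hypothesis from Lemma~\ref{lem:rate-approx}, summed over pieces using $\sum_{i,j}\|\nabla\psi_i^j\|_2^2\le 2\lambda$.

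Second, when you write out Step 3 with unequal coordinate masses, apply H\"older and Gagliardo--Nirenberg first,
\[
\Big\|\prod_{j=1}^p\psi_i^j\Big\|_2^2\le\kappa_{d,p}^{2p}\prod_{j=1}^p\|\nabla\psi_i^j\|_2^{\frac{d(p-1)}{p}}\,\|\psi_i^j\|_2^{2-\frac{d(p-1)}{p}}.
\]
Only then use AM--GM across $j$ on the energies $\|\nabla\psi_i^j\|_2^2$ and on the masses $\|\psi_i^j\|_2^2$. This reduces exactly to the one-function problem with $q=p$ and gives $p\,\Theta_{d,p}$, with the equality cases you list. Symmetrizing the $L^{2p}$ norms first, as the appendix's one-line reduction does, only yields the lower bound $p^{2/(d(p-1))}\Theta_{d,p}$. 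That falls short of $p\,\Theta_{d,p}$ whenever $d(p-1)>2$, for example $(d,p)=(3,2)$.
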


Due to the Brownian scaling $ \{W_{cs}  : 1 \le s \le t  \}\overset{d}{=} \{\sqrt{c}W_s : 1 \le s \le t \}$, we may extend our results to deviations of any order (with exponential tail decay) via the scaling property
\[  
\alpha([0, ct]^p) \overset{d}{=} c^{\frac{2p - d(p-1)}{2}} \alpha([0, t]^p), \quad \beta([0, ct]^q) \overset{d}{=} c^{\frac{q + 1}{2}} \beta([0, t]^q)
\]
of \cite[Propositions~2.2.6, 2.3.3]{Chen2010}. For example, Theorem~\ref{thm:self-cond-conv} implies the following statements.
\begin{enumerate}
    \item Given $\beta([0,t]^2) \ge \lambda t^2$, $\widetilde{L}_t$ converges to the distribution with density $\lambda \psi_{1, q}^2 (\lambda x)$ in $\widetilde{\mathcal{M}}_1(\mathbb{R})$.
    This comes from taking $c = \lambda^2$.
    \item Given $\beta[0, t]^2 \ge t^3$, the rescaled measure $L_t'(A) := L_t (t^{-1} A)$ converges to $\widetilde{\mu}_{1, 2}$ in $\widetilde{\mathcal{M}}_1(\mathbb{R})$. This comes from taking $c = t^{2}$.
\end{enumerate}

Apart from intrinsic interest, intersections of Brownian motions are a prototypical example of functionals over the path measure. Other models include the volume of the Wiener sausage \cite{BergBolthausenHollander2001, BergBolthausenHollander2004}, the intersections and volume of a random walk \cite{JainOrey1968, Chen2010, AsselahSchapira2021, AsselahSchapira2023}, and the capacity of random walk ranges \cite{Schapira2020, AsselahSchapiraSousi2019, AsselahSchapira2020, AdhikariOkada2023, DemboOkada2024, AsselahSchapiraSousi2018, AdhikariPark2025}. It is also possible to consider other Markov processes and potentials \cite{BassChenRosen2009}. These models share many similar features and methods used to solve one model can often be transferred to other systems. In particular, our paper draws inspiration from prior works which prove weak convergence of random walks with small volume \cite{PoisatErhard2023} and Brownian motions under the Coulomb potential \cite{MukherjeeVaradhan2016, Mukherjee2017, KoenigMukherjee2017, BolthausenKoenigMukherjee2017} or the polaron measure \cite{MukherjeeVaradhan[2022]copyright2022}. We also believe the additional tools developed in this paper may be generalized and applied to other problems of a similar nature. Indeed, the two techniques we develop here, namely the LDP and exponential approximations of the Brownian occupation measures, are quite general both in proof strategy and result.

Another motivation for Theorems~\ref{thm:self-cond-conv} and \ref{thm:mutual-cond-conv} is that these conditional distributions are often closely related to Gibbs measures created by tilting the probability measure to favor large intersections. This is a widely studied model in statistical physics and used to model self-attracting polymers (e.g., see \cite{Hollander2009}). In this context, we show that Brownian occupation measures under the Gibbs measure also converge to rescaled versions of the limits of Theorems~\ref{thm:self-cond-conv} and \ref{thm:mutual-cond-conv}.

\begin{theorem}
\label{thm:self-tilted-conv}
    For any $q > 1$ and $0 < \gamma < \frac{2q}{q-1}$, consider the Gibbs measure
    \begin{equation}
    \label{eq:self-gibbs-measure}
        \mathrm{d} \widehat{\mathbb{P}}_{t} = \frac{1}{Z_t} \exp \left\{ t^{1 - \gamma} \beta([0, t]^q)^{\gamma/q} \right\} \mathrm{d} \mathbb{P}.
    \end{equation}
    Then, the Brownian occupation measures $\widetilde{L}_t$ under the law $\widehat{\mathbb{P}}_t \circ \widetilde{L}_t^{-1}$ converge to
    \[  
    \lim_{t \to \infty} \widetilde{L}_t = \widetilde{\mu}_{1, q, \gamma} \quad \text{in } \widetilde{\mathcal{M}}_1(\mathbb{R}),
    \]
    where $\mu_{1, q, \gamma}$ has density $\psi_{1, q, \gamma}^2$ and $\psi_{1, q, \gamma}$ is the unique (up to shifts) solution to the variational problem
    \begin{equation}
        \label{eq:self-tilted-var}
    \rho_{1, q, \gamma} = \sup_{\substack{\psi \in H^1(\mathbb{R}) \\ \| \psi \|_2 = 1}} \left\{\| \psi \|_{2q}^{2\gamma} - \frac{1}{2} \| \nabla \psi \|_2^2 \right\}.
    \end{equation}
\end{theorem}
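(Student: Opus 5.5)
The idea is to combine a Laplace/Varadhan-type argument with the compact large deviation principle for $\widetilde{L}_t$ on the Mukherjee--Varadhan compactification, as announced in the abstract. The plan has four steps.

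\textbf{Step 1: write the tilt as a functional of $L_t$.} Since $\int_0^t\delta(W_s-x)\,\mathrm{d}s = t\,\ell_t(x)$, where $\ell_t$ is the density of $L_t$, we have
\[
\beta([0,t]^q)=t^q\|\ell_t\|_q^q .
\]
Hence
\[
t^{1-\gamma}\beta([0,t]^q)^{\gamma/q}=t\,\|\ell_t\|_q^{\gamma}=t\,F(L_t),
\qquad F(\mu):=\|\mathrm{d}\mu/\mathrm{d}x\|_q^{\gamma}.
\]
So $\widehat{\mathbb{P}}_t$ is the exponential tilt $e^{tF(L_t)}$, and $F$ is shift invariant. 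For $\mu=\psi^2\,\mathrm{d}x$ we get $F(\mu)=\|\psi\|_{2q}^{2\gamma}$. The rate function of $L_t$ is $I(\mu)=\frac12\|\nabla\psi\|_2^2$. Tilted Varadhan then predicts the rate function
\[
J(\widetilde\mu)=\rho_{1,q,\gamma}-\bigl(F(\mu)-I(\mu)\bigr),
\]
whose zeros are the maximizers of \eqref{eq:self-tilted-var}. By the scaling step below, these maximizers are rescalings of the Gagliardo--Nirenberg optimizers, so they are unique up to shifts.

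\textbf{Step 2: upper bound (the main obstacle).} $F$ is neither continuous nor bounded on $\widetilde{\mathcal{M}}_1(\mathbb{R})$, so Varadhan's lemma cannot be applied directly. I would mollify: set
\[
F_\varepsilon(\mu)=\|\mu*\phi_\varepsilon\|_q^{\gamma}.
\]
This is continuous and shift invariant. It also extends naturally to the compactification $\widetilde{\mathcal X}$ of orbits $\{\widetilde\alpha_i\}$ with total mass $\le1$, by the rule
\[
F_\varepsilon(\{\widetilde\alpha_i\})=\Bigl(\sum_i\|\alpha_i*\phi_\varepsilon\|_q^q\Bigr)^{\gamma/q}.
\]
Because $q>1$, splitting mass into pieces only lowers this quantity.

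Two ingredients are then needed.
\begin{enumerate}
\item \emph{Exponential approximation.} I need
\[
\limsup_{\varepsilon\to0}\limsup_{t\to\infty}\frac1t\log\mathbb{E}\exp\bigl\{\theta t\,|F(L_t)-F_\varepsilon(L_t)|\bigr\}=0
\quad\text{for every }\theta>0.
\]
I would obtain this from the paper's exponentially good approximation of intersection measures, using $|a^{\gamma/q}-b^{\gamma/q}|$ bounds and Hölder.
\item \emph{Exponential integrability.} I need $\limsup_t\frac1t\log\mathbb{E}\,e^{\theta tF(L_t)}<\infty$. This follows from Chen's tail bound \eqref{eq:self-var-problem} together with the scaling identity, which gives
\[
\mathbb{P}\bigl(\|\ell_t\|_q^\gamma\ge a\bigr)\approx e^{-ct\,a^{2q/(\gamma(q-1))}} .
\]
The exponent $2q/(\gamma(q-1))$ exceeds $1$ exactly when $\gamma<\frac{2q}{q-1}$, so the tilt is superexponentially dominated. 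This is where the hypothesis on $\gamma$ enters, and it also makes $Z_t$ satisfy $\frac1t\log Z_t\to\rho_{1,q,\gamma}<\infty$.
\end{enumerate}
With these two facts, the compact LDP on $\widetilde{\mathcal X}$ gives the upper bound
\[
\limsup_t\frac1t\log\widehat{\mathbb{P}}_t(\widetilde L_t\in C)\le-\inf_C J
\]
for closed $C$. Letting $\varepsilon\to0$ makes $F_\varepsilon\to F$ on the relevant sets via the lower semicontinuity of $I$.

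\textbf{Step 3: lower bound.} This is the standard one: localize near a smooth compactly supported $\psi$, use the LDP lower bound, and use $F\ge F_\varepsilon-\text{error}$. Together with Step 2 this yields $\frac1t\log Z_t\to\rho_{1,q,\gamma}$.

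\textbf{Step 4: concentration and uniqueness.} Fix a neighbourhood $U$ of $\widetilde\mu_{1,q,\gamma}$ in $\widetilde{\mathcal M}_1$. Its complement in $\widetilde{\mathcal X}$ is closed, so I must show that $\inf J>0$ there, in particular on points with several pieces or with lost mass. For $\{\widetilde\alpha_i\}$ with masses $m_i$, $\sum_i m_i\le 1$, write $\alpha_i=\psi_i^2$ and use the scaling $\psi\mapsto\sqrt{\lambda}\,\psi(\lambda\cdot)$. This shows that the value of a mass-$m$ piece is
\[
\sup\Bigl\{\|\psi\|_{2q}^{2\gamma}-\tfrac12\|\nabla\psi\|_2^2:\ \|\psi\|_2^2=m\Bigr\},
\]
and that this scales strictly superlinearly in $m$, since with $\gamma<\frac{2q}{q-1}$ the relevant exponent exceeds $1$. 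For $q>1$ the map $x\mapsto x^{\gamma/q}$ combined with $\sum_i\|\cdot\|_q^q$ then gives strict subadditivity, so any split or mass loss has value strictly below $\rho_{1,q,\gamma}$. On full-mass single orbits, uniqueness of the maximizer up to shifts follows from the Gagliardo--Nirenberg uniqueness: optimize over dilations, reducing the problem to maximizing $\|\psi\|_{2q}/\|\nabla\psi\|_2^{d(q-1)/(2q)}$. Compactness of $\widetilde{\mathcal X}$ and lower semicontinuity of $J$ then give $\inf_{U^c}J>0$, hence $\widehat{\mathbb{P}}_t(\widetilde L_t\notin U)\to0$ exponentially fast.
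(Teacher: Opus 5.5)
\textbf{Overall.} Your architecture is the same as the paper's: the compact LDP on $\widetilde{\mathcal X}_{\le 1}$, the mollified tilt, exponential approximation at scale $t^{1-\gamma}\bigl|\|\ell_t\|_q^\gamma-\|\ell_{t,\epsilon}\|_q^\gamma\bigr|$, Varadhan's lemma, then a variational step. Steps 1--3 are essentially the paper's argument.

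\textbf{The gap is in Step 4.} That step must show splitting or mass loss is strictly suboptimal. It is also needed to identify $\lim_t \frac1t\log Z_t$ with $\rho_{1,q,\gamma}$, since Varadhan only gives $\sup_{\widetilde{\mathcal X}_{\le1}}\{F-\mathcal I\}$.

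Let $V(m)$ be your per-piece value and carry out the scaling: write $\psi(x)=a\phi(bx)$ with $\|\phi\|_2=1$ and $a^2=mb$, then optimize over $b$. This gives
\[
V(m)=V(1)\,m^{e},\qquad e=\frac{\gamma-s}{1-s},\qquad s=\frac{\gamma(q-1)}{2q}.
\]
The hypothesis $\gamma<\frac{2q}{q-1}$ only gives $s<1$. The exponent $e$ exceeds $1$ if and only if $\gamma>1$.

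\emph{Case $\gamma<1$.} Here $V$ is strictly sublinear in the mass. For example, two pieces of mass $\tfrac12$ give $2^{1-e}V(1)>V(1)$, so your bound cannot exclude splitting.

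\emph{Case $\gamma>q$.} This is allowed whenever $q<3$. The functional is not additive over pieces: $F(\xi)=(\sum_i a_i)^{\gamma/q}$ with $a_i=\|\psi_i\|_{2q}^{2q}$, and this is superadditive when $\gamma>q$. So $F(\xi)-\mathcal I(\xi)\le\sum_i V(m_i)$ fails.

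Your Lions-type argument therefore covers only $1\le\gamma\le q$.

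\textbf{The missing idea (the paper's Lemma~\ref{lem:self-tilt-var}).} Do not optimize piece by piece. Bound $F$ through the total Dirichlet energy $K=\sum_i k_i$ instead, where $k_i=\|\nabla\psi_i\|_2^2$, $m_i=\|\psi_i\|_2^2$ and $\theta=\frac{q-1}{2q}$:
\begin{enumerate}
\item Use $(\sum_i a_i)^{\gamma/q}\le(\sum_i a_i^{1/q})^{\gamma}$. This holds for every $\gamma>0$, because it is the $1/q$-power that is subadditive.
\item Apply Gagliardo--Nirenberg to each piece: $a_i^{1/q}\le\kappa^2k_i^{\theta}m_i^{1-\theta}$.
\item Apply H\"older across pieces: $\sum_i k_i^\theta m_i^{1-\theta}\le K^\theta(\sum_i m_i)^{1-\theta}\le K^\theta$.
\end{enumerate}
Together these give
\[
F(\xi)-\mathcal I(\xi)\le\kappa^{2\gamma}K^{s}-\tfrac12K\le\rho_{1,q,\gamma}.
\]
Equality forces a single piece (each nonzero piece has $a_i>0$, so the $1/q$-subadditivity is strict), full mass, and Gagliardo--Nirenberg equality.

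An alternative fix is to superpose all pieces into one sub-probability density. This works because $\|\cdot\|_q^q$ is superadditive and $\frac12\|\nabla\sqrt{\cdot}\|_2^2$ is subadditive. The mass deficit is then handled using only $e>0$.

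\textbf{Secondary point.} You say letting $\epsilon\to0$ makes $F_\epsilon\to F$ ``via the lower semicontinuity of $\mathcal I$''; that is not the reason.
\begin{itemize}
\item For the Varadhan bounds themselves, $F_\epsilon\le F$ (Young's inequality) plus pointwise convergence suffices.
\item The lower semicontinuity of $J$ that you invoke at the end of Step 4 is equivalent to upper semicontinuity of $F-\mathcal I$ on $\widetilde{\mathcal X}_{\le1}$. This needs $\sup_{\mathcal I\le\lambda}|F_\epsilon-F|\to0$, which is a Sobolev-type estimate; it is the paper's Lemma~\ref{lem:rate-approx}.
\end{itemize}
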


When $\gamma > \frac{2q}{q-1}$, the ground state energy $\frac{1}{t} \log Z_t$ diverges and $\widetilde{L}_t$ does not converge. The most studied case is when $\gamma = 1$, which corresponds to tilting by $\beta([0, t]^q)^{1/q}$ with no additional time factor. Another common choice is to take $q = \gamma = 2$, in which case we get 
\[
\frac{1}{t} \beta([0, t]^2) = \frac{1}{t} \int_{\mathbb{R}} \int_0^t \int_0^t \delta(W_r - x) \delta(W_s - x)  \mathrm{d} r \mathrm{d} s \mathrm{d} x= \frac{1}{t} \int_0^t \int_0^t \delta(W_r - W_s)  \mathrm{d} r \mathrm{d} s.
\]

\begin{theorem}
\label{thm:mutual-tilted-conv}
    For any $d \ge 1$, $p \ge 2$ such that $d(p-1) < 2p$ and $0 < \gamma < \frac{2p}{d(p-1)}$, take the Gibbs measure
    \begin{equation}
    \label{eq:mutual-gibbs-measure}
    \mathrm{d} \widehat{\mathbb{P}}_t^{\otimes p} = \frac{1}{Z_t} \exp \left\{ pt^{1-\gamma} \left( \alpha([0, t]^p) \right)^{\gamma/p} \right\} \mathrm{d} \mathbb{P}^{\otimes p}.
    \end{equation}
    Then, the Brownian occupation measures $\widetilde{L}_t^{\otimes p}$ under the law $\widehat{\mathbb{P}}_t^{\otimes p} \circ (\widetilde{L}_t^{\otimes p})^{-1}$ converge to
    \[  
    \lim_{t \to \infty} \widetilde{L}_t^{\otimes p} = \widetilde{\mu}_{d, p, \gamma}^{\otimes p} \quad \text{in } \widetilde{\mathcal{M}}_1^{\otimes p}(\mathbb{R}^d).
    \]
    where $\mu_{d, p, \gamma}^{\otimes p} = (\mu_{d, p,\gamma}^1, \dots ,\mu_{d, p,\gamma}^p)$. Each $\mu_{d, p,\gamma}^j$ has density $\psi_{d, p,\gamma}^2$, where $(\psi_{d, p,\gamma} , \dots , \psi_{d, p,\gamma})$ is the unique (up to diagonal shifts) solution to the variational problem
    \begin{equation}
    \label{eq:mutual-tilted-var}
    p \cdot \rho_{d, p, \gamma} = \sup_{\substack{\psi \in (H^1(\mathbb{R}^d))^p \\ \| \psi^j \|_2 = 1}} \left\{ p \Big\| \prod_{j=1}^p \psi^j \Big\|_{2}^{2\gamma/p} - \frac{1}{2} \sum_{j=1}^p \| \nabla \psi^j \|_2^2\right\}.
    \end{equation}
\end{theorem}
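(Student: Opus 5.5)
Theorem~\ref{thm:mutual-tilted-conv} will follow from a large deviation principle for the tuple $\widetilde{L}_t^{\otimes p}$ in a compactified version of $\widetilde{\mathcal{M}}_1^{\otimes p}(\mathbb{R}^d)$, in the spirit of Mukherjee--Varadhan, combined with Varadhan's lemma for the tilt and uniqueness of the maximizer of \eqref{eq:mutual-tilted-var}. First I rewrite the exponent via Brownian scaling. Formally $\alpha([0,t]^p) = t^p \int \prod_j L_t^j(x)\,\mathrm{d}x$, so
\[
pt^{1-\gamma}\alpha([0,t]^p)^{\gamma/p} = p\,t\,\Big(\int_{\mathbb{R}^d}\prod_{j=1}^p L_t^j(x)\,\mathrm{d}x\Big)^{\gamma/p} =: t\,F(L_t^{\otimes p}).
\]
For $\mu^j$ with densities $(\psi^j)^2$, this gives $F = p\|\prod_j\psi^j\|_2^{2\gamma/p}$, which is exactly the first term in \eqref{eq:mutual-tilted-var}. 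The entropy cost is the Donsker--Varadhan rate $\frac12\sum_j\|\nabla\psi^j\|_2^2$. Note that $F$ is invariant under diagonal shifts, so it descends to the quotient.

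Next, I work in the compactification of $\widetilde{\mathcal{M}}_{\le 1}^{\otimes p}(\mathbb{R}^d)$. Its elements are collections of orbits $\{\widetilde{\alpha}_i\}$ of tuples of sub-probability measures, where mass may split into widely separated pieces or escape to infinity. On this space I want the LDP with rate $I(\xi)=\sum_i \frac12\sum_j\|\nabla\psi_i^j\|_2^2$. I extend $F$ by $F(\xi)=p\sum_i\|\prod_j \psi^j_i\|_2^{2\gamma/p}$, so that intersections between distinct pieces vanish. The key step is then an exponentially good approximation of $F(L_t^{\otimes p})$ by continuous functionals, obtained by mollifying the intersection, i.e. replacing $\delta$ with $\phi_\varepsilon$. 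I need two estimates:
\[
\limsup_{\varepsilon\to0}\limsup_{t}\frac1t\log\mathbb{P}\Big(\Big|\alpha([0,t]^p)-\alpha_\varepsilon([0,t]^p)\Big|\ge\eta t^p\Big)=-\infty,
\]
and the corresponding upper semicontinuity along the compactification. For the latter I use that $\int\prod_j(\mu^j*\phi_\varepsilon)$ is continuous under the Mukherjee--Varadhan topology, and that it is additive across separated pieces in the limit.

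Once this is in place, Varadhan's lemma needs the exponential moment bound $\limsup \frac1t\log\mathbb{E}e^{\theta tF}<\infty$ for some $\theta>1$. This follows from Chen's tail \eqref{eq:mutual-var-problem} together with the assumption $\gamma<\frac{2p}{d(p-1)}$, since then $F$ grows sublinearly relative to the Gagliardo--Nirenberg scaling. With it, Varadhan's lemma gives $\frac1t\log Z_t\to \sup_\xi\{F(\xi)-I(\xi)\}$ and the tilted LDP with rate $I-F$ minus this supremum. I then show the supremum over collections is attained by a single orbit of total mass one. Writing masses $a_i$, $\sum a_i\le1$, and rescaling $\psi\mapsto a^{1/2}\psi$, the value of one piece of mass $a$ becomes $p\,a^{\gamma}\|\prod\psi^j\|^{2\gamma/p}-a\cdot\frac12\sum\|\nabla\psi^j\|^2$. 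The optimum over spatial dilations of a piece of mass $a$ then scales like $a^{\kappa}$ with $\kappa>1$; checking this exponent is where $\gamma<2p/(d(p-1))$ enters. Strict superadditivity of $a\mapsto a^\kappa$ then forces a single piece with $a=1$.

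Finally, uniqueness up to diagonal shifts of the maximizer is needed, including the fact that the maximizer has equal components. For this I reduce to the Gagliardo--Nirenberg extremal. By the generalized Hölder inequality, $\|\prod\psi^j\|_2^{2/p}\le\prod_j\|\psi^j\|_{2p}^{2/p}$, and then AM--GM applies with equality iff $\psi^j$ all coincide up to sign and are equal. Optimizing over dilations of a Gagliardo--Nirenberg optimizer then identifies $\psi_{d,p,\gamma}$ uniquely. The tilted LDP with a unique zero $\widetilde{\mu}^{\otimes p}_{d,p,\gamma}$ yields convergence in probability in the compactification. Since the limit is a single orbit of full mass, this convergence lifts to $\widetilde{\mathcal{M}}_1^{\otimes p}(\mathbb{R}^d)$.

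I expect the main obstacle to be the exponential approximation of the intersection functional uniformly enough to pass to the compactified topology. Mollification error estimates of Chen's type control $\alpha-\alpha_\varepsilon$ superexponentially only in the subcritical regime. The extra difficulty is showing that cross-terms between mass pieces drifting apart vanish exponentially, so that $F$ becomes upper semicontinuous on the compact space.
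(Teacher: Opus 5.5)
\textbf{There is a genuine gap in the variational step.} It comes from the way you extend the tilt to the compactification.

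\emph{The extended functional is wrong.} Since $t^{-p}\alpha_{\epsilon}([0,t]^p)=\Lambda(p_{\epsilon}^{\otimes p},\widetilde{L}_t^{\otimes p})$ is additive over the pieces of $\xi^{\otimes p}$, the continuous approximants are $F_{\epsilon}(\xi^{\otimes p})=p\,\Lambda(p_{\epsilon}^{\otimes p},\xi^{\otimes p})^{\gamma/p}$. On sublevel sets of $\mathcal{I}$ these converge to
\[
F(\xi^{\otimes p})=p\Big(\sum_{i\in I}\Big\|\prod_{j=1}^p\psi_i^j\Big\|_2^2\Big)^{\gamma/p},
\]
with the power outside the sum, not $p\sum_i\|\prod_j\psi_i^j\|_2^{2\gamma/p}$. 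Additivity of the intersection does not make its $\gamma/p$-th power additive.

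\emph{The superadditivity claim also fails.} Your argument uses the piecewise-additive functional, and even there it breaks down. Optimizing $p\,a^{\gamma}b^{\beta}A-\frac{a}{2}b^{2}B$ over the dilation $b$, with $\beta=\gamma d(p-1)/p$, gives a value proportional to $a^{\kappa}$ with $\kappa=\frac{2\gamma-\beta}{2-\beta}$. Hence $\kappa>1$ if and only if $\gamma>1$. The hypothesis $\gamma<\frac{2p}{d(p-1)}$ only gives $\beta<2$, i.e.\ finiteness of the dilation optimum.
\begin{itemize}
\item For $\gamma=1$, the most natural case, you get $\kappa=1$ and no strictness.
\item For $\gamma<1$, your functional even has infinite supremum: $N$ copies of mass $1/N$ give a value of order $N^{1-\kappa}\to\infty$. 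This contradicts the finiteness of $\frac1t\log Z_t$ that you derive from Chen's tail.
\item For $\gamma>p$ (possible when $(d,p)=(1,2)$), your functional no longer dominates the correct one, so it gives no upper bound.
\end{itemize}
So the claim that the maximizer is a single orbit of full mass is not established on the stated range of $\gamma$.

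\textbf{The fix} is essentially what the paper does: keep the correct $F$ and use concavity rather than superadditivity.
\begin{itemize}
\item \emph{Reduce to marginals.} H\"older in $x$ gives $\|\prod_j\psi_i^j\|_2^2\le\prod_j\|\psi_i^j\|_{2p}^2$. H\"older over $i$ and then AM--GM give $F(\xi^{\otimes p})\le\sum_{j=1}^p\|\xi^j\|_p^{\gamma}$. Hence $F-\mathcal{I}\le\sum_j\bigl(\|\xi^j\|_p^{\gamma}-\mathcal{I}(\xi^j)\bigr)$, which is the one-marginal problem of Lemma~\ref{lem:self-tilt-var} with $q=p$.
\item \emph{Solve the one-marginal problem.} Set $y_i=\|\psi_i\|_{2p}^{2p}$, $m_i=\|\psi_i\|_2^2$, $k_i=\|\nabla\psi_i\|_2^2$ and $\vartheta=\frac{d(p-1)}{2p}$. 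Use $(\sum_i y_i)^{1/p}\le\sum_i y_i^{1/p}$, then Gagliardo--Nirenberg on each piece, then $\sum_i k_i^{\vartheta}m_i^{1-\vartheta}\le(\sum_i k_i)^{\vartheta}(\sum_i m_i)^{1-\vartheta}$. This reduces everything to $\sup_{K\ge 0}\{\kappa_{d,p}^{2\gamma}K^{\gamma\vartheta}-K/2\}$, which is finite and uniquely attained exactly because $\gamma\vartheta<1$.
\item \emph{Equality conditions.} Strictness of the first inequality when two pieces are nonzero forces a single piece. Equality in $\sum_i m_i\le 1$ forces mass one.
\end{itemize}

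The rest of your outline follows the paper's route: the compact LDP, the mollified approximation of $\alpha$, Varadhan's lemma, H\"older/AM--GM for equal components, and the lift to $\widetilde{\mathcal{M}}_1^{\otimes p}$.
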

Similarly to \eqref{eq:mutual-var-problem}, equation \eqref{eq:mutual-tilted-var} is maximized when $\psi^1 = \dots = \psi^p$ and hence reduces to \eqref{eq:self-tilted-var}.

When proving the above theorems, our starting point is the celebrated Donsker-Varadhan weak LDP \cite{DonskerVaradhan1975, DonskerVaradhan1976, DonskerVaradhan1983},
\begin{equation}
\label{eq:DV-LDP}
\mathbb{P} (L_t \approx \mu) = \exp \left\{ -\frac{t}{2} \left\| \nabla \sqrt{\frac{d\mu}{\mathrm{d} x}} \right\|_2^2 + o(t) \right\}.
\end{equation}
From a naive perspective, Theorems~\ref{thm:self-cond-conv}--\ref{thm:mutual-tilted-conv} seem to be mere applications of the contraction principle. The main challenge in proving our results is bridging the two following major gaps in this rationale.

Firstly, the intersection local times $\beta([0, t]^q)$ and $\alpha([0, t]^p)$ are not continuous functionals of the occupation measures. To overcome this problem, one must define continuous analogs of these quantities and show that are exponentially good approximations of the true values. We go even further and show that the occupation measures themselves are well-approximated by their smoothed versions (see Section~\ref{subsec:intro-exp-approx} for a precise statement). Our methods unify and generalize several previous attempts \cite{Chen2010, KoenigMoerters2002, KoenigMoerters2006, KoenigMukherjee2013, Mukherjee2017, Mori2023} using a novel (and purely probabilistic) strategy which is quite general. This exponential approximation is the main technical challenge of the paper, and we believe our approach and result may have applications to settings beyond what is considered here---see Section~\ref{subsec:intro-exp-approx} and Section~\ref{sec:exp-approx} for more details.

Even after overcoming the lack of continuity, the remaining obstacle is that \eqref{eq:DV-LDP} is only a \emph{weak} LDP, the key problem being that $\mathcal{M}_1(\mathbb{R}^d)$ is not compact. We resolve this by modifying the topology on the space of occupation measures, generalizing the approach of Mukherjee and Varadhan \cite{MukherjeeVaradhan2016,Mukherjee2017} with a key change in how we generalize to joint distributions. This gives us a compact topology on which the  Brownian occupation measures satisfy a full LDP. The topology and associated LDP is discussed in more detail in Section~\ref{subsec:intro-LDP} and Section~\ref{sec:MV-top}.

\subsection{LDP for Brownian occupation measures}
\label{subsec:intro-LDP}

A critical limitation of the Donsker-Varadhan weak LDP is that there is no upper bound for closed sets. To bypass this obstruction, previous results on intersection local times often used methods such as simply analyzing the Brownian motion on a bounded region \cite{KoenigMoerters2002, KoenigMoerters2006, KoenigMukherjee2013} or folding the Brownian paths into a large torus \cite{Chen2010}. Another approach is to compare the Brownian motion with the Ornstein-Uhlenbeck process \cite{DonskerVaradhan1983} which, unlike the Brownian motion, is exponentially tight. However, while these techniques work well for the values of the intersection local times, they cannot handle questions about the underlying occupation measures.

To this end, Mukherjee and Varadhan \cite{MukherjeeVaradhan2016} derived a full LDP by introducing a new topology on the space of occupation measures. They do so by first taking the quotient space $\widetilde{\mathcal{M}}_1(\mathbb{R}^d)$ of orbits under spatial translations, and then considering (countable) combinations of such orbits. Generalized to $p$-fold products, this leads to the set
\begin{equation} 
\label{eq:MV-bdd-set}
\widetilde{\mathcal{X}}^{\otimes p}_{\le 1} (\mathbb{R}^d) = \left\{ \xi^{\otimes p} = \{ \widetilde{\alpha}_i^{\otimes p} \}_{i \in I}: \widetilde{\alpha}_i^{\otimes p} \in \widetilde{\mathcal{M}}_{\le 1}^{\otimes p}(\mathbb{R}^d), \; \sum_{i \in I} \alpha_i^j(\mathbb{R}^d) \le 1\right\}.
\end{equation}
Equipped with a suitable metric $\mathbf{D}^{\otimes p}$ (to be defined in Section~\ref{sec:MV-top}), the space $\widetilde{\mathcal{X}}^{\otimes p}_{\le 1}(\mathbb{R}^d) $ is compact and contains $\widetilde{\mathcal{M}}_1^{\otimes p} (\mathbb{R}^d)$ as a dense subspace. We can then prove a full LDP for $\widetilde{L}_t^{\otimes p}$ in $\widetilde{\mathcal{X}}_{\le 1}^{\otimes p}$:
\begin{proposition}
\label{prop:brownian-ldp}
    The distributions $\widetilde{L}_t^{\otimes p}$ satisfy a large deviation principle in the compact metric space $(\widetilde{\mathcal{X}}_{\le 1}^{\otimes p}, \mathbf{D}^{\otimes p})$ with good rate function
    \begin{equation}
    \label{eq:rate-function}
    \mathcal{I}(\xi^{\otimes p}) = \begin{cases}
    \displaystyle \frac{1}{2} \sum_{i \in I} \sum_{j=1}^p \| \nabla \psi_i^j \|_2^2  & \text{if } \psi_i^j = \sqrt{\frac{\mathrm{d} \alpha_i^j}{\mathrm{d} x}} \in H^1(\mathbb{R}^d) \text{ for all } i, j \\
    \infty &\text{otherwise}.
    \end{cases}
\end{equation}
\end{proposition}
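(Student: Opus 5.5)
We will follow the Mukherjee--Varadhan strategy \cite{MukherjeeVaradhan2016}, adapted to $p$-tuples with diagonal shifts. Since $(\widetilde{\mathcal{X}}^{\otimes p}_{\le 1}, \mathbf{D}^{\otimes p})$ is compact, exponential tightness is automatic. A full LDP therefore follows once we prove a weak LDP: for every $\xi^{\otimes p}$,
\[
\lim_{\varepsilon\to 0}\liminf_{t\to\infty}\frac1t\log \mathbb{P}\big(\mathbf{D}^{\otimes p}(\widetilde{L}^{\otimes p}_t,\xi^{\otimes p})<\varepsilon\big)=\lim_{\varepsilon\to 0}\limsup_{t\to\infty}\frac1t\log \mathbb{P}\big(\mathbf{D}^{\otimes p}(\widetilde{L}^{\otimes p}_t,\xi^{\otimes p})<\varepsilon\big)=-\mathcal{I}(\xi^{\otimes p}).
\]
Goodness of $\mathcal{I}$ will come from lower semicontinuity, which follows from the upper bound on compact level sets. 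Alternatively, one can check directly that $\mathcal{I}$ is lower semicontinuous: write $\frac12\|\nabla\psi\|_2^2$ as the Donsker--Varadhan supremum $\sup_{u>0}\int -\frac{\Delta u}{2u}\,\mathrm d\alpha$, which is shift-invariant, and use that $\mathbf{D}^{\otimes p}$ is built from shift-invariant integrals of functions vanishing at infinity. Throughout we use that $\mathbf{D}^{\otimes p}$, as in \cite{MukherjeeVaradhan2016}, is defined through functionals
\[
\Lambda(f,\xi^{\otimes p})=\sum_i\int f(x_1-x_2,\dots)\,\prod_{j}\alpha_i^{j_k}(\mathrm d x_k),
\]
where $f$ ranges over translation-invariant, vanishing-at-infinity test functions in the variables of the $p$ measures. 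Diagonal shifts preserve these functionals, so the definition is consistent.

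For the \textbf{lower bound}, we first reduce by lower semicontinuity and density to $\xi^{\otimes p}$ with finitely many orbits, each $\psi_i^j$ smooth and compactly supported. The strategy is to produce the target by a concrete path: let the $p$ independent Brownian motions spend time fractions $\alpha_i^j(\mathbb{R}^d)$ near widely separated translates $x_i$, with their occupation densities approximating $(\psi_i^j)^2$. The leftover mass is handled by letting the motion wander off to infinity, where it escapes all compact test functions. The cost of approximating each $(\psi^j_i)^2$ on a fixed box comes from the Donsker--Varadhan lower bound \eqref{eq:DV-LDP} for each coordinate, using independence across $j$. Transitions between boxes and the escape cost only $o(t)$, since they take time $o(t)$ with macroscopic displacement, or use a linear drift costing $O(1)\cdot$distance$^2/$time. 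Crucially, the different coordinates $j$ must sit at the \emph{same} translate $x_i$ within orbit $i$. This is exactly why diagonal shifts are the natural quotient, and it causes no extra cost because the coordinates are independent.

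For the \textbf{upper bound}, we adapt the Mukherjee--Varadhan decomposition argument. Fix a finite family of test functionals determining a $\mathbf{D}^{\otimes p}$-neighborhood. We cover $\mathbb{R}^d$ by a random family of far-apart balls chosen so that the joint occupation measure is concentrated on them up to small error. On each widely separated cluster, we periodize or fold the $p$ paths jointly onto a large torus and apply the compact Donsker--Varadhan upper bound there. Folding all $p$ coordinates with the same period preserves the diagonal relation among them. Summing over clusters and using subadditivity of the Dirichlet form under localization, via IMS-type partition-of-unity estimates with $O(R^{-2})$ error, gives the bound $-\sum_i\sum_j\frac12\|\nabla\psi^j_i\|^2+\delta$. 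The combinatorial entropy of choosing clusters is subexponential.

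The \textbf{main obstacle} is the upper bound, specifically showing that the joint $p$-tuple splits into common diagonal orbits. Independent coordinates might concentrate at different locations, and the decomposition must then register them as separate orbits with zero-mass components in the other coordinates. To handle this, we will run the Mukherjee--Varadhan concentration-compactness on the sum measure $\sum_j L^j_t$ to choose the clusters, then record each coordinate's restriction to each cluster. Components of a coordinate that carry no mass on a given cluster simply contribute $\alpha_i^j=0$, which is allowed in \eqref{eq:MV-bdd-set}.
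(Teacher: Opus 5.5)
Your lower bound is sound and is the paper's argument in substance. The paper builds $\mu_n^j=\sum_i\alpha_i^j\ast\delta_{x_i^n}+\beta_n^j$ with widely separated $x_i^n$ and $\beta_n^j$ a spread-out Gaussian. It uses subadditivity of $\mathcal{I}$ to get $\limsup_n\mathcal{I}(\mu_n^{\otimes p})\le\mathcal{I}(\xi^{\otimes p})$. It then simply invokes the Donsker--Varadhan weak lower bound on $(\mathcal{M}_1(\mathbb{R}^d))^p$, together with continuity of $\widetilde{\mathcal{M}}_1^{\otimes p}\hookrightarrow\widetilde{\mathcal{X}}_{\le 1}^{\otimes p}$. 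Your explicit path construction re-proves that lower bound for these particular measures. One correction: the leftover mass must \emph{totally disintegrate}, not merely escape to infinity. A lump that runs away while staying concentrated is detected by the shift-invariant test functions as an extra orbit.

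The gap is in the upper bound. The idea you are missing is that the rate function only sees the marginals, $\mathcal{I}(\xi^{\otimes p})=\sum_j\mathcal{I}(\pi^j\xi^{\otimes p})$, and that the marginal map $\pi=(\pi^1,\dots,\pi^p)$ is continuous. Indeed, $\Lambda_k(f,\pi^j\xi^{\otimes p})=\Lambda_{\mathbf{k}}(f,\xi^{\otimes p})$ with $\mathbf{k}$ equal to $k$ in slot $j$ and $0$ elsewhere. Take neighborhoods $U^j$ of $\xi^j$ from the $p=1$ Mukherjee--Varadhan LDP and set $U=\pi^{-1}(U^1\times\dots\times U^p)$. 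Independence gives $\mathbb{P}(\widetilde{L}_t^{\otimes p}\in U)=\prod_j\mathbb{P}(\widetilde{L}_t^j\in U^j)$, hence the local bound $-\mathcal{I}(\xi^{\otimes p})+\epsilon$, and compactness finishes. Lower semicontinuity follows the same way, from that of each marginal rate. So the ``main obstacle'' you describe never has to be faced. How the coordinates group into diagonal orbits matters for the topology, not for the rate, and the upper bound need not detect it.

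Instead you propose to redo the Mukherjee--Varadhan upper bound jointly, and as sketched it does not go through. Read literally, applying the compact Donsker--Varadhan bound ``on each cluster'' is not available: the event does not factor over clusters, and the occupation measure restricted to a cluster is not that of a Markov process. Read as a single fold of the whole path onto one torus, distant clusters can land on top of each other. Since $\rho\mapsto\|\nabla\sqrt{\rho}\|_2^2$ is subadditive, and strictly so for overlapping non-proportional densities, the torus rate of the folded measure can fall strictly below $\sum_i\mathcal{I}(\alpha_i)$. No IMS localization can separate pieces that sit in the same place. Note also that the inequality you would need there is the superadditive one, $\|\nabla\psi\|_2^2\ge\sum_i\|\nabla(\phi_i\psi)\|_2^2-O(R^{-2})$, not subadditivity. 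Overcoming exactly this overlap problem is the nontrivial content of the $p=1$ theorem. The paper notes that torus folding controls intersection local times but not occupation measures, and it uses the $p=1$ LDP as a black box rather than re-proving it.
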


The case $p = 1$ was done in \cite{MukherjeeVaradhan2016} and has been used in great success to show convergence of Brownian motions under the Coulomb potential or the polaron measure \cite{MukherjeeVaradhan2016, KoenigMukherjee2017, BolthausenKoenigMukherjee2017, MukherjeeVaradhan[2022]copyright2022}. For joint distributions, \cite{Mukherjee2017} shows a similar theorem when $p = 2$ and there are also variations of this LDP for random walks \cite{BatesChatterjee2020, ErhardFrancoJesusSantana2025, ErhardPoisat2026}. The idea of using shift-invariance to construct compactifications of was introduced by Parthasarathy, Ranga Rao, and Varadhan \cite{ParthasarathyRangaRaoVaradhan1962} and later developed into a general theory of ``concentration compactness'' by Lions \cite{Lions1984,Lions1984a} with numerous applications in PDEs and calculus of variations.

However, the topology used in \cite{Mukherjee2017, ErhardPoisat2026} for joint measures is slightly different from the one we consider here. We use an alternate definition of $\widetilde{\mathcal{X}}_{\le 1}^{\otimes p}$ which we feel is a more natural generalization of \cite{MukherjeeVaradhan2016} that preserves full information of the marginals. Indeed, a major drawback of \cite{Mukherjee2017, ErhardPoisat2026} is that the maps to the marginals $\widetilde{L}_t^{\otimes p} \mapsto \widetilde{L}_t^j$ are not continuous; our construction resolves this problem.

Equipped with this new LDP (and after overcoming the lack of continuity), we apply tools from large deviation theory to obtain the following LDP for occupation measures conditioned on large intersections.
\begin{theorem}
    \label{thm:self-cond-ldp}
    Conditioned on $\{\beta([0,t]^q) \ge t^q\}$, $\widetilde{L}_t$ satisfies an LDP in the compact metric space $\widetilde{\mathcal{X}}_{\le 1}(\mathbb{R})$ with good rate function 
    \[  
    \mathcal{I}_{1, q}^{\mathrm{cond}}(\xi) = \begin{cases}
        \displaystyle \mathcal{I}(\xi) - \Theta_{1, q}  & \text{if } \mathcal{I}(\xi)< \infty \text{ and } \sum_{i \in I}\| \psi_i \|_{2q}^{2q} \ge 1\\
    \infty &\text{otherwise},
    \end{cases}
    \]
    where $\mathcal{I}(\cdot)$ and $\Theta_{1, q}$ are as in \eqref{eq:rate-function} and\eqref{eq:self-var-problem}, respectively.
\end{theorem}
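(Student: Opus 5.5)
The plan is to derive the conditional LDP from the full LDP of Proposition~\ref{prop:brownian-ldp} (with $p=1$, $d=1$) together with an exponentially good continuous approximation of $t^{-q}\beta([0,t]^q)$ as a functional on $\widetilde{\mathcal{X}}_{\le 1}(\mathbb{R})$. By Brownian scaling, $t^{-q}\beta([0,t]^q)=t^{-(q-1)}\|L_t\|_{q}^{q}$, where $\|L_t\|_q^q$ means $\int (\mathrm{d}L_t/\mathrm{d}x)^q\,\mathrm{d}x$ and the local time density exists in $d=1$. The natural limiting functional is $\Phi(\xi)=\sum_{i\in I}\|\psi_i\|_{2q}^{2q}$ on $\{\mathcal{I}<\infty\}$. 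It is shift-invariant and additive over the orbits, so it is well defined on $\widetilde{\mathcal{X}}_{\le 1}$.

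\textbf{Step 1 (mollified functional and approximation).} Fix a mollifier $\phi_\varepsilon$ and set $\Phi_\varepsilon(\xi)=\sum_i\|\alpha_i*\phi_\varepsilon\|_q^q$. This is continuous in the metric $\mathbf{D}$, because $\mathbf{D}$ is built from integrals of functions of the form $\int f(x_1-x_2,\dots)\prod\alpha_i(\mathrm{d}x_k)$ that vanish at infinity. One can see this by expanding $\|\alpha*\phi_\varepsilon\|_q^q$ for integer $q$, or by approximating $x\mapsto x^q$ for general $q$, and truncating where the mass is small. Using the exponential approximation announced in the introduction (Section~\ref{sec:exp-approx}), we obtain for every $\delta>0$
\[\lim_{\varepsilon\to0}\limsup_{t\to\infty}\frac1t\log\mathbb{P}\Big(\big|t^{-q}\beta([0,t]^q)-\Phi_\varepsilon(\widetilde L_t)\big|>\delta\Big)=-\infty.\]
In addition, $\Phi_\varepsilon\to\Phi$ uniformly on the level sets $\{\mathcal{I}\le M\}$. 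This follows from the Gagliardo--Nirenberg inequality~\eqref{eq:GN-inequality} and the bound $\|\psi*\ldots\|$ controlled by $\|\nabla\psi\|_2$. The latter gives $\|\psi_i^2*\phi_\varepsilon-\psi_i^2\|_q\lesssim \varepsilon^{c}\|\nabla\psi_i\|_2^{a}\|\psi_i\|_2^{b}$, summed over $i$ using $\sum_i\|\psi_i\|_2^2\le1$.

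\textbf{Step 2 (LDP for the pair and conditioning).} By the extended contraction principle (Dembo--Zeitouni, Theorem 4.2.23), the pair $(\widetilde L_t, t^{-q}\beta([0,t]^q))$ satisfies an LDP in $\widetilde{\mathcal X}_{\le1}\times\mathbb{R}$ with rate $J(\xi,s)=\mathcal{I}(\xi)$ if $s=\Phi(\xi)$ and $J=\infty$ otherwise. Alternatively, one can argue directly with the upper and lower bounds and the $\varepsilon$-approximation. For closed $F\subset\widetilde{\mathcal X}_{\le 1}$, the set $F\times[1,\infty)$ is closed, which gives the upper bound
\[\limsup\frac1t\log\mathbb{P}(\widetilde L_t\in F,\beta\ge t^q)\le-\inf\{\mathcal{I}(\xi):\xi\in F,\Phi(\xi)\ge1\}.\]
We then subtract $\frac1t\log\mathbb{P}(\beta\ge t^q)\to-\Theta_{1,q}$ from~\eqref{eq:self-var-problem}. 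Goodness of the resulting rate function follows from compactness of the space and lower semicontinuity of $\mathcal I$ together with upper semicontinuity of $\Phi$ on level sets. The latter is Step 1's uniform approximation, where each $\Phi_\varepsilon$ is continuous.

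\textbf{Step 3 (lower bound; the main obstacle).} For open $G$ and $\xi\in G$ with $\Phi(\xi)\ge1$ and $\mathcal{I}(\xi)<\infty$, the LDP only controls the event $\{\Phi>1-\eta\}$, not $\{\ge1\}$. To handle this, perturb $\xi$ to $\xi_\lambda$ by rescaling each $\psi_i$ as $\lambda^{1/2}\psi_i(\lambda\cdot)$ with $\lambda>1$. This keeps the masses fixed, multiplies $\Phi$ by $\lambda^{q-1}>1$, multiplies $\mathcal I$ by $\lambda^2$, and sends $\xi_\lambda\to\xi$ as $\lambda\downarrow1$. We then apply the lower bound of the pair LDP to the open set $G'\times(1,\infty)$, with $G'\ni\xi_\lambda$ a small neighbourhood in $G$. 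The delicate point is that $\{s>1\}$ is open, so we only need $\Phi(\xi_\lambda)>1$. This holds, and letting $\lambda\downarrow1$ yields $-\mathcal{I}(\xi)+\Theta_{1,q}$. The real difficulty throughout is Step 1: proving continuity of $\Phi_\varepsilon$ on the compactification, including the handling of mass escaping to infinity and splitting into orbits, and invoking the superexponential approximation.
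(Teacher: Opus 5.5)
Your proposal is correct and follows essentially the paper's route: lift to the pair $(\widetilde{L}_t, \text{intersection functional})$, approximate by the Gaussian-mollified functional (continuous by Corollary~\ref{cor:conti-functional}), feed Proposition~\ref{prop:self-exp-approx} and the uniform level-set bound \eqref{eq:self-rate-bound} into Dembo--Zeitouni's Theorem~4.2.23, then intersect with $[1,\infty]$ (resp.\ $(1,\infty]$) and subtract $\frac{1}{t}\log\mathbb{P}(\beta([0,t]^q)\ge t^q)\to-\Theta_{1,q}$. Your rescaling $\psi_i\mapsto\lambda^{1/2}\psi_i(\lambda\,\cdot)$ in Step~3 makes explicit the passage from $\{\Phi>1\}$ to $\{\Phi\ge 1\}$, which the paper only asserts via continuity on sublevel sets; two small corrections: the identity is $t^{-q}\beta([0,t]^q)=\|L_t\|_q^q$ with no factor $t^{-(q-1)}$, and using $q$-th powers rather than $\|\cdot\|_q$ requires the (easy) exponential tightness of $\|L_t\|_q$ to transfer Proposition~\ref{prop:self-exp-approx}.
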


\begin{theorem}
    \label{thm:mutual-cond-ldp}
    Suppose $p \ge 2$ and $d(p-1) < 2p$. Conditioned on the event $\{\alpha([0,t]^p) \ge t^p\}$, $\widetilde{L}_t^{\otimes p}$ satisfies an LDP in the compact metric space $\widetilde{\mathcal{X}}_{\le 1}^{\otimes p}(\mathbb{R}^d)$ with good rate function
    \[  
    \mathcal{I}_{d, p}^{\mathrm{cond}}(\xi^{\otimes p}) = \begin{cases}
        \displaystyle \mathcal{I}(\xi^{\otimes p}) - p \cdot \Theta_{d, p}  & \text{if }\mathcal{I}(\xi^{\otimes p})< \infty \text{ and } \sum_{i \in I}\| \prod_{j = 1}^p \psi_i^j \|_{2}^{2} \ge 1\\
    \infty &\text{otherwise},
    \end{cases}
    \]
    where $\mathcal{I}(\cdot)$ and $\Theta_{d, p}$ are as in \eqref{eq:rate-function} and \eqref{eq:mutual-var-problem}, respectively.   
\end{theorem}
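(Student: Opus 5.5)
The strategy is to combine the full LDP of Proposition~\ref{prop:brownian-ldp} with an exponentially good continuous approximation of $\alpha([0,t]^p)/t^p$, and then to condition. For $\varepsilon>0$ let $\phi_\varepsilon$ be a mollifier. Define on $\widetilde{\mathcal{X}}^{\otimes p}_{\le 1}(\mathbb{R}^d)$ the functional
\[
\Phi_\varepsilon(\xi^{\otimes p}) = \sum_{i\in I} \int_{\mathbb{R}^d} \prod_{j=1}^p (\alpha_i^j * \phi_\varepsilon)(x)\, \mathrm{d} x .
\]
It is shift invariant, and each summand depends only on the diagonal orbit $\widetilde{\alpha}_i^{\otimes p}$. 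Note that $\alpha([0,t]^p)/t^p = \int \prod_j \ell_t^j$ formally, where $\ell^j_t$ is the density of $L_t^j$. The first step is to verify that $\Phi_\varepsilon$ is continuous in $\mathbf{D}^{\otimes p}$. This is the usual Mukherjee--Varadhan argument: functionals of the form $\int \prod_j (\alpha^j*\phi_\varepsilon)$ vanish as mass is spread out or leaks to infinity, and are continuous under diagonal shifts of the joint components. Here we use the fact, emphasised above, that our topology keeps the joint marginals together.

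The second step uses the paper's exponential approximation of the intersection measure (Section~\ref{sec:exp-approx}). It gives, for every $\delta>0$,
\[
\lim_{\varepsilon\to0}\limsup_{t\to\infty}\frac1t\log\mathbb{P}\Big(\big|t^{-p}\alpha([0,t]^p)-\Phi_\varepsilon(\widetilde L_t^{\otimes p})\big|>\delta\Big)=-\infty .
\]
We also need $\Phi_\varepsilon(\xi^{\otimes p})\to\Phi(\xi^{\otimes p}):=\sum_i\|\prod_j\psi_i^j\|_2^2$ uniformly on level sets $\{\mathcal{I}\le M\}$. This follows from Gagliardo--Nirenberg/Hölder bounds, since $d(p-1)<2p$ makes $\int\prod_j(\psi^j)^2$ controlled by the $H^1$ norms with a sub-critical power. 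The generalized contraction principle (Dembo--Zeitouni, Thm.~4.2.23 style) then yields a joint LDP for $(\widetilde L_t^{\otimes p}, t^{-p}\alpha([0,t]^p))$ in $\widetilde{\mathcal{X}}^{\otimes p}_{\le1}\times[0,\infty)$. Its rate function is $\mathcal{I}(\xi^{\otimes p})$ if the second coordinate equals $\Phi(\xi^{\otimes p})$, and $\infty$ otherwise. One way to set this up is via the continuous maps $\xi\mapsto(\xi,\Phi_\varepsilon(\xi))$ together with exponential equivalence.

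The third step is conditioning. For a closed $F$, the joint LDP upper bound applied to $F\times[1,\infty)$ gives
\[
\limsup\frac1t\log\mathbb{P}(\widetilde L_t^{\otimes p}\in F,\ \alpha\ge t^p)\le-\inf\{\mathcal{I}:\xi\in F,\ \Phi(\xi)\ge1\}.
\]
Dividing by $\mathbb{P}(\alpha\ge t^p)=e^{-tp\Theta_{d,p}+o(t)}$, from \eqref{eq:mutual-var-problem}, gives the upper bound with rate $\mathcal{I}^{\mathrm{cond}}_{d,p}$. For the lower bound on an open $G$, take $\xi\in G$ with $\Phi(\xi)\ge1$ and $\mathcal{I}(\xi)<\infty$. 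If $\Phi(\xi)=1$ exactly, the event $[1,\infty)$ is not open. To handle this, perturb $\xi$ by a slight spatial compression $\psi_i^j\mapsto\lambda^{d/2}\psi_i^j(\lambda\,\cdot)$ with $\lambda\downarrow1$. This strictly increases $\Phi$ (scaling by $\lambda^{d(p-1)}$) and changes $\mathcal{I}$ continuously, so we may work with $G\times(1,\infty)$ instead. Finally, goodness of $\mathcal{I}^{\mathrm{cond}}_{d,p}$ holds because the space is compact and the rate function is lower semicontinuous. Lower semicontinuity follows from that of $\mathcal{I}$ together with the closedness of $\{\Phi\ge1\}\cap\{\mathcal{I}\le M\}$, which in turn comes from the uniform approximation of $\Phi$ by the continuous $\Phi_\varepsilon$ on level sets.

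The main obstacle is the second step: the exponential approximation and, in particular, establishing the uniform closeness of $\Phi_\varepsilon$ and $\Phi$ together with continuity of $\Phi_\varepsilon$ on the compactified space. I would rely on the paper's approximation theorem for the former and on sub-criticality $d(p-1)<2p$ for the latter.
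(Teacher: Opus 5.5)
Your proposal is correct and follows essentially the paper's route: lift to $(\widetilde L_t^{\otimes p}, t^{-p}\alpha([0,t]^p))$, approximate the second coordinate by the continuous functional $\Lambda(p_\epsilon^{\otimes p},\cdot)$, combine Proposition~\ref{prop:mutual-exp-approx} with the level-set bound \eqref{eq:mutual-rate-approx} in Lemma~\ref{lem:exp-approx}, then restrict to $[1,\infty)$ and divide by $\mathbb{P}(\alpha([0,t]^p)\ge t^p)$. Take your mollifier to be the Gaussian $p_\varepsilon$, since the exponential approximation of Section~\ref{sec:exp-approx} is proved only for the heat kernel. Your dilation $\psi_i^j\mapsto\lambda^{d/2}\psi_i^j(\lambda\,\cdot)$ is a welcome addition: it makes explicit the passage from $\Phi>1$ to $\Phi\ge 1$ in the lower bound, which the paper only asserts via continuity of the functional on sub-level sets of $\mathcal{I}$.
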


Theorems~\ref{thm:self-cond-conv} and \ref{thm:mutual-cond-conv} are immediate consequences of Theorems~\ref{thm:self-cond-ldp}, \ref{thm:mutual-cond-ldp} and Lemma~\ref{lem:self-var-problem}, which shows that $\widetilde{\mu}_{1, q}$ and $\widetilde{\mu}_{d, p}^{\otimes p}$ are the unique minimizers of the respective rate functions. Note that the LDP is in the topology of $\widetilde{\mathcal{X}}_{\le 1}^{\otimes p}$, while the convergence in Theorems~\ref{thm:self-cond-conv} and \ref{thm:mutual-cond-conv} are in the weak topology. This is possible because $\widetilde{\mathcal{X}}_{\le 1}^{\otimes p}$ contains $\widetilde{\mathcal{M}}_1^{\otimes p}$ as a subspace---since both the sequence $\widetilde{L}_t^{\otimes p}$ and the limiting measure $\widetilde{\mu}_{d, p}$ lie in $\widetilde{\mathcal{M}}_1^{\otimes p}$, convergence in $\widetilde{\mathcal{X}}^{\otimes p}$ implies convergence in $\widetilde{\mathcal{M}}_1^{\otimes p}$.

Similarly, we also derive the LDP for the Gibbs measures introduced in Theorems~\ref{thm:self-tilted-conv} and \ref{thm:mutual-tilted-conv}. As in the preceding case of the conditional measure, these LDPs (combined with Lemma~\ref{lem:self-tilt-var}) immediately imply Theorems~\ref{thm:self-tilted-conv} and \ref{thm:mutual-tilted-conv}.

\begin{theorem}
    \label{thm:self-tilted-ldp}
    For any $0 <\gamma < \frac{2q}{q-1}$, the distributions $\widehat{\mathbb{P}}_t \circ (\widetilde{L}_t)^{-1}$ under the Gibbs measure \eqref{eq:self-gibbs-measure} satisfy an LDP in $\widetilde{\mathcal{X}}_{\le 1}(\mathbb{R})$ with good rate function
    \[
    \mathcal{I}_{1, q, \gamma}^{\mathrm{Gibbs}}(\xi) = \begin{cases}
    \mathcal{I}(\xi) - \left( \sum_{i \in I} \| \psi_i \|_{2q}^{2q} \right)^{\gamma / q}  + \rho_{1, q, \gamma} & \text{if } \mathcal{I}(\xi) < \infty\\
    \infty & \text{otherwise},
    \end{cases}
    \]
    where $\mathcal{I}(\cdot)$ and $\rho_{1, q, \gamma}$ are defined in \eqref{eq:rate-function} and \eqref{eq:self-tilted-var}, respectively.
    
\end{theorem}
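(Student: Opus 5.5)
The plan is to derive Theorem~\ref{thm:self-tilted-ldp} from Proposition~\ref{prop:brownian-ldp} (with $p=1$) through a Varadhan-type tilting lemma. The obstruction is that the tilt $F_t = t^{1-\gamma}\beta([0,t]^q)^{\gamma/q}$ is not a function of $\widetilde{L}_t$ that is continuous on $\widetilde{\mathcal{X}}_{\le 1}(\mathbb{R})$. By the scaling and formal definition,
\[
t^{-q}\beta([0,t]^q) = \int_{\mathbb{R}} \Big(\frac{\mathrm{d}L_t}{\mathrm{d}x}\Big)^q \mathrm{d}x ,
\]
so that $F_t = t\,\|\mathrm{d}L_t/\mathrm{d}x\|_q^{\gamma}$. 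The natural continuous surrogate is obtained by mollification. For a mollifier $\phi_\varepsilon$, set
\[
\Phi_\varepsilon(\xi) = \Big(\sum_{i\in I}\|\alpha_i * \phi_\varepsilon\|_q^q\Big)^{\gamma/q}.
\]
This is shift invariant and continuous on $(\widetilde{\mathcal{X}}_{\le 1},\mathbf{D})$, which is the standard fact from the Mukherjee--Varadhan theory: functionals of the form $\sum_i \int (\alpha_i*\phi_\varepsilon)^q$ are continuous because mass escaping to infinity splits into orbits whose contributions decouple. Moreover, $\Phi_\varepsilon(\xi)\to (\sum_i\|\psi_i\|_{2q}^{2q})^{\gamma/q}$ as $\varepsilon\to0$ whenever $\mathcal{I}(\xi)<\infty$.

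The key steps are as follows.

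\textbf{Step 1: exponential approximation.} Using the exponentially good approximation of the intersection measure announced in Section~\ref{subsec:intro-exp-approx}, show for every $\delta>0$ that
\[
\lim_{\varepsilon\to0}\limsup_{t\to\infty}\frac1t\log\mathbb{P}\big(|F_t/t-\Phi_\varepsilon(\widetilde{L}_t)|>\delta\big)=-\infty .
\]

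\textbf{Step 2: exponential moment bound.} Show that for some $\theta>1$,
\[
\limsup_{t}\frac1t\log\mathbb{E}\, e^{\theta F_t}<\infty .
\]
This is where $\gamma<\frac{2q}{q-1}$ enters, since it makes the power $\gamma/q$ sublinear in the relevant sense. By Chen's tail estimate \eqref{eq:self-var-problem} and scaling,
\[
\mathbb{P}(\beta([0,t]^q)\ge \lambda t^q)\approx e^{-t\,\Theta_{1,q}\lambda^{2/(q-1)}}.
\]
The tilt contributes $e^{t\lambda^{\gamma/q}}$, and $\gamma/q<2/(q-1)$ ensures the growth is beaten by the decay. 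A uniform version of this bound over $t$ needs the superexponential tail from Chen's results, which I would take as known.

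\textbf{Step 3: tilted LDP.} Steps 1 and 2, combined with Proposition~\ref{prop:brownian-ldp} and the standard lemma on Varadhan's integral for exponentially good approximations, give an LDP for $\widetilde{L}_t$ under $e^{F_t}\mathrm{d}\mathbb{P}/Z_t$. The rate function is
\[
\mathcal{I}(\xi)-\Big(\sum_i\|\psi_i\|_{2q}^{2q}\Big)^{\gamma/q}+\lim_t\tfrac1t\log Z_t .
\]
I would verify the upper and lower bounds directly. For the lower bound on an open set $G$, restrict to $\{\widetilde L_t\in G\}\cap\{|F_t/t-\Phi_\varepsilon|\le\delta\}$, use Step 1 to discard the complement, and apply the LDP lower bound to $\Phi_\varepsilon-\delta$. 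For the upper bound on a closed set, split off the bad event by Hölder's inequality with exponent $\theta$ from Step 2, then use compactness and continuity of $\Phi_\varepsilon$. Finally let $\varepsilon\to0$, using lower semicontinuity of $\mathcal{I}$ and the pointwise limit of $\Phi_\varepsilon$ on $\{\mathcal{I}\le M\}$. This uniform convergence on level sets requires the Gagliardo--Nirenberg bound \eqref{eq:GN-inequality} to control $\|\psi_i*\text{mollified}\|_{2q}$ against $\|\nabla\psi_i\|_2$ uniformly in $i$.

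\textbf{Step 4: identifying the constant and goodness.} To identify $\lim_t\frac1t\log Z_t$, apply the same argument to the whole space, which gives
\[
\sup_\xi\Big\{\Big(\sum_i\|\psi_i\|_{2q}^{2q}\Big)^{\gamma/q}-\mathcal{I}(\xi)\Big\}.
\]
Show that this supremum equals $\rho_{1,q,\gamma}$, i.e.\ that splitting mass into several orbits never helps. For masses $m_i$ with $\sum m_i\le1$, the scaling $\psi\mapsto$ mass $m$ gives $\|\psi\|_{2q}^{2q}$ and $\|\nabla\psi\|_2^2$ homogeneous of different degrees in $m$. Superadditivity, from concavity of the optimal value in the mass for $\gamma$ in the admissible range, then shows that concentrating all mass in one orbit is optimal; this is the content of Lemma~\ref{lem:self-tilt-var}. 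Goodness of the rate function is automatic from compactness of $\widetilde{\mathcal{X}}_{\le1}$ and lower semicontinuity.

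I expect the main obstacle to be Step 1: the exponential approximation of $\beta$ by its mollified version uniformly at the exponential scale. This rests on the paper's main technical tool, and the statement must be phrased with the nonlinear power $\gamma/q$. Handling that power uses $|a^{\gamma/q}-b^{\gamma/q}|$ bounds together with the tail control from Step 2 on the event where $\beta$ is large.
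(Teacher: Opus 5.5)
Your proposal is correct and shares the paper's skeleton: the full LDP on the compact space, the continuous surrogate $\|\xi\ast p_\epsilon\|_q^\gamma$, Varadhan's lemma, $\epsilon\to0$ on sublevel sets of $\mathcal{I}$, and the normalizing constant from Lemma~\ref{lem:self-tilt-var}. The key error estimate, however, is different.

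The paper proves one strong statement, Lemma~\ref{lem:self-tilt-approx}: the tilt error $t^{1-\gamma}\bigl|\|\ell_t\|_q^\gamma-\|\ell_{t,\epsilon}\|_q^\gamma\bigr|$ has negligible exponential moments for \emph{every} multiplier. This comes from Corollary~\ref{cor:self-approx-momemt}, moment bounds on $\|\ell_t\|_q$, the inequality $a^\gamma-b^\gamma\le\gamma(a-b)(a+b)^{\gamma-1}$ and H\"older. The condition $\gamma<\frac{2q}{q-1}$ enters as positivity of the exponent $\frac{2q-\gamma(q-1)}{2q}-(q-1)\theta$. The paper then splits the \emph{exponent} by H\"older with weights $\theta,1-\theta$. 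Closed and open sets are thus handled by the same device, with no truncation and no a priori bound on the partition function.

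You split the \emph{event} instead. This needs only superexponential smallness of the bad event (Proposition~\ref{prop:self-exp-approx} plus truncation on $\{\|L_t\|_q\le M\}$ when $\gamma>1$), together with a separate bound on $\mathbb{E}e^{\theta F_t}$. That bound is available, and its uniformity in $t$ comes for free. By Brownian scaling, \eqref{eq:self-var-problem} is equivalent to $\log\mathbb{P}(\beta([0,1]^q)\ge a)\sim-\Theta_{1,q}a^{2/(q-1)}$ as $a\to\infty$. Hence $\mathbb{P}(\beta([0,t]^q)\ge\lambda t^q)\le C_\eta e^{-(\Theta_{1,q}-\eta)\lambda^{2/(q-1)}t}$ uniformly in $t$ and $\lambda$.

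Your route makes the role of $\gamma<\frac{2q}{q-1}$ transparent (tilt growth versus tail decay) and avoids the extra moment estimates, at the price of importing Chen's tail asymptotics. The paper's route is self-contained given its moment bounds.

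Two small points. First, Young's inequality gives $\Phi_\epsilon\le(\sum_i\|\psi_i\|_{2q}^{2q})^{\gamma/q}$ wherever $\mathcal{I}<\infty$, so your upper bound needs no uniform convergence on level sets. Second, Young also gives $\Phi_\epsilon(\widetilde L_t)\le F_t/t$ pathwise. In the lower bound, discard the bad event via $\int_{\mathrm{bad}}e^{t\Phi_\epsilon(\widetilde L_t)}\,\mathrm{d}\mathbb{P}\le\int_{\mathrm{bad}}e^{F_t}\,\mathrm{d}\mathbb{P}$ rather than through $\sup\Phi_\epsilon$. This matters because $\sup\Phi_\epsilon\to\infty$ as $\epsilon\to0$.
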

\begin{theorem}
    \label{thm:mutual-tilted-ldp}
    For any $d \ge 1$, $p \ge 2$ such that $d(p-1) < 2p$ and any $0 < \gamma < \frac{2p}{d(p-1)}$, the distribution $\widehat{\mathbb{P}}_t^{\otimes p} \circ (\widetilde{L}_t^{\otimes p})^{-1}$ under the Gibbs measure \eqref{eq:mutual-gibbs-measure} satisfies an LDP in $\widetilde{\mathcal{X}}^{\otimes p}(\mathbb{R}^d)$ with good rate function
    \[
    \mathcal{I}_{d, p, \gamma}^{\mathrm{Gibbs}}(\xi^{\otimes p}) = \begin{cases}
    \displaystyle \mathcal{I}(\xi^{\otimes p}) - p\biggl(\sum_{i \in I} \Big\| \prod_{j=1}^p \psi_i^j \Big\|_2^2 \biggr)^{\gamma/p} + p \cdot \rho_{d, p, \gamma} & \text{if } \mathcal{I}(\xi^{\otimes p}) < \infty\\
    \infty & \text{otherwise},
    \end{cases}
    \]
    where $\mathcal{I}(\cdot)$ and $\rho_{d, p, \gamma}$ are defined in \eqref{eq:rate-function} and \eqref{eq:mutual-tilted-var}, respectively.
\end{theorem}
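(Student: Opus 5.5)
The plan is to derive Theorem~\ref{thm:mutual-tilted-ldp} from Proposition~\ref{prop:brownian-ldp} via Varadhan's lemma and the tilted LDP (the ``Laplace principle for Gibbs measures''), applied to the functional
\[
\Phi(\xi^{\otimes p}) := p\Big(\sum_{i \in I} \Big\| \prod_{j=1}^p \psi_i^j \Big\|_2^2\Big)^{\gamma/p}.
\]
Under the scaling $L_t^j$, formally $\alpha([0,t]^p) = t^p \int \prod_j \frac{\mathrm{d} L_t^j}{\mathrm{d}x}\,\mathrm{d}x$, so the exponent in \eqref{eq:mutual-gibbs-measure} is $t\,\Phi(\widetilde{L}_t^{\otimes p})$. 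The obstacle is that $\Phi$ is neither defined on atomic/singular measures nor continuous on $\widetilde{\mathcal{X}}^{\otimes p}_{\le 1}$. So I would first replace it by a mollified version: for $\varepsilon>0$ let
\[
\Phi_\varepsilon(\xi^{\otimes p}) = p\Big(\sum_{i}\int \prod_j (\alpha_i^j * \phi_\varepsilon)(x)\,\mathrm{d}x\Big)^{\gamma/p}.
\]
This is shift-invariant on each orbit and additive over orbits, and by the construction of $\mathbf{D}^{\otimes p}$ (integrals of shift-invariant functionals $\int \prod_j f(x_j - x_{j'})\,\prod \alpha(\mathrm{d}x)$ vanishing at infinity) it is continuous and bounded on the compact space. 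I would also check that $\Phi_\varepsilon \to \Phi$ on $\{\mathcal{I}\le M\}$ uniformly, using the Gagliardo--Nirenberg inequality \eqref{eq:GN-inequality} to control $\|\prod_j\psi_i^j\|_2$ and its smoothing error in terms of $\sum\|\nabla\psi_i^j\|_2^2$.

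Second, the exponential approximation: I need
\[
\lim_{\varepsilon\to0}\limsup_{t\to\infty}\frac1t\log \mathbb{E}^{\otimes p}\exp\big\{\lambda t\,|t^{-\gamma}\alpha([0,t]^p)^{\gamma/p}-\Phi_\varepsilon(\widetilde{L}_t^{\otimes p})|\big\}\le 0 \quad\text{for all }\lambda>0,
\]
or at least that the probability of a deviation $>\delta$ is superexponentially small. I would take this from the exponentially good approximation of the intersection measure announced in Section~\ref{subsec:intro-exp-approx}, i.e.\ Section~\ref{sec:exp-approx}, tested against the constant function $1$. Because $\gamma<\frac{2p}{d(p-1)}$, I would combine it with Chen's moment/tail bound $\frac1t\log\mathbb{P}(\alpha([0,t]^p)\ge \lambda t^p)\approx -p\Theta_{d,p}\lambda^{2/(d(p-1))}$, using $\alpha$'s scaling, which grows faster than $\lambda^{\gamma/p}$. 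This gives the exponential tightness of the tilt, $\limsup_{M\to\infty}\limsup_t \frac1t\log\mathbb{E}[e^{t\Phi}\mathbf 1_{\Phi>M}]=-\infty$, which handles the unboundedness and the power $\gamma/p$ (via $|a^{\gamma/p}-b^{\gamma/p}|$ bounds on truncated regions).

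Third, the standard argument. For bounded continuous $\Phi_\varepsilon$, the tilted measures satisfy an LDP with rate $\mathcal{I}-\Phi_\varepsilon+\sup(\Phi_\varepsilon-\mathcal{I})$, by Varadhan's lemma plus the tilting lemma. Passing $\varepsilon\to0$ uses the exponential approximation and the uniform convergence on level sets, and gives upper bounds for closed $F$ and lower bounds for open $G$ of $\frac1t\log\mathbb{E}[e^{t\cdot(\text{true tilt})};\widetilde L_t\in\cdot]$ equal to $-\inf(\mathcal{I}-\Phi)$. Applying this with the whole space identifies $\lim\frac1t\log Z_t=\sup(\Phi-\mathcal{I})$. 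That this equals $p\rho_{d,p,\gamma}$ needs a check that splitting mass into several orbits never helps. By concavity of $s\mapsto s^{\gamma/p}$ when $\gamma\le p$, and in general by the scaling of $\psi\mapsto c\psi(c'\cdot)$, one sees the supremum is attained on a single orbit of full mass; I would defer this to Lemma~\ref{lem:self-tilt-var}. Goodness of the rate function follows from compactness and the lower semicontinuity of $\mathcal{I}-\Phi$ on $\{\mathcal{I}<\infty\}$.

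The main obstacle is the second step: the exponential approximation uniform up to scale $t$ with the superlinear tail control. The rest is routine large-deviation bookkeeping.
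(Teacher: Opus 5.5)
Your proposal is correct and shares the paper's architecture: Gaussian mollification $\Phi_\epsilon=p\,\Lambda(p_\epsilon^{\otimes p},\cdot)^{\gamma/p}$, uniform convergence $\Phi_\epsilon\to\Phi$ on sublevel sets of $\mathcal{I}$ as in Lemma~\ref{lem:rate-approx}, Varadhan's lemma for the smoothed tilt, then $\epsilon\to0$. You differ from the paper in how you compare the true tilt with the smoothed one.

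Write $\alpha=\alpha([0,t]^p)$ and $\alpha_\epsilon=\ell_{t,\epsilon}^{\otimes p}(\mathbb{R}^d)$. The paper does not use Chen's tail asymptotics at this point. It applies Lemma~\ref{lem:mutual-approx-moment} with fractional order $\gamma m/p$ to get $\mathbb{E}\,t^{(1-\gamma)m}|\alpha-\alpha_\epsilon|^{\gamma m/p}\le C^m\epsilon^{\theta\gamma m/p}\,m!\,(t^m/m!)^{\frac{2p-\gamma d(p-1)}{2p}-\frac{\gamma\theta}{p}}$. The exponent here is positive precisely when $\gamma<\frac{2p}{d(p-1)}$. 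This gives the exponential-moment approximation directly, and the H\"older splitting of Lemma~\ref{lem:self-tilted-ldp} then finishes with no truncation.

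You instead use only the scale-$t$ statement of Proposition~\ref{prop:mutual-exp-approx} with $f\equiv 1$, which gives superexponential closeness in probability of $t^{-p}\alpha$ and $t^{-p}\alpha_\epsilon$. You then import the upper tail of $\alpha([0,1]^p)$ together with Brownian scaling to get exponential tightness of the tilt, and truncate.

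What each route buys:
\begin{itemize}
\item The paper's route is self-contained and gives the stronger exponential-moment form.
\item Yours needs an external superlinear tail bound. However, it uses $s\mapsto s^{\gamma/p}$ only through uniform continuity on bounded sets, so it handles $\gamma>p$ at no extra cost. That case occurs only for $(d,p)=(1,2)$, $\gamma\in(2,4)$.
\item By contrast, a bound on $|\alpha-\alpha_\epsilon|^{\gamma/p}$ controls $|\alpha^{\gamma/p}-\alpha_\epsilon^{\gamma/p}|$ directly only when $\gamma\le p$. Otherwise it needs the mean-value splitting of Lemma~\ref{lem:self-tilt-approx}.
\end{itemize}

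One simplification: to identify $\lim\frac1t\log Z_t$ with $p\rho_{d,p,\gamma}$ you do not need a ``splitting never helps'' argument. The recovery sequences from the lower-bound proof in Section~\ref{subsec:MV-top-ldp}, combined with continuity of $\Phi$ on sublevel sets of $\mathcal{I}$, show that $\sup(\Phi-\mathcal{I})$ over the compactification equals the supremum over $\widetilde{\mathcal{M}}_1^{\otimes p}$. That supremum is \eqref{eq:mutual-tilted-var} by definition. The H\"older/AM--GM reduction to Lemma~\ref{lem:self-tilt-var} is what yields the explicit value and the uniqueness used in Theorem~\ref{thm:mutual-tilted-conv}; concavity of $s\mapsto s^{\gamma/p}$ or rescaling does not.
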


\subsection{Exponential approximations of intersection measures}
\label{subsec:intro-exp-approx}

Once we have an LDP for the occupation measures, the remaining problem is that the intersection local times are not continuous functionals of the occupation measures. To overcome this obstacle, we proceed via an exponential approximation. That is, we define continuous analogs $\alpha_\epsilon([0, t]^p)$, $\beta_{\epsilon}([0,t]^q)$ of the intersection local times and show that they are good approximations in the sense that for any $\lambda > 0$,
\begin{align}
\label{eq:mutual-exp-approx}
\limsup_{\epsilon \to 0} \lim\sup_{t \to \infty} \frac{1}{t} \log \mathbb{E} \exp \left\{ \lambda \left| \alpha([0, t]^p) - \alpha_{\epsilon}([0, t]^p) \right|^{1/p} \right\} &= 0,\\
\label{eq:self-exp-approx}
\limsup_{\epsilon \to 0} \lim\sup_{t \to \infty} \frac{1}{t} \log \mathbb{E} \exp \left\{ \lambda \left| \beta([0, t]^q) - \beta_{\epsilon}([0, t]^q) \right|^{1/q} \right\} &= 0.
\end{align}

In reality, we go a great deal further and show that the occupation measures themselves are well-approximated. Recall that when $d = 1$, the self-intersection $\beta([0, t]^q)$ is equal to $\| \ell_t \|_q^q$, where $\ell_t$ is the density of the (pre-normalized) occupation measure,
\[  
\ell_t(x) := \int_0^t \delta(W_s - x) \mathrm{d} s.
\]
We approximate $\ell_t$ by convolving it with the Gaussian kernel $p_{\epsilon}$,
\[  
\ell_{t, \epsilon}(x) := \int_{-\infty}^{\infty} p_{\epsilon}(x -y) \ell_t(y) \mathrm{d} y = \int_{-\infty}^{\infty} \int_0^t  p_{\epsilon}(x -y) \delta(W_s - y)  \mathrm{d} s \mathrm{d} y = \int_0^t p_{\epsilon}(W_s - x) \mathrm{d} s
\]
and define $\beta_{\epsilon}([0, t]^q) := \| \ell_{t, \epsilon} \|_q^q$. We show that $\ell_{t, \epsilon}$ is an exponentially good approximation of $\ell_t$ in $L^q(\mathbb{R})$ for all $q > 1$.
\begin{proposition}
    \label{prop:self-exp-approx}
    For any $\lambda > 0$,
    \[  
    \limsup_{\epsilon \to 0} \limsup_{t \to \infty} \frac{1}{t} \log \mathbb{E} \exp \{ \lambda \| \ell_t - \ell_{t, \epsilon} \|_q \} = 0.
    \]
\end{proposition}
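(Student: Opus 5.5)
The plan is to reduce the exponential estimate to a moment bound and then use subadditivity in time to pass from a fixed time to large $t$. Write $\Delta_{t,\epsilon} := \ell_t - \ell_{t,\epsilon}$. The lower bound $\ge 0$ is trivial, so only the upper bound needs proof.

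\emph{Step 1: reduction to a fixed time block.} The map $t \mapsto \|\Delta_{t,\epsilon}\|_q$ is subadditive along the path. Indeed, $\ell_{t+s} = \ell_t + \ell_s \circ \theta_t$, where $\theta_t$ is the shift, and the same decomposition holds for $\ell_{t,\epsilon}$. Since the $L^q$ norm is translation invariant, the increment over $[t,t+s]$ has the law of $\Delta_{s,\epsilon}$ and is independent of $\mathcal{F}_t$. By the triangle inequality and the Markov property,
\[
\mathbb{E} \exp\{\lambda \|\Delta_{nT,\epsilon}\|_q\} \le \Big(\mathbb{E} \exp\{\lambda \|\Delta_{T,\epsilon}\|_q\}\Big)^n .
\]
Interpolating between the values $t = nT$ costs only a factor that is subexponential in $t$. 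Hence, for every fixed $T$,
\[
\limsup_{t\to\infty}\frac1t\log\mathbb{E} e^{\lambda\|\Delta_{t,\epsilon}\|_q} \le \frac1T \log \mathbb{E} e^{\lambda\|\Delta_{T,\epsilon}\|_q}.
\]
By Brownian scaling, $\ell_T(x) \overset{d}{=} \sqrt{T}\,\ell_1(x/\sqrt T)$. It follows that $\|\Delta_{T,\epsilon}\|_q \overset{d}{=} T^{(q+1)/(2q)} \|\Delta_{1,\epsilon/T}\|_q$, so we may work at time $1$ with a rescaled smoothing parameter.

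\emph{Step 2: choice of block length and the moment bound.} Choose $T = T(\epsilon)$ so that $\lambda T^{(q+1)/(2q)}$ times a typical size of $\|\Delta_{1,\epsilon/T}\|_q$ is small relative to $T$. The quantity $\frac1T\log\mathbb{E}\exp\{c\,T^{(q+1)/(2q)} X_{\epsilon/T}\}$ should then tend to $0$ if two things hold. First, $X_\delta := \|\ell_1 - \ell_{1,\delta}\|_q$ satisfies a moment bound $\mathbb{E} X_\delta^m \le (m!)^{a} C^m \delta^{\kappa m}$ for some $\kappa > 0$, where the exponent $a$ is such that the exponential series converges at the relevant scale. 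Second, the prefactor $T^{(q+1)/(2q)} (\epsilon/T)^\kappa$ is $o(T^{1-a'})$.

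To obtain this moment bound, first take $q=2$. Then
\[
\|\ell_1-\ell_{1,\delta}\|_2^2=\int_0^1\!\int_0^1 \bigl(\delta_0 - 2p_\delta + p_{2\delta}\bigr)(W_r-W_s)\,dr\,ds ,
\]
and its $m$-th moments can be expanded via Le Gall's moment formula. In Fourier variables each factor carries $(1-e^{-\delta|\xi|^2/2})^2 \le \min(1,\delta|\xi|^2)^{2\cdot\kappa'}$, so each factor gains a power $\delta^{\kappa}$ while the time-ordered integrals keep the usual $(m!)^{1/2}$-type growth. For general $q$, use the Hölder continuity of local time in space together with Gaussian tails of $\sup_x \ell_1$, interpolating between $L^1$ (where $\|\Delta\|_1 \le 2$) and $L^\infty$. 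Alternatively, use the Gaussian tail of the spatial modulus of continuity of $\ell_1$, which gives $\|\ell_1-\ell_{1,\delta}\|_\infty \lesssim \delta^{1/4-}\,M$ with $M$ having Gaussian tails on the compact range.

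\emph{Main obstacle.} The main difficulty is the exponent bookkeeping in Step 2. The Gaussian or sub-Gaussian tail of $X_\delta$ combined with the prefactor $T^{(q+1)/(2q)}$ gives a log-moment of order $T^{(q+1)/q}\delta^{2\kappa}$. We need this to be $o(T)$ with $\delta = \epsilon/T$, which requires $\kappa$ large enough relative to $1/(2q)$. If the crude modulus-of-continuity estimate is insufficient, I would first try replacing $L^\infty$ control by a direct $L^q$ moment expansion, estimating $\mathbb{E}\|\Delta\|_q^{qm}$ through products of heat kernels. I expect this to give $\kappa$ close to $1/2$ per unit of $\ell$, which suffices. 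Finally, the range of $W$ on $[0,1]$ has Gaussian tails, so the $L^q$ norm over the support is controlled, which closes the argument.
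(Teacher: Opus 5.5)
Your route is correct and quite different from the paper's. In fact your Step~1 already finishes the proof once you notice that $T$ need not depend on $\epsilon$.

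The submultiplicativity
\[
\mathbb{E} e^{\lambda\|\ell_{t+s}-\ell_{t+s,\epsilon}\|_q}\le \mathbb{E} e^{\lambda\|\ell_{t}-\ell_{t,\epsilon}\|_q}\,\mathbb{E} e^{\lambda\|\ell_{s}-\ell_{s,\epsilon}\|_q}
\]
is valid. The leftover partial block is controlled by $\mathbb{E} e^{2\lambda\|\ell_T\|_q}$, because $\ell_s\le \ell_T$ for $s\le T$ and $\|\ell_s * p_\epsilon\|_q\le\|\ell_s\|_q$. So with $T=1$,
\[
\limsup_{t\to\infty}\frac1t\log \mathbb{E} e^{\lambda\|\ell_t-\ell_{t,\epsilon}\|_q}\le \log \mathbb{E} e^{\lambda\|\ell_1-\ell_1 * p_\epsilon\|_q}.
\]
Now $\ell_1$ is a.s.\ continuous with compact support, so $\ell_1 * p_\epsilon\to\ell_1$ in $L^q$ a.s. Moreover $\|\ell_1-\ell_1 * p_\epsilon\|_q\le 2\|\ell_1\|_q\le 2\|\ell_1\|_\infty^{(q-1)/q}$, which has exponential moments of every order since $\sup_x\ell_1(x)$ has Gaussian tails. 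Dominated convergence then gives $\mathbb{E} e^{\lambda\|\ell_1-\ell_1 * p_\epsilon\|_q}\to 1$, and the lower bound $0$ is trivial.

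So Brownian scaling, the Le Gall/Fourier expansion, the modulus-of-continuity estimate, and any rate in $\delta$ are all unnecessary. The ``main obstacle'' you identify comes only from letting $T\to\infty$. You do not need the log-moment to be $o(T)$; you only need $\frac1T\log\mathbb{E}(\cdot)\to 0$ as $\epsilon\to0$ for some admissible $T$. With $T$ fixed, your own expression $\epsilon^{2\kappa}T^{1/q-2\kappa}$ is $O(\epsilon^{2\kappa})$ for any $\kappa>0$.

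The paper instead proves explicit moment bounds at arbitrary time (Lemma~\ref{lem:self-approx-moment}, Corollary~\ref{cor:self-approx-momemt}). It conditions on Brownian-bridge endpoints around each time, trades each singular factor for a negative power of a time gap, and integrates with the Dirichlet formula. It then sums the exponential series as in Corollary~\ref{cor:sketch-exp-approx}, obtaining $\exp\{O(\epsilon^{\theta'}t)\}$.

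Your argument is far shorter, but it rests on two features special to this functional: the $L^q$ norm is translation invariant, and the triangle inequality makes $\|\ell_t-\ell_{t,\epsilon}\|_q$ subadditive along the path. The paper's heavier method buys quantitative, $t$-uniform moment bounds. It reuses these, together with analogous bounds for $\|\ell_t\|_q$, for the nonlinear functional $t^{1-\gamma}\bigl|\|\ell_t\|_q^\gamma-\|\ell_{t,\epsilon}\|_q^\gamma\bigr|$ in Lemma~\ref{lem:self-tilt-approx}. The same technique also handles $\langle f,\ell_t^{\otimes p}-\ell_{t,\epsilon}^{\otimes p}\rangle$ in Proposition~\ref{prop:mutual-exp-approx}. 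There a non-constant $f$ breaks shift invariance, and cross terms between time blocks of different motions break subadditivity, so your Step~1 is not available.
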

This immediately implies \eqref{eq:self-exp-approx}, since
\[  
|\beta([0, t]^q)^{1/q} - \beta_{\epsilon} ([0, t]^q)^{1/q}| = | \| \ell_t \|_q - \| \ell_{t, \epsilon} \|_q | \le \| \ell_t - \ell_{t, \epsilon} \|_q.
\]

Now we move to the mutual intersection case. For $p$ independent Brownian motions, we consider the \emph{intersection measure} $\ell_t^{\otimes p}$ (formally) defined as
\[
\ell_t^{\otimes p}(A) = \int_A \int_{[0, t]^p}  \prod_{j=1}^p \delta(W^j(s_j) - y) \mathrm{d}\mathbf{s} \mathrm{d} y
\]
and its smoothed approximation
\[
\ell_{t, \epsilon}^{\otimes p}(A) = \int_A \int_{[0, t]^p} \prod_{j=1}^p p_{\epsilon}(W^j(s_j) - y) \mathrm{d}\mathbf{s} \mathrm{d} y.
\]
This measures the amount of time the Brownian motions spend within a region\footnote{The measure $\ell_t^{\otimes p}$ also often goes by the name \emph{intersection local time}, but we reserve that name for the quantities $\alpha([0, t]^p)$ and $\beta([0, t]^q)$ in this paper. Instead, we shall always refer to $\ell_t^{\otimes p}$ as the \emph{intersection measure}.}\footnote{We also remark that while $t^{-1} \ell_t$ is the density of $L_t$ (and so we often use them interchangeably), $t^{-p} \ell_t^{\otimes p}$and $L_t^{\otimes p}$ are \emph{not} the same object. Indeed, $\ell_t^{\otimes p}$ is a measure on $\mathbb{R}^d$ while $L_t^{\otimes p}$ is a tuple of $p$ measures.}. From this perspective, $\alpha([0, t]^p)$ is simply the total mass of the intersection measure, $\ell_t^{\otimes p}(\mathbb{R}^d)$. We remark that while $\ell_{t, \epsilon}^{\otimes p}$ has density $\ell_{t, \epsilon}^{\otimes p}(d x) = \prod_{j=1}^p \ell_{t, \epsilon}^j (x) \mathrm{d} x$, the true intersection measure is singular once $d \ge 2$ and even its existence is nontrivial \cite{GemanHorowitzRosen1984}. Hence when approximating $\ell_t^{\otimes p}$, we do so in topologies generated by test functions. That is, we prove exponential approximations of the form
\[  
\limsup_{\epsilon \to 0} \limsup_{t \to \infty} \frac{1}{t} \log \mathbb{E} \exp |\langle f, \ell_{t}^{\otimes p} -\ell_{t, \epsilon}^{\otimes p} \rangle|^{1/p} = 0,
\]
where $f$ lies in some class of functions. If we let $\alpha_{\epsilon}([0, t]^p) := \ell_{t, \epsilon}^{\otimes p}(\mathbb{R}^d)$, equation \eqref{eq:mutual-exp-approx} corresponds to the case where $f$ is constant. Other classes previously considered include bounded nonnegative functions \cite{KoenigMoerters2006} and continuous compactly supported functions \cite{KoenigMukherjee2013, Mori2023}. We present a new proof technique that works for any bounded measurable function, generalizing all previous results. In spirit, our strategy is closest to Le Gall's approximation technique \cite{LeGall1986} in which one uses estimates of the Gaussian heat kernel to bound the moments of the integral. However, there are several complications coming from the mixed signs and singularities of the integrals which we overcome via original methods.
\begin{proposition}
    \label{prop:mutual-exp-approx}
    Suppose $p \ge 2$ and $d(p-1) < 2p$. For any bounded measurable function $f$ on $\mathbb{R}^d$,
    \[  
    \limsup_{\epsilon \to 0} \limsup_{t \to \infty} \frac{1}{t} \log \mathbb{E} \exp |\langle f, \ell_{t}^{\otimes p} -\ell_{t, \epsilon}^{\otimes p} \rangle|^{1/p} = 0.
    \]
\end{proposition}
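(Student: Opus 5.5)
We follow Chen's route for exponential approximation of mutual intersection local times: reduce the exponential estimate to integer moment bounds, and use the subadditivity of $\alpha$-type quantities under time-splitting. The tool that turns moments into exponentials is the standard criterion: if for a nonnegative functional $Z_t$ of the paths one has
\[
\mathbb{E}\, Z_t^{m} \le (m!)^{p} C^m t^{\,\cdot}\text{-type bounds},
\]
then $\mathbb{E}\exp\{\theta Z_t^{1/p}\}$ is controlled. The time dependence is handled by a decoupling lemma. Split $[0,t]$ into $n=t$ blocks of unit length. Multilinearity gives
\[
\langle f, \ell_t^{\otimes p}-\ell_{t,\epsilon}^{\otimes p}\rangle=\sum_{k_1,\dots,k_p}\langle f, \ell^{\otimes p}_{I_{k_1}\times\cdots\times I_{k_p}}-\ell^{\otimes p}_{\epsilon,I_{k_1}\times\cdots}\rangle .
\]
Following Chen (Theorem 2.2.x on $\alpha$), we compare with a symmetrized version in which each block of each Brownian motion is restarted independently. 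A Hölder/Cauchy--Schwarz argument in Fourier (using $|\hat f|$ is not available for merely bounded $f$, so instead work in space) then yields
\[
\limsup_t \tfrac1t \log \mathbb{E}\exp\{\theta|\langle f,\ell_t^{\otimes p}-\ell_{t,\epsilon}^{\otimes p}\rangle|^{1/p}\}\le \log \mathbb{E}\exp\{c\theta |\langle f,\ell_{\tau}^{\otimes p}-\ell_{\tau,\epsilon}^{\otimes p}\rangle|^{1/p}\}
\]
with $\tau$ an exponential time, or unit time. This is the analogue of Chen's "$\log\mathbb{E}e^{\theta \alpha([0,t]^p)^{1/p}}$ is subadditive" lemma, obtained by the triangle inequality for $|\cdot|^{1/p}$ sums and independence across blocks. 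It reduces the statement to showing that, for fixed time horizon, $\mathbb{E}\exp\{\theta |\langle f,\ell_1^{\otimes p}-\ell_{1,\epsilon}^{\otimes p}\rangle|^{1/p}\}\to 1$ as $\epsilon\to0$ for every $\theta$.

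For the fixed-time estimate, expand the $m$-th moment with $m$ even, which removes sign issues in $\langle f,\cdot\rangle^m$. We have
\[
\mathbb{E}\langle f,\ell^{\otimes p}-\ell_\epsilon^{\otimes p}\rangle^m=\int_{(\mathbb{R}^d)^m}\prod_i f(x_i)\,\sum_{S\subseteq[m]}(-1)^{|S|}\Big(\mathbb{E}\int_{[0,1]^m}\prod_{i}p^{(S)}(W_{s_i}-x_i)\,\mathrm{d}s\Big)^p \mathrm{d}x,
\]
where $p^{(S)}$ is $p_\epsilon$ for $i\in S$ and $\delta$ otherwise. This uses independence of the $p$ motions. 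The inner expectation is an ordered product of heat kernels $G_\epsilon(x)=\int_0^1\!\!\cdots$. For each ordering we bound by $|f|_\infty^m$ times the integral of absolute values of the signed sum. We use the telescoping identity $a^p-b^p=\sum (a-b)a^{k}b^{p-1-k}$ together with the fact that replacing $\delta$ by $p_\epsilon$ only convolves heat kernels, $p_{s}*p_\epsilon=p_{s+\epsilon}$. Le Gall's estimate $\int |p_{s+\epsilon}-p_s|^{\,\cdot}$ then gives a factor $\eta(\epsilon)\to0$ uniformly in the remaining structure. This yields
\[
\mathbb{E}|\langle f,\ell_1^{\otimes p}-\ell_{1,\epsilon}^{\otimes p}\rangle|^{m}\le \eta(\epsilon)^{m/2}(m!)^{p}C^m,
\]
where the $(m!)^p$ comes from the $p$ orderings and $C$ uses $d(p-1)<2p$ to make $\int(\prod p_s)$-type integrals finite. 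Summing $\theta^k\mathbb{E}|\cdot|^{k/p}/k!$, with fractional moments handled via Hölder, gives $\mathbb{E}\exp\{\theta|\cdot|^{1/p}\}\le 1+o_\epsilon(1)$. Combined with the decoupling step, and since the lower bound $0$ is trivial via Jensen, this proves the claim.

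\textbf{Main obstacle.} The key step is the uniform-in-ordering smallness of the signed difference in the moment expansion. With $f$ merely bounded and of mixed sign, we cannot use positivity or Fourier decay. We must instead bound $\int\prod_i|f(x_i)|\,|\prod_j A_j-\prod_j B_j|\,\mathrm{d}x$ directly. The heat-kernel differences $|p_{s+\epsilon}-p_s|$ must be paired, via Hölder with exponent $p$ across the $p$ motions, with the still-integrable singular factors, without losing the $(m!)^p$ growth. I would first try the generalized Hölder inequality $\int\prod_{j}g_j\le\prod_j\|g_j\|_p$ over the $x$-variables, reducing to single-motion integrals $\int(\mathbb{E}\cdots)^p\mathrm{d}x$ as in Chen's moment bound. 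Telescoping one motion at a time then isolates exactly one difference factor.
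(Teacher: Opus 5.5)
The gap is the decoupling step. It carries the entire $t\to\infty$ content of the statement, and it is asserted by analogy rather than proved.

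Chen's subadditivity rests on the moment inequality $(\mathbb{E}Z_{t+s}^m)^{1/p}\le\sum_{k}\binom{m}{k}(\mathbb{E}Z_t^k)^{1/p}(\mathbb{E}Z_s^{m-k})^{1/p}$ for $Z_t=\alpha([0,t]^p)$. Its proof uses two things you do not have, namely $f\equiv 1$ and positivity:
\begin{itemize}
\item By Le Gall's formula, $\mathbb{E}\alpha([0,t]^p)^m=\int(\mathbb{E}\prod_k\ell_t(x_k))^p\,\mathrm{d}\mathbf{x}$ is the $p$-th power of an $L^p(\mathrm{d}\mathbf{x})$ norm of a nonnegative function.
\item Splitting time at $t$ shifts the later block by $W(t)$. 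That shift is absorbed by translation invariance of $L^p(\mathrm{d}\mathbf{x})$ and Minkowski's inequality; on the Fourier side, the phases $e^{i\lambda\cdot W(t)}$ are discarded using positivity.
\end{itemize}

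For $X_t=\langle f,\ell_t^{\otimes p}-\ell_{t,\epsilon}^{\otimes p}\rangle$ neither ingredient is available. The even moments are the signed combination $\int\prod_k f(x_k)\sum_S(-1)^{|S|}G_S(\mathbf{x})^p\,\mathrm{d}\mathbf{x}$ that you wrote down yourself, which is neither a norm nor translation invariant. Moreover, through $f$ the contribution of a later block depends on the absolute positions $W^j(t)$. So there is no stationarity and no comparison with independently restarted blocks.

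The substitute you name (triangle inequality for $|\cdot|^{1/p}$ plus independence across blocks) does not help. Every term $X_{\mathbf{k}}$, diagonal or not, depends on the whole past through the positions of the motions at the start of its blocks, so the terms are not independent. Bounding $|X_t|^{1/p}\le\sum_{\mathbf{k}}|X_{\mathbf{k}}|^{1/p}$ over $n^p=t^p$ terms gives nothing linear in $t$.

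In $d=1$ a space-side argument does work. There $|X_t|\le\|f\|_{\sup}\|\prod_j\ell^j_t-\prod_j\ell^j_{t,\epsilon}\|_1$, and H\"older reduces this to $\|\ell_t-\ell_{t,\epsilon}\|_p$, whose exponential moments are submultiplicative in $t$ (triangle inequality in $L^p$ plus the Markov property). But this needs $\ell_t\in L^p$, which fails for $d=2,3$. That is exactly the regime where bounded measurable $f$ is the new content.

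By contrast, the fixed-time step you single out as the main obstacle is easy in the form you actually need. The bound $|X_1|\le\|f\|_{\sup}\bigl(\alpha([0,1]^p)+\alpha_\epsilon([0,1]^p)\bigr)$ gives uniform integrability of $\exp\{\theta|X_1|^{1/p}\}$, and $X_1\to0$ in $L^2$ by the second-moment formula. So no $\eta(\epsilon)^{m/2}$ bound is needed there. Your sketch of such a bound is also off: at an interior time, $p_\epsilon(y_i-x_i)$ sits between two transition kernels and does not collapse to $p_{s+\epsilon}$.

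The paper avoids decoupling altogether. It proves, for every $t$,
$\mathbb{E}|X_t|^m\le C^m\|f\|_{\sup}^m\epsilon^{\theta m}(m!)^p(t^m/m!)^{\frac{2p-d(p-1)}{2}-\theta}$.
The positive power of $t^m/m!$ is what makes the exponential series of order $e^{O(\epsilon^{\theta'})t}$.

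What makes this possible for signed $f$ is conditioning on the trisection points around the times of the motion carrying $\Delta_\epsilon$. The $m$ factors $\Delta_\epsilon$ then become conditionally independent Gaussian evaluations, so the absolute value (and $|f|$) can be moved inside the expectation. The proof is finished by three ingredients:
\begin{itemize}
\item the estimate $|\mathbb{E}\Delta_\epsilon(W_t-x)|\le C\epsilon^{\theta}t^{-(p+1)\theta/2}|x|^{-d+(p-1)\theta}$;
\item the chain lemmas for the remaining nonnegative singular products;
\item Dirichlet integrals.
\end{itemize}

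Equivalently, you could keep a fixed-time structure via Brownian scaling. $X_t$ is distributed as $t^{\frac{2p-d(p-1)}{2}}$ times the time-one quantity with $f(\sqrt{t}\,\cdot)$ and $\epsilon/t$. So a time-one bound $C^m\epsilon^{\theta m}(m!)^{\frac{d(p-1)}{2}+\theta}$, uniform over $\|f\|_{\sup}\le 1$, would suffice with no decoupling. However, the bound you claim lacks both the explicit power of $\epsilon$ and the gain over $(m!)^p$.
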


Propositions~\ref{prop:self-exp-approx} and \ref{prop:mutual-exp-approx} are proven in Section~\ref{sec:exp-approx}. Using this approximation, we also obtain the following LDP for $t^{-p}\widetilde{\ell}_t^{\otimes p}$. Since $\ell_t^{\otimes p}$ may have arbitrary total mass, we extend the space $\widetilde{\mathcal{X}}_{\le 1}$ of \eqref{eq:MV-bdd-set} to the space $\widetilde{\mathcal{X}}$ of all finite measures (defined in \eqref{eq:MV-set}).

\begin{proposition}
    \label{prop:intersection-measure-LDP}
    Suppose $p \ge 2$ and $d(p-1) < 2p$. The distributions $t^{-p} \widetilde{\ell}_t^{\otimes p}$ satisfies an LDP in $(\widetilde{\mathcal{X}}(\mathbb{R}^d), \mathbf{D})$ defined in \eqref{eq:MV-set}, \eqref{eq:MV-metric} with good rate function
    \begin{equation}
        \label{eq:intersection-rate-function}
        \mathcal{I}^{\ell} (\zeta) = \begin{cases}
            \mathcal{I}(\xi^{\otimes p}) & \text{if } \mathcal{I}(\xi^{\otimes p}) < \infty \text{ and }\prod\limits_{j=1}^p \psi_i^j = \sqrt{\frac{d\gamma_i}{d x}} \\
            \infty & \text{otherwise},
        \end{cases}
    \end{equation}
    where $\zeta = \{ \widetilde{\gamma}_i\}_{i \in I} \in \widetilde{\mathcal{X}}(\mathbb{R}^d)$.
\end{proposition}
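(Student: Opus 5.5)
The plan is to obtain the LDP for $t^{-p}\widetilde{\ell}_t^{\otimes p}$ from Proposition~\ref{prop:brownian-ldp} by an extended contraction principle, using Proposition~\ref{prop:mutual-exp-approx} to pass from the smoothed intersection measure to the true one.

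\textbf{Step 1: the smoothed map is continuous.} For fixed $\epsilon>0$ we have $t^{-p}\ell_{t,\epsilon}^{\otimes p}(dx)=\prod_{j=1}^p (p_\epsilon * L_t^j)(x)\,dx$. This defines a map
\[
\Phi_\epsilon:\widetilde{\mathcal{X}}_{\le 1}^{\otimes p}\to\widetilde{\mathcal{X}}, \qquad \{\widetilde\alpha_i^{\otimes p}\}_{i\in I}\mapsto\Bigl\{\widetilde{\textstyle\prod_j (p_\epsilon*\alpha_i^j)\,dx}\Bigr\}_{i\in I}.
\]
The map is well defined under diagonal shifts, since all $p$ components shift together. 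This is precisely why the diagonal quotient $\widetilde{\mathcal{M}}^{\otimes p}$ is the right space. Cross terms between different orbits vanish in the limit, because widely separated pieces contribute products of heat-kernel tails.

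I would prove continuity of $\Phi_\epsilon$ in the metric $\mathbf{D}^{\otimes p}\to\mathbf{D}$, exactly as in the Mukherjee--Varadhan continuity of functionals $\int\!\!\int V(x_1-x_2,\dots)\prod d\alpha^j$ with $V$ vanishing at infinity. Here $\mathbf{D}$ is tested against shift-invariant integrals of functions decaying at infinity, and $\prod_j p_\epsilon(x_j-y)$ integrated in $y$ gives exactly such a kernel. The contraction principle then yields an LDP for $t^{-p}\widetilde{\ell}_{t,\epsilon}^{\otimes p}$ with rate function
\[
\mathcal{I}_\epsilon(\zeta)=\inf\{\mathcal{I}(\xi^{\otimes p}):\Phi_\epsilon(\xi^{\otimes p})=\zeta\}.
\]

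\textbf{Step 2: exponential equivalence.} I need to show that $t^{-p}\widetilde\ell_{t,\epsilon}^{\otimes p}$ is an exponentially good approximation of $t^{-p}\widetilde\ell_t^{\otimes p}$ in $\mathbf{D}$. Since $\mathbf{D}$ is built from countably many test functionals, this reduces to Proposition~\ref{prop:mutual-exp-approx}. For each test function $f$, Chebyshev's inequality gives
\[
\mathbb{P}\bigl(|\langle f,\ell_t^{\otimes p}-\ell_{t,\epsilon}^{\otimes p}\rangle|>\delta t^p\bigr)\le e^{-\lambda\delta^{1/p}t}\,\mathbb{E}\exp\{\lambda|\cdot|^{1/p}\}.
\]
Scaling $f$ by $\lambda^p$ turns the proposition into an arbitrary $\lambda$, so the exponent is $-\lambda\delta^{1/p}+o(1)$. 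Letting $\lambda\to\infty$ gives superexponential decay.

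The main obstacle is that the metric $\mathbf{D}$ involves shift-invariant multi-point functionals of $\zeta$ itself, such as $\int\!\!\int h(x_1-x_2)\,d\zeta(x_1)\,d\zeta(x_2)$, rather than linear ones. I would handle this in three moves:
\begin{itemize}
\item expand the difference of such functionals as sums of integrals of $\ell_t^{\otimes p}-\ell_{t,\epsilon}^{\otimes p}$ against the bounded function $x_1\mapsto\int h(x_1-x_2)\,\ell(dx_2)$;
\item note that this function is random, but conditionally on the other copy it is bounded measurable;
\item use independent copies of the Brownian motions, together with H\"older's inequality, to reduce to Proposition~\ref{prop:mutual-exp-approx}, which applies to all bounded measurable $f$.
\end{itemize}
This last point is where that generality is essential. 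Total masses, needed for the unbounded space $\widetilde{\mathcal X}$, are controlled by \eqref{eq:mutual-exp-approx} together with the exponential moments of $\alpha([0,t]^p)^{1/p}$. These moments also give exponential tightness of the total mass, so that goodness holds on the noncompact $\widetilde{\mathcal X}$.

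\textbf{Step 3: identify the rate function.} By \cite[Thm.~4.2.16]{DemboZeitouni}-type extended contraction, the LDP holds with rate function
\[
\sup_{\delta>0}\liminf_{\epsilon\to0}\inf_{B_\delta(\zeta)}\mathcal{I}_\epsilon.
\]
It remains to show this equals $\mathcal{I}^\ell$.

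\begin{itemize}
\item \emph{Upper bound.} If $\mathcal{I}(\xi^{\otimes p})<\infty$, then $\Phi_\epsilon(\xi^{\otimes p})\to\Phi_0(\xi^{\otimes p})=\{\widetilde{\prod_j\psi_i^{j2}\,dx}\}$. Indeed, $\psi_i^j\in H^1$ and the Gagliardo--Nirenberg inequality give $\prod_j\psi_i^j\in L^2$ under $d(p-1)<2p$, so $p_\epsilon*\psi^2\to\psi^2$ in the relevant $L^p$ norms.
\item \emph{Lower bound.} Take $\xi_\epsilon$ with bounded $\mathcal{I}$ and $\Phi_\epsilon(\xi_\epsilon)$ near $\zeta$. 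Compactness of level sets of $\mathcal{I}$ gives a subsequential limit $\xi$. Uniform GN bounds on sublevel sets of $\mathcal{I}$ give $\Phi_\epsilon\to\Phi_0$ uniformly there, so $\zeta=\Phi_0(\xi)$ and $\mathcal{I}(\xi)\le\liminf\mathcal{I}(\xi_\epsilon)$.
\end{itemize}

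Combining the two bounds yields $\mathcal{I}^\ell$ as stated in \eqref{eq:intersection-rate-function}, interpreted as an infimum over preimages.
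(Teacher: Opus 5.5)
Your Steps 1 and 3 match the paper. Step 1 is Lemma~\ref{lem:smooth-intersection-ldp}. Step 3 (compact level sets of $\mathcal I$ plus uniform convergence $\Gamma_\epsilon\to\Gamma$ on $\{\mathcal I\le\lambda\}$ via \eqref{eq:mutual-rate-approx}) is exactly what Lemma~\ref{lem:exp-approx} packages; the paper invokes that lemma directly, which also gives goodness without any exponential tightness of the mass.

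The gap is in Step 2. You rightly note that $\mathbf D$ is built only from multi-point functionals $\Lambda_k(h,\cdot)$ with $k\ge 2$ (recall $\mathcal F_1=\{0\}$), so Proposition~\ref{prop:mutual-exp-approx} does not apply to them directly. The paper itself simply asserts that exponential goodness in $\mathbf D$ is ``already established''. But your decoupling does not work. The function $g(x_1)=\int h(x_1-x_2)\,\ell_t^{\otimes p}(\mathrm d x_2)$ is a functional of the \emph{same} paths $W^1,\dots,W^p$ that produce $\ell_t^{\otimes p}-\ell_{t,\epsilon}^{\otimes p}$. There is no other copy to condition on, and H\"older's inequality cannot exchange a path for an independent one. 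Nor can the proof of Lemma~\ref{lem:mutual-approx-moment} absorb a random test function. Pulling out the bridge expectations of $\Delta_\epsilon(W^p(s_i^p)-\cdot)$ requires the test function to be measurable with respect to $W^1,\dots,W^{p-1}$ and $\{W^p(s^p_{i\pm1/3})\}$. Your $g$ depends on $W^p$ at all times, middle thirds included. Moreover, $\|g\|_{\sup}\le\|h\|_{\sup}\,\alpha([0,t]^p)$ is itself random.

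One way to close the gap uses deterministic test functions. The dense family $\{f_{k,r}\}$ in \eqref{eq:MV-metric} is yours to choose, so take $h(x_1,\dots,x_k)=H(x_2-x_1,\dots,x_k-x_1)$ with $H(y)=\int\widehat H(\eta)e^{i\langle\eta,y\rangle}\mathrm d\eta$ and $\widehat H\in L^1$ (e.g.\ $H$ Schwartz). Write $e_\zeta(x)=e^{i\langle\zeta,x\rangle}$. Telescoping over the $k$ factors gives
\[
|\Lambda_k(h,\mu)-\Lambda_k(h,\nu)|\le\max\{\mu(\mathbb R^d),\nu(\mathbb R^d)\}^{k-1}\sum_{r=1}^k\int|\widehat H(\eta)|\,\bigl|\langle e_{\zeta_r(\eta)},\mu-\nu\rangle\bigr|\,\mathrm d\eta,
\]
with $\zeta_1(\eta)=-\sum_{r\ge2}\eta_r$ and $\zeta_r(\eta)=\eta_r$ for $r\ge2$.

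The rest then goes through:
\begin{enumerate}
\item The real and imaginary parts of $e_\zeta$ are deterministic with sup-norm $1$. The bound of Lemma~\ref{lem:mutual-approx-moment} depends on $f$ only through $\|f\|_{\sup}$, so Jensen's inequality in $\eta$ (with respect to $|\widehat H|\,\mathrm d\eta/\|\widehat H\|_1$) transfers the moment bounds to the integral.
\item The mass factor is handled on $\{\alpha([0,t]^p)\vee\alpha_\epsilon([0,t]^p)\le Mt^p\}$. Its complement has probability at most $e^{-c(M)t}$ with $c(M)\to\infty$, by the exponential moments of $\alpha([0,t]^p)^{1/p}$.
\item To sum over all $k$, cap each term of \eqref{eq:MV-metric} at $1$; this leaves the topology unchanged. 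It is needed because with weights $(2p)^{-|\mathbf k|}$ the series need not converge for measures of large total mass.
\end{enumerate}
Alternatively, rerun the conditioning argument of Lemma~\ref{lem:mutual-approx-moment} directly on the $kp$-fold time integral defining $\Lambda_k$.
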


This is a generalization of \cite{Mukherjee2017}, which proved the case $(d, p) = (3, 2)$ modulo some minor differences in the topology $\widetilde{\mathcal{X}}$. A similar statement for all $(d, p)$ where $d(p-1) < 2p$ was done in \cite{Mori2023} for the vague topology.

We remark that the exponential approximation of Proposition~\ref{prop:mutual-exp-approx} is done in a topology even finer than $\widetilde{\mathcal{X}}$. However, this does not immediately give a stronger LDP for $t^{-p} \widetilde{\ell}_t^{\otimes p}$, since we do not have an LDP for the approximated measures $\widetilde{\ell}_{t, \epsilon}^{\otimes p}$.

Furthermore, via a similar process as in Theorems~\ref{thm:self-cond-ldp}--\ref{thm:mutual-tilted-ldp}, we can also derive an LDP for $t^{-p}\widetilde{\ell}_{t, \epsilon}^{\otimes p}$ conditional on $\ell_t^{\otimes p}(\mathbb{R}^d) \ge t^p$ or on the Gibbs measure tilted by (say) $\ell_t^{\otimes p}(\mathbb{R}^d)^{1/p}$. This process is rather routine, following similar arguments used to show Theorems~\ref{thm:self-cond-ldp} and \ref{thm:self-tilted-ldp}. As such, we have chosen not to present the details here.

\subsection{Outline and notation}

Our paper consists of four main steps:
\begin{enumerate}
    \item In Section~\ref{sec:exp-approx}, we prove that the approximations are exponentially good, in the sense of Propositions~\ref{prop:self-exp-approx} and \ref{prop:mutual-exp-approx}.
    \item In Section~\ref{sec:MV-top}, we establish an LDP for Brownian occupation measures on $\widetilde{\mathcal{X}}_{\le 1}^{\otimes p}$ and show that the approximations $\ell_{t, \epsilon}$ and $\ell_{t, \epsilon}^{\otimes p}$ are continuous functionals of the occupation measures.
    \item In Section~\ref{sec:main-proof}, we using the exponential approximation and the LDP of the previous sections to prove Theorems~\ref{thm:self-cond-ldp}--\ref{thm:mutual-tilted-ldp}, as well as Proposition~\ref{prop:intersection-measure-LDP}.
    \item Lastly, we prove Theorems~\ref{thm:self-cond-conv}--\ref{thm:mutual-tilted-conv} by characterizing the solutions to the variational problems arising from the LDPs. This step is deferred to the appendix.
\end{enumerate}

We conclude the introduction with some comments on our notation. Whenever we choose some $\xi^{\otimes p} \in \widetilde{\mathcal{X}}^{\otimes p}$, we automatically assume $\xi^{\otimes p} = \{ \widetilde{\alpha}_i^{\otimes p} \}_{i \in I}$ and often choose arbitrary representatives $\alpha_i^{\otimes p}$ for each $\widetilde{\alpha}_i^{\otimes p}$. Moreover, we take $(\psi_i^j)^2$ to be the densities of $\alpha_i^j$ (in cases where they exist). Under such conditions, we extend definitions on $\alpha_i^{\otimes p}$ or $\psi_i^j$ to $\xi^{\otimes p}$ in the ``natural'' way---some examples are the following.
\begin{itemize}
    \item $\| \xi^{\otimes p} \|_q^q = \sum_{i \in I} \| \alpha_i^{\otimes p} \|_q^q = \sum_{i \in I} \sum_{j=1}^p \| \psi_i^j \|_{2q}^{2q}$.
    \item $\xi^{\otimes p} \ast p_{\epsilon}^{\otimes p} = \{ \widetilde{\alpha}_i^{\otimes p} \ast p_{\epsilon}^{\otimes p} \}_{i \in I}$, where $\widetilde{\alpha}_i^{\otimes p} \ast p_{\epsilon}^{\otimes p}$ is the equivalence class of $(\alpha_i^1 \ast p_{\epsilon} , \dots , \alpha_i^p \ast p_{\epsilon} )$.
\end{itemize}

Such definitions will always be well-defined in the sense that they don't depend on the choice of representatives $\alpha_i^{\otimes p}$. Some of these values don't exist when $\mathcal{I}(\xi^{\otimes p}) = \infty$, but this is not of much concern (see Section~\ref{sec:main-proof}).

Given the new notation introduced in Section~\ref{subsec:intro-exp-approx}, the intersection local times may be written as
\[  
\beta([0, t]^q)^{1/q} = \| \ell_t \|_q = t \|L_t \|_q, \quad \alpha([0, t]^p) = \ell_t^{\otimes p}(\mathbb{R}^d) = \langle \mathbf{1}, \ell_t^{\otimes p} \rangle 
\]
The remainder of this paper will mostly be using the latter representations. We always work in the regime where $d(p-1) < 2p$. As we have already seen, $\delta$ denotes the Dirac delta at zero. We also use $\delta_x$ to denote the Dirac delta at $x$, mostly to write $\alpha \ast \delta_x$ as the translation of some measure or function. We also use the shorthand $\Delta_{\epsilon} = \delta -p_{\epsilon}$. $C_0(\mathbb{R}^d)$ denotes the space of continuous functions on $\mathbb{R}^d$ that vanish at infinity. We often take integrals on the ordered simplex
\[  
[0, t]_<^{m} = \{ (s^1, s^2, \dots ,s^m) : 0 < s^1 < s^2 <\dots < s^m < t \}.
\]
We use $C$ as a universal constant that may change from line to line. Unless otherwise stated, ``universal'' should be taken to mean that $C$ may depend on $d$ and $p$ (or $q$), but not anything else.

\subsection*{Acknowledgements}
We thank Amir Dembo for many helpful discussions, comments, and suggestions. We thank Chiranjib Mukherjee for his explanation of \cite{MukherjeeVaradhan2016}, which helped shape Section~\ref{sec:MV-top} of this paper. We thank Arka Adhikari, Izumi Okada, and Xia Chen for fruitful discussions. This work was supported by a grant from the Simons Foundation International [SFI-MPS-SDF-00014916]. Research partly funded by NSF grant DMS-2348142. 

    \section{Exponential approximation}
    \label{sec:exp-approx}

    In this section, we prove Propositions~\ref{prop:self-exp-approx} and \ref{prop:mutual-exp-approx}. We begin with a toy example where we approximate $\ell_t^{\otimes 2}(\mathbb{R})$. While not strictly necessary, this example illustrates our main ideas and also motivates the additional technical work needed to handle the general case.

    \subsection{Approximating $\ell_t^{\otimes 2}(\mathbb{R})$}

    We prove Proposition~\ref{prop:intersection-measure-LDP} in the special case where $f = 1$ and $(d, p) = (1, 2)$. That is, we show that
    \[
    \lim_{\epsilon \to 0} \limsup_{t \to \infty} \frac{1}{t} \log \mathbb{E} \exp |\ell_t^{\otimes 2}(\mathbb{R}) - \ell_{t, \epsilon}^{\otimes 2}(\mathbb{R})|^{1/2}  = 0.
    \]
     Our goal is to bound the moments of $\ell_t^{\otimes 2}(\mathbb{R}) - \ell_{t, \epsilon}^{\otimes 2}(\mathbb{R})$. To this end, observe that
    \begin{align*}
    \ell_t^{\otimes 2}(\mathbb{R}) - \ell_{t, \epsilon}^{\otimes 2}(\mathbb{R}) &= \int_{-\infty}^{\infty} \int_0^t \int_0^t \delta(W_s - x) \delta(\widetilde{W}_r - x) \mathrm{d} s \mathrm{d} r \mathrm{d} x - \int_{-\infty}^{\infty} \int_0^t \int_0^t p_{\epsilon}(W_s - x) p_{\epsilon}(\widetilde{W}_r - x) \mathrm{d} s \mathrm{d} r \mathrm{d} x \\
    &= \int_0^t \int_0^t \delta(W_s - \widetilde{W}_r) \mathrm{d} s \mathrm{d} r - \int_0^t \int_0^t p_{2 \epsilon}(W_s - \widetilde{W}_r) \mathrm{d} s dr \\
        &= \int_0^t \int_0^t \Delta_{2 \epsilon} (W_s - \widetilde{W}_r) \mathrm{d} s \mathrm{d} r,
    \end{align*}  
    where we use the shorthand $\Delta_{\epsilon} = \delta - p_{\epsilon}$ and $W, \widetilde{W}$ are independent Brownian motions. Thus, the $m$-th moment may be written as 
    \begin{align*}  
    \mathbb{E} \left( \int_0^t \int_0^t \Delta_{2 \epsilon} (W_s - \widetilde{W}_r) \mathrm{d} s \mathrm{d} r \right)^m &= \mathbb{E} \left[ \int_{[0, t]^{2m}} \prod_{i=1}^m \Delta_{2 \epsilon} (W(s_i) - \widetilde{W}(r_i) \mathrm{d} \mathbf{s} \mathrm{d} \mathbf{r} \right] \\
    &= \int_{[0, t]^{2m}} \mathbb{E} \left[ \prod_{i=1}^m \Delta_{2 \epsilon} (W(s_i) - \widetilde{W}(r_i) \mathrm{d} \mathbf{s} \mathrm{d}\mathbf{r} \right],
    \end{align*}
    which we bound in Lemma~\ref{lem:sketch-moment} below. Our main tools are the following Gaussian estimates. These are standard lemmas, but we include the proof for completeness.
    
    \begin{lemma}
        \label{lem:gaussian-estimates}
        Let $W_t$ be a Brownian motion in $\mathbb{R}^d$. For any $k < d$ and $x \in \mathbb{R}^d$, we have the following estimates. Here, $C$ is a constant that may depend on $d$ and $k$ but not on $t$, $x$, or $\epsilon$.
        \begin{align}
            \label{eq:delta-ineq}
            \mathbb{E} \delta(W_t - x) &\le C\min\{ |t|^{-d/2}, |x|^{-d} \} \\
            \label{eq:approx-ineq}
            \left| \mathbb{E} \Delta_{\epsilon}(W_t - x)\right| &\le C  \min\{ |t|^{-d/2}, |x|^{-d}, \epsilon|t|^{-d/2 - 1}, \epsilon|x|^{-d - 2} \} \\
            \label{eq:riesz-ineq}
            \mathbb{E}|W_t - x|^{-k} &\le C \min\{ t^{-k/2}, |x|^{-k} \} 
        \end{align}
    \end{lemma}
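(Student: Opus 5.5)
All three estimates come from the explicit heat kernel. We have $\mathbb{E}\,\delta(W_t-x)=p_t(x)=(2\pi t)^{-d/2}e^{-|x|^2/2t}$. Since $p_\epsilon$ is the Gaussian density with variance $\epsilon$, the semigroup property gives $\mathbb{E}\,p_\epsilon(W_t-x)=p_{t+\epsilon}(x)$. Hence
\[
\mathbb{E}\,\Delta_\epsilon(W_t-x)=p_t(x)-p_{t+\epsilon}(x).
\]
The plan is to reduce each inequality to an elementary bound on functions of the form $s^{-a}e^{-|x|^2/cs}$. The basic fact I will use repeatedly is that, for fixed $a>0$ and $c>0$,
\[
\sup_{s>0} s^{-a}e^{-|x|^2/cs}=C|x|^{-2a}.
\]
This follows from the substitution $u=|x|^2/s$, which reduces it to the statement that $u^a e^{-u/c}$ is bounded on $(0,\infty)$. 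More generally, $u^{b}e^{-u/c}$ is bounded for every $b\ge 0$.

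For \eqref{eq:delta-ineq}, the bound $p_t(x)\le C t^{-d/2}$ holds because the exponential factor is at most $1$. The bound $p_t(x)\le C|x|^{-d}$ is the supremum fact with $a=d/2$.

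For \eqref{eq:approx-ineq}, the first two terms of the minimum follow from the triangle inequality $|p_t-p_{t+\epsilon}|\le p_t+p_{t+\epsilon}$, together with \eqref{eq:delta-ineq} applied at times $t$ and $t+\epsilon$; note that $(t+\epsilon)^{-d/2}\le t^{-d/2}$. For the two terms carrying a factor $\epsilon$, I will use the mean value theorem in the time variable. A direct computation gives
\[
\partial_s p_s(x)=p_s(x)\left(\frac{|x|^2}{2s^2}-\frac{d}{2s}\right),
\]
so that
\[
|\partial_s p_s(x)|\le C s^{-d/2-1}\bigl(1+|x|^2/s\bigr)e^{-|x|^2/2s}.
\]
Absorbing the factor $(1+|x|^2/s)$ into a slightly weaker exponential, this is at most $C s^{-d/2-1}e^{-|x|^2/4s}$. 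That quantity is bounded both by $C s^{-d/2-1}$ and, by the supremum fact with $a=d/2+1$, by $C|x|^{-d-2}$. Integrating over $s\in[t,t+\epsilon]$ then yields
\[
|p_t(x)-p_{t+\epsilon}(x)|\le C\epsilon\min\{t^{-d/2-1},|x|^{-d-2}\},
\]
where we used that $s^{-d/2-1}$ is decreasing in $s$.

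For \eqref{eq:riesz-ineq}, which I take with $0\le k<d$, write
\[
\mathbb{E}|W_t-x|^{-k}=\int|y-x|^{-k}p_t(y)\,\mathrm{d} y.
\]
By Brownian scaling this equals $t^{-k/2}\,\mathbb{E}|Z-x/\sqrt t|^{-k}$ with $Z$ a standard Gaussian. The expectation $\mathbb{E}|Z-z|^{-k}$ is bounded uniformly in $z$. To see this, split the integral at $|y-z|=1$. On the ball $|y-z|<1$, bound the density by a constant and use local integrability of $|y|^{-k}$, which holds because $k<d$. Off the ball, the integrand is at most the Gaussian density. This gives the bound $Ct^{-k/2}$.

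For the $|x|^{-k}$ bound, I would argue as follows. If $|x|^2\le t$, then already $t^{-k/2}\le|x|^{-k}$. If $|x|^2>t$, split the integral according to whether $|y-x|\ge|x|/2$ or $|y-x|<|x|/2$.
\begin{itemize}
\item On $\{|y-x|\ge |x|/2\}$, the integrand is at most $2^k|x|^{-k}p_t(y)$, so this part contributes at most $C|x|^{-k}$.
\item On $\{|y-x|<|x|/2\}$, we have $|y|>|x|/2$, so $p_t(y)\le Ct^{-d/2}e^{-|x|^2/8t}$. Moreover, $\int_{|y-x|<|x|/2}|y-x|^{-k}\,\mathrm{d} y=C|x|^{d-k}$, again using $k<d$. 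This part is therefore at most
\[
C|x|^{-k}\,(|x|^2/t)^{d/2}e^{-|x|^2/8t}\le C|x|^{-k}.
\]
\end{itemize}

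The only mildly delicate step is this second piece of the Riesz estimate, where the singularity of $|y-x|^{-k}$ must be balanced against the Gaussian tail. The hypothesis $k<d$ is used exactly there, and in the local integrability step of the $t^{-k/2}$ bound. Everything else is direct calculus on the heat kernel.
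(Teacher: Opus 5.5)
Your proof is correct and is essentially the paper's argument. It uses the explicit heat kernel with the boundedness of $u^{a}e^{-u}$, then the identity $\mathbb{E}\,\Delta_\epsilon(W_t-x)=p_t(x)-p_{t+\epsilon}(x)$ handled by the triangle inequality and by integrating $\partial_s p_s(x)$ over $[t,t+\epsilon]$, and finally a near/far splitting of $\int|y-x|^{-k}p_t(y)\,\mathrm{d}y$. Your rescaling to $\sup_z\mathbb{E}|Z-z|^{-k}$ with a split at radius $1$ is the paper's split at $|y-x|=\sqrt{t}$, and your bound $p_t(y)\le Ct^{-d/2}e^{-|x|^2/8t}$ on the near region plays the role of the paper's $p_t(y)\le p_t(x/2)\le C|x|^{-d}$.

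Restricting \eqref{eq:riesz-ineq} to $0\le k<d$ is the right reading, since for $k<0$ the stated bound fails at $x=0$.
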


\begin{proof}
    The inequality \eqref{eq:delta-ineq} follows from the equation
    \[
    \mathbb{E} \delta(W_t - x) = p_t(x) = (2\pi t)^{-d/2} e^{-|x|^2 / 2t}  = |x|^{-d} \left\{ \pi^{-d/2} \Big( \frac{|x|^2}{2t} \Big)^{-d/2} e^{-|x|^2 / 2t}\right\}.
    \]
    The first equality show that $p_t(x) \le C |t|^{-d/2}$, while the second equality show that $p_t(x) \le C |x|^{-d}$ since the term $\bigl(\frac{|x|^2}{2t} \bigr)^{-d/2} \exp\bigl( - \frac{|x|^2}{2t} \bigr)$ is bounded.

    The inequality \eqref{eq:approx-ineq} follows similarly from the equation $\left| \mathbb{E} \Delta_{\epsilon}(W_t - x)\right| = |p_t(x) - p_{t + \epsilon}(x)|$. Indeed, the first two inequalities are immediate from the \eqref{eq:delta-ineq} and the triangle inequality. The second two come from 
    \[
        |p_t(x) - p_{t + \epsilon}(x)| \le \int_t^{t + \epsilon} \left| \frac{d}{\mathrm{d} s} p_s(x) \right| \mathrm{d} s = \int_t^{t + \epsilon} \left| (2\pi)^{-d/2} s^{-d/2-1} \left( - \frac{d}{2} + \frac{|x|^2}{2s} \right)e^{-|x|^2/2s}\right| \mathrm{d} s.
    \]
    Lastly, we show \eqref{eq:riesz-ineq}. The first inequality comes from
    \begin{align*}
        \mathbb{E}|W_t - x|^{-k} &= \int_{|y - x| \le \sqrt{t}} \frac{p_t(y)}{|y - x|^k} \mathrm{d} y + \int_{|y - x| > \sqrt{t}} \frac{p_t(y)}{|y - x|^{-k}} \mathrm{d} y \\
        &\le \int_{|y - x| \le \sqrt{t}} \frac{Ct^{-d/2} }{|y-x|^{k}}\mathrm{d} y + \int_{|y - x| > \sqrt{t}} \frac{p_t(y)}{(\sqrt{t})^{-k}} \mathrm{d} y \\
        &\le Ct^{-d/2} (\sqrt{t})^{-k + d} + t^{-k/2} \\
        &= C t^{-k/2}.
    \end{align*}
    The second is similar.
    \begin{align*}
        \mathbb{E}|W_t - x|^{-k} &= \int_{|y - x| \le |x|/2} \frac{p_t(y)}{|y - x|^k} \mathrm{d} y + \int_{|y - x| > |x| /2} \frac{p_t(y)}{|y - x|^{-k}} \mathrm{d} y \\
        &\le \int_{|y - x| \le |x| /2} \frac{p_t(x/2)}{|y-x|^{k}}\mathrm{d} y + \int_{|y - x| > |x| /2} C\frac{p_t(y)}{|x|^{-k}} \mathrm{d} y \\
        &\le C|x|^{-d} |x|^{-k + d} + |x|^{-k} \\
        &= C |x|^{-k}.
    \end{align*}
\end{proof}
    
    \begin{lemma}
    For any integer $m \ge 0$,
    \label{lem:sketch-moment}
        \begin{equation}
            \int_{[0, t]^{2m}} \left| \mathbb{E} \left[ \prod_{i=1}^m \Delta_{\epsilon}(W(s_i) - \widetilde{W}(r_i)) \right] \right|\mathrm{d} \mathbf{s} \mathrm{d}\mathbf{r} \le C^m \epsilon^{m/12} (m!)^2 \left( \frac{t^m}{m!} \right)^{17/12}
        \end{equation}
    \end{lemma}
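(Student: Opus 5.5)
The plan is to pass to Fourier variables, where $\Delta_\epsilon$ has symbol $1-e^{-\epsilon|\xi|^2/2}$, and to extract the factor $\epsilon^{m/12}$ from the elementary bound
\[
|1-e^{-\epsilon\xi^2/2}| \le C(\epsilon\xi^2)^{1/12}.
\]
I will then decouple $W$ from $\widetilde W$ with Cauchy--Schwarz and integrate the remaining Gaussian factors over ordered simplices.

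\textbf{Step 1 (Fourier representation and orderings).} Write $\Delta_\epsilon(x)=(2\pi)^{-1}\int e^{i\xi x}(1-e^{-\epsilon\xi^2/2})\,\mathrm{d}\xi$. By independence of $W$ and $\widetilde W$,
\[
\mathbb{E}\prod_{i=1}^m\Delta_\epsilon(W(s_i)-\widetilde W(r_i))=(2\pi)^{-m}\int_{\mathbb{R}^m}\prod_i(1-e^{-\epsilon\xi_i^2/2})\,e^{-\frac12 \mathrm{Var}(\sum_i\xi_iW(s_i))}\,e^{-\frac12\mathrm{Var}(\sum_i\xi_i\widetilde W(r_i))}\,\mathrm{d}\boldsymbol{\xi}.
\]
Taking absolute values inside and using the bound above gives a factor $C^m\epsilon^{m/12}\prod_i|\xi_i|^{1/6}$. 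Cauchy--Schwarz in $\boldsymbol\xi$ then bounds the integral by $G(\mathbf s)^{1/2}H(\mathbf r)^{1/2}$, where
\[
G(\mathbf s)=\int\prod_i|\xi_i|^{1/6}e^{-\mathrm{Var}(\sum\xi_iW(s_i))}\,\mathrm{d}\boldsymbol\xi,
\]
and $H$ is defined analogously. Hence the $(\mathbf s,\mathbf r)$-integral factorizes as
\[
\Big(\int_{[0,t]^m}G^{1/2}\Big)\Big(\int_{[0,t]^m}H^{1/2}\Big).
\]
Splitting each $[0,t]^m$ into $m!$ ordered simplices produces the factor $(m!)^2$. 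Since $G$ is symmetric under relabelling $(s_i,\xi_i)$, each simplex contributes the same amount, so I may take $0<s_1<\dots<s_m<t$, and likewise for $\mathbf r$.

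\textbf{Step 2 (Gaussian integral on a simplex).} For ordered times, $\mathrm{Var}(\sum_i\xi_iW(s_i))=\sum_k(s_k-s_{k-1})\eta_k^2$ with $\eta_k=\sum_{i\ge k}\xi_i$. This is a unimodular change of variables, and $\xi_k=\eta_k-\eta_{k+1}$. Bound $|\xi_k|^{1/6}\le|\eta_k|^{1/6}+|\eta_{k+1}|^{1/6}$ and expand into $2^m$ terms. Each term has the form $\prod_k|\eta_k|^{b_k}$ with $b_k\in\{0,\tfrac16,\tfrac13\}$ and $\sum_kb_k=m/6$. The one-dimensional integrals give
\[
\int|\eta|^{b}e^{-u\eta^2}\,\mathrm{d}\eta=C_b\,u^{-1/2-b/2},
\]
so
\[
G^{1/2}\le C^m\sum_{\mathbf b}\prod_k(s_k-s_{k-1})^{-1/4-b_k/4}.
\]
Every exponent is at most $1/3<1$, so each term is integrable. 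The Dirichlet integral over the simplex is
\[
\int_{[0,t]^m_<}\prod_k(s_k-s_{k-1})^{-a_k}\,\mathrm{d}\mathbf s\le \frac{\prod_k\Gamma(1-a_k)}{\Gamma(1+\sum_k(1-a_k))}\,t^{\sum_k(1-a_k)}.
\]
Here $\sum_k(1-a_k)=\tfrac34m-\tfrac1{24}m=\tfrac{17}{24}m$ holds uniformly over the $2^m$ choices of $\mathbf b$. This gives $C^m t^{17m/24}/\Gamma(1+\tfrac{17}{24}m)\le C^m(t^m/m!)^{17/24}$ by Stirling.

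\textbf{Step 3 (assembly).} Multiplying the two identical bounds from Step 2 with the $(m!)^2$ orderings and the factor $C^m\epsilon^{m/12}$ from Step 1 gives exactly
\[
C^m\epsilon^{m/12}(m!)^2(t^m/m!)^{17/12}.
\]
The main obstacle is Step 2: converting the $|\xi_i|$ weights into $\eta$-variables without exceeding the integrability threshold. The worst case is when $\eta_k$ receives weight from both $\xi_{k-1}$ and $\xi_k$. The exponent $1/12$ is chosen so that even this case yields $(s_k-s_{k-1})^{-1/3}$, which remains integrable. At the same time the total exponent, and hence the power of $t$, is independent of how the weights are distributed, so the $2^m$ expansion costs only $C^m$.
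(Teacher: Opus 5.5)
Your proof is correct, and it takes a genuinely different route from the paper.

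\textbf{The paper's route.} It stays in physical space. It conditions on $\widetilde W$ and on the trisection points $W(s_{i\pm 1/3})$, so that the $W(s_i)$ become independent bridge points. It then moves the absolute value inside using the interpolated kernel bound $|\mathbb{E}\Delta_\epsilon(W_t-x)|\le C\epsilon^{1/12}t^{-1/4}|x|^{-2/3}$. Finally it removes the factors $|\overline W(s_i)-\widetilde W(r_i)|^{-2/3}$ by iterated conditioning on $\widetilde W$ alone. The singularity is therefore split asymmetrically: the $s$-simplex contributes $t^{3m/4}$ and the $r$-simplex contributes $t^{2m/3}$.

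\textbf{Your route.} You pass to characteristic functions, where the sign issue is trivial because the symbol $1-e^{-\epsilon\xi^2/2}$ is nonnegative and the Gaussian factors are positive. Cauchy--Schwarz in the frequency variables decouples $W$ from $\widetilde W$ with a symmetric split, $t^{17m/24}$ on each side. The increment variables $\eta_k$ then give the Dirichlet integrand directly.

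Your bookkeeping checks out. Each $|\xi_k|^{1/6}$ lands on $\eta_k$ or $\eta_{k+1}$ (with $\eta_{m+1}=0$), so $\sum_k b_k=m/6$ on every surviving term. The worst exponent is $1/3$, and $\sum_k(1-a_k)=17m/24$, matching the paper's total of $17/12$.

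The one thing you should state explicitly is the justification of the Fourier formula. Mollify $\delta$ to $p_\eta$ and note that $0\le e^{-\eta\xi^2/2}-e^{-\epsilon\xi^2/2}\le 1-e^{-\epsilon\xi^2/2}$ for $\eta<\epsilon$. Since the covariance of $(W(s_i)-\widetilde W(r_i))_i$ is nondegenerate for distinct positive times, dominated convergence applies.

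\textbf{What each approach buys.} Your argument is shorter and avoids all the bridge bookkeeping. However, it relies on the integrand depending only on the differences $W(s_i)-\widetilde W(r_i)$, so that the expectation is an explicit positive Gaussian in Fourier space. The paper's conditioning scheme is chosen because it survives inserting a bounded measurable, sign-changing test function $f(W^1(s_i^1))$, as in Lemma~\ref{lem:mutual-approx-moment}, where no such Fourier factorization exists. Your Cauchy--Schwarz split is essentially the Fourier analogue of the paper's later refinement of using the randomness of both motions.
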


    \begin{proof}
       Without loss of generality assume $s_1 \le s_2 \le \dots \le s_m$ and choose $\sigma \in S_m$ such that $r_{\sigma(1)} \le r_{\sigma(2)} \le \dots \le r_{\sigma(m)}$. We divide each interval $[s_i, s_{i+1}]$ into thirds and denote the times as $s_{i \pm 1/3} := s_i + (s_{i\pm 1} - s_i)/3$. We condition on the event $\widetilde{W}[0, t] \cup \{W(s_{i \pm 1/3})\}_{i=1}^m$. By the Markov property of Brownian motions, each term $W(s_i)$ becomes an independent variable distributed as a point on the Brownian bridge from $W(s_{i -1/3})$ to $W(s_{i+1/3})$. Therefore, $W(s_i)$ is Gaussian with mean
        \[
        \overline{W}(s_i) := W(s_{i - 1/3}) + \frac{s_i - s_{i - 1/3}}{s_{i + 1/3} - s_{i - 1/3}} (W(s_{i+1/3}) - W(s_{
        i - 1/3}))
        \]
        and variance
        \[
        \frac{(s_{i + 1/3} - s_i)(s_i - s_{i - 1/3})}{s_{i + 1/3} - s_{i - 1/3}} = \left( \frac{1}{s_{i+1/3} - s_i} + \frac{1}{s_i - s_{i-1/3}} \right)^{-1} = \frac{1}{3}\left( \frac{1}{s_{i+1} - s_i} + \frac{1}{s_i - s_{i-1}} \right)^{-1}.
        \]
        By conditioning on $\widetilde{W}[0, t] \cup \{W(s_{i \pm 1/3})\}_{i=1}^m$ and applying Lemma \ref{lem:gaussian-estimates}, we have
        \begin{equation}
        \label{eq:sketch-approx}
        \begin{aligned}
            \biggl| \mathbb{E} \biggl[ \prod_{i=1}^m \Delta_{\epsilon}(W(s_i) &- \widetilde{W}(r_i)) \biggr] \biggr| \\
            &= \biggl|\mathbb{E} \biggl[ \prod_{i=1}^m  \mathbb{E} \Bigl[\Delta_{\epsilon}(W(s_i) - \widetilde{W}(r_i)) \Big| W(s_{i \pm 1/3}), \widetilde{W}(r_i) \Bigr]  \biggr]\biggr| \\
            &\le \mathbb{E} \biggr[ \prod_{i=1}^m  \biggl| \mathbb{E} \Bigl[\Delta_{\epsilon}(W(s_i) - \widetilde{W}(r_i)) | W(s_{i \pm 1/3}), \widetilde{W}(r_i) \Bigr] \biggr| \biggr] \\
            &\le C^m \epsilon^{m/12} \prod_{i=1}^m \bigl( |s_i - s_{i-1}|^{-1/4} + |s_{i+1} - s_i|^{-1/4} \bigr) \mathbb{E} \biggl[ \prod_{i=1}^m |\overline{W}(s_i) - \widetilde{W}(r_i)|^{-2/3}\biggr]
        \end{aligned}
        \end{equation}
        The third line uses the conditional distribution of $W(s_i)$ and inequality
        \[
        |\mathbb{E} \Delta_{\epsilon}(W_t - x)| \le C \epsilon^{-1/12} |t|^{-1/4} |x|^{-2/3}
        \]
        which comes from H\"{o}lder's inequality applied to equation~\eqref{eq:approx-ineq} with $d = 1$ and weights $\frac{1}{4}, \frac{2}{3}, \frac{1}{12}, 0$ respectively.

        Note that the term inside the expectation is now nonnegative. At this point, we condition iteratively on all but the last point in $\widetilde{W}$. That is, by conditioning on $W[0, t] \cup \widetilde{W}[0, r_{\sigma(m-1})]$, $\widetilde{W}(r_{\sigma(m)})$ becomes a Gaussian with mean $\widetilde{W}(r_{\sigma(m-1)})$ and variance $r_{\sigma(m)} - r_{\sigma(m-1)}$. Therefore, we can apply \eqref{eq:delta-ineq} iteratively to get
        \begin{equation}
        \label{eq:sketch-riesz}
        \begin{aligned}  
        \mathbb{E} \bigg[ \prod_{i=1}^m &|\overline{W}(s_i) - \widetilde{W}(r_i) |^{-2/3} \bigg]  \\&= \mathbb{E} \left[  \prod_{i=1}^{m-1} |\overline{W}(s_{\sigma(i)}) - \widetilde{W}(r_{\sigma(i)})|^{-2/3} \mathbb{E} \left[ |\overline{W}(s_{\sigma(m)}) - \widetilde{W}(r_{\sigma(m)})|^{-2/3} \Big| \overline{W}(s_{\sigma(m)}), \widetilde{W}(r_{\sigma(m-1)}) \right]\right] \\
        &\le C|r_{\sigma(m)} - r_{\sigma(m-1)}|^{-1/3} \mathbb{E} \left[ \prod_{i=1}^{m-1}|\overline{W}(s_{\sigma(i)}) - \widetilde{W}(r_{\sigma(i)})|^{-2/3} \right] \\
        &\vdots \\
        &\le C^m \prod_{i=1}^m |r_{\sigma(m)} - r_{\sigma(m-1)}|^{-1/3}.
        \end{aligned}
        \end{equation}
        Combined with \eqref{eq:sketch-approx}, we obtain
        \[  
        \bigg| \mathbb{E} \Big[ \prod_{i=1}^m \Delta_{\epsilon}\big( W(s_i) - \widetilde{W}(r_i) \big) \Big] \bigg| \le C^m \prod_{i=1}^m |r_{\sigma(m)} - r_{\sigma(m-1)}|^{-1/3} \prod_{i=1}^m \Big( |s_i -s_{i-1}|^{-1/4} + |s_{i+1} - s_i|^{-1/4} \Big).
        \]
        We can integrate both sides on the simplex
        \[  
        ([0, t]_<^m)^2 = \{ (s_1 ,\dots , s_m , r_1 , \dots , r_m) : s_1 < \dots < s_m , r_{\sigma(1)} < \dots < r_{\sigma(m)} \}
        \]
        using the Dirichlet integral (Lemma~\ref{lem:dirichlet-integral}) to get
        \[  
        \int_{([0, t]_<^m)^2} \bigg| \mathbb{E} \Big[ \prod_{i=1}^m \Delta_{\epsilon}(W(s_i) - \widetilde{W}(r_i)) \Big] \bigg| \mathrm{d} \mathbf{s} \mathrm{d} \mathbf{r} \le C^m \epsilon^{m/12} \frac{t^{2m/3}}{\Gamma(\frac{2}{3}m + 1)}  \times \frac{t^{3m/4}}{\Gamma(\frac{3}{4}m + 1)} \le C^m \epsilon^{m/12} \left( \frac{t^m}{m!} \right)^{17/12}.
        \]
        Note that the product $\prod_{i=1}^m \left( |s_i -s_{i-1}|^{-1/3} + |s_{i+1} - s_i|^{-1/3} \right)$ only has $2^m$ terms, so the combinatorial factor gets absorbed in the $C^m$ term. We complete the proof by multiplying $(m!)^2$ to account for all possible orderings of $\{s_i\}$ and $\{r_i\}$.
    \end{proof}

    \begin{lemma}[Dirichlet integral {\cite[Chapter~12.5]{WhittakerWatson2021}}]
        For any $\alpha_1, \dots , \alpha_m > 0$,
        \label{lem:dirichlet-integral}
        \begin{equation}
        \label{eq:dirichlet-integral}
    \int_{[0, t]_<^m} \prod_{i=1}^m (s_i - s_{i-1})^{\alpha_i - 1} d\mathbf{s} = \frac{\Gamma(\alpha_1) \Gamma(\alpha_2)\dots \Gamma(\alpha_m)}{\Gamma(\alpha_1 + \alpha_2 + \dots + \alpha_m + 1)} t^{\alpha_1 + \alpha_2 + \dots + \alpha_m}.
    \end{equation}
    \end{lemma}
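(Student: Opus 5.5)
We prove the Dirichlet integral by induction on $m$, with $s_0 = 0$ understood. For $m = 1$ the integral is $\int_0^t s_1^{\alpha_1 - 1}\,\mathrm{d} s_1 = t^{\alpha_1}/\alpha_1 = \Gamma(\alpha_1) t^{\alpha_1}/\Gamma(\alpha_1 + 1)$, which is the claimed formula. For the inductive step we write the simplex integral as an iterated integral, first integrating out $s_1, \dots, s_{m-1}$ with $s_m$ fixed. The induction hypothesis applied on $[0, s_m]_<^{m-1}$ gives
\[
\int_{[0, s_m]_<^{m-1}} \prod_{i=1}^{m-1} (s_i - s_{i-1})^{\alpha_i - 1}\, \mathrm{d} s_1 \cdots \mathrm{d} s_{m-1} = \frac{\prod_{i=1}^{m-1}\Gamma(\alpha_i)}{\Gamma(A_{m-1} + 1)} s_m^{A_{m-1}},
\]
where $A_k := \alpha_1 + \dots + \alpha_k$.

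It remains to integrate this against $(s_m - s_{m-1})^{\alpha_m - 1}$. Here there is a bookkeeping subtlety: the last factor depends on $s_{m-1}$, not only on $s_m$. So instead of peeling off the last variable, we peel off the first increment. Change variables to the increments $u_i = s_i - s_{i-1}$, with $u_i > 0$ and $\sum_{i=1}^m u_i < t$; the Jacobian is $1$. The claim then becomes
\[
J_m(t; \alpha) := \int_{u_i>0,\ \sum u_i<t} \prod_{i=1}^m u_i^{\alpha_i - 1}\,\mathrm{d}\mathbf{u} = \frac{\prod_i \Gamma(\alpha_i)}{\Gamma(A_m + 1)} t^{A_m}.
\]
Fix $u_m$ and apply the induction hypothesis (in the $u$-form) with total length $t - u_m$. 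This gives
\[
J_m(t;\alpha) = \frac{\prod_{i<m}\Gamma(\alpha_i)}{\Gamma(A_{m-1}+1)} \int_0^t u_m^{\alpha_m - 1}(t - u_m)^{A_{m-1}}\,\mathrm{d} u_m.
\]
The substitution $u_m = t v$ turns the remaining integral into $t^{A_m} B(\alpha_m, A_{m-1} + 1)$. The Beta identity $B(a,b) = \Gamma(a)\Gamma(b)/\Gamma(a+b)$ then cancels $\Gamma(A_{m-1}+1)$ and yields exactly $\prod_i \Gamma(\alpha_i)\, t^{A_m}/\Gamma(A_m + 1)$.

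There is no genuine obstacle. The only points needing care are the positivity assumption $\alpha_i > 0$, which guarantees integrability near $u_i = 0$ (Tonelli applies since the integrand is nonnegative), and the use of the Beta–Gamma identity. That identity is standard, or can be derived from Fubini on $\Gamma(a)\Gamma(b) = \int\!\!\int x^{a-1}y^{b-1}e^{-x-y}$. Alternatively, one can give a one-shot proof by Laplace transform. Multiply both sides by $e^{-\lambda t}$ and integrate over $t > 0$. The left side factorizes into $\prod_i \int_0^\infty u^{\alpha_i - 1} e^{-\lambda u}\,\mathrm{d} u \cdot \lambda^{-1} = \lambda^{-1-A_m}\prod_i\Gamma(\alpha_i)$, which matches the transform of the right side. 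Uniqueness of Laplace transforms for continuous functions then concludes.
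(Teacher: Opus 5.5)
Your proof is correct. Passing to increments $u_i = s_i - s_{i-1}$ (unit Jacobian), inducting on $m$ by fixing $u_m$, and applying $B(\alpha_m, A_{m-1}+1) = \Gamma(\alpha_m)\Gamma(A_{m-1}+1)/\Gamma(A_m+1)$ gives exactly the stated formula. The paper gives no proof of its own; it only cites Whittaker--Watson, and this reduction to Beta integrals is the classical argument behind that result.

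For a clean write-up, delete the abandoned first display: as you noticed, the factor $(s_m - s_{m-1})^{\alpha_m-1}$ couples $s_m$ to $s_{m-1}$. Also say ``last increment'' rather than ``first increment''; this is harmless, since $\{u_i > 0,\ \sum_i u_i < t\}$ is symmetric in the $u_i$. Your Laplace-transform variant is also valid, and the scaling $J_m(t) = t^{A_m} J_m(1)$ would let you bypass the uniqueness theorem there.
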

    
    \begin{corollary}
    \label{cor:sketch-exp-approx}
        \[
        \lim_{\epsilon \to 0} \limsup_{t \to \infty} \frac{1}{t} \log \mathbb{E} \exp | \ell_{t}^{\otimes 2}(\mathbb{R}) - \ell_{t, \epsilon}^{\otimes 2}(\mathbb{R})|^{1/2} = 0.
        \]        
    \end{corollary}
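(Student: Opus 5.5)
We expand the exponential in a power series. Write $X_{t,\epsilon} := \ell_t^{\otimes 2}(\mathbb{R}) - \ell_{t,\epsilon}^{\otimes 2}(\mathbb{R}) = \int_0^t\int_0^t \Delta_{2\epsilon}(W_s - \widetilde{W}_r)\,\mathrm{d} s\,\mathrm{d} r$. Since $e^{|x|^{1/2}} \le \sum_{k\ge 0} |x|^{k/2}/k!$, it suffices to bound $\mathbb{E}|X_{t,\epsilon}|^{k/2}$ for every $k$.

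For even moments $m$ we take absolute values inside the integral and apply Lemma~\ref{lem:sketch-moment} with $2\epsilon$ in place of $\epsilon$:
\[
\mathbb{E} X_{t,\epsilon}^{m} \le C^m \epsilon^{m/12} (m!)^2 \Big(\frac{t^m}{m!}\Big)^{17/12} = C^m \epsilon^{m/12} t^{17m/12} (m!)^{7/12}.
\]
For general $k$, Jensen or Cauchy--Schwarz gives $\mathbb{E}|X|^{k/2} \le (\mathbb{E} X^{2m})^{k/(4m)}$ with $2m$ the smallest even integer at least $k/2$. Up to a $C^k$ factor this yields
\[
\mathbb{E}|X|^{k/2} \le C^k \epsilon^{k/24} t^{17k/24} (k!)^{7/24}.
\]

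\textbf{Main obstacle.} The power of $t$ is too large: $t^{17k/24}$ against $k!$ does not give growth $e^{o(1) t}$. Summing $\sum_k (C\epsilon^{1/24} t^{17/24})^k/(k!)^{17/24}$ gives roughly $\exp\{c\,\epsilon^{1/17} t\}$, which is exponential in $t$ with rate $c\,\epsilon^{1/17}$. So the series bound does not go to rate $0$ in $t$ directly, but it does go to rate $0$ as $\epsilon\to 0$. Precisely, $\sum_k a^k/(k!)^{\theta} \le C\exp\{C a^{1/\theta}\}$ with $\theta = 17/24$ and $a = C\epsilon^{1/24}t^{17/24}$, so $a^{1/\theta} = C\epsilon^{1/17} t$. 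Therefore
\[
\limsup_{t\to\infty}\frac1t\log\mathbb{E}\exp|X_{t,\epsilon}|^{1/2} \le C\epsilon^{1/17},
\]
which tends to $0$ as $\epsilon\to 0$. The lower bound $\ge 0$ is trivial since $e^{|x|^{1/2}} \ge 1$.

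The delicate point is the scaling bookkeeping. The exponent $17/12$ in Lemma~\ref{lem:sketch-moment} must exceed $1$ enough that $(m!)^2 (m!)^{-17/12}$ grows slower than $m!$ in the moment of order $k/2$. Equivalently, we need the effective exponent $\theta = 17/24 > 0$ in the Mittag-Leffler type sum, and the $\epsilon$-power must survive the $1/\theta$ rescaling, which it does. We should also double-check that the Dirichlet integral step gives the $t$-power $17m/12$ with the $\epsilon^{m/12}$ factor intact. Alternatively, if one prefers to avoid non-integer moments, one can apply sub-additivity via splitting $[0,t]$ into blocks as in \cite{Chen2010}. The direct series argument above, however, already suffices.
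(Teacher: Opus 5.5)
Your proposal is correct and takes essentially the same route as the paper. Both use the moment bound of Lemma~\ref{lem:sketch-moment}, reach non-even orders by Lyapunov/H\"older, and expand the exponential as a series of the type $\sum_k (a^k/k!)^{17/24}$, which yields $\exp\{O(\epsilon^{1/17}t)\}$. The ``main obstacle'' you flag is not really one, since the statement only requires the exponential rate to vanish after $\epsilon \to 0$, and your bound $C\epsilon^{1/17}$ already gives that.
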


    \begin{proof}
        We know from Lemma~\ref{lem:sketch-moment} that
        \[  
        \mathbb{E} \left| \ell_{t}^{\otimes 2}(\mathbb{R}) - \ell_{t, \epsilon}^{\otimes 2}(\mathbb{R})\right|^m \le C^m (m!)^2 \left( \frac{\epsilon^{m/17} t^m}{m!} \right)^{17/12}.
        \]
        Note that we have moved the absolute value inside the expectation. When $m$ is even, this is obviously valid. When $m$ is odd, we can use the bound $((m+1)!)^{m/(m+1)} \le C^m m!$ along with H\"{o}lder's inequality on the $m+1$ case (we may similarly generalize to fractional moments).
        Therefore,
        \begin{align*}
            \mathbb{E} \exp | \ell_{t}^{\otimes 2}(\mathbb{R}) - \ell_{t, \epsilon}^{\otimes 2}(\mathbb{R})|^{1/2} &= \sum_{m=0}^\infty \frac{\mathbb{E} | \ell_{t}^{\otimes 2}(\mathbb{R}) - \ell_{t, \epsilon}^{\otimes 2}(\mathbb{R})|^{m/2}}{m!}  \\
            &\le \sum_{m=0}^\infty \frac{1}{m!} \left( \mathbb{E} | \ell_{t}^{\otimes 2}(\mathbb{R}) - \ell_{t, \epsilon}^{\otimes 2}(\mathbb{R})|^m \right)^{1/2} \\
            &\le \sum_{m=0}^\infty \left( \frac{(C \epsilon^{1/17} t)^m}{m!} \right)^{17/24} \\
            &= \exp \{ O(\epsilon^{1/17} t)\}.
        \end{align*}
        The last line comes from e.g., \cite[Section~8.8]{Olver1997}.
    \end{proof}

    In short, our main idea is as follows. By conditioning on small intervals around each $W(r_i)$, we may use the independence of each increment to move the absolute value inside the expectation. From there, we iteratively exchange the randomness of $|W_t - x|^{-k}$ for a deterministic factor of $|t|^{-k/2}$. From this perspective, $\delta$ behaves similarly to $| \cdot |^{-d}$ and $\Delta_{\epsilon}$ to $| \cdot |^{-d-2}$. After all exchanges have been made, we integrate both sides using Dirichlet's integral. The key point here is that the bounds are indeed integrable, i.e., that we never get terms of order $|t|^{-1}$ or more. We remark that for this proof, we never used the randomness coming from the middle thirds of the intervals $[s_i, s_{i+1}]$.

    This philosophy continues to apply in the general case. The same conditioning argument lets us bound $\mathbb{E} \langle f, \ell_t^{\otimes p} - \ell_{t, \epsilon}^{\otimes p} \rangle^m$ by an expectation over nonnegative products; the sign of $f$ does not cause any issues. The more pressing problem is that in higher dimensions, the factors become more singular. This means we have to be more careful when bounding the expectations. To this end, we explain two ways in which the above proof is wasteful and how we refine them.
    
    The first is the conditioning over the endpoints $W(s_{i \pm 1/3})$. Because $\overline{W}(s_i)$ is distributed as a Brownian bridge, its variance is of order $\min \{s_i - s_{i-1}, s_{i+1} - s_i \}$. In other words, while the \emph{average} order is $t$, we have to account for terms of order $t^2$ since small intervals get counted twice. To avoid this waste, we should exchange as little as possible in the inequality \eqref{eq:sketch-approx}, and instead leave more singularity in the product inside the expectation (which we can do by altering the weights used when applying H\"{o}lder's inequality to \eqref{eq:approx-ineq}).

    But this is only postponing the issue, as the second challenge concerns the terms $|\overline{W}(s_i) - \widetilde{W}(r_i)|^{-k}$. In \eqref{eq:sketch-riesz}, we used the randomness of $\widetilde{W}$ to exchange its expectation for $|r_i-r_{i-1}|^{-2/3}$. In higher dimensions when the orders are more singular, this strategy no longer gives us an integrable bound. The solution is to use the randomness coming from \emph{both} $W$ and $\widetilde{W}$. In this way, we can split the expectation $\mathbb{E}|\overline{W}(s_i) - \widetilde{W}(r_i)|^{-k}$ into (say) $|s_i - s_{i-1}|^{-k/4} |r_i - r_{i-1}|^{-k/4}$. This is why we split each interval $[s_i , s_{i+1}]$ into thirds---even after conditioning on Brownian bridges, the randomness from the middle third remains untouched. This means that it remains available for us to use at this later stage. Because the integrand is nonnegative, we are able to condition iteratively on the last point instead of both endpoints, as we had to do with Brownian bridges.

    \begin{remark*}
        This proof method fundamentally breaks down once we reach criticality at $d(p-1) = 2p$. The $(p-1)$ Dirac delta functions introduce singularities of order $|t|^{-d/2}$ each, culminating in a singularity of order $|t|^{-d(p-1)/2}$. If we split evenly among the $p$ time variables, we get a singularity of order $|t|^{-d(p-1)/2p}$ for each variable. Thus the integral is only finite when $d(p-1)/2p < 1$, or equivalently, $d(p-1) < 2p$.
    \end{remark*}
    
    \subsection{Approximating $\ell_t$}

    Now we explain the self-intersection case and prove Proposition~\ref{prop:self-exp-approx}. For reasons identical to Corollary~\ref{cor:sketch-exp-approx}, it suffices to prove the moment bounds of Corollary~\ref{cor:self-approx-momemt} below. We first prove a moment estimate for integer values $p$, and then use as interpolation argument to generalize to all $q > 1$. Since $\ell_t(x) - \ell_{t, \epsilon}(x) = \int_0^t \Delta_{\epsilon}(W_s - x) \mathrm{d} s$, we may write
    \begin{align*}
        \left(\int_{-\infty}^{\infty} (\ell_t(x) - \ell_{t, \epsilon}(x))^p \mathrm{d} x \right)^{1/p} &= \left[ \int_{- \infty}^{\infty} \left(\int_0^t \Delta_{\epsilon}(W_s - x) dt \right)^p \mathrm{d} x\right]^{1/p} \\
        &= \bigg[ \int_{-\infty}^{\infty} \int_{[0, t]^p} \prod_{j=1}^p \Delta_{\epsilon}(W(s^j) - x) \mathrm{d} \mathbf{s} \mathrm{d} x\bigg]^{1/p} \\
        &= \bigg[ p! \times  \int_{-\infty}^{\infty} \int_{[0, t]_<^p} \prod_{j=1}^p \Delta_{\epsilon}(W(s^j) - x) \mathrm{d} \mathbf{s} \mathrm{d} x\bigg]^{1/p},
    \end{align*}
    where the simplex $[0, t]_<^p$ denotes
    \[   
    [0, t]^p_< := \{ (s^1 , s^2, \dots , s^p): s^1 < s^2 < \dots < s^p \}.
    \]
    Thus, it suffices to show the following lemma.
    \begin{lemma}
    \label{lem:self-approx-moment}
    For any integer $p \ge 2$ and $0 < \theta < 1/6$,
        \begin{equation}
            \label{eq:self-moment}
        \Bigg| \mathbb{E} \bigg[ \int_{-\infty}^{\infty}\int_{[0, t]_<^{p}} \prod_{j=1}^p\Delta_{\epsilon}(W(s^{j}) - x)  \mathrm{d} \mathbf{s} \mathrm{d} x
        \bigg]^m \Bigg| \le C^m \epsilon^{\theta(p-1) m} (m!)^p \Big( \frac{t^m}{m!} \Big)^{\frac{p+1}{2} - (p-1)\theta}.
        \end{equation}
    \end{lemma}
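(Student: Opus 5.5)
Expanding the $m$-th power gives $mp$ time variables $s^j_i$ ($1\le i\le m$, $1\le j\le p$) and $m$ space variables $x_i$:
\[
\int_{\mathbb{R}^m}\int_{([0,t]^p_<)^m}\mathbb{E}\Big[\prod_{i=1}^m\prod_{j=1}^p\Delta_\epsilon(W(s_i^j)-x_i)\Big]\mathrm{d}\mathbf{s}\,\mathrm{d}\mathbf{x}.
\]
First I integrate out each $x_i$ exactly, using $\int\Delta_\epsilon(a-x)\Delta_\epsilon(b-x)\cdots\mathrm{d}x$. Since $\Delta_\epsilon=\delta-p_\epsilon$ and Gaussians convolve, the $x_i$-integral is a signed combination of $2^p$ terms. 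Each term is a $(p-1)$-fold product of Gaussian-type kernels in the differences $W(s_i^{j+1})-W(s_i^j)$ (tree-like, via the semigroup property). More conveniently, I keep the structure $\prod_{j\ge2}\Delta$-like factors: after integrating $x_i$ against the factor at $s_i^1$, each block $i$ contributes a function of the increments $W(s_i^{j})-W(s_i^{1})$, $j=2,\dots,p$. This function is bounded in absolute value by $\prod_{j=2}^p$ of kernels behaving like $\Delta_{c\epsilon}$. In particular, $(p-1)$ singular factors remain per block, matching the exponent $\frac{p+1}{2}m=m+\frac{p-1}{2}m$ on the right side.

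Next I order all $mp$ times globally; the $(m!)^p$ factor, up to $C^m$, accounts for the interleavings of the $m$ blocks. I use the conditioning trick from the proof of Lemma~\ref{lem:sketch-moment}. For each singular evaluation time $s_i^j$ with $j\ge2$, I condition on $W$ at the third-points of its neighbouring global gaps. Then $W(s_i^j)$ is an independent Gaussian with variance comparable to $\min$ of the adjacent gaps, and the absolute value moves inside the expectation. Lemma~\ref{lem:gaussian-estimates}, eq.~\eqref{eq:approx-ineq}, interpolated by H\"older with weights chosen so that a power $\epsilon^{\theta}$ is extracted, then gives a per-factor bound. Each such factor is $\epsilon^\theta\,(\text{gap})^{-a}\,|\overline{W}-\cdot|^{-b}$ with $a+b/2=\tfrac12+\theta$. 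This yields the total $\epsilon^{\theta(p-1)m}$.

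Then I remove the remaining nonnegative Riesz factors $|\cdot|^{-b}$ using the untouched middle-third randomness, via \eqref{eq:riesz-ineq} iteratively on the last time. The outcome is a product of gap powers $\prod(\text{gap}_k)^{-\beta_k}$ over the $mp$ global gaps. I distribute the exponents evenly so that every $\beta_k<1$ and $\sum\beta_k=(p-1)m(\tfrac12+\theta)$. The condition $\theta<1/6$ enters here: with $d=1$, some gap may receive weight from two neighbours, and the worst per-gap exponent should stay below $1$. Finally, Lemma~\ref{lem:dirichlet-integral} gives $C^m t^{mp-\sum\beta_k}/\Gamma(mp-\sum\beta_k+1)$, which is $\le C^m (m!)^p (t^m/m!)^{\frac{p+1}{2}-(p-1)\theta}$.

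The main obstacle is the exponent bookkeeping. Both Brownian-bridge conditioning and Riesz exchanges can dump singularity onto the same short gap. So I must assign the H\"older weights so that no gap receives total exponent $\ge1$ while the sum stays exactly $(p-1)m(\tfrac12+\theta)$. I would first try splitting each factor's singularity equally between its left and right gaps.
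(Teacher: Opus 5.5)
Your architecture matches the paper's proof: integrate $x_i$ against the factor at $s^1_i$, sum over the $(mp)!/(p!)^m$ global orderings, trisect the gaps, condition so each $W(s^j_i)$, $j\ge2$, is an independent bridge point, extract $\epsilon^\theta$ from \eqref{eq:approx-ineq} by H\"older, exchange the Riesz factors backwards in time using the middle thirds, and finish with Dirichlet. Two steps, however, fail as written.

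The first is the reduction after integrating out $x_i$. You cannot bound the per-block function ``in absolute value'' by a product of $\Delta$-like kernels. Any pointwise majorant of the signed object $\prod_{j\ge2}\Delta_\epsilon(y_j)-\int p_\epsilon(x)\prod_{j\ge2}\Delta_\epsilon(y_j-x)\,\mathrm{d}x$ is nonnegative and has no $\epsilon$-decay. The whole gain $\epsilon^{\theta(p-1)m}$ comes from cancellation inside the conditional expectations $\mathbb{E}[\Delta_\epsilon(W(s^j_i)-A)\mid\cdot\,]$. For the same reason the $2^p$-term expansion must be abandoned outright: the all-$\delta$ term alone has no decay. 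What you need is an exact representation in which every $j\ge2$ factor is literally $\Delta_\epsilon(W(s^j_i)-A_i)$, with the anchor $A_i$ measurable for the conditioning $\sigma$-field. The paper obtains this by splitting $\Delta_\epsilon(W(s^1)-x)=\delta(W(s^1)-x)-p_\epsilon(W(s^1)-x)$. The $\delta$ part gives $A_i=W(s^1_i)$. For the $p_\epsilon$ part it writes $p_\epsilon(y)=\mathbb{E}\,\delta(\sqrt{\epsilon}Z-y)$, so $A_i=W(s^1_i)+\sqrt{\epsilon}Z_i$ with auxiliary independent Gaussians $Z_1,\dots,Z_m$. These are added to the conditioning together with the $W(s^1_i)$; this works because $p_\epsilon$ is a probability density, so the averaging can sit outside all absolute values.

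The second is the exponent bookkeeping. The bridge variance is $\tfrac13(L^{-1}+R^{-1})^{-1}\asymp\min(L,R)$, where $L,R$ are the adjacent gaps. So $\mathrm{var}^{-a}$ is \emph{not} bounded by $CL^{-a/2}R^{-a/2}$ (let $L\to0$ with $R$ fixed), and splitting the singularity equally between the two gaps is invalid. The correct bound is $\mathrm{var}^{-a}\le C(L^{-a}+R^{-a})$, expanded into $2^{(p-1)m}$ terms that are absorbed into $C^m$. A gap $[\tau(s),s]$ can then receive $a$ from each endpoint plus $b/2$ from the Riesz exchange at $s$, for a total of $2a+b/2$. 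You must also fix the weights explicitly. The constraint $b<1$ is forced by \eqref{eq:riesz-ineq} in $d=1$. The paper takes H\"older weights $(\tfrac13-\theta,\tfrac23,\theta,0)$, i.e.\ $a=\tfrac16+\theta$ and $b=\tfrac23$. The worst gap exponent is then $\tfrac23+2\theta<1$ exactly when $\theta<1/6$, and the total stays $(p-1)m(\tfrac12+\theta)$ as you computed. With these two repairs your argument becomes the paper's proof.
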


    \begin{proof}
        Note that
        \begin{multline*}
        \int_{-\infty}^{\infty}\int_{[0, t]_<^{p}} \prod_{j=1}^p\Delta_{\epsilon}(W(s^{j}) - x)  \mathrm{d} \mathbf{s} \mathrm{d} x \\
        = \int_{-\infty}^{\infty}\int_{[0, t]_<^{p}}\delta(W_t(s^1) - x) \prod_{j=2}^p\Delta_{\epsilon}(W(s^{j}) - x)  \mathrm{d} \mathbf{s} \mathrm{d} x - \int_{-\infty}^{\infty}\int_{[0, t]_<^{p}}  p_{\epsilon}(W_t(s^1) - x)\prod_{j=2}^p\Delta_{\epsilon}(W(s^{j}) - x)  \mathrm{d}\mathbf{s} \mathrm{d} x.
        \end{multline*}
        It suffices to show the moment bounds for each term separately, i.e.,
        \begin{equation}
        \label{eq:self-approx-1}
            \Biggl| \mathbb{E} \biggl[ \int_{-\infty}^{\infty}\int_{[0, t]_<^p} \delta(W(s^1) - x) \prod_{j=2}^p \Delta_{\epsilon}(W(s^j) -x) \mathrm{d} \mathbf{s} \mathrm{d} x \biggr]^m \Biggr| \le C^m \epsilon^{\theta(p-1) m} (m!)^p \Bigl( \frac{t^m}{m!} \Bigr)^{\frac{p+1}{2} - (p-1)\theta},
        \end{equation}
        \begin{equation}
        \label{eq:self-approx-2}
            \Biggl| \mathbb{E} \biggl[ \int_{-\infty}^{\infty}\int_{[0, t]_<^p} p_{\epsilon}(W(s^1) - x) \prod_{j=2}^p \Delta_{\epsilon}(W(s^j) -x) \mathrm{d} \mathbf{s} \mathrm{d} x \biggr]^m \Biggr| \le C^m \epsilon^{\theta(p-1) m} (m!)^p \Bigl( \frac{t^m}{m!} \Bigr)^{\frac{p+1}{2} - (p-1)\theta}.
        \end{equation}

        The proof for each are similar so we only explain \eqref{eq:self-approx-1} in detail, with the modifications for proving \eqref{eq:self-approx-2} mentioned in the last paragraph. By integrating over $x$, we may write the left hand side of \eqref{eq:self-approx-1} as
        \[
        \Biggl| \mathbb{E} \biggl[ \int_{([0, t]_<^p)^m} \prod_{i=1}^m  \prod_{j=2}^p \Delta_{\epsilon}(W(s^j_i) -W(s^1_i)) d \mathbf{s}  \biggr]^m \Biggr|
        \le \int_{([0, t]_<^p)^m} \biggl| \mathbb{E} \biggl[ \prod_{i=1}^m \prod_{j=2}^p \Delta_{\epsilon}(W(s^j_i) -W(s^1_i))\biggr]^m \biggr| d \mathbf{s}  .
        \]

        The only restrictions on the orders of $\{ s_i^j \}$ are $s_i^1 < s_i^2 < \dots < s_i^p$ for each $i$, and we cannot make additional assumptions without losing generality. As such, we need to consider all $(mp)! / (p!)^m$ possible orderings of times $\{s^1_1 , \dots , s^1_m , \dots , s^p_m \}$ separately. For each ordering, define $\tau(s^j_i)$ to be the time appearing immediately before $s^j_i$ in the set $\{s^1_1 , \dots , s^1_m , \dots , s^p_m \}$. Clearly, $\tau^{-1}$ would map $s^j_i$ to the time immediately after $s^j_i$. Divide each interval $[\tau(s^j_i), s^j_i]$ into thirds and label the timestamps
        \[
        s^j_{i - 1/3} := s^j_i + \frac{\tau(s^j_i) - s^j_i}{3}, \quad s^j_{i + 1/3} := s^j_i + \frac{\tau^{-1}(s^j_i) - s^j_i}{3}.
        \]
        Conditioned on the event $\{ W(s^1_i) : 1 \le i \le m \} \cup \{ W(s^j_{i \pm 1/3}) : 2 \le j \le p, 1 \le i \le m\}$, each $W(s_i^j)$ is distributed as a Gaussian with mean
        \[
        \overline{W}(s_i^j) := W(s_{i - 1/3}^j) + \frac{s_i^j - s_{i - 1/3}^j}{s_{i + 1/3}^j - s_{i - 1/3}^j} (W(s_{i+1/3}^j) - W(s_{
        i - 1/3}^j))
        \]
        and variance
        \[
        \frac{(s^j_{i + 1/3} - s^j_i)(s^j_i - s^j_{i - 1/3})}{s^j_{i + 1/3} - s^j_{i - 1/3}} = \left( \frac{1}{s^j_{i+1/3} - s^j_i} + \frac{1}{s^j_i - s^j_{i-1/3}} \right)^{-1} = \frac{1}{3}\left( \frac{1}{\tau^{-1}(s^j_i) - s^j_i} + \frac{1}{s^j_i - \tau(s^j_i)} \right)^{-1}.
        \]

        By conditioning on $\{ W(s^1_i) : 1 \le i \le m \} \cup \{ W(s^j_{i \pm 1/3}) : 2 \le j \le p, 1 \le i \le m\}$,
        \begin{equation}
        \label{eq:self-approx}
        \begin{aligned}
            &\Biggl| \mathbb{E} \biggl[ \prod_{i=1}^m   \prod_{j=2}^p\Delta_{\epsilon}(W(s^j_i) - W(s^1_i))\biggr] \Biggr| \\
            &= \Biggl| \mathbb{E} \biggl[ \prod_{i=1}^m \prod_{j=2}^p \mathbb{E} \Bigl[ \Delta_{\epsilon}\Bigl(W(s^j_i) - W(s^1_i)\Bigr) \Big| W(s^j_{i \pm 1/3}), W(s^1_i)\Bigr]\biggr] \Biggr| \\
            &\le \mathbb{E} \Biggl[ \prod_{i=1}^m \prod_{j=2}^p \biggl| \mathbb{E} \Bigl[ \Delta_{\epsilon}\Bigl(W(s^j_i) - W(s^1_i)\Bigr) \Big| W(s^j_{i \pm 1/3}), W(s^1_i)\Bigr] \biggr| \Biggr] \\
            &\le C^m \epsilon^{\theta (p-1)m} \prod_{i=1}^m \prod_{j=2}^p \Bigl( |s^j_i - \tau(s^j_i)|^{-1/6 -\theta} + |\tau^{-1}(s^j_i) - s^j_i|^{-1/6 - \theta} \Bigr) \mathbb{E} \biggl[ \prod_{i=1}^m \prod_{j=2}^p |\overline{W}(s^j_i) - W(s^1_i)|^{-2/3} \biggr].
        \end{aligned}
        \end{equation}
        The last line uses the inequality
        \[
        |\mathbb{E} \Delta_{\epsilon}(W_t - x)| \le C \epsilon^{\theta} |t|^{-1/6 - \theta} |x|^{-2/3},
        \]
        which comes from H\"{o}lder's inequality applied to equation~\eqref{eq:approx-ineq} with $d = 1$ and weights $ \frac{1}{3} - \theta, \frac{2}{3}, \theta, 0$ respectively.

        Now let $s^{j_1}_{i_1}$ be the largest time out of $\{s^j_i : 2 \le j \le p, 1 \le i \le m\}$ and $\tau(s^{j_1}_{i_1}) = s^{j_0}_{i_0}$. By conditioning on $W[0, s^{j_0}_{i_0 + 1/3}]$, all points except $\overline{W}(s^{j_1}_{i_1})$ are completely determined, and
        \[
        \overline{W}(s^{j_1}_{i_1}) = W(s^{j_0}_{i_0 + 1/3}) + \Big( W(s_{i_1 - 1/3}^{j_1}) - W(s_{i_0 + 1/3}^{j_0}) \Bigr) + \frac{s_{i_1}^{j_1} - s_{i_1 - 1/3}^{j_1}}{s_{i_1 + 1/3}^{j_1} - s_{i_1 - 1/3}^{j_1}} \Big( W(s_{i_1 + 1/3}^{j_1}) - W(s_{i_1 - 1/3}^{j_1}) \Bigr)
        \]
        has mean $W(s^{j_0}_{i_0 + 1/3})$ and variance greater than $s^{j_1}_{i_1 - 1/3} - s^{j_0}_{i_0 + 1/3} = (s^{j_1}_{i_1} - s^{j_0}_{i_0})/3$. Therefore,
        \begin{equation}
        \begin{aligned}
        \label{eq:self-approx-riesz}
         \mathbb{E} &\left[ \prod_{i=1}^m \prod_{j=2}^p |\overline{W}(s^j_i) - W(s^1_i)|^{-2/3} \right] \\
         &= \mathbb{E} \biggl[  \mathbb{E}\left[|\overline{W}(s^{j_1}_{i_1}) - W(s^1_{i_1})|^{-2/3} \big| W(s^1_{i_1}) , W(s^{j_0}_{i_0 + 1/3}) \right]\prod_{(i, j) \ne (i_1, j_1)} |\overline{W}(s^j_i) -W(s^1_i)|^{-2/3}  \biggr] \\
         &\le |s^{j_1}_{i_1} - \tau(s^{j_1}_{i_1})|^{-1/3} \mathbb{E}\biggl[ \prod_{(i, j) \ne (i_1 , j_1)} |\overline{W}(s^j_i) - W(s^1_i)|^{-2/3} \biggr].
        \end{aligned}
        \end{equation}
        The last line uses the inequality \eqref{eq:riesz-ineq}. Repeating this $m(p-1)$ times gives us
        \[  
        \mathbb{E} \biggl[ \prod_{i=1}^m \prod_{j=2}^p |\overline{W}(s^j_i) - W(s^1_i)|^{-2/3} \biggr] \le C^m  \prod_{i=1}^m \prod_{j=2}^p |s^j_i - \tau(s^j_i)|^{-1/3}.
        \]
        Note that all terms containing $W(s_i^1)$ go away as we take expectations over $W(s_i^p), \dots W(s_i^2)$, which appear before $s^1_i$ thanks to the ordering $s^1_i < s^2_i \dots < s^p_i$. Combined with \eqref{eq:self-approx}, this yields
        \begin{multline*}
        \Biggl| \mathbb{E} \biggl[ \prod_{i=1}^m \prod_{j=2}^p\Delta_{\epsilon}(W(s^j_i) - W(s^1_i))\biggr] \Biggr| \\
        \le C^m \epsilon^{\theta(p-1)m} \prod_{i=1}^m \prod_{j=2}^p |s^j_i -\tau(s^j_i)|^{-1/3} \bigl( |s^j_i - \tau(s^j_i)|^{-1/6 - \theta} + |\tau^{-1}(s^j_i) - s^j_i|^{-1/6 - \theta} \bigr).
        \end{multline*}
        The only remaining step is to integrate both sides. For a fixed ordering of $\{ s^j_i\}$, we may apply Dirichlet's integral to bound the right hand side by $C^m \epsilon^{\theta(p-1)m} ( \frac{t^m}{m!} )^{\frac{p+1}{2} - (p-1)\theta}$ as long as $\theta < 1/6$. Since there are $\frac{(mp)!}{(p!)^m} = O(C^m (m!)^p)$ possible orderings, we can conclude that
        \[
        \Biggl| \mathbb{E} \biggl[ \prod_{i=1}^m \prod_{j=2}^p\Delta_{\epsilon}\bigl(W(s^j_i) - W(s^1_i)\bigr)\biggr] \Biggr| \le C^m \epsilon^{\theta(p-1)m} (m!)^p \Bigl( \frac{t^m}{m!} \Bigr)^{\frac{p+1}{2} - (p-1)\theta}.
        \]

        For the proof of \eqref{eq:self-approx-2}, we simply note that $p_{\epsilon}(x) = \mathbb{E} \delta(\sqrt{\epsilon}Z - x)$, where $Z$ is a standard Gaussian. Therefore, we may write
        \begin{align*}
        \int p_{\epsilon} (W(s^1) - x) \prod_{j=2}^p \Delta_{\epsilon}(W(s^j) -x)  \mathrm{d} x &=  \mathbb{E} \biggl[ \int \delta(W(s^1) + \sqrt{\epsilon}Z- x) \prod_{j=2}^p \Delta_{\epsilon}(W(s^j) -x) \mathrm{d} x \biggr] \\
        &= \mathbb{E} \biggl[  \prod_{j=2}^p \Delta_{\epsilon}(W(s^j) - W(s^1) - \sqrt{\epsilon}Z) \biggr],
        \end{align*}
        where $Z$ is a Gaussian independent of $W$ and the expectation is taken over $Z$. Thus the $m$-th moment may be written as
        \[  
        \mathbb{E} \biggl[ \prod_{i=1}^m \prod_{j=2}^p\Delta_{\epsilon}\Bigl(W(s^j_i) - W(s^1_i) - \sqrt{\epsilon}Z_i\Bigr)\biggr],
        \]
        where $Z_1 , \dots , Z_m$ are independent standard Gaussian variables which are also independent from $W$. From this point, we can proceed exactly as before to obtain the same bound.
    \end{proof}

    \begin{corollary}
        \label{cor:self-approx-momemt}
        There exists sufficiently small $\theta > 0$ such that for any $m \ge 0$ and $q > 1$,
        \[  
        \mathbb{E} \| \ell_t - \ell_{t, \epsilon} \|_q^m \le C^m \epsilon^{ \theta m} m! \left(\frac{t^m}{m!} \right)^{\frac{q+1}{2} - (q-1) \theta}.
        \]
    \end{corollary}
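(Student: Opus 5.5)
The plan is to reduce the corollary to Lemma~\ref{lem:self-approx-moment} for integer exponents, handle odd and fractional moments with H\"older's inequality as in Corollary~\ref{cor:sketch-exp-approx}, and then interpolate in $q$. Write $X := \ell_t - \ell_{t,\epsilon}$ and use a fresh name $\theta'$ for the small parameter in the lemma; the $\theta$ of the statement is then fixed as a function of $\theta'$ and $q$.

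\emph{Integer $q \ge 2$.} The display preceding Lemma~\ref{lem:self-approx-moment} gives
\[
\|X\|_q^q = q!\int_{\mathbb{R}}\int_{[0,t]^q_<}\prod_{j=1}^q \Delta_\epsilon(W(s^j)-x)\,\mathrm{d}\mathbf{s}\,\mathrm{d}x .
\]
For even $m'$, the left side raised to $m'$ is nonnegative. Hence Lemma~\ref{lem:self-approx-moment} with $p=q$ gives
\[
\mathbb{E}\|X\|_q^{qm'} \le C^{m'}\epsilon^{\theta'(q-1)m'}(m'!)^q\Bigl(\tfrac{t^{m'}}{m'!}\Bigr)^{\frac{q+1}{2}-(q-1)\theta'} .
\]
For a general moment $m$, choose an even $m' \ge m/q$ with $m' \le m/q + 2$. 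Apply H\"older in the form $\mathbb{E}\|X\|_q^m \le (\mathbb{E}\|X\|_q^{qm'})^{m/(qm')}$. Then absorb the factorial discrepancies using $((k+j)!)^{k/(k+j)} \le C^{k}k!$ for bounded $j$, and Stirling, which gives $(m'!)^{q}\approx C^m m!$ when $m' \approx m/q$. This produces a bound of the form $C^m\epsilon^{\theta'(q-1)m/q}\,m!\,(t^m/m!)^{e}$, where $e$ is the lemma's exponent after the H\"older rescaling. Finally, set $\theta := \theta'(q-1)/q$ and compare $(t^m/m!)^{e}$ with $(t^m/m!)^{\frac{q+1}{2}-(q-1)\theta}$. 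This comparison requires bookkeeping the Stirling powers and separating the regimes $t^m \ge m!$ and $t^m < m!$. In the second regime I plan to use the trivial bound $\|X\|_q \le \|X\|_1^{1/q}\|X\|_\infty^{1-1/q}$ only if needed, and otherwise enlarge $C$.

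\emph{Non-integer $q>1$.} Pick consecutive integers $q_0<q<q_1$, where $q_0=1$ if $q<2$, and write $1/q=(1-\lambda)/q_0+\lambda/q_1$. Log-convexity of $L^r$ norms gives $\|X\|_q\le \|X\|_{q_0}^{1-\lambda}\|X\|_{q_1}^{\lambda}$. H\"older in expectation then gives
\[
\mathbb{E}\|X\|_q^m\le \bigl(\mathbb{E}\|X\|_{q_0}^{m}\bigr)^{1-\lambda}\bigl(\mathbb{E}\|X\|_{q_1}^{m}\bigr)^{\lambda}.
\]
Each factor is controlled by the integer case. When $q_0=1$, the deterministic bound $\|X\|_1\le 2t$ is used instead, since both $\ell_t$ and $\ell_{t,\epsilon}$ have mass $t$. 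Multiplying, the $t$-exponents combine linearly in $\lambda$ and the $\epsilon$-powers combine to a positive power; shrinking $\theta$ absorbs the loss. The factor $\epsilon^{\theta m}$ survives provided $\lambda>0$, which holds because the $q_0=1$ factor carries no $\epsilon$-gain.

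I expect the main obstacle to be the exponent bookkeeping. Converting moments of $\|X\|_q^q$ (the natural output of the lemma) into moments of $\|X\|_q$ of the stated form, with the exact power $\frac{q+1}{2}-(q-1)\theta$ of $t^m/m!$ and a single $m!$, is delicate, and the interpolation step must respect it. I would first verify the integer case with $m$ a multiple of $q$, then extend via H\"older, choosing $\theta$ small enough at each stage.
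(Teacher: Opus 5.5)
\textbf{There is a genuine gap, and it cannot be closed.} Your route matches the paper's: the even-integer case comes from Lemma~\ref{lem:self-approx-moment}, followed by interpolation against $\|X\|_1\le 2t$. Your H\"older--Stirling conversion, from moments of $\|X\|_q^q$ with $(m'!)^q$ to moments of $\|X\|_q$ with a single $m!$, is more careful than the paper's ``direct consequence''. The failure is the final comparison you flag as bookkeeping. Your conversion gives $\mathbb{E}\|X\|_q^m\le C^m\epsilon^{\theta m}m!\,(t^m/m!)^{e}$ with $e=\frac{q+1}{2q}-\frac{(q-1)\theta'}{q}$. The target exponent $T=\frac{q+1}{2}-(q-1)\theta$, with your $\theta=\theta'(q-1)/q$, satisfies $T-e=\frac{q-1}{q}\bigl(\frac{q+1}{2}+(2-q)\theta'\bigr)>0$.

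When $t^m<m!$ the two bounds differ by the factor $(m!/t^m)^{T-e}$. For fixed $t$ this outgrows every $C^m$, so enlarging $C$ does nothing. No fallback such as $\|X\|_q\le\|X\|_1^{1/q}\|X\|_\infty^{1-1/q}$ can help either. For fixed $t,\epsilon$, Jensen gives $\mathbb{E}\|X\|_q^m\ge(\mathbb{E}\|X\|_q)^m$ with $\mathbb{E}\|X\|_q>0$, since $\ell_t$ vanishes off the bounded range $W[0,t]$ while $\ell_{t,\epsilon}>0$ everywhere. The stated right-hand side, however, equals $(C\epsilon^{\theta}t^{T})^m(m!)^{1-T}$ with $T>1$ for $\theta<\frac12$, and this decays faster than any exponential. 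So the inequality as literally written fails for large $m$. The exponent $\frac{q+1}{2}$ is the scaling of $\|X\|_q^q$, not of $\|X\|_q$; note $\|\ell_t\|_q\overset{d}{=}t^{\frac{q+1}{2q}}\|\ell_1\|_q$.

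What your argument does prove is the version with exponent $\frac{q+1}{2q}-(q-1)\theta$, after relabeling $\theta$. This is exactly the form the paper later quotes in Lemma~\ref{lem:self-tilt-approx}. It also suffices for Proposition~\ref{prop:self-exp-approx}, because the summation in Corollary~\ref{cor:sketch-exp-approx} works for any positive exponent. The paper's own $\frac{q+1}{2}$ comes from its relation $q=(1-\eta)+\eta p$. That relation is the correct one for $\int|X|^q\le(\int|X|)^{1-\eta}(\int|X|^p)^{\eta}$, not for norms. Read with $q$-th powers, the paper's argument proves the equivalent form $\mathbb{E}\|X\|_q^{qm}\le C^m\epsilon^{(q-1)\theta m}(m!)^q(t^m/m!)^{\frac{q+1}{2}-(q-1)\theta}$.

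\textbf{Your integer step also fails for odd $q$.} The identity $\int(\ell_t-\ell_{t,\epsilon})^q\,\mathrm{d}x=q!\int\int_{[0,t]^q_<}\prod_{j}\Delta_\epsilon(W(s^j)-x)\,\mathrm{d}\mathbf{s}\,\mathrm{d}x$ computes $\int X^q$. That equals $\|X\|_q^q$ only when $q$ is even. Taking $m'$ even fixes the sign of $(\int X^q)^{m'}$, but not the absolute value inside the $x$-integral. So Lemma~\ref{lem:self-approx-moment} says nothing about, for example, $\|X\|_3$. Your interpolation between consecutive integers needs such odd endpoints, for instance when $q\in(2,3)$.

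The fix is to interpolate between $1$ and an even integer $q_1\ge q$, as the paper does. Use your (correct) log-convexity $\frac{1}{q}=(1-\lambda)+\frac{\lambda}{q_1}$. The exponents $\frac{q_i+1}{2q_i}=\frac12+\frac{1}{2q_i}$ are affine in $\frac{1}{q_i}$, so they combine to exactly $\frac{q+1}{2q}$. The factorials combine to a single $m!$, and a positive power of $\epsilon$ survives because $\lambda>0$. This gives the corrected statement.
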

    
    \begin{proof}
        When $q$ is an even integer, we have $|\ell_t - \ell_{t, \epsilon}|^q = (\ell_t - \ell_{t, \epsilon})^q$ so the the above is a direct consequence of Lemma~\ref{lem:self-approx-moment}. For general $q > 1$, we interpolate between $1$ and a large even number. Since $\| \ell_t \|_1 = \| \ell_{t, \epsilon }\|_1 = t$, we immediately have
        \[      
        \mathbb{E} \|\ell_t - \ell_{t, \epsilon} \|_1^m \le C^m t^m = C^m m! \left( \frac{t^m}{m!} \right).
        \]
        Let $p = 2 \lfloor q/2 \rfloor$ and $\eta \in (0, 1]$ such that $q = (1 - \eta) + \eta p$. By H\"{o}lder's inequality,
        \begin{align*}
        \mathbb{E} \|\ell_t - \ell_{t, \epsilon} \|_q^m &\le \mathbb{E} (\| \ell_t - \ell_{t, \epsilon} \|_1^{1 - \eta} \|\ell_t - \ell_{t, \epsilon} \|_p^{\eta})^m \\
        &\le \left( \mathbb{E}\| \ell_t - \ell_{t, \epsilon} \|_1^m \right)^{1 - \eta} \left( \mathbb{E}\| \ell_t - \ell_{t, \epsilon} \|_p^m \right)^{\eta} \\
        &\le C^m \epsilon^{\eta \theta m} m! \left( \frac{t^m}{m!} \right)^{\frac{q+1}{2} - (q-1) \eta \theta} \\
        \end{align*}
        so we are done.
    \end{proof}
    \subsection{Approximating intersection measures}

    We now turn to the intersection measures $\ell_t^{\otimes p}$. For the same reasons as in Corollary~\ref{cor:sketch-exp-approx}, it is enough to prove a sufficient moment bound on $\langle f, \ell_t^{\otimes p} - \ell_{t, \epsilon}^{\otimes p} \rangle$. We may write this as an interpolating sum as follows.
    \begin{align*}
    \langle f, \ell_t^{\otimes p} - \ell_{t, \epsilon}^{\otimes p} \rangle &= \int_{\mathbb{R}^d} \int_{[0, t]^p} f(x) \prod_{j=1}^p \delta(W^j(s^j) - x) \mathrm{d}\mathbf{s} \mathrm{d} x - \int_{\mathbb{R}^d} \int_{[0, t]^p} f(x) \prod_{j=1}^p p_{\epsilon} (W^j(s^j) - x) \mathrm{d}\mathbf{s} \mathrm{d} x \\
    &= \sum_{j_0 = 1}^p \int_{\mathbb{R}^d} \int_{[0, t]^p}f(x) \Delta_{\epsilon}(W^j(s^j) - x) \prod_{j <j_0} \delta(W^j(s^j) - x) \prod_{j >j_0} p_{\epsilon}(W^j (s^j) - x) \mathrm{d}\mathbf{s} \mathrm{d} x.
    \end{align*}
    Therefore, it suffices to bound the moments of each of the $p$ summands. The case $j_0 = p$ is stated as Lemma~\ref{lem:mutual-approx-moment}, and the rest can be done similarly (cf. the last paragraph of Lemma~\ref{lem:self-approx-moment}).
    
    We break up the proof into smaller lemmas. Lemma~\ref{lem:mutual-basic} gives us preliminary estimates, and Lemmas~\ref{lem:mutual-chain-1} and \ref{lem:mutual-chain-2} serve as the analog of \eqref{eq:sketch-riesz} when combined.

    \begin{lemma}
    \label{lem:mutual-basic}
        Let $W_t$ be a Brownian motion in $\mathbb{R}^d$. For any $0 \le \theta \le d$ and $x, y \in \mathbb{R}^d$,
        \begin{equation}
        \label{eq:mutual-basic-delta}
            \mathbb{E} \delta(W_t -x) |W_t - y|^{-\theta} \le C |t|^{-(d-\theta)/2} |x - y|^{-\theta} |x|^{-\theta}.
        \end{equation}
        Similarly, for any $0 < k < d$ and $0 \le \theta \le k/2$,
        \begin{align}
        \label{eq:mutual-basic-riesz-1}
        \mathbb{E} |W_t -x|^{-k} |W_t - y|^{-k} &\le C |t|^{-k/2} |x-y|^{-k}, \\
        \label{eq:mutual-basic-riesz-2}
            \mathbb{E} |W_t -x|^{-k} |W_t - y|^{-\theta} &\le C |t|^{-(k-\theta)/2} |x|^{-\theta}|x-y|^{-\theta}.
        \end{align}
        Here, $C$ may depend on $d$ and $k$ but not on $t$, $x$, $y$, or $\theta$.
    \end{lemma}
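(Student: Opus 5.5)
The three estimates are all one-point Gaussian computations, so I will prove them directly from the heat kernel, using Lemma~\ref{lem:gaussian-estimates} together with one interpolation trick: whenever a quantity is bounded by $C\min\{a,b\}$, it is also bounded by $C a^{1-\lambda} b^{\lambda}$ for every $\lambda\in[0,1]$.

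\emph{Estimate \eqref{eq:mutual-basic-delta}.} This one is exact rather than an inequality in disguise. The delta function forces $W_t=x$, so
\[
\mathbb{E}\,\delta(W_t-x)\,|W_t-y|^{-\theta}=p_t(x)\,|x-y|^{-\theta}.
\]
By \eqref{eq:delta-ineq}, $p_t(x)\le C\min\{t^{-d/2},|x|^{-d}\}$. Interpolating with $\lambda=\theta/d$ gives
\[
p_t(x)\le C\,t^{-(d-\theta)/2}|x|^{-\theta}.
\]
This produces the claimed bound.

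\emph{Estimate \eqref{eq:mutual-basic-riesz-1}.} I split $\mathbb{R}^d$ into $A=\{z:|z-x|\ge |x-y|/2\}$ and its complement. On the complement the triangle inequality forces $|z-y|>|x-y|/2$.
\begin{itemize}
\item On $A$, bound $|z-x|^{-k}\le 2^k|x-y|^{-k}$ and use \eqref{eq:riesz-ineq} to get $\mathbb{E}|W_t-y|^{-k}\le Ct^{-k/2}$.
\item On $A^c$, do the same with the roles of $x$ and $y$ swapped.
\end{itemize}
Adding the two pieces gives $C t^{-k/2}|x-y|^{-k}$.

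\emph{Estimate \eqref{eq:mutual-basic-riesz-2}.} I use the same splitting, now with $B=\{z:|z-y|\ge |x-y|/2\}$.
\begin{itemize}
\item On $B$, bound $|z-y|^{-\theta}\le C|x-y|^{-\theta}$. What remains is $\mathbb{E}|W_t-x|^{-k}\le C\min\{t^{-k/2},|x|^{-k}\}$. Interpolating with $\lambda=\theta/k$ gives $C t^{-(k-\theta)/2}|x|^{-\theta}$, which is exactly the target.
\item On $B^c$ we have $|z-x|\ge |x-y|/2$ and $|z-x|\ge|z-y|$. I spend part of the $k$-singularity: write $|z-x|^{-k}\le C|x-y|^{-\theta}|z-x|^{-(k-\theta)}$. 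It then remains to bound $\mathbb{E}\big[|W_t-x|^{-(k-\theta)}|W_t-y|^{-\theta}\mathbf 1_{B^c}\big]$ by $Ct^{-(k-\theta)/2}|x|^{-\theta}$.
\end{itemize}

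\emph{Main obstacle.} The remaining expectation on $B^c$ is where I expect the difficulty, because of the factor $|x|^{-\theta}$. I would split into two cases.
\begin{itemize}
\item If $|x|\le 2|x-y|$, then $|x-y|^{-\theta}\le C|x|^{-\theta}$. I spend the reserved factor $|x-y|^{-\theta}$ as $|x|^{-\theta}$, and on $B^c$ use $|z-x|^{-(k-\theta)}|z-y|^{-\theta}\le |z-y|^{-k}$. This should give $\mathbb{E}|W_t-y|^{-k}\le Ct^{-k/2}$, and the hypothesis $\theta\le k/2$ is then used to trade $t^{-\theta/2}$ against the remaining power of $|x-y|$.
\item If $|x|>2|x-y|$, then $|y|\ge |x|/2$, and on $B^c$ also $|z|\ge |x|/4$. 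The Gaussian density on that region is bounded by $C\min\{t^{-d/2},|x|^{-d}\}$. I integrate the locally integrable singularities $|z-x|^{-(k-\theta)}$ and $|z-y|^{-\theta}$ against this density (both exponents are below $d$), then interpolate as before to extract $t^{-(k-\theta)/2}|x|^{-\theta}$.
\end{itemize}
If this bookkeeping fails, my fallback is Hölder's inequality on $B^c$ with exponents chosen so that each factor reduces to \eqref{eq:riesz-ineq}.
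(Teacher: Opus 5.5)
\textbf{Gap in Case~1 on $B^c$.} Your proofs of \eqref{eq:mutual-basic-delta} and \eqref{eq:mutual-basic-riesz-1} are correct. For \eqref{eq:mutual-basic-riesz-2}, the piece on $B$ is correct, and so is Case~2 on $B^c$: there $E\le C\min\{t^{-d/2},|x|^{-d}\}\,|x-y|^{d-k}\le C\min\{t^{-d/2},|x|^{-d}\}\,|x|^{d-k}$, which interpolates to $Ct^{-(k-\theta)/2}|x|^{-\theta}$. Here $E=\mathbb{E}[|W_t-x|^{-(k-\theta)}|W_t-y|^{-\theta}\mathbf{1}_{B^c}]$.

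Case~1 fails as written. The pointwise bound $|z-x|^{-(k-\theta)}|z-y|^{-\theta}\le |z-y|^{-k}$ discards the separation $|z-x|\ge |x-y|/2$. It leaves you with $E\le Ct^{-k/2}$, so the $B^c$ piece is bounded only by $Ct^{-k/2}|x|^{-\theta}$. Converting this to $Ct^{-(k-\theta)/2}|x|^{-\theta}|x-y|^{-\theta}$ requires $t^{-\theta/2}\le C|x-y|^{-\theta}$, i.e.\ $|x-y|\lesssim\sqrt t$. The hypothesis $\theta\le k/2$ is a condition on exponents and cannot supply this. Concretely, take $t=1$, $y=0$, $|x|=R\to\infty$, which is in Case~1. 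Your chain bounds the $B^c$ piece by order $R^{-\theta}$, the target is $R^{-2\theta}$, and the true size is $R^{-k}$.

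\textbf{How to repair it.} One fix is to use the separation a second time. On $B^c$,
\[
|z-x|^{-(k-\theta)}\le 2^{\theta}|x-y|^{-\theta}|z-x|^{-(k-2\theta)}\le 2^{\theta}|x-y|^{-\theta}|z-y|^{-(k-2\theta)},
\]
and this is where $k-2\theta\ge 0$ is genuinely needed. Then $E\le C|x-y|^{-\theta}\,\mathbb{E}|W_t-y|^{-(k-\theta)}\le C t^{-(k-\theta)/2}|x|^{-\theta}$ in Case~1.

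Alternatively, your H\"older fallback works outright and makes the whole case split unnecessary. Using exponents $k/(k-\theta)$ and $k/\theta$,
\[
E\le \bigl(\mathbb{E}|W_t-x|^{-k}\bigr)^{(k-\theta)/k}\bigl(\mathbb{E}|W_t-y|^{-k}\bigr)^{\theta/k}\le C\min\{t^{-k/2},|x|^{-k}\}^{(k-\theta)/k}\,t^{-\theta/2}.
\]
Putting weight $(k-2\theta)/k\ge 0$ on $t^{-k/2}$ in the interpolation gives $Ct^{-(k-\theta)/2}|x|^{-\theta}$.

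\textbf{Comparison with the paper.} With either repair, your argument is a valid variant of the paper's. The paper does no spatial splitting at all. It applies H\"older with the same exponents to $\mathbb{E}|W_t-x|^{-k}$ and the two-point bound \eqref{eq:mutual-basic-riesz-1}, then interpolates using $\theta\le k/2$. So in the paper the factor $|x-y|^{-\theta}$ comes from \eqref{eq:mutual-basic-riesz-1}, whereas you extract it from the $B/B^c$ geometry.
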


    \begin{proof}
        The proof for \eqref{eq:mutual-basic-delta} is straightforward from \eqref{eq:delta-ineq} since
        \[  
        \mathbb{E} \delta(W_t - x)|W_t - y|^{- \theta} = p_t(x) |x - y|^{-\theta} \le C |t|^{-(d-\theta)/2} |x|^{-\theta} |x-y|^{-\theta}.
        \]

        To show \eqref{eq:mutual-basic-riesz-1}, we use the triangle inequality $|x - y|^{k} \le C(|W_t - x|^{k} + |W_t - y|^k)$. Hence by \eqref{eq:riesz-ineq},
        \[  
        |x - y|^{k} \mathbb{E} |W_t - x|^{-k} |W_t - y|^{-k} \le C \left( \mathbb{E} |W_t - x|^{-k} + \mathbb{E} |W_t - y|^{-k} \right) \le C |t|^{-k/2}
        \]
        Lastly, for \eqref{eq:mutual-basic-riesz-2}, we use H\"{o}lder's inequality on \eqref{eq:riesz-ineq} and \eqref{eq:mutual-basic-riesz-1} to get
        \begin{align*}
        \mathbb{E} |W_t -x|^{-k} |W_t - y|^{-\theta} &\le \left( \mathbb{E} |W_t - x|^{-k}\right)^{(k - \theta)/k} \left(\mathbb{E} |W_t - x|^{-k} |W_t - y|^{-k}\right)^{\theta/k} \\
        &\le C |t|^{-\frac{(k - 2\theta)k}{2(k-\theta)}} |x|^{-\frac{\theta k}{k - \theta}})^{(k-\theta)/k}(|t|^{-k/2} |x-y|^{-k})^{\theta/k} \\
        &= C |t|^{-(k - \theta)/2} |x|^{-\theta} |x - y|^{- \theta}.
        \end{align*}
    \end{proof}
    
    \begin{lemma}
    \label{lem:mutual-chain-1}
    Let $W_t$ be a Brownian motion in $\mathbb{R}^d$ and $0 = t_0 < t_1 < \dots < t_m$. Then, for any $0 \le \theta \le d$ and $0 = x_0, x_1 , \dots , x_m \in \mathbb{R}^d$,
    \begin{equation}
    \label{eq:mutual-chain-delta}
        \mathbb{E} \left[ \prod_{i=1}^m \delta(W (t_i) - x_i) \right] \le C^m \prod_{i=1}^m |t_i - t_{i-1}|^{-(d-\theta)/2} \prod_{i=1}^m |x_i - x_{i-1}|^{-\theta}.
    \end{equation}
    Similarly, if $k < d$ and $0 \le \theta \le k/2$,
    \begin{equation}
    \label{eq:mutual-chain-riesz}
        \mathbb{E} \prod_{i=1}^m |W (t_i) - x_i|^{-k} \le C^m \prod_{i=1}^m |t_i - t_{i-1}|^{-(k-\theta)/2} \prod_{i=1}^m |x_i - x_{i-1}|^{-\theta}.
    \end{equation}
    \end{lemma}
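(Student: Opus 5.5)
Both estimates in Lemma~\ref{lem:mutual-chain-1} will be proved by induction on $m$. At each step we peel off the last time $t_m$ with the Markov property and use the one-step estimates of Lemma~\ref{lem:mutual-basic}. The point is that those estimates already have the ``chain'' form: one factor in the time increment and one factor in the spatial increment $|x_m - x_{m-1}|$. The extra factor $|x_{m-1}|^{-\theta}$ (relative to the current starting point) is then absorbed into the conditioning variable at the next stage.

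For \eqref{eq:mutual-chain-delta} the argument is essentially exact. The left side is the Gaussian chain $\prod_{i=1}^m p_{t_i - t_{i-1}}(x_i - x_{i-1})$, with the convention $x_0 = 0$. By \eqref{eq:delta-ineq}, $p_s(z) \le C\min\{s^{-d/2}, |z|^{-d}\}$. Interpolating the two bounds with weights $(d-\theta)/d$ and $\theta/d$ gives $p_s(z) \le C s^{-(d-\theta)/2}|z|^{-\theta}$ for $0 \le \theta \le d$. Taking the product over $i$ finishes this case; no induction subtleties arise.

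For \eqref{eq:mutual-chain-riesz} I would condition on $\mathcal{F}_{t_{m-1}}$. Given this, $W(t_m) - W(t_{m-1})$ is an independent Gaussian with variance $t_m - t_{m-1}$. So
\[
\mathbb{E}\bigl[|W(t_m) - x_m|^{-k} \,\big|\, \mathcal{F}_{t_{m-1}}\bigr] = \mathbb{E}|W'_{t_m - t_{m-1}} - (x_m - W(t_{m-1}))|^{-k},
\]
where $W'$ is an independent Brownian motion. The obstacle is that this expression depends on the random point $W(t_{m-1})$, not on $x_{m-1}$. A naive use of \eqref{eq:riesz-ineq} would lose the spatial factor $|x_m - x_{m-1}|^{-\theta}$. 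The remedy is to keep the factor $|W(t_{m-1}) - x_m|^{-\theta}$ and combine it with the already-present $|W(t_{m-1}) - x_{m-1}|^{-k}$ via \eqref{eq:mutual-basic-riesz-2}.

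Concretely, I would prove a strengthened inductive statement. For $0 \le \theta \le k/2$ and an extra point $y$,
\[
\mathbb{E}\Bigl[\prod_{i=1}^{m}|W(t_i)-x_i|^{-k}\,|W(t_m)-y|^{-\theta}\Bigr] \le C^{m}\prod_{i=1}^{m}|t_i-t_{i-1}|^{-(k-\theta)/2}\prod_{i=1}^{m}|x_i-x_{i-1}|^{-\theta}\,|x_m - y|^{-\theta}.
\]
The claimed bound follows by dropping the last factor; equivalently, run the same induction with the exponent on $y$ equal to $0$ at the top level. The inductive step proceeds in four moves. First condition on $\mathcal{F}_{t_{m-1}}$. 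Then bound the innermost factor $\mathbb{E}[|W(t_m)-x_m|^{-k}|W(t_m)-y|^{-\theta} \mid W(t_{m-1})]$ by \eqref{eq:mutual-basic-riesz-2}, applied to the shifted Brownian motion started at $W(t_{m-1})$. This yields
\[
C|t_m-t_{m-1}|^{-(k-\theta)/2}\,|x_m - W(t_{m-1})|^{-\theta}\,|x_m-y|^{-\theta}.
\]
The factor $|W(t_{m-1})-x_m|^{-\theta}$ now plays the role of the ``$y$'' term at level $m-1$, with new extra point $x_m$. Applying the inductive hypothesis then produces $|x_{m-1}-x_m|^{-\theta}$, as needed. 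The base case $m=1$ is \eqref{eq:mutual-basic-riesz-2} with the Brownian motion started at $0 = x_0$; this supplies $|x_1|^{-\theta} = |x_1 - x_0|^{-\theta}$.

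The main care needed is at the top level. There the extra $y$-factor is absent, and we apply \eqref{eq:mutual-basic-riesz-2} with the roles giving $|x_m - W(t_{m-1})|^{-\theta}$ directly, which is the same computation with the $y$-term trivial. Throughout, the constant stays uniform because the constant in Lemma~\ref{lem:mutual-basic} does not depend on $\theta$, $x$, $y$, or $t$.
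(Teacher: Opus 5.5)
Your proof is correct and is essentially the paper's argument. You peel off the last time by the Markov property and let the factor $|W(t_{i-1})-x_i|^{-\theta}$ occupy the ``$y$'' slot of Lemma~\ref{lem:mutual-basic} at the next level. Your strengthened inductive hypothesis is a cleaner formalization of the paper's ``both sides have the same form, so repeat,'' and your closed-form Gaussian-chain bound for \eqref{eq:mutual-chain-delta} is exactly what the paper's iteration amounts to.

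The one imprecision is the top level of \eqref{eq:mutual-chain-riesz}. There the extra factor cannot simply be deleted from the strengthened inequality. Nor is the needed one-step bound a special case of \eqref{eq:mutual-basic-riesz-2}, since removing the $y$-factor there forces $\theta=0$. Instead, use \eqref{eq:riesz-ineq} and interpolate the minimum: $\mathbb{E}[|W(t_m)-x_m|^{-k}\mid\mathcal{F}_{t_{m-1}}]\le C\min\{(t_m-t_{m-1})^{-k/2},|x_m-W(t_{m-1})|^{-k}\}\le C(t_m-t_{m-1})^{-(k-\theta)/2}|x_m-W(t_{m-1})|^{-\theta}$. This is the analogue of the paper's first step via \eqref{eq:delta-ineq}. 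Then apply your level-$(m-1)$ statement with $y=x_m$.
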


    \begin{proof}
        Conditioned on $W[0, t_{m-1}]$, $W(t_m)$ is distributed as a Brownian motion starting at $W(t_{m-1})$ run for time $t_m - t_{m-1}$. Hence by \eqref{eq:delta-ineq}, 
        \[  
        \mathbb{E}\left[ \delta(W(t_m) - x_m) | W(t_{m-1}) \right] \le |t_m - t_{m-1}|^{-(d-\theta)/2} |W(t_{m-1}) - x_m|^{-\theta}
        \]
        and so
        \begin{align*}
            \mathbb{E} \left[ \prod_{i=1}^m \delta(W (t_i) - x_i) \right] &= \mathbb{E} \left[ \mathbb{E} \left[ \delta(W(t_m) - x_m) | W(t_{m-1}) \right]\prod_{i=1}^{m-1} \delta(W(t_i) - x_i)  \right] \\
            &\le C |t_m - t_{m-1}|^{-(d-\theta)/2} \mathbb{E} \left[ |W(t_{m-1}) - x_m|^{-\theta}\prod_{i=1}^{m-1} \delta(W(t_i) - x_i) \right].
        \end{align*}
        At this point, we condition on $W[0, t_{m-2}]$. Then by \eqref{eq:mutual-basic-delta}, we have
        \begin{align*}
            &\mathbb{E}\biggl[ \prod_{i=1}^{m-1} \delta(W(t_i) - x_i)| W(t_{m-1}) - x_m|^{-\theta} \biggr] \\
            &= \mathbb{E} \biggl[ \mathbb{E} \left[ \delta(W(t_{m-1}) - x_{m-1})| W(t_{m-1}) - x_m|^{-\theta}| W(t_{m-2}) \right]\prod_{i=1}^{m-2} \delta(W(t_i) - x_i)  \biggr] \\
            &\le C |t_m - t_{m-1}|^{-(d-\theta)/2} |t_{m-1} - t_{m-2}|^{-(d-\theta)/2} |x_m - x_{m-1}|^{-\theta} \mathbb{E} \biggl[ | W(t_{m-2}) - x_{m-1}|^{-\theta}\prod_{i=1}^{m-2} \delta(W(t_i) - x_i) \biggr].
        \end{align*}
        Since the expectations of the left and right hand sides have the same form, we may repeat this process $m-1$ times to obtain \eqref{eq:mutual-chain-delta}, namely
        \[  
        \mathbb{E} \left[ \prod_{i=1}^m \delta(W (t_i) - x_i) \right] \le C^m \prod_{i=1}^m |t_i - t_{i-1}|^{-(d-\theta)/2} \prod_{i=1}^m |x_i - x_{i-1}|^{-\theta}.
        \]
        The proof for \eqref{eq:mutual-chain-riesz} is almost identical, except that we use \eqref{eq:mutual-basic-riesz-2} instead of \eqref{eq:mutual-basic-delta}.
    \end{proof}

    \begin{lemma}
    \label{lem:mutual-chain-2}
        Let $W_t$ be a Brownian motion in $\mathbb{R}^d$. Given $t_0 = 0$ and $t_1 , t_2 , \dots , t_m > 0$, let $\sigma \in S_m$ be a permutation such that $t_{\sigma(1)} \le t_{\sigma(2)} \le \dots \le t_{\sigma(m)}$ and set $\sigma(0) = 0$. For any $k < d$,
    \begin{equation}
        \mathbb{E} \left[ \prod_{i=1}^m |W(t_i) - W(t_{i-1})|^{-k} \right] \le C^m \prod_{i=1}^m |t_{\sigma(i)} - t_{\sigma(i-1)}|^{-k/2}.
    \end{equation}
    \end{lemma}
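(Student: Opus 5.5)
The plan is to induct on $m$, each time integrating out the Brownian point at the \emph{largest} time, as in \eqref{eq:sketch-riesz} and Lemma~\ref{lem:mutual-chain-1}. The point is that the factors form a chain indexed by $i$ (consecutive indices $i-1,i$), while the randomness is ordered by the permutation $\sigma$. The point at the latest time appears in at most two links of the chain. Integrating it out merges those two links into a single link between its chain-neighbours. This gives a product of the same form with one fewer time.

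\textbf{Setup and the maximal time.} Let $i_* = \sigma(m)$, so $t_{i_*}$ is the largest time, and write $\Delta = t_{\sigma(m)} - t_{\sigma(m-1)}$. Condition on $\mathcal{F} = \sigma(W[0, t_{\sigma(m-1)}])$. Every $W(t_i)$ with $i \ne i_*$ is $\mathcal{F}$-measurable, and so is $W(t_0) = W(0) = 0$. Given $\mathcal{F}$, the variable $W(t_{i_*})$ is Gaussian with mean $W(t_{\sigma(m-1)})$ and covariance $\Delta I$. Since the bounds of Lemma~\ref{lem:gaussian-estimates} and Lemma~\ref{lem:mutual-basic} hold for arbitrary points $x, y$, the random center only shifts $x$ and $y$ and does not matter. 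The factors involving $t_{i_*}$ are $|W(t_{i_*}) - W(t_{i_*-1})|^{-k}$ and, when $i_* < m$, also $|W(t_{i_*+1}) - W(t_{i_*})|^{-k}$.

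\textbf{The two cases.}
\begin{itemize}
\item If $i_* = m$, only one factor involves $t_{i_*}$. By \eqref{eq:riesz-ineq},
\[
\mathbb{E}\bigl[|W(t_m) - W(t_{m-1})|^{-k} \,\big|\, \mathcal{F}\bigr] \le C\Delta^{-k/2}.
\]
What remains is exactly the product for the times $t_1,\dots,t_{m-1}$.
\item If $i_* < m$, apply \eqref{eq:mutual-basic-riesz-1} with $x = W(t_{i_*-1})$ and $y = W(t_{i_*+1})$ (here $k<d$ is needed):
\[
\mathbb{E}\bigl[|W(t_{i_*}) - x|^{-k}|W(t_{i_*}) - y|^{-k} \,\big|\, \mathcal{F}\bigr] \le C\Delta^{-k/2}\,|W(t_{i_*+1}) - W(t_{i_*-1})|^{-k}.
\]
The remaining integrand is then $\prod |W(t'_i) - W(t'_{i-1})|^{-k}$ over the relabelled sequence $t'_0 = 0$, $(t'_1,\dots,t'_{m-1}) = (t_1,\dots,\widehat{t_{i_*}},\dots,t_m)$. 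This has the same chain form with $m-1$ times, and the case $i_* - 1 = 0$ is covered because $W(t_0) = 0$.
\end{itemize}

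\textbf{Induction and the obstacle.} In both cases the sorted list of remaining times is the original sorted list with its maximum removed. Hence the sorted gaps of the new chain are exactly $t_{\sigma(i)} - t_{\sigma(i-1)}$ for $i \le m-1$. The induction hypothesis then gives $C^{m-1}\prod_{i\le m-1}|t_{\sigma(i)} - t_{\sigma(i-1)}|^{-k/2}$, and multiplying by $C\Delta^{-k/2}$ closes the induction; the base case $m=0$ is trivial. The main thing to get right is this bookkeeping: the merged link must keep the statement in exactly the same form. In particular the gap $\Delta$ is measured from the previous time in \emph{sorted} order, not in chain order, and the constant must not depend on the configuration, which holds because the $C$ in \eqref{eq:mutual-basic-riesz-1} depends only on $d$ and $k$. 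Ties among the $t_i$ give $\Delta = 0$, where the bound is trivially $+\infty$, or they can be ignored as a null set.
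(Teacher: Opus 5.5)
Your proposal is correct and is essentially the paper's own argument. You condition on $W[0,t_{\sigma(m-1)}]$, integrate out the point at the maximal time $t_{\sigma(m)}$ (using \eqref{eq:riesz-ineq} when $\sigma(m)=m$, and otherwise \eqref{eq:mutual-basic-riesz-1} to merge the two adjacent links), and induct on $m$ with the sorted gaps unchanged. Your bookkeeping is slightly more careful than the written proof: you explicitly cover the case $\sigma(m)=1$ via $W(t_0)=0$, which the paper's case split ($1<i_0<m$) leaves out.
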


    \begin{proof}
       We condition on $W[0, t_{\sigma(m-1)}]$. Under such conditioning, $W(t_{\sigma(m)})$ is distributed as a Gaussian with mean $W(t_{\sigma(m-1)})$ and variance $t_{\sigma(m)} - t_{\sigma(m-1)}$. If $\sigma(m) = m$, we may use \eqref{eq:riesz-ineq} to show that
       \begin{align*}
       \mathbb{E}\left[ \prod_{i=1}^m |W(t_i) - W(t_{i-1})|^{-k} \right] &= \mathbb{E}\left[ \prod_{i=1}^{m-1} |W(t_i) - W(t_{i-1})|^{-k} \mathbb{E} \left[ |W(t_m) - W(t_{m-1})| W(t_{m-1})\right] \right] \\
       &\le C|t_{\sigma(m)} - t_{\sigma(m-1)}|^{-k/2} \mathbb{E}\left[ \prod_{i=1}^{m-1} |W(t_i) - W(t_{i-1})|^{-k} \right].
       \end{align*}
       Otherwise, if $\sigma(m) = i_0$ for some $1 < i_0 < m$, the same conditioning argument and equation~\eqref{eq:mutual-basic-riesz-1} yields
       \begin{align*}
       &\mathbb{E}\biggl[ \prod_{i=1}^m |W(t_i) - W(t_{i-1})|^{-k} \biggr] \\
       &= \mathbb{E} \bigg[ \mathbb{E}\Big[|W(t_{i_0}) - W(t_{i_0 - 1})|^{-k} |W(t_{i_0}) - W(t_{i_0 + 1})|^{-k} \Big| W(t_{i_0 - 1}), W(t_{i_0 + 1}) \Big]\prod_{i \ne i_0, i_0 + 1} |W(t_i) - W(t_{i-1})|^{-k}\biggr]\\
       &\le C|t_{\sigma(m)} - t_{\sigma(m-1)}|^{-k/2} \mathbb{E}\biggl[ |W(t_{i_0 + 1}) - W(t_{i_0 - 1})|^{-k} \prod_{i \ne i_0, i_0 + 1} |W(t_i) - W(t_{i-1})|^{-k} \biggr].
       \end{align*}
       In either case, the problem is reduced to the same statement with $m-1$ in the place of $m$. Repeating $m$ times gives us the desired result.
    \end{proof}

\begin{lemma}
    \label{lem:mutual-approx-moment}
    Let $W^1, W^2, \dots , W^p$ be independent Brownian motions in $\mathbb{R}^d$ such that $d(p-1) < 2p$. For sufficiently small $\theta > 0$ and any bounded measurable $f$, we have
        \[
        \left| \mathbb{E} \left[ \int_{\mathbb{R}^d} \int_{[0, t]^p} f(x) \Delta_{\epsilon}(W^p(s^p) - x) \prod_{j=1}^{p-1} \delta(W^j(s^j) - x) \mathrm{d}\mathbf{s} \mathrm{d} x \right]^m \right| \le C^m \| f \|_{\sup}^m \epsilon^{\theta m} (m!)^p \left( \frac{t^m}{m!} \right)^{\frac{2p - d(p-1)}{2} - \theta}.
        \]
\end{lemma}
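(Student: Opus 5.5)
We expand the $m$-th power as an integral over $m$ copies of the time variables and of the space variable. Integrating out $x_i$ against $\delta(W^1(s^1_i)-x_i)$ sets $x_i = W^1(s^1_i)$. The left-hand side is then bounded by
\[
\|f\|_{\sup}^m\int_{([0,t]^p)^m}\biggl|\mathbb{E}\biggl[\prod_{i=1}^m g_i\,\Delta_{\epsilon}\bigl(W^p(s^p_i)-W^1(s^1_i)\bigr)\prod_{j=2}^{p-1}\delta\bigl(W^j(s^j_i)-W^1(s^1_i)\bigr)\biggr]\biggr|\,\mathrm{d}\mathbf{s},
\]
where $g_i=f(W^1(s^1_i))/\|f\|_{\sup}$ is bounded by $1$. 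Here the signed factors are $f$ and $\Delta_\epsilon$.

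\textbf{Step 1 (removing the sign of $\Delta_\epsilon$).} Fix the orderings of $\{s^p_i\}_i$ and order each $W^p$-interval into thirds, as in Lemma~\ref{lem:sketch-moment}. We condition on $W^1,\dots,W^{p-1}$ entirely, together with the points $W^p(s^p_{i\pm1/3})$. Under this conditioning the $W^p(s^p_i)$ are independent Brownian-bridge points with means $\overline W^p(s^p_i)$, while $f$ and the deltas are measurable. Hence we may move the absolute value inside, as in \eqref{eq:sketch-approx}.

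We then apply H\"older to \eqref{eq:approx-ineq} with only a tiny weight on the bridge variance, $|\mathbb{E}\Delta_\epsilon(W_v-x)|\le C\epsilon^{\theta}v^{-\theta'}|x|^{-d-2\theta+2\theta'}$. Replacing $|f|$ by $\|f\|_{\sup}$ leaves a nonnegative integrand of the form
\[
\epsilon^{\theta m}\prod_i\bigl(\text{small powers of gaps in }s^p\bigr)\;\mathbb{E}\Bigl[\prod_i|\overline W^p(s^p_i)-W^1(s^1_i)|^{-k}\prod_{i}\prod_{j=2}^{p-1}\delta(W^j(s^j_i)-W^1(s^1_i))\Bigr],
\]
with $k$ slightly less than $d$. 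The case $k=d$ is not allowed, which is exactly why the small variance exponent is needed.

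\textbf{Step 2 (distributing the singularity across the $p$ motions).} Now condition on $W^1$ and integrate out each $W^j$ with $j\ge2$ separately, using their independence. For $2\le j\le p-1$, Lemma~\ref{lem:mutual-chain-1}\eqref{eq:mutual-chain-delta}, along the time order of $s^j$, trades the product of deltas for
\[
\prod_i|\text{gap}^j_i|^{-(d-\theta_j)/2}\prod_i|W^1(s^1_{\pi(i)})-W^1(s^1_{\pi(i-1)})|^{-\theta_j}.
\]
For $j=p$, \eqref{eq:mutual-chain-riesz} applied to the bridge means does the same with $k$ in place of $d$. Using the middle-third randomness of $W^p$, as explained before Lemma~\ref{lem:self-approx-moment}, the variance of $\overline W^p$ given the past dominates a third of the gap.

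This leaves a product over $W^1$ of negative powers of the increments $W^1(s^1_{\pi(i)})-W^1(s^1_{\pi(i-1)})$, with total exponent $\kappa=\sum_{j\ge2}\theta_j$ per link. For each $j$ this produces several chains, one per permutation $\pi_j$, not a single chain. We combine them with H\"older/AM–GM into products of the form handled by Lemma~\ref{lem:mutual-chain-2} with $k<d$. That lemma converts them to $\prod_i|\text{gap}^1_i|^{-\kappa/2}$.

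Choose $\theta_j$ so that every motion receives roughly equal exponent $d(p-1)/(2p)$ per gap. Since $d(p-1)<2p$, this exponent is $<1$, leaving margin for $\theta$, and requires $\kappa<d$, which holds when the split is balanced. All gap exponents are then $<1$.

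\textbf{Step 3 (integration).} By the Dirichlet integral (Lemma~\ref{lem:dirichlet-integral}) each motion contributes $C^m(t^m/m!)^{1-d(p-1)/(2p)-O(\theta)}$. The product over the $p$ motions gives $(t^m/m!)^{(2p-d(p-1))/2-\theta}$. Summing over the $(m!)^p$ orderings gives the claim.

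The main obstacle is Step 2's bookkeeping. The $W^1$ increments come in several chains indexed by different permutations, and Lemma~\ref{lem:mutual-chain-2} handles only one chain at a time. I would first try H\"older over the $p-1$ chains, with exponents chosen so that each single chain has power $<d$; this works since $\kappa(p-1)<d(p-1)$ per chain after rescaling. The delicate point is to keep every time exponent strictly below $1$ after these H\"older steps.
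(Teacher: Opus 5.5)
Your Step 1, your treatment of $W^2,\dots,W^{p-1}$ via \eqref{eq:mutual-chain-delta}, and your handling of the resulting $W^1$-increment chains (Hölder, then Lemma~\ref{lem:mutual-chain-2}) all coincide with the paper's proof. The bookkeeping you call delicate is actually harmless. With $\theta_j\approx d/p$ and equal Hölder weights, each chain carries power about $(p-1)d/p<d$, and the $W^1$ gaps end with exponent about $d(p-1)/(2p)<1$. This balanced allocation is a legitimate alternative to the paper's, which gives the pure-delta motions exponent $(d-\theta)/2$ and motions $1,p$ about $d/4$ each, relying on $d\le 2$ when $p\ge3$.

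\textbf{The gap.} The problem is the case $j=p$, where you apply \eqref{eq:mutual-chain-riesz} to the bridge means $\overline W^p(s^p_i)$ with $W^1$ frozen, justified only by the lower bound on the conditional variance. That lemma's recursion needs more than variance. After peeling the last point, the factor left behind must have the form $|Y_{i-1}-y_i|^{-\theta}$, where $Y_{i-1}$ is the \emph{previous chain variable}; only then can \eqref{eq:mutual-basic-riesz-2} be applied to a single Gaussian at the next step. For bridge means this fails. The conditional mean of $\overline W^p(s^p_i)$ given $W^p[0,s^p_{(i-1)+1/3}]$ is $W^p(s^p_{(i-1)+1/3})$, not $\overline W^p(s^p_{i-1})$. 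So the next step must bound $|\overline W^p(s^p_{i-1})-y_{i-1}|^{-k}\,|W^p(s^p_{(i-1)+1/3})-y_i|^{-\theta_p}$, which involves two distinct correlated variables, and \eqref{eq:mutual-basic-riesz-2} does not apply. A variance bound alone suffices only for estimates uniform in the location of the mean, namely \eqref{eq:riesz-ineq}, \eqref{eq:mutual-basic-riesz-1} and Lemma~\ref{lem:mutual-chain-2}.

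\textbf{Two ways to close it.}
\begin{enumerate}
\item \emph{The paper's ordering.} Keep $|\overline W^p(s^p_i)-W^1(s^1_i)|^{-d+(p-1)\theta}$ inside the $W^1$-expectation and separate it from the $W^1$ chains by Hölder. Then integrate out the genuine Brownian motion $W^1$ against the frozen points $\overline W^p(s^p_i)$ using \eqref{eq:mutual-chain-riesz} with its $\theta$ equal to $k/2$. This leaves a pure chain of differences of bridge means, which Lemma~\ref{lem:mutual-chain-2} handles because it needs only the variance.
\item \emph{Repair within your route.} Additionally condition on the window increments $U_i=W^p(s^p_{i+1/3})-W^p(s^p_{i-1/3})$. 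Then $\overline W^p(s^p_i)=R_i+c_i$, where $R$ is a Gaussian walk with independent middle-third increments of variance $\ge (s^p_i-s^p_{i-1})/3$ and $c_i$ is frozen. Now \eqref{eq:mutual-chain-riesz} applies to $R$ and yields the $W^1$ chain shifted by $c_i-c_{i-1}$. Your Hölder step and Lemma~\ref{lem:mutual-chain-2} use only \eqref{eq:riesz-ineq} and \eqref{eq:mutual-basic-riesz-1}, which are uniform in such deterministic shifts, so the rest of your argument goes through.
\end{enumerate}
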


\begin{proof}
    The left hand side may be written as
    \begin{align*}
        \left| \mathbb{E} \left[\int_{[0, t]^p} \right. \right.& \left. \left.f(W^1(s^1)) \Delta_{\epsilon}(W^p(s^p) - W^1(s^1)) \prod_{j=2}^{p-1} \delta(W^j(s^j) - W^1(s^1)) \mathrm{d} \mathbf{s} \right]^m \right|\\
        &=  \left| \int_{[0, t]^{mp}}\mathbb{E} \left[ \prod_{i=1}^m f(W^1 (s^1_i)) \Delta_{\epsilon}(W^p(s^p_i) - W^1(s^1_i)) \prod_{j=2}^{p-1} \delta(W^j(s^j_i) - W^1(s^1_i)) \right] \mathrm{d} \mathbf{s} \right| \\
        &\le \int_{[0, t]^{mp}} \left|\mathbb{E} \left[ \prod_{i=1}^m f(W^1 (s^1_i)) \Delta_{\epsilon}(W^p(s^p_m) - W^1(s^1_m)) \prod_{j=2}^{p-1} \delta(W^j(s^j_i) - W^1(s^1_i)) \right] \right| \mathrm{d} \mathbf{s}.
    \end{align*}
    Without loss of generality assume $s^p_1 \le s^p_2 \le \dots \le s^p_m$ and let $\sigma^1 , \dots , \sigma^{p-1} \in S_m$ such that $\{s^{j}_{\sigma^j(i)} \}_{i=1}^m$ is increasing for each $1 \le j \le p-1$. We trisect each interval $[s^p_{i-1}, s^p_i]$ into thirds and label $s^p_{i \pm 1/3} := s^p_i + (s^p_{i \pm 1} - s^p_i)/3$. If we condition on $\{s^p_{i \pm 1/3}: 1 \le i \le m\}$, the points $W(s^p_i)$ become independent Gaussians with mean
    \[
    \overline{W}(s_i^p) := W(s^p_{i-1/3}) + \frac{s^p_i- s^p_{i - 1/3}}{s^p_{i + 1/3} - s^p_{i - 1/3}} (W(s^p_{i+1/3}) - W(s^p_{i - 1/3}))
    \]
    and variance
    \[
    \frac{(s^p_{i + 1/3} - s^p_i)(s^p_i - s^p_{i - 1/3})}{s^p_{i + 1/3} - s^p_{i - 1/3}} = \left( \frac{1}{s^p_{i+1/3} - s^p_i} + \frac{1}{s^p_i - s^p_{i-1/3}} \right)^{-1} = \frac{1}{3}\left( \frac{1}{s^p_{i+1} - s^p_i} + \frac{1}{s^p_i - s^p_{i-1}} \right)^{-1}.
    \]
    We now condition on the set $\{W^j[0, t] :1 \le j \le p-1 \} \cup \{ W^p(s^p_{i \pm 1/3}) : 1 \le i \le m \}$. This gives us
    \begin{align*}
        \Biggl|\mathbb{E} \bigg[ &\prod_{i=1}^m f(W^1 (s^1_i))\Delta_{\epsilon}(W^p(s^p_m) - W^1(s^1_m)) \prod_{j=2}^{p-1} \delta(W^j(s^j_i) - W^1(s^1_i)) \biggr] \Biggr| \\
        &= \Biggl|\mathbb{E} \biggl[ \prod_{i=1}^m f(W^1 (s^1_i)) \mathbb{E}\Bigl[\Delta_{\epsilon}(W^p(s^p_i) - W^1(s^1_i)) \Big| W^1(s^1_i), W^p(s^p_{i \pm 1/3})\Bigr]\prod_{j=2}^{p-1} \delta(W^j(s^j_i) - W^1(s^1_i)) \biggr] \Biggr| \\
        &\le \mathbb{E} \Biggl[ \prod_{i=1}^m |f(W^1 (s^1_i))|\biggl| \mathbb{E}\Bigl[\Delta_{\epsilon}(W^p(s^p_i) - W^1(s^1_i)) \Big| W^1(s^1_i), W^p(s^p_{i \pm 1/3})\Bigr]\biggr| \prod_{j=2}^{p-1} \delta(W^j(s^j_i) - W^1(s^1_i)) \Biggr] \\
        &\le C^m \| f \|_{\sup}^m \epsilon^{\theta m} \prod_{i=1}^m \Bigl( |s^p_i - s^p_{i-1}|^{-\frac{(p+1)\theta}{2}} + |s^p_{i+1} - s^p_i |^{-\frac{(p+1)\theta}{2}} \Bigl) \\
        &\quad \quad \quad \times \mathbb{E} \biggl[ \prod_{i=1}^m |\overline{W}^p(s^p_i) - W^1(s^1_i)|^{-d + (p-1)\theta} \prod_{j=2}^{p-1} \delta(W^j(s^j_i) - W^1(s^1_i))\biggr]
    \end{align*}
    The last line comes from the inequality
    \[  
    |\mathbb{E} \Delta_{\epsilon}(W_t - x)| \le C \epsilon^{\theta} t^{-(p+1)\theta/2} |x|^{-d + (p-1)\theta},
    \]
    which is a consequence of applying H\"{o}lder's inequality to \eqref{eq:approx-ineq} with weights $ \frac{(p+1)\theta}{d}, \frac{d - (p+d+1)\theta}{d}, 0, \theta$. We will always choose $\theta$ such that $d > (p + d + 1) \theta$.
    
    Now we condition on $W^1[0, t]$. By the independence of $W^2, \dots , W^p$, we may distribute the expectation over the product and apply Lemma~\ref{lem:mutual-chain-1} to get
    \begin{align*}
        \mathbb{E} \biggl[ \prod_{i=1}^m &|\overline{W}^p(s^p_i) - W^1(s^1_i)|^{-d + (p-1)\theta} \prod_{j=2}^{p-1} \delta(W^j(s^j_i) - W^1(s^1_i))\biggr] \\
        &= \mathbb{E} \Biggl[ \mathbb{E} \biggl[\prod_{i=1}^m |\overline{W}^p(s^p_i) - W^1(s^1_i)|^{-d + (p-1)\theta} \Big| W^1[0, t] \biggr] \prod_{j=2}^{p-1}\mathbb{E} \biggl[\prod_{i=1}^m\delta(W^j(s^j_i) - W^1(s^1_i))\Big| W^1[0, t] \biggr] \Biggr] \\
        &\le C^m \prod_{j=2}^{p-1}\prod_{i=1}^m |s^j_{\sigma^j(i)} - s^j_{\sigma^j(i-1)}|^{-(d-\theta)/2} \\
        &\quad \quad \times \mathbb{E}\biggl[ \prod_{i=1}^m |\overline{W}^p(s^p_i) - W^1(s^1_i)|^{-d + (p-1)\theta} \prod_{j=2}^{p-1} |W^1(s^1_{\sigma^j(i)}) - W^1(s^1_{\sigma^j(i-1)})|^{-\theta} \biggr].
    \end{align*}
    By H\"{o}lder's inequality, the last term is bounded by
    \begin{multline*}
    \mathbb{E}\left[ \prod_{i=1}^m |W^p(s^p_i) - W^1(s^1_i)|^{-d +(p-1)\theta} \prod_{j=2}^{p-1} \prod_{i=1}^m |W^1(s^1_{\sigma^j(i)}) - W^1(s^1_{\sigma^j(i-1)})|^{-\theta} \right] \\
    \le \mathbb{E}\left[ \prod_{i=1}^m |W^p(s^p_i) - W^1(s^1_i)|^{-d + \theta} \right]^{\frac{d - (p-1)\theta}{d - \theta}} \prod_{j=2}^{p-1} \mathbb{E} \left[ \prod_{i=1}^m |W^1(s^1_{\sigma^j(i)}) - W^1(s^1_{\sigma^j(i-1)})|^{-d + \theta}\right]^{\frac{\theta}{d - \theta}}.
    \end{multline*}
    The second term is bounded by Lemma~\ref{lem:mutual-chain-2},
    \[  
    \mathbb{E} \left[ \prod_{i=1}^m |W^1(s^1_{\sigma^j(i)}) - W^1(s^1_{\sigma^j(i-1)})|^{-d + \theta}\right] \le \prod_{i=1}^m |s^1_{\sigma^1(i)} -s^1_{\sigma^1(i-1)}|^{-(d-\theta)/2}.
    \]
    As for the first term, we first apply Lemma~\ref{lem:mutual-chain-1} to obtain
    \[
    \mathbb{E}\left[ \prod_{i=1}^m |W^p(s^p_i) - W^1(s^1_i)|^{-d + \theta} \right] \le \prod_{i=1}^m |s^1_{\sigma^1(i)} - s^1_{\sigma^1(i-1)}|^{-(d - \theta)/4} \mathbb{E} \left[ \prod_{i=1}^m |\overline{W}^p(s^p_{\sigma^1(1)}) - \overline{W}^p(s^p_{\sigma^1(i-1)}) |^{-(d - \theta)/2}\right]
    \]
    The right-hand side is very similar to Lemma~\ref{lem:mutual-chain-2}, except that we have $\overline{W}^p(s^p_i)$ instead of $W^p(s^p_i)$. However, an observation of the proof quickly reveals that the identical proof gives the same bound of
    \[  
    \mathbb{E} \left[ \prod_{i=1}^m |\overline{W}^p(s^p_{\sigma^1(1)}) - \overline{W}^p(s^p_{\sigma^1(i-1)}) |^{-(d - \theta)/2}\right] \le C^m \prod_{i=1}^m |s^p_i - s^p_{i-1}|^{-(d-\theta)/4}.
    \]
    Combining all the above, we have
    \begin{multline*}
        \left|\mathbb{E} \left[ \prod_{i=1}^m f(W^1 (s^1_i) \Delta_{\epsilon}(W^p(s^p_m) - W^1(s^1_m)) \prod_{j=2}^{p-1} \delta(W^j(s^j_i) - W^1(s^1_i)) \right] \right| \\
        \le C^m \epsilon^{\theta m} \prod_{i=1}^m |s^1_{\sigma^1(i)} - s^1_{\sigma^1(i-1)}|^{-\frac{d- (p-1)\theta}{4} -\frac{(p-2)\theta}{2}} \prod_{j=2}^{p-1} \prod_{i=1}^m |s^j_{\sigma^1(i)} - s^j_{\sigma^j(i-1)}|^{-(d - \theta)/2} \\
        \times \prod_{i=1}^m |s^p_i - s^p_{i-1} |^{-\frac{d- (p-1)\theta}{4}}   \left( |s^p_i - s^p_{i-1} |^{-(p+1)\theta/2} + |s^p_{i+1} - s^p_i|^{-(p+1)\theta/2}\right).
    \end{multline*}
    Note that when $d = 3$, we only consider $p = 2$ so there are no terms of the form $|s^j_{\sigma^j(i)} - s^j_{\sigma^j(i-1)}|^{-(d-\theta)/2}$. As such, we can always choose sufficiently small $\theta$ so that the right-hand side is integrable. The proof is completed by integrating both sides over $([0, t]_<^m)^p$ and then summing over all $(m!)^p$ possible orderings.
\end{proof}

\section{LDP for occupation measures: the Mukherjee-Varadhan topology}
\label{sec:MV-top}

In this section, we define the analog of the Mukherjee-Varadhan topology for $p$-fold product measures and prove the LDP for the occupation measures in this topology. The methods and proofs are similar to the original works of Mukherjee and Varadhan \cite{MukherjeeVaradhan2016,Mukherjee2017}, but there is one key difference in how we generalize to joint measures.

\subsection{Compactification of $\widetilde{\mathcal{M}}_1^{\otimes p}(\mathbb{R}^d)$}
\label{subsec:MV-top-def}

Recall the set $\widetilde{\mathcal{X}}_{\le 1}^{\otimes p}(\mathbb{R}^d)$ of \eqref{eq:MV-bdd-set},
\[
    \widetilde{\mathcal{X}}_{\le 1}^{\otimes p}(\mathbb{R}^d) = \left\{ \xi^{\otimes p} = \{ \widetilde{\alpha}_i^{\otimes p} \}_{i \in I} : \widetilde{\alpha}_i^j \in \widetilde{\mathcal{M}}_{\le 1} (\mathbb{R}^d), \sum_{i \in I} \alpha_i^j(\mathbb{R}^d) \le 1 \text{ for all }j, \; I \text{ is at most countable} \right\}.
\]
Clearly, $\widetilde{\mathcal{M}}_1^{\otimes p}$ is a subset of $\widetilde{\mathcal{X}}_{\le 1}^{\otimes p}$ via the inclusion $\widetilde{\alpha} \mapsto \{ \widetilde{\alpha}\}$. In fact, we proceed to show that $\widetilde{\mathcal{M}}_1^{\otimes p}$ is actually a topological subspace of $\widetilde{\mathcal{X}}_{\le 1}^{\otimes p}$, i.e., that the inclusion map is a homeomorphism onto its image. We can also view $\widetilde{\mathcal{X}}_{\le 1}^{\otimes p}(\mathbb{R})$ as a subset of 
\begin{equation}
\label{eq:MV-set}
    \widetilde{\mathcal{X}}^{\otimes p}(\mathbb{R}^d) = \left\{ \xi^{\otimes p} = \{ \widetilde{\alpha}_i^{\otimes p} \}_{i \in I} : \widetilde{\alpha}_i^{\otimes p} \in \widetilde{\mathcal{M}}^{\otimes p}(\mathbb{R}^d), \; \sum_{i \in I} \alpha_i^j(\mathbb{R}^d) < \infty, \;\; I \text{ is at most countable} \right\}.
\end{equation}
In particular, $\emptyset \in \widetilde{\mathcal{X}}^{\otimes p}$. The sets $\{ \widetilde{\alpha}_i \}_{i \in I}$ are unordered but allowed to repeat, i.e., they should be seen as multisets. The zero tuple $(0, \dots , 0)$ is not allowed, as they should be erased to remove redundancy. On the other hand, $\alpha^{\otimes p} = (\alpha^1, \dots , \alpha^p)$ is allowed to contain zero measures $\alpha^j = 0$ as part of its tuple, as long as not all of them are zero.

\begin{remark*}
    The papers \cite{Mukherjee2017} and \cite{ErhardPoisat2026} view $\widetilde{\alpha}^{\otimes p}$ as a product measures in $\mathbb{R}^{dp}$ rather than a $p$-tuple of measures in $\mathbb{R}^d$. Because the $\alpha^j$'s are not necessarily probability measures, this perspective loses some information about the marginals. In particular, this means that elements with zero measures as part of its tuple get ignored (they become the zero measure). We have altered the definition in the way presented above, as we feel this is the more natural generalization of \cite{MukherjeeVaradhan2016} (see the remarks following Example~\ref{ex:profile-decomposition} and Lemma~\ref{lem:projection}).
\end{remark*}

We define a topology on $\widetilde{\mathcal{X}}^{\otimes p}$ through a class of test functions. For any integer $k \ge 1$, define $\mathcal{F}_k(\mathbb{R}^d)$ to be the set of functions $f : (\mathbb{R}^d)^p \to \mathbb{R}$ that is continuous, diagonally shift-invariant in the sense that
\[
f(x_1 + x , x_2 + x , \dots , x_k + x) = f(x_1 , x_2 , \dots , x_k) \quad \text{for all } x_i, x \in \mathbb{R}^d,
\]
and vanishing at infinity, i.e.,
\[  
f(x_1 , x_2 , \dots , x_k) \to 0 \quad \text{as} \quad \max_{i_1 \ne i_2} |x_{i_1} - x_{i_2}| \to \infty.
\]
Note that $\mathcal{F}_1(\mathbb{R}^d)$ only contains the zero map. For a multi-index $\mathbf{k} = (k_1 , \dots , k_p)$ where $k_1, \dots , k_p \ge 0$ are integers, define $| \mathbf{k}| = \sum k_i$ and
\[  
\Lambda_{\mathbf{k}}( f, \xi^{\otimes p}) := \sum_{i \in I} \int f(x_1^1, \dots ,x_{k_1}^1, \dots , x_{k_p}^p) \mathrm{d}\alpha_i^1(x_1^1) \dots \mathrm{d} \alpha_i^1(x_{k_1}^1) \dots \mathrm{d}\alpha_i^p(x_{k_p}^p)
\]
for any $f \in \mathcal{F}_{|\mathbf{k}|}$. We may sometimes omit the subscript when $\mathbf{k}  =(1, \dots,  1)$. This map is well-defined since the integral is diagonally shift-invariant. We equip $\widetilde{\mathcal{X}}^{\otimes p}$ with the weakest topology that is continuous for all $\Lambda^{\otimes p}_{\mathbf{k}}(f , \cdot)$. Since $\mathcal{F}_{|\mathbf{k}|}$ is separable, we may metrize $\widetilde{\mathcal{X}}^{\otimes p}$ with the (pseudo)metric
\begin{equation}
    \label{eq:MV-metric}
    \mathbf{D}^{\otimes p}(\xi^{\otimes p}, \zeta^{\otimes p}) := \sum_{ \mathbf{k}} \sum_{r = 1}^{\infty} \frac{1}{(2p)^{|\mathbf{k}|}} \frac{2^{-r}}{ | \mathbf{k}|(1 + \| f_{k, r} \|_{\sup})} \left| \Lambda_{\mathbf{k}} (f_{k, r}, \xi^{\otimes p}) - \Lambda_{\mathbf{k}} (f_{k, r}, \zeta^{\otimes p}) \right|
\end{equation}
where $\{ f_{k, r} \}_{r \ge 1}$ is a dense subset of $\mathcal{F}_{k}$. We show that $(\widetilde{\mathcal{X}}_{\le 1}^{\otimes p}, \mathbf{D}^{\otimes p})$ is a compact metric space that contains $\widetilde{\mathcal{M}}_1^{\otimes p}$ as a dense subspace.

\begin{lemma}
    $\mathbf{D}^{\otimes p}$ is a metric on $\widetilde{\mathcal{X}}$.
\end{lemma}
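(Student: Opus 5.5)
Non-negativity, symmetry and the triangle inequality are immediate, since $\mathbf{D}^{\otimes p}$ is a weighted sum of terms $|\Lambda_{\mathbf{k}}(f,\xi^{\otimes p})-\Lambda_{\mathbf{k}}(f,\zeta^{\otimes p})|$. Each term is finite because $|\Lambda_{\mathbf{k}}(f,\xi^{\otimes p})| \le \|f\|_{\sup}\sum_i\prod_j \alpha_i^j(\mathbb{R}^d)^{k_j}$, and this is finite by $\sum_i \alpha_i^j(\mathbb{R}^d)<\infty$. On $\widetilde{\mathcal{X}}_{\le 1}^{\otimes p}$ the weights $(2p)^{-|\mathbf{k}|}$ make the series converge, since there are at most $C|\mathbf{k}|^p$-many $\mathbf{k}$ of a given size. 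The only substantive point is therefore separation. If $\mathbf{D}^{\otimes p}(\xi^{\otimes p},\zeta^{\otimes p})=0$, then by density of $\{f_{k,r}\}$ in $\mathcal{F}_k$ (in sup norm) and the bound above, $\Lambda_{\mathbf{k}}(f,\xi^{\otimes p})=\Lambda_{\mathbf{k}}(f,\zeta^{\otimes p})$ for every $\mathbf{k}$ and every $f\in\mathcal{F}_{|\mathbf{k}|}$. We must deduce $\xi^{\otimes p}=\zeta^{\otimes p}$ as multisets of orbits.

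I would follow the Mukherjee--Varadhan argument for $p=1$, adapted to tuples, in three steps.

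\textbf{Step 1: compare the largest masses.} I first want to recover the largest ``masses'' from the functionals. For a tuple $\alpha^{\otimes p}$, choose $f\in\mathcal{F}_{|\mathbf{k}|}$ to be a product of bump functions $\phi_R(x-y)$ over all pairs of variables, with $\phi_R\uparrow 1$. Monotone convergence then gives
\[
\Lambda_{\mathbf{k}}(f,\xi^{\otimes p})\to \sum_i \int \mathbf{1}\{\text{all } |\mathbf{k}| \text{ points coincide}\}\,\cdots
\]
if we instead shrink the support. Shrinking the support captures atoms, which is the wrong object, so I would use the $R\to\infty$ limit. There, $\Lambda_{\mathbf{k}}(\phi_R\text{-product},\xi^{\otimes p})\to\sum_i\prod_j\alpha_i^j(\mathbb{R}^d)^{k_j}$, by dominated convergence with the summable bound above. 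Hence
\[
\sum_i\prod_j a_{ij}^{k_j}=\sum_{i'}\prod_j b_{i'j}^{k_j}\quad\text{for all } \mathbf{k}\neq 0,
\]
where $a_{ij}=\alpha_i^j(\mathbb{R}^d)$ and $b_{i'j}=\beta_{i'}^j(\mathbb{R}^d)$. Taking $|\mathbf{k}|\to\infty$ along directions, these ``power sums'' determine the multiset of mass vectors $\{(a_{i1},\dots,a_{ip})\}$, including those with some zero coordinates; this is exactly why zero entries in tuples are allowed. Concretely, I would write these as moments of the finite measure $\sum_i \delta_{a_i}$ on $[0,M]^p\setminus\{0\}$ tested against monomials. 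Monomials with $\mathbf{k}\neq 0$ vanish at $0$ and separate points, so Stone--Weierstrass on the compactification identifies the measures.

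\textbf{Step 2: identify the profiles.} Order the components by mass, and consider the maximal mass vectors, of which there are finitely many above any threshold. I would then show that the functionals with large $|\mathbf{k}|$ concentrated on a fixed mass vector determine the orbit $\widetilde{\alpha}_i^{\otimes p}$ itself. For this, use $f(x_1,\dots)=g(x_2-x_1,\dots)$ multiplied by high powers of the $\phi_R$-weights raised along the direction that isolates component $i$. Equivalently, the family $\Lambda_{\mathbf{k}}$ for $\mathbf{k}=\mathbf{k}_0+N\mathbf{e}$, normalized by the $N$-th power of the mass, singles out the top component as $N\to\infty$. The orbit of a tuple is determined by the joint laws of the differences $x^j_l-x^1_1$, that is, by integrating all $\mathcal{F}$ functions: these determine the characteristic functions of the differences, and hence $\widetilde{\alpha}^{\otimes p}$.

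\textbf{Step 3: induct.} Subtract the identified components from both multisets and induct; countability together with the summability of masses lets this exhaust everything.

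I expect Step 2 to be the main obstacle. Isolating individual components when several have the same mass vector needs care: in that case one recovers the sum of the corresponding product moments, and must argue, as in Mukherjee--Varadhan, that the multiset of orbits is still determined. I would handle this using the full family of $\mathbf{k}$, which gives the symmetric-power moments $\sum_i (\alpha_i^{\otimes})^{\otimes N}$, and these determine a finite multiset of orbits.
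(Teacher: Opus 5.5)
Your overall plan is the same as the paper's: recover $m^{\mathbf k}=\sum_i\prod_j (m_i^j)^{k_j}$ by letting cutoffs increase to $1$, recover the multiset of mass vectors, isolate the dominant components with large exponents, identify them, peel them off and repeat. The gap is that the step that actually carries the lemma, identifying the orbits, is only asserted.

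You say the functionals determine the characteristic functions of the differences ``and hence'' $\widetilde{\alpha}^{\otimes p}$. That inference is a phase-retrieval (bispectrum-type) problem. Already for $p=1$ you only know products $\widehat{\alpha}(-t_2-\dots-t_k)\,\widehat{\alpha}(t_2)\cdots\widehat{\alpha}(t_k)$, and recovering $\widehat{\alpha}$ up to a character $e^{i\langle t,x\rangle}$ is not automatic once $\widehat{\alpha}$ vanishes on a set. For a block of several components with the same mass vector you only know sums of such products, and your claim that the ``symmetric-power moments determine a finite multiset of orbits'' is exactly what must be proved.

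The missing idea is to read the functionals probabilistically; the paper does this through the law of large numbers for the empirical measures $k_j^{-1}\sum_l \delta_{Y^j_l-Y^1_0}$. Let $T$ be the dominant block, with common mass vector $a^*$ and say $a^*_1>0$. Take $\widetilde g(x_0,\dots)=\prod_{j,l} g^j_l(x^j_l-x_0)$ with $g^j_l\in C_0(\mathbb{R}^d)$. Then the normalized sums $\sum_{i\in T}\Lambda_{\mathbf k+(1,0,\dots,0)}(\widetilde g,\widetilde{\alpha}_i^{\otimes p})$ are the mixed moments of the bounded variables $\langle g^j_l,\beta_I^j\ast\delta_{-X}\rangle$, where $I$ is uniform on $T$ and $X\sim\beta_I^1$. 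Hence they determine the law of the random tuple $(\beta_I^j\ast\delta_{-X})_{j\le p}$. This law is a uniform mixture of laws, each concentrated on one orbit. Since these orbits are closed and pairwise equal or disjoint in $(\mathcal{M}_{\le 1}(\mathbb{R}^d))^p$, the law determines the multiset $\{\widetilde{\alpha}_i^{\otimes p}\}_{i\in T}$. Without this or an equivalent argument, separation is not proved.

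Step~1 needs two easy repairs. First, since $\mathcal{F}_1=\{0\}$, you do not get the identities for $|\mathbf k|=1$: your product of cutoffs is then the constant $1\notin\mathcal{F}_1$. Second, $\sum_i\delta_{a_i}$ is not a finite measure when $I$ is infinite, so density of monomials in $C_0([0,M]^p\setminus\{0\})$ does not by itself identify it. Work instead with the finite measure $\sum_i |a_i|^2\delta_{a_i}$ on $[0,M]^p$. Its integral against a polynomial $P$ is $\sum_i P(a_i)|a_i|^2$, which involves only moments with $|\mathbf k|\ge 2$.

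The isolation step also needs a specified peeling order, because a fixed direction $\mathbf e$ can tie distinct mass vectors, e.g.\ $(1,0)$ and $(1,1)$ under $\mathbf e=(1,0)$. The paper uses the lexicographic maximum $a^*$, with $k_j=0$ where $a^*_j=0$. The remaining $k_j\to\infty$ at separated rates (e.g.\ $k_1\gg k_2\gg\cdots$) so that this block dominates.
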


\begin{proof}
    Symmetry, positivity, and the triangle inequality are trivial, so we only show that $\mathbf{D}^{\otimes p}(\xi_1, \xi_2) = 0$ implies $\xi_1 = \xi_2$. That is, we show that $\xi \in \widetilde{\mathcal{X}}^{\otimes p}$ is uniquely determined by the values of $\Lambda^{\otimes p}(f, \xi)$. Our general strategy is to take large values of $\mathbf{k}$ and use the law of large numbers on the empirical distributions to retrieve $\xi^{\otimes p}$.
    
    To this end, fix some $\xi^{\otimes p} = \{ \widetilde{\alpha}_i^{\otimes p} \}_{i \in I} \in \widetilde{\mathcal{X}}^{\otimes p}$. For each $i \in I$, denote the mass of $\alpha_i^j$ by $m_i^j = \alpha_i^j(\mathbb{R}^d)$ and its renormalized (probability) measure as $\beta_i^j := (m_i^j)^{-1}\alpha_i^j$. We sample $X_{i, 0}^j, X_{i, 1}^j, \dots$ independently from $\beta_i^j$. We use the shorthand $m_i^{\mathbf{k}} = \prod_{j=1}^p (m_i^j)^{k_j}$ and $m^{\mathbf{k}} = \sum_{i \in I} m_i^{\mathbf{k}}$.
    
    Now we describe the integrals $\Lambda(f, \cdot)$ in terms of $X_{i, k}^j$. For any $f \in C_0(\mathbb{R}^{d| \mathbf{k}|})$, define $\widetilde{f} \in \mathcal{F}_{|\mathbf{k}| + 1} (\mathbb{R}^d)$ as $\widetilde{f}(x_0 \dots ,x_k) = f(x_1 - x_0, \dots ,x_k -x_0)$ ($\widetilde{f}$ is indeed diagonally shift-invariant and vanishes at infinity). Then,
    \begin{align*}
    \Lambda_{\mathbf{k} + (1, 0,\dots , 0)} (\widetilde{f}, \xi^{\otimes p}) = \sum_{i \in I} \Lambda_{\mathbf{k} + (1, 0,\dots , 0)}( \widetilde{f}, \widetilde{\alpha}_i^{\otimes p})&= \sum_{i \in I} m_i^1 m_i^{\mathbf{k}} \Lambda_{\mathbf{k} + (1, 0,\dots , 0)} ( \widetilde{f}, \widetilde{\beta}_i^{\otimes p} ) \\
    &= \sum_{i \in I} m_i^1 m_i^{\mathbf{k}} \mathbb{E} f(X_{i, 1}^1 - X_{i, 0}^1, \dots , X_{i, k_p}^{p} - X_{i, 0}^1) \\
    &= m^{\mathbf{k} + (1, 0, \dots, 0)}\mathbb{E} f(Y_1^1 - Y_0^p, \dots , Y_{k_p}^p - Y_0^p ),
    \end{align*}
    where $(Y_0^1, Y_1^1, \dots , Y_{k_1}^1 , \dots , Y_{k_p}^p)$ is distributed as
    \[  
    (Y_0^1, \dots , Y_{k_1}^1 , \dots , Y^p_{k_p}) = (X_{i, 0}^1, \dots , X_{i, k_1}^1 , \dots , X_{i, k_p}^p) \text{ with probability } \frac{m_i^{\mathbf{k} + (1, 0, \dots, 0)}}{m^{\mathbf{k} + (1, 0, \dots, 0)}} = \frac{m_i^1 \prod_{j=1}^p (m_i^j)^{k_j}}{\sum\limits_{i' \in I} m_{i'}^1\prod_{j=1}^p  (m_{i'}^j)^{k_j}}.
    \]
    
    In other words, we may test $(Y_1^1 - Y_0^1 , \dots Y_{k_p}^p - Y_0^p)$ against arbitrary functions in $C_0(\mathbb{R}^{d(k-1)})$. Hence we retrieve its law completely, along with the value of $m^{\mathbf{k}}$. Since we know $m^{\mathbf{k}}$ for any $\mathbf{k}$, we can retrieve the multiset $\{(m_i^1 , \dots, m_i^p)\}_{i \in I}$ via the method of moments. The case where $m_i^1 = 0$ is handled by replacing $X_0^1$ with some other $X_0^j$.

    It only remains to determine the measures $\beta_i^j$. To this end, order the set $\{(m_i^1 , \dots, m_i^p)\}_{i \in I}$ in lexicographically decreasing order, and suppose there are $r$ elements tied for first (there can only be finitely many, as $\sum_i m_i^j$ is finite). We may choose a sequence $\mathbf{k}^n$ of multi-indices such that each $k_j^n$ diverges to infinity (unless $m_i^j = 0$, in which case we take $k_j^n = 0$) and $m_1^{\mathbf{k}}, \dots , m_r^{\mathbf{k}}$ dominates all other terms. Then for large $n$, $Y^{\mathbf{k}}$ is very close to the uniform distribution on $\{ X_1^{\mathbf{k}^n}, \dots, X_r^{\mathbf{k}^n} \}$. Now if we compute the law of
    \[  
    \left( \frac{\sum_{k=1}^{k_1} Y_{k}^1}{k_1} - Y_0^1 , \dots , \frac{\sum_{k=1}^{k_p} Y_{k}^p}{k_p} - Y_0^1\right),
    \]
    which is possible since it only depends on the differences $Y_k^j - Y_0^1$, it will converge to the uniform distribution on $\{ \widetilde{\beta}_1^{\otimes p}, \dots , \widetilde{\beta}_r^{\otimes p} \}$ in $\widetilde{\mathcal{M}}_1^{\otimes p}$. Since we also know the number $r$, this uniquely determines $\widetilde{\alpha}_1^{\otimes p}, \dots , \widetilde{\alpha}_r^{\otimes p}$. By removing $\widetilde{\alpha}_1^{\otimes p}, \dots , \widetilde{\alpha}_r^{\otimes p}$ and repeating this process (possibly infinitely many times), we obtain the entire set $\{ \widetilde{\alpha}_i^{\otimes p} \}$.
\end{proof}

\begin{lemma}
\label{lem:decomposition-convergence}
    A sequence $\widetilde{\mu}^{\otimes p}$ in $\widetilde{\mathcal{M}}_1^{\otimes p}$ converges to $\{ \widetilde{\alpha}_i^{\otimes p} \}$ in $(\widetilde{\mathcal{X}}^{\otimes p}, \mathbf{D}^{\otimes p})$ if there exists a decomposition
    \[
    \mu_n^{\otimes p} = \sum_{i \in I} \alpha_{i, n}^{\otimes p} + \beta_n^{\otimes p}
    \]
    and points $x_{i, n} \in \mathbb{R}^d$ such that
    \begin{enumerate}
        \item $\alpha_{i, n}^j \ast \delta_{x_i^n} \to \alpha_i^j$ weakly for all $i \in I$ and  $j = 1 , \dots , p$,
        \item $\beta_n^j$ \emph{totally disintegrates}, i.e., $\lim_{n \to \infty} \sup_{x \in \mathbb{R}^d} \beta_n^j (B(x, R)) = 0$ for any finite $R > 0$.
        \item Distinct sequences are \emph{widely separated}, i.e., $\lim_{n \to \infty} |x_{i_1}^n - x_{i_2}^n| = \infty$ for any $i_1 \ne i_2$.
    \end{enumerate}
    As a consequence, the inclusion $\widetilde{\mathcal{M}}_1 \hookrightarrow \widetilde{\mathcal{X}}_{\le 1}^{\otimes p}$ is a continuous injection and $\widetilde{\mathcal{M}}_1$ is a dense subset of $\widetilde{\mathcal{X}}_{\le 1}^{\otimes p}$.
\end{lemma}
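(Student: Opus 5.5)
Since the topology on $\widetilde{\mathcal{X}}^{\otimes p}$ is the weakest one making every $\Lambda_{\mathbf{k}}(f,\cdot)$ continuous, and $\mathbf{D}^{\otimes p}$ is a weighted sum of such evaluations with summable weights, it suffices to show that $\Lambda_{\mathbf{k}}(f,\widetilde{\mu}_n^{\otimes p}) \to \Lambda_{\mathbf{k}}(f,\{\widetilde{\alpha}_i^{\otimes p}\})$ for each fixed $\mathbf{k}$ and each $f\in\mathcal{F}_{|\mathbf{k}|}$. Dominated convergence over $r$ and $\mathbf{k}$ then gives convergence in $\mathbf{D}^{\otimes p}$. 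The weights $(2p)^{-|\mathbf{k}|}$ dominate the number of multi-indices of a given size, and every $|\Lambda_{\mathbf{k}}|$ is bounded by $\|f\|_{\sup}$ because all masses are at most $1$.

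Fix $\mathbf{k}$, $f$ and $\eta>0$. Because $f$ vanishes at infinity, choose $R$ with $|f|<\eta$ whenever $\max|x_{a}-x_{b}|>R$. Then choose a finite set $I_0\subset I$ so that the remaining masses $\sum_{i\notin I_0}\alpha_i^j(\mathbb{R}^d)$ are small. By tightness of each limit $\alpha_i^j$ for $i\in I_0$, choose $K$ with $\alpha_i^j(B(0,K)^c)<\eta$, and note this persists for $\alpha_{i,n}^j\ast\delta_{x_i^n}$ for large $n$ along a slightly larger ball by the portmanteau theorem. Now expand
\[
\int f \, \mathrm{d}(\mu_n^1)^{\otimes k_1}\cdots \mathrm{d}(\mu_n^p)^{\otimes k_p}
\]
by multilinearity, writing $\mu_n^j=\sum_{i\in I_0}\alpha_{i,n}^j+\rho_n^j$. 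Here $\rho_n^j$ collects $\beta_n^j$, the pieces with $i\notin I_0$, and the tails of $\alpha_{i,n}^j$ outside $B(x_i^n,K)$.

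The terms split into three types. \emph{Diagonal terms}, in which every factor is the localized piece $\alpha_{i,n}^j|_{B(x_i^n,K)}$ with the same $i$, converge by shift invariance and weak convergence to $\Lambda_{\mathbf{k}}(f,\widetilde{\alpha}_i^{\otimes p})$ up to $O(\eta)$. Here the product of weakly convergent measures converges weakly and $f$ is bounded and continuous. \emph{Mixed terms}, in which two localized pieces with distinct $i_1\ne i_2$ appear, are $O(\eta)$ because the points are at mutual distance at least $|x_{i_1}^n-x_{i_2}^n|-2K>R$ for large $n$, so $|f|<\eta$ there. \emph{Terms containing at least one $\rho_n^j$ factor} need the main care. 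For the tail and $i\notin I_0$ parts the factor has small mass, and since $\|f\|_{\sup}$ is bounded and all other masses are at most $1$, their contribution is small. For $\beta_n^j$, fix the other variables. The integral over that variable splits into the region within distance $R$ of another chosen point and its complement. On the complement $|f|<\eta$. The near region has $\beta_n^j$-mass at most $\sup_x\beta_n^j(B(x,R))\to0$ by total disintegration. Hence these terms vanish in the limit.

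Sending $n\to\infty$ and then $\eta\to0$ (with $I_0\uparrow I$) gives the convergence, and the limit equals $\sum_{i\in I}\Lambda_{\mathbf{k}}(f,\widetilde{\alpha}_i^{\otimes p})$. One subtlety is $\mathbf{k}$ with a single nonzero coordinate equal to $1$, but $\mathcal{F}_1=\{0\}$, so that case is trivial. For the consequences, injectivity of the inclusion follows from $\mathbf{D}^{\otimes p}$ being a metric. Continuity follows by taking the trivial decomposition with $I=\{1\}$ and $x^n=$ the shifts realizing weak convergence. Density follows by building, for a given finite truncation of $\{\widetilde{\alpha}_i^{\otimes p}\}$, the measure $\sum_i\alpha_i^{j}\ast\delta_{nv_i}$ with widely separated translates, plus a missing mass $1-\sum_i\alpha_i^j(\mathbb{R}^d)$ spread uniformly over a ball of radius $n^2$ far away. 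That spread mass totally disintegrates, and the truncation is then removed by a diagonal argument. The main obstacle is the uniform control of the $\beta_n$-cross terms, which the $R$-neighbourhood splitting above handles.
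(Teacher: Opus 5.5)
Your argument follows the paper's route. You reduce to convergence of each $\Lambda_{\mathbf{k}}(f,\cdot)$ and expand multilinearly. The terms split into diagonal ones (weak convergence plus diagonal shift invariance), cross terms between separated profiles (vanishing of $f$ at infinity), and terms containing $\beta_n$ (total disintegration). Continuity then comes from the trivial decomposition and density from an explicit spread-out construction. Your $R$-neighbourhood bound for the $\beta_n$ terms, the localization to balls, and the truncation-plus-diagonal argument for density fill in details the paper defers to Mukherjee--Varadhan.

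One step fails when $I$ is infinite: the handling of the indices $i\notin I_0$. The paper's proof passes over the same point with ``summing over infinitely many terms is not a problem''. You choose $I_0$ so that the \emph{limiting} masses $\sum_{i\notin I_0}\alpha_i^j(\mathbb{R}^d)$ are small. But the factor in your expansion is $\sum_{i\notin I_0}\alpha_{i,n}^j$, and hypothesis (i) is only pointwise in $i$, so nothing prevents mass from drifting to ever larger indices.

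The statement is in fact false without an extra assumption. Take $p=1$, $I=\mathbb{N}$, $x_i^n=in$, $\alpha_{n,n}=\tfrac12\delta_{-n^2}$, $\alpha_{i,n}=0$ for $i\neq n$, and $\beta_n=\tfrac12\,\mathrm{Unif}[0,n]$. Then (i)--(iii) hold with every $\alpha_i=0$, so the claimed limit is $\emptyset$. Yet $\Lambda_2(f,\widetilde{\mu}_n)\to\tfrac14 f(0,0)$ for $f\in\mathcal{F}_2$, so $\widetilde{\mu}_n\not\to\emptyset$.

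The missing hypothesis is $\sum_i\alpha_{i,n}^j(\mathbb{R}^d)\to\sum_i\alpha_i^j(\mathbb{R}^d)$ for each $j$. Equivalently, $\limsup_n\sum_{i\notin I_0}\alpha_{i,n}^j(\mathbb{R}^d)\to 0$ as $I_0\uparrow I$. With it, your tail estimate is valid and the rest of your proof goes through as written.

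This hypothesis holds in the following cases:
\begin{itemize}
\item finite $I$, which is all your continuity argument and your truncated density construction use;
\item the paper's density construction, where the $\alpha_{i,n}^j$ are exact translates of $\alpha_i^j$.
\end{itemize}
When the lemma is invoked for compactness, you would need the profile decomposition to supply this hypothesis as well.
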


We call this the \emph{profile decomposition} of $\mu_n$, following the name used in the literature when $p=1$ (e.g., see \cite[Section~4.5]{Tao2013}).

\begin{remark*}
    Summation of tuples are done entry-wise, i.e., $\alpha_1^{\otimes p} + \alpha_2^{\otimes p} = (\alpha_1^1 + \alpha_2^1, \dots, \alpha_1^p + \alpha_2^p)$. This operation does not behave well under the equivalence relation, in the sense that the sum depends on the representative chosen and so $\widetilde{\alpha}_1^{\otimes p} + \widetilde{\alpha}_2^{\otimes p}$ is not well-defined. Nevertheless, the above lemma is valid since we give additional information on the shift operations.
\end{remark*}

\begin{proof}
    It suffices to show that $\Lambda_{\mathbf{k}}( f, \widetilde{\mu}_n^{\otimes p})$ converges to $\sum_{i \in I} \Lambda_{\mathbf{k}} (\widetilde{\alpha}_i^{\otimes p})$ for any $f \in \mathcal{F}_{|\mathbf{k}|}$. For simplicity, suppose $p = 1$, $|I| = 2$, and $k = 2$. That is, we have the decomposition $\mu_n = \alpha_{1, n} + \alpha_{2, n} + \beta_n$ satisfying the properties (i)-(iii). We see that for any $f \in \mathcal{F}_2$,
    \begin{align*}
        \Lambda_2( f, \mu_n ) &= \int f(x^1, x^2)  (\alpha_{1, n} + \alpha_{2, n} + \beta_n)(\mathrm{d} x^1) (\alpha_{1, n} + \alpha_{2, n} + \beta_n)(\mathrm{d} x^2) \\
        &= \int f(x^1, x^2) \alpha_{1, n} (\mathrm{d} x^1) \alpha_{1, n}(\mathrm{d} x^2) + \int f(x^1, x^2) \alpha_{2, n} (\mathrm{d} x^1) \alpha_{2, n}(\mathrm{d} x^2) \\
        & \qquad + \int f(x^1, x^2) \alpha_{1, n} (\mathrm{d} x^1) \alpha_{2, n}(\mathrm{d} x^2) + \int f(x^1, x^2) \alpha_{2, n} (\mathrm{d} x^1) \alpha_{1, n}(\mathrm{d} x^2) \\
        & \qquad + \int f(x^1, x^2) (\alpha_{1, n} + \alpha_{2, n} + \beta_n)(\mathrm{d} x^1) \beta_n(\mathrm{d} x^2) + \int f(x^1, x^2) \beta_n(\mathrm{d} x^1) (\alpha_{1, n} + \alpha_{2, n})(\mathrm{d} x^2).
    \end{align*}
    The first line equals $\Lambda_2( f, \alpha_{1, n}) + \Lambda_2( f, \alpha_{2, n} )$, which converges to $\Lambda_2( f, \alpha_1)+ \Lambda_2( f, \alpha_2 )= \Lambda_2( f, \{ \widetilde{\alpha}_1 ,\widetilde{\alpha}_2\} )$ by the properties of weak convergence. Meanwhile, the cross-terms of the second line converge to zero since $\alpha_{1, n}$ and $\alpha_{2, n}$ are widely separated, while the third line goes to zero since $\beta_n$ totally disintegrates---see \cite{MukherjeeVaradhan2016} for a detailed proof.

    The above proof easily generalizes to all $p$, $I$, and $\mathbf{k}$. Indeed, we can use the same decomposition to split $\Lambda_{\mathbf{k}}(f, \mu_n^{\otimes p})$ into a sum of integrals, and the only terms that survive are the ones only containing $\alpha_{i, n}^j$'s with the same drift. Since both sides are uniformly bounded by $\| f \|_{\sup}^k$ since $\mu_n^j(\mathbb{R}^d) = 1$, summing over infinitely many terms is not a problem.

    Now suppose $\widetilde{\mu}_n^{\otimes p} \to \widetilde{\mu}^{\otimes p}$ weakly (i.e., in $\widetilde{\mathcal{M}}_1^{\otimes p}$). Since $\widetilde{\mathcal{M}}_1^{\otimes p}$ is a quotient under a continuous group action, this implies that there is a sequence $\{x_n\}$ such that $\mu_n^j \ast \delta_{x_n} \to \mu^j$ weakly for each $j$. In other words, $\mu_n^{\otimes p}$ is its own profile decomposition and hence $\mathbf{D}^{\otimes p} (\widetilde{\mu}_n^{\otimes p} , \widetilde{\mu}^{\otimes p}) \to 0$.
    
    To see that $\widetilde{\mathcal{M}}_1^{\otimes p}$ is dense in $\widetilde{\mathcal{X}}_{\le 1}^{\otimes p}$, we simply construct $\alpha_{i, n}^j = \alpha_i^j \ast \delta_{x_i^n}$ for some choice of $x_i^n$ satisfying condition (iii). We also choose $\beta_n$ to have total mass $\beta_n^j(\mathbb{R}^d) = 1 - \sum_i \alpha_{i, n}^j(\mathbb{R}^d)$ with sufficient spread (e.g., take a Gaussian with variance $n$) so that it totally disintegrates. Now it is easy to see that the sequence $\mu_n^{\otimes p}$ with marginals
    \begin{equation}  
    \label{eq:profile-decomposition}
    \mu_n^j = \sum_{i \in I} \alpha_i^j \ast \delta_{x_i^n} + \beta_n
    \end{equation}
    satisfies the conditions of the lemma and hence converges to $\{ \widetilde{\alpha}_i^{\otimes p} \}_{i \in I}$.
\end{proof}

Next we prove that $\widetilde{\mathcal{X}}_{\le 1}^{\otimes p}$ is compact. Most of the work is done for us by concentration-compactification criterion. The following version is stated in \cite[Theorem~4.5.4]{Tao2013} (slightly rephrased), and \cite{MukherjeeVaradhan2016} also includes an equivalent statement in the proof of their Theorem~3.2.

\begin{lemma}[{\cite[Theorem~4.5.4]{Tao2013}}]
    \label{lem:profile-decomposition}
    Let $\mu_n$ be a sequence of Borel probability measures on $\mathbb{R}^d$. Then, after passing to a subsequence, $\mu_n$ admits a profile decomposition
    \[  
    \mu_n = \sum_{i \in I} \alpha_{i, n} + \beta_n.
    \]
    That is, the decomposition satisfies the conditions of Lemma~\ref{lem:decomposition-convergence}.
\end{lemma}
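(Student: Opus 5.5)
The plan is the classical concentration-compactness argument of Lions, as in Parthasarathy--Ranga Rao--Varadhan, run inductively to extract profiles one at a time. First, pass to a subsequence along which the concentration function $Q_n(R) := \sup_{x \in \mathbb{R}^d} \mu_n(B(x,R))$ converges for every rational $R$. This is possible by a diagonal argument, since the $Q_n$ are monotone in $R$ and take values in $[0,1]$. Set $q_1 := \lim_{R \to \infty} \lim_n Q_n(R)$.

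\emph{Vanishing case.} If $q_1 = 0$, then $\mu_n$ totally disintegrates. We take $I = \emptyset$ and $\beta_n = \mu_n$, and we are done.

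\emph{Concentration case.} Otherwise, choose centers $x_{1,n}$ with $\mu_n(B(x_{1,n},R)) \ge Q_n(R) - 1/n$. The shifted measures $\mu_n \ast \delta_{-x_{1,n}}$ have total mass at most $1$, so by Helly/Prokhorov on the one-point compactification they have a vague limit $\alpha_1$ along a further subsequence. Its mass satisfies $\alpha_1(\mathbb{R}^d) \ge q_1$. Choose radii $R_n \to \infty$ slowly, by diagonalization, so that the truncations $\alpha_{1,n} := \mu_n|_{B(x_{1,n},R_n)}$ satisfy $\alpha_{1,n} \ast \delta_{-x_{1,n}} \to \alpha_1$ \emph{weakly}, not merely vaguely. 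This requires showing no mass escapes in the annulus $B(x_{1,n},R_n)\setminus B(x_{1,n},R)$ for fixed large $R$. If mass did escape, we would simply split it off into the remainder; this is handled by choosing $R_n$ so that $\mu_n(B(x_{1,n},R_n)) \to \alpha_1(\mathbb{R}^d)$, which is possible since $\lim_n \mu_n(B(x_{1,n},R)) \to \alpha_1(\mathbb{R}^d)$ as $R \to \infty$ along continuity radii.

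\emph{Iteration.} Apply the same procedure to the remainder $\mu_n - \alpha_{1,n}$, which is a sub-probability measure. Each new center $x_{2,n}$ must satisfy $|x_{2,n} - x_{1,n}| \to \infty$. Otherwise, after a subsequence, the new mass near $x_{2,n}$ would lie within bounded distance of $x_{1,n}$, but the remainder carries vanishing mass on every fixed ball around $x_{1,n}$ by the choice of $R_n$. Iterating gives profiles $\alpha_1, \alpha_2, \dots$ with masses summing to at most $1$. Pass to a diagonal subsequence over all stages. The remainder $\beta_n$ is then what is left after infinitely many extractions, defined via finite truncations $K_n \to \infty$ slowly.

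\emph{Main obstacle.} The key difficulty is the final diagonalization: showing that $\beta_n$ totally disintegrates. Let $q_k$ denote the concentration level at stage $k$. By construction $\sum_k q_k \le \sum_k \alpha_k(\mathbb{R}^d) \le 1$, so $q_k \to 0$. For the remainder after $K$ steps, the limiting concentration function is at most $q_{K+1}$. We choose $K_n \to \infty$ slowly enough that, for each fixed $R$, $\limsup_n \sup_x \beta_n(B(x,R)) \le q_{K+1}$ for every $K$, which forces it to be $0$. One must also check that the infinitely many separation and weak-convergence properties hold simultaneously along one subsequence. This is standard bookkeeping with $\epsilon_n \to 0$ tolerances, and here I would follow \cite{MukherjeeVaradhan2016} and \cite[Section~4.5]{Tao2013} closely.
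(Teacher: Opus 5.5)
The paper gives no proof of this lemma. It imports it from Tao and remarks that the compactness proof of Mukherjee--Varadhan contains an equivalent statement, so there is no in-paper argument to compare against. Your iterative extraction is the classical Lions / Parthasarathy--Ranga Rao--Varadhan route and works as a self-contained proof. Its steps are: concentration function, vague limit of recentred measures, truncation at slowly growing radii to upgrade to weak convergence, iteration on the remainder, and diagonalisation. The truncation step, the separation argument and the final monotonicity $\beta_n \le \rho^{(K)}_n$ once $K_n \ge K$ are all correct. (For separation: the remainder vanishes on $B(x_{1,n},R_n)$ with $R_n\to\infty$, so a new centre at bounded distance would capture no mass.)

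One claim is false as written, and your ``main obstacle'' paragraph relies on it. Choosing $x_{1,n}$ to nearly maximise $\mu_n(B(x,R))$ at a single radius $R$ only gives $\alpha_1(\bar B(0,R)) \ge \lim_n Q_n(R)$, not $\alpha_1(\mathbb{R}^d) \ge q_1$. For example, take $d=1$, $\mu_n = \frac13\delta_0 + \frac23\,\mathrm{Unif}[n,n+100]$ and $R=1$. The near-maximisers stay within distance $1$ of the atom, so $\alpha_1(\mathbb{R}) = \frac13$, while $q_1 = \frac23$. Hence $\sum_k q_k \le \sum_k \alpha_k(\mathbb{R}^d)$ is unjustified. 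There are two easy repairs.
\begin{enumerate}
\item At stage $k$, choose $R$ with $\lim_n Q^{(k)}_n(R) \ge q_k/2$. Then $\alpha_k(\mathbb{R}^d) \ge q_k/2$, so $\sum_k q_k \le 2$.
\item Keep $R=1$ at every stage, so that $\sum_k Q^{(k)}(1) \le \sum_k \alpha_k(\mathbb{R}^d) \le 1$. Then pass to arbitrary radii with the covering bound $\sup_x \nu(B(x,R)) \le N_d(R)\,\sup_x \nu(B(x,1))$, where $N_d(R)$ is the number of unit balls needed to cover a ball of radius $R$.
\end{enumerate}
With either fix, your last step goes through verbatim.

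Also, in the diagonal bookkeeping, state explicitly that $K_n$ grows slowly enough that $\sum_{i \le K_n} |\alpha_{i,n}(\mathbb{R}^d) - \alpha_i(\mathbb{R}^d)| \to 0$. Conditions (1)--(3) of Lemma~\ref{lem:decomposition-convergence} do not enforce this on their own: setting $\alpha_{i,n} = \mu_n$ if $i=n$ and $0$ otherwise, with $\beta_n = 0$, satisfies them trivially. This mass accounting is what gives uniform control of the tail $\sum_{i>K}$, which that lemma's proof needs to sum over infinitely many profiles.
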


\begin{corollary}
\label{cor:profile-decomposition}
    Given any sequence $\mu_n^{\otimes p} \in (\mathcal{M}_1)^p$, there exists a subsequence with a profile decomposition.
\end{corollary}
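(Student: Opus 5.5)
We extend Lemma~\ref{lem:profile-decomposition} from a single sequence to $p$ sequences by applying it to a combined measure and then disentangling the marginals. The profile decomposition requires a common set of shifts $x_i^n$ shared by all $p$ coordinates, which is exactly the diagonal structure of $\widetilde{\mathcal{M}}^{\otimes p}$. Applying Lemma~\ref{lem:profile-decomposition} separately to each marginal would give unrelated shifts, so we work with the average instead.

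\textbf{Step 1: decompose the average.} Let $\nu_n := \frac{1}{p}\sum_{j=1}^p \mu_n^j$, which is a probability measure on $\mathbb{R}^d$. By Lemma~\ref{lem:profile-decomposition}, after passing to a subsequence,
\[
\nu_n = \sum_{i\in I}\alpha_{i,n}+\beta_n,
\]
with shifts $x_i^n$ such that:
\begin{enumerate}
\item $\alpha_{i,n}\ast\delta_{x_i^n}\to\alpha_i$ weakly;
\item $\beta_n$ totally disintegrates;
\item the $x_i^n$ are widely separated.
\end{enumerate}

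\textbf{Step 2: split each marginal.} Each $\mu_n^j\le p\,\nu_n$, so $\mu_n^j$ is absolutely continuous with respect to $\nu_n$ with density $h_n^j=\mathrm{d}\mu_n^j/\mathrm{d}\nu_n\in[0,p]$. Set
\[
\alpha_{i,n}^j := h_n^j\,\alpha_{i,n},\qquad \beta_n^j := h_n^j\,\beta_n.
\]
This gives $\mu_n^j=\sum_i\alpha_{i,n}^j+\beta_n^j$.

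\textbf{Step 3: verify the three conditions.}
\begin{itemize}
\item Condition (iii) is inherited directly, since the shifts $x_i^n$ are the same as for $\nu_n$.
\item Condition (ii) holds because $\beta_n^j(B(x,R))\le p\,\beta_n(B(x,R))\to 0$ uniformly in $x$.
\item For condition (i), the shifted measures $\alpha_{i,n}^j\ast\delta_{x_i^n}$ are dominated by $p\,\alpha_{i,n}\ast\delta_{x_i^n}$. The dominating sequence converges weakly, so it is tight with bounded mass, and hence so is the dominated one. By Prokhorov and a diagonal argument over the countable index set $I$ and $j=1,\dots,p$, we pass to a further subsequence along which $\alpha_{i,n}^j\ast\delta_{x_i^n}\to\alpha_i^j$ weakly for every $i,j$.
\end{itemize}
Along this final subsequence, all three conditions hold simultaneously. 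The limit tuple satisfies $\sum_j\alpha_i^j=p\,\alpha_i\neq 0$, so no zero tuple appears in the decomposition.

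\textbf{Main obstacle: where the splitting is done.} The delicate point is whether conditions (i)--(iii) are preserved when splitting through the densities $h_n^j$. The decomposition of $\nu_n$ in Lemma~\ref{lem:profile-decomposition} need not consist of mutually singular pieces. If it does not, the natural densities of the pieces do not match $h_n^j$, and the marginals cannot be read off naively. Defining the split via the global density $h_n^j$ relative to $\nu_n$, as in Step 2, sidesteps this: additivity holds automatically, and every estimate only uses the domination $0\le h_n^j\le p$.

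The remaining care is in the diagonal subsequence extraction over countably many indices. Tightness is uniform in $i$ only through the summability of the masses, but this is enough, because weak convergence is needed only for each fixed $i$.
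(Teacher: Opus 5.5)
Your argument is correct, but it takes a different route from the paper's. The paper applies Lemma~\ref{lem:profile-decomposition} to each $\mu_n^j$ separately and then realigns the shifts. It passes to a further subsequence along which every difference $x_{i_1,n}^{j_1}-x_{i_2,n}^{j_2}$ either converges or diverges. It then replaces convergent pairs by a common shift, translating the limit profile by the limiting offset, groups the profiles sharing a shift into one tuple, and pads missing coordinates with zero measures. You instead apply the scalar lemma once, to $\nu_n=\frac1p\sum_j\mu_n^j$, so a single family of widely separated shifts is shared by all coordinates from the start. You then push the decomposition down to the marginals through the densities $h_n^j=\mathrm{d}\mu_n^j/\mathrm{d}\nu_n\le p$, at the cost of one extra Prokhorov/diagonal extraction to produce the limits $\alpha_i^j$.

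Your route buys three things: you avoid the countable pairwise realignment and the bookkeeping of which profiles merge; no zero tuple can occur, since $\sum_j\alpha_i^j=p\,\alpha_i$; and every estimate transfers trivially by domination, including any uniform tail bound on $\sum_{i>N}\alpha_{i,n}$. The paper's route instead makes each tuple literally a collection of the individual marginals' own profiles, and this is what it appeals to for the continuity of $\pi$ in Lemma~\ref{lem:projection}. Your construction still restricts, coordinate by coordinate, to a profile decomposition of each $\mu_n^j$ (pieces with $\alpha_i^j=0$ just carry vanishing mass), so that argument would survive, though less transparently.
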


\begin{proof}
    By Lemma~\ref{lem:profile-decomposition}, we may find a subsequence where each $\mu_n^j$ has a profile decomposition $\mu_n^j = \sum_{i \in I} \alpha_{i, n}^j + \beta_n^j$ (we can share the same index set $I$ simply by taking a disjoint union and allowing zero measures) with shifts $x_{i, n}^j$. Now, by passing to a further subsequence, we may assume that the differences of any pair $x_{i_1, n}^{j_1} - x_{i_2, n}^{j_2}$ is either convergent or diverges to infinity. Moreover, if $x_{i_1, n}^{j_1} - x_{i_2, n}^{j_2}$ converges to some $x \in \mathbb{R}^d$, then we may replace each $x_{i_2, n}^{j_2}$ with $x_{i_1, n}^{j_1}$ and $\alpha_{i_2}^{j_2}$ with $\alpha_{i_2}^{j_2} \ast \delta_x$ and still get a profile decomposition. Hence, we may assume two sequences $x_{i_1, n}^{j_1}$ and $x_{i_2, n}^{j_2}$ are either identical or diverge away from each other.

    Now group the measures which have the same shift (this is clearly an equivalence relation). If $\alpha_{i_1}^1, \dots , \alpha_{i_p}^p$ are in the same group (there cannot be two measures with the same $j$-index in the same group, by the definition of profile decomposition for $\mu_n^j$), take the equivalence class of $(\alpha_{i_1}^1 , \dots , \alpha_{i_p}^p)$ to be an element of $\xi^{\otimes p}$. If some of the $j$-indices are missing, fill them with zero measures and include the tuple in $\xi^{\otimes p}$. It is clear that $\xi^{\otimes p}$ along with $\beta_n^{\otimes p} = (\beta_n^1 , \dots , \beta_n^p)$ satisfies the conditions of Lemma~\ref{lem:decomposition-convergence}.
\end{proof}

\begin{example}
\label{ex:profile-decomposition}
    We give a concrete example of a profile decomposition and compare it to the topology of \cite{Mukherjee2017}. Suppose $p = 3$ and $\mu_n^1$, $\mu_n^2$ and $\mu_n^3$ have decompositions
    \begin{center}
    \label{fig:profile-decomposition}
    \begin{tikzpicture}
    
    \node (y1) at (0, 3) {$\mu_n^{1} =$};
    \node (a11) at (2, 3) {$\alpha_{1,n}^{1}$};
    \node (p1a) at (3, 3) {$+$};
    \node (a21) at (4, 3) {$\alpha_{2,n}^{1}$};
    \node (p1b) at (5, 3) {$+$};
    \node (a31) at (6, 3) {$\alpha_{3,n}^{1}$};
    \node (p1c) at (7, 3) {$+$};
    \node (b1)  at (8, 3) {$\beta_n^{1}$};
    
    \node (y1) at (0, 2) {$\mu_n^{2} =$};
    \node (a12) at (2, 2) {$\alpha_{1,n}^{2}$};
    \node (p2a) at (3, 2) {$+$};
    \node (a22) at (4, 2) {$\alpha_{2,n}^{2}$};
    \node (p2c) at (7, 2) {$+$};
    \node (b2)  at (8, 2) {$\beta_n^{2}$};
    
    \node (y1) at (0, 1) {$\mu_n^{3} =$};
    \node (a13) at (2, 1) {$\alpha_{1,n}^{3}$};
    \node (p3a) at (3, 1) {$+$};
    \node (a23) at (4, 1) {$\alpha_{2,n}^{3}$};
    \node (p3b) at (5, 1) {$+$};
    \node (a33) at (6, 1) {$\alpha_{3,n}^{3}$};
    \node (p3c) at (7, 1) {$+$};
    \node (b3)  at (8, 1) {$\beta_n^{3}$};
    
    \node[draw, rounded corners=18pt,
          fit=(a11) (a12), inner sep=8pt] (big) {};
    \node[draw, rounded corners=18pt,
      fit=(a21) (a22) (a23), inner sep=8pt, draw] (big) {};
    \end{tikzpicture}
    \end{center}
    where the circled groups indicate which drifts coalesce. Then, $\widetilde{\mu}_n^{\otimes p}$ converges to the set with representatives
    \[
    \{ (\alpha_{1}^1, \alpha_{1}^2, 0), (\alpha_{2}^1, \alpha_{2}^2, \alpha_{2}^3), (\alpha_{3}^1, 0, 0), (0, 0, \alpha_{1}^3), (0, 0, \alpha_{3}^3) \}.
    \]
    On the other hand, in the topologies of \cite{Mukherjee2017} and \cite{ErhardPoisat2026}, the limit would simply be $\alpha_2^1 \alpha_2^2 \alpha_2^3$. While this is fine for retrieving the intersection measure, it loses information about the marginal distributions. See Lemma~\ref{lem:projection} for additional benefits.
\end{example}

\begin{lemma}
    The space $\widetilde{\mathcal{X}}_{\le 1}^{\otimes p}$ is compact and contains $\widetilde{\mathcal{M}}_1^{\otimes p}$ as a topological subspace (i.e., $\mathbf{D}^{\otimes p}$ induces the quotient weak topology on $\widetilde{\mathcal{M}}_1^{\otimes p}$). Therefore, $\widetilde{\mathcal{X}}_{\le 1}^{\otimes p}$ is a compactification of $\widetilde{\mathcal{M}}_1^{\otimes p}$ and also its completion under the metric $\mathbf{D}^{\otimes p}$.
\end{lemma}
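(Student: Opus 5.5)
We need to prove three things: that $\widetilde{\mathcal{X}}_{\le 1}^{\otimes p}$ is compact; that $\mathbf{D}^{\otimes p}$ induces the quotient weak topology on $\widetilde{\mathcal{M}}_1^{\otimes p}$; and the completion statement, which follows from these two together with the density in Lemma~\ref{lem:decomposition-convergence}. Since $\mathbf{D}^{\otimes p}$ is a metric, compactness reduces to sequential compactness.

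\textbf{Compactness.} We first treat sequences in $\widetilde{\mathcal{M}}_1^{\otimes p}$. Choose representatives $\mu_n^{\otimes p}$. By Corollary~\ref{cor:profile-decomposition} a subsequence has a profile decomposition, so by Lemma~\ref{lem:decomposition-convergence} it converges in $\mathbf{D}^{\otimes p}$ to some $\xi^{\otimes p}$. This limit lies in $\widetilde{\mathcal{X}}_{\le 1}^{\otimes p}$ because weak limits of the pieces $\alpha_{i,n}^j$ can only lose mass and the pieces have total mass at most $1$.

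For a general sequence $\xi_n^{\otimes p}\in\widetilde{\mathcal{X}}_{\le 1}^{\otimes p}$ we use density (Lemma~\ref{lem:decomposition-convergence}). For each $n$ pick $\widetilde{\mu}_n^{\otimes p}\in\widetilde{\mathcal{M}}_1^{\otimes p}$ with $\mathbf{D}^{\otimes p}(\widetilde{\mu}_n^{\otimes p},\xi_n^{\otimes p})<1/n$. Extract a convergent subsequence of $\widetilde{\mu}_n^{\otimes p}$ as above; the corresponding $\xi_n^{\otimes p}$ converge to the same limit, which lies in $\widetilde{\mathcal{X}}_{\le 1}^{\otimes p}$. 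Closedness of $\widetilde{\mathcal{X}}_{\le 1}^{\otimes p}$ inside $\widetilde{\mathcal{X}}^{\otimes p}$ is thus a byproduct. One caveat: the density construction in \eqref{eq:profile-decomposition} must apply to every element of $\widetilde{\mathcal{X}}_{\le 1}^{\otimes p}$, including those with infinitely many components or with zero entries in a tuple. I expect this to hold, using the same construction with the shifts $x_i^n$ spread along a sparse sequence.

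\textbf{Topological subspace.} Lemma~\ref{lem:decomposition-convergence} already gives one direction: weak convergence in $\widetilde{\mathcal{M}}_1^{\otimes p}$ implies $\mathbf{D}^{\otimes p}$-convergence. For the converse, suppose $\mathbf{D}^{\otimes p}(\widetilde{\mu}_n^{\otimes p},\widetilde{\mu}^{\otimes p})\to0$ with $\widetilde{\mu}^{\otimes p}\in\widetilde{\mathcal{M}}_1^{\otimes p}$. It suffices to show every subsequence has a further subsequence converging weakly modulo diagonal shifts to $\widetilde{\mu}^{\otimes p}$. Pass to a subsequence with a profile decomposition. Its $\mathbf{D}$-limit $\{\widetilde{\alpha}_i^{\otimes p}\}$ must equal $\{\widetilde{\mu}^{\otimes p}\}$, since $\mathbf{D}^{\otimes p}$ is a metric. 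So there is exactly one nonzero profile $\alpha_1^{\otimes p}$, and $\alpha_1^j=\mu^j$ has full mass $1$ for every $j$.

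Mass conservation then forces $\alpha_{1,n}^j(\mathbb{R}^d)\to1$, so the remainders have vanishing mass. Here we should check that the profile pieces can be chosen with $\alpha_{i,n}^j(\mathbb{R}^d)\to\alpha_i^j(\mathbb{R}^d)$, i.e.\ no mass escapes from a profile. This is part of the standard concentration-compactness statement in Lemma~\ref{lem:profile-decomposition}, or follows from tightness after the shift $x_{1}^n$. With the common shift $x_1^n$ we get $\mu_n^j\ast\delta_{x_1^n}\to\mu^j$ weakly for all $j$ simultaneously. Here the diagonal nature of the grouping in Corollary~\ref{cor:profile-decomposition} is used: all $p$ marginals sit in one tuple with the same shift. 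Hence $\widetilde{\mu}_n^{\otimes p}\to\widetilde{\mu}^{\otimes p}$ in the quotient topology.

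\textbf{Completion and main obstacle.} The completion claim follows because a compact metric space is complete and $\widetilde{\mathcal{M}}_1^{\otimes p}$ is dense in it. The main obstacle is the converse direction of the subspace claim, specifically justifying that full limiting mass in one tuple forces tightness of the shifted $\mu_n^j$. My first approach is a direct argument. The test functions $\widetilde f(x_0,x_1)=g(x_1-x_0)$ with $g\in C_0$ determine, through $\Lambda_{(2,0,\dots)}$ and its analogues, the limiting values of $\int\!\!\int g(x-y)\,\mu_n^j(dx)\mu_n^j(dy)$. Taking $g\uparrow 1$, full mass $1$ in the limit gives $\sup_x\mu_n^j(B(x,R))\to1$ uniformly in $n$ as $R\to\infty$, which is tightness up to shifts. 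Applying the same reasoning to mixed indices $j_1\ne j_2$ shows the shifts can be taken common to all $j$.
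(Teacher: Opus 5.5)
Your proposal is correct and follows essentially the same route as the paper. Compactness comes from density together with Corollary~\ref{cor:profile-decomposition} and Lemma~\ref{lem:decomposition-convergence}. For the converse of the subspace claim, you pass to a subsequence with a profile decomposition, use uniqueness of $\mathbf{D}^{\otimes p}$-limits to see that the only profile is $\widetilde{\mu}^{\otimes p}$, and then use mass conservation to force the remainder to vanish.

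Your extra details fill in what the paper leaves implicit: the diagonal argument for general sequences, and spreading the shifts $x_i^n$ when there are countably many profiles. Your direct two-point tightness argument also works but is not needed, because condition (i) of the profile decomposition is weak convergence, which already gives $\alpha_{1,n}^j(\mathbb{R}^d)\to 1$.
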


\begin{proof}
    By Lemma~\ref{lem:decomposition-convergence}, we know that $\widetilde{\mathcal{M}}_1^{\otimes p}$ is a dense subset of $\widetilde{\mathcal{X}}_{\le 1}^{\otimes p}$. Therefore, to show that $\widetilde{\mathcal{X}}_{\le 1}^{\otimes p}$ is compact, it suffices to show that any sequence in $\widetilde{\mathcal{M}}_1^{\otimes p}$ has a subsequence that converges to some $\xi \in \widetilde{\mathcal{X}}_{\le 1}^{\otimes p}$, which is immediate from Lemma~\ref{lem:decomposition-convergence} and Corollary~\ref{cor:profile-decomposition}.

    Now we show that $\mathbf{D}^{\otimes p}$ induces the quotient weak topology on $\widetilde{\mathcal{M}}_1^{\otimes p}$. We already know from Lemma~\ref{lem:decomposition-convergence} that the injection map is a continuous injection, so it only remains to show that $\mathbf{D}^{\otimes p}(\widetilde{\mu}_n^{\otimes p}, \widetilde{\mu}^{\otimes p}) \to 0$ implies $\widetilde{\mu}_n^{\otimes p} \to \widetilde{\mu}^{\otimes p}$ in the usual quotient weak topology of $\widetilde{\mathcal{M}}_1^{\otimes p}$. To this end, take any subsequence of $\mu_n^{\otimes p}$. By Corollary~\ref{cor:profile-decomposition}, there exists a further subsequence (which we suppress in the notation) with decomposition $\mu_n^{\otimes p} = \sum_{i \in I} \alpha_{i, n}^{\otimes p} + \beta_n^{\otimes p}$ satisfying the conditions of Lemma~\ref{lem:decomposition-convergence}. Since we know that $\mathbf{D}^{\otimes p}(\widetilde{\mu}_n^{\otimes p} , \widetilde{\mu}^{\otimes p}) \to 0$, it must be that $\{ \widetilde{\alpha}_i^{\otimes p} \} = \{ \widetilde{\mu}^{\otimes p} \}$. In other words, $\mu_n^{\otimes p}= \mu_n'^{\otimes p} + \beta_n^{\otimes p}$ where $\widetilde{\mu}_n'^{\otimes p} \to \widetilde{\mu}^{\otimes p}$ weakly. Since $\mu_n^{\otimes p}$ and $\mu^{\otimes p}$ are both tuples of probability measures, it must be that $\beta_n^j(\mathbb{R}^d) \to 0$ for each $j$, so $\beta_n^{\otimes p} \to 0$ weakly and hence $\widetilde{\mu}_n^{\otimes p} \to \widetilde{\mu}^{\otimes p}$ in the quotient weak topology. Since this is true for all subsequences of $\widetilde{\mu}_n^{\otimes p}$, we have that $\widetilde{\mu}_n^{\otimes p} \to \widetilde{\mu}^{\otimes p}$ in the quotient weak topology.
\end{proof}

\begin{corollary}
\label{cor:conti-functional}
    The functionals $\xi \mapsto \Lambda (p_{\epsilon}^{\otimes p}, \xi^{\otimes p} )$ and $\xi \mapsto \| \xi \ast p_{\epsilon}\|_q$ defined by
    \[
    \Lambda (p_{\epsilon}^{\otimes p}, \xi^{\otimes p} ) := \sum_{\widetilde{\alpha}_i^{\otimes p} \in \xi^{\otimes p}} \int_{\mathbb{R}^{d}} (\alpha_i^1 \ast p_{\epsilon})(x) (\alpha_i^2 \ast p_{\epsilon})(x) \dots (\alpha_i^p \ast p_{\epsilon})(x)  \mathrm{d} x
    \]
    \[
    \| \xi \ast p_{\epsilon} \|_q^q := \sum_{\widetilde{\alpha}_i \in \xi} \| \alpha_i \ast p_{\epsilon} \|_q^q
    \]
    are continuous functions of $\widetilde{\mathcal{X}}^{\otimes p}$ and $\widetilde{\mathcal{X}}$, respectively.
\end{corollary}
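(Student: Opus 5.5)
The plan is to express both functionals as limits of the test-function integrals $\Lambda_{\mathbf{k}}(f,\cdot)$ that define the topology. Since $\mathbf{D}^{\otimes p}$-convergence means convergence of $\Lambda_{\mathbf{k}}(f,\cdot)$ for every $f \in \mathcal{F}_{|\mathbf{k}|}$, uniform approximation by such functionals will give continuity.

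\textbf{The functional $\Lambda(p_\epsilon^{\otimes p},\cdot)$.} Integrating out $x$ gives
\[
\int \prod_{j=1}^p (\alpha^j \ast p_\epsilon)(x)\,\mathrm{d}x = \int F_\epsilon(y_1,\dots,y_p)\, \mathrm{d}\alpha^1(y_1)\cdots \mathrm{d}\alpha^p(y_p),
\]
where
\[
F_\epsilon(y_1,\dots,y_p) := \int_{\mathbb{R}^d} \prod_{j=1}^p p_\epsilon(y_j - x)\,\mathrm{d}x.
\]
The function $F_\epsilon$ is continuous, bounded by $\|p_\epsilon\|_{\sup}^{p-1}$, and invariant under diagonal shifts. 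It also vanishes at infinity: if $|y_a-y_b|\to\infty$, the product $p_\epsilon(y_a-x)p_\epsilon(y_b-x)$ is uniformly small in $L^1(\mathrm{d}x)$, being a Gaussian factor $e^{-|y_a-y_b|^2/4\epsilon}$ times an integrable kernel. Hence $F_\epsilon \in \mathcal{F}_p$, and $\Lambda(p_\epsilon^{\otimes p},\xi^{\otimes p}) = \Lambda_{(1,\dots,1)}(F_\epsilon,\xi^{\otimes p})$, which is continuous by the definition of the topology. There is one degenerate case: if some tuple has $\alpha_i^j=0$, its term is zero on both sides, so the identity still holds.

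\textbf{The functional $\|\xi\ast p_\epsilon\|_q^q$, integer $q\ge 2$.} The same computation gives $\|\alpha\ast p_\epsilon\|_q^q = \Lambda_q(F_{\epsilon,q},\alpha)$, with $F_{\epsilon,q}(y_1,\dots,y_q)=\int\prod_{j=1}^q p_\epsilon(y_j-x)\,\mathrm{d}x \in \mathcal{F}_q$. Continuity follows exactly as above.

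\textbf{The functional $\|\xi\ast p_\epsilon\|_q^q$, non-integer $q>1$.} This is the main obstacle, since $\|\alpha\ast p_\epsilon\|_q^q$ is not multilinear in $\alpha$. I would approximate $u\mapsto u^q$ on $[0,\|p_\epsilon\|_{\sup}]$ by polynomials. Write $u^q = u^2 g(u)$ with $g(u)=u^{q-2}$ when $q\ge 2$. For $1<q<2$, write $u^q = u\cdot u^{q-1}$ and approximate $u^{q-1}$ uniformly by polynomials $P$ with $P(0)=0$, using Weierstrass on $[0,M]$ together with continuity of $u^{q-1}$ at $0$.

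Each monomial term $\int (\alpha\ast p_\epsilon)^{k}\,\mathrm{d}x$ with $k\ge 2$ is a $\Lambda_k$ functional, as shown above. The monomial $k=1$ term is just the mass $\alpha(\mathbb{R}^d)$, which is not a $\Lambda$-functional, so it must be avoided. This is why I arrange every approximating polynomial to be divisible by $u^2$: replace $u^{q-1}$ by $P(u)$ with $P(0)=0$, so that $uP(u)$ has lowest degree at least $2$.

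The error bound is then
\[
\Bigl|\int (\alpha\ast p_\epsilon)^q - \int (\alpha\ast p_\epsilon) P(\alpha\ast p_\epsilon)\Bigr| \le \delta\,\alpha(\mathbb{R}^d),
\]
where $\delta=\sup|u^{q-1}-P(u)|$. Summed over $i\in I$, this error is at most $\delta$ times the total mass, which is bounded on $\widetilde{\mathcal{X}}_{\le 1}$ and on bounded-mass subsets of $\widetilde{\mathcal{X}}$. Thus $\|\cdot\ast p_\epsilon\|_q^q$ is a uniform limit of continuous functionals, hence continuous.

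The degree-$\ge 2$ Weierstrass approximation with uniform error is the delicate point. I would check it on the compact range $[0,\|p_\epsilon\|_{\sup}]$, which requires $\alpha\ast p_\epsilon\le \|p_\epsilon\|_{\sup}\,\alpha(\mathbb{R}^d)\le \|p_\epsilon\|_{\sup}$ for sub-probability measures.
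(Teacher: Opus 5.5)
Your proof is correct. For the first functional and for integer $q$ it is the paper's argument: both functionals are $\Lambda_{(1,\dots,1)}$, resp.\ $\Lambda_q$, of the test function $\int\prod_j p_\epsilon(y_j-x)\,\mathrm{d}x$, which lies in $\mathcal{F}_p$, resp.\ $\mathcal{F}_q$.

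For non-integer $q$ the two routes differ.

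\textbf{The paper's route.} It reduces to sequences $\widetilde{\mu}_n\in\widetilde{\mathcal{M}}_1$ converging to $\xi$ and takes a profile decomposition $\mu_n=\sum_i\alpha_{n,i}+\beta_n$. It then interpolates between $L^1$ and $L^{\lceil q\rceil}$. The smoothed bumps converge, and the disintegrating part vanishes because $\|\beta_n\ast p_\epsilon\|_{\lceil q\rceil}\to0$ while $\|\beta_n\ast p_\epsilon\|_1\le1$.

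\textbf{Your route.} You write $\|\cdot\ast p_\epsilon\|_q^q$ as a uniform limit on $\widetilde{\mathcal{X}}_{\le 1}$ of finite linear combinations of the defining functionals $\Lambda_k(F_{\epsilon,k},\cdot)$, $k\ge2$. You do this by approximating $u^q$ on $[0,\|p_\epsilon\|_{\sup}]$ with polynomials having no constant or linear term.

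\textbf{Comparison.}
\begin{itemize}
\item Your route is self-contained and never touches the profile-decomposition machinery. It therefore avoids the steps the paper leaves implicit: vanishing of cross terms between widely separated bumps, tail control over countably many bumps, and the diagonal argument passing from sequences in the dense set $\widetilde{\mathcal{M}}_1$ to arbitrary convergent sequences.
\item It also shows exactly why a mass bound enters. The linear monomial is the total mass, which is not continuous in this topology.
\item The paper's route is closer in spirit to the rest of Section~\ref{sec:MV-top} and makes the bump/dust picture explicit.
\end{itemize}

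Your restriction to $\widetilde{\mathcal{X}}_{\le 1}$ (or bounded-mass subsets) is not a weakness of your method. For $1<q<2$ the second functional is genuinely discontinuous on all of $\widetilde{\mathcal{X}}$, so the corollary as stated overclaims. Here is a counterexample:
\begin{itemize}
\item Fix $a\in(q,2)$ and a probability measure $\phi$.
\item Let $\xi_n$ consist of $\lfloor n^a\rfloor$ copies of $\widetilde{\alpha}_n$, where $\alpha_n=n^{-1}\phi$.
\item Then $|\Lambda_k(f,\xi_n)|\le\|f\|_{\sup}n^{a-k}\to0$ for every $k\ge2$, so $\xi_n\to\emptyset$.
\item But $\|\xi_n\ast p_\epsilon\|_q^q=\lfloor n^a\rfloor n^{-q}\|\phi\ast p_\epsilon\|_q^q\to\infty$.
\end{itemize}

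For $q\ge2$ your method extends to all of $\widetilde{\mathcal{X}}$. The factorization $u^q=u^2g(u)$ bounds the error by $\delta\,\Lambda_2(F_{\epsilon,2},\xi)$. The range is controlled by $\|\alpha\ast p_\epsilon\|_{\sup}\le\|p_{\epsilon/2}\|_2\|\alpha\ast p_{\epsilon/2}\|_2$. Both bounds are locally bounded, which gives continuity everywhere.

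The paper's proof likewise only covers limits of sequences from $\widetilde{\mathcal{M}}_1$, i.e.\ points of $\widetilde{\mathcal{X}}_{\le 1}$. That is all Lemma~\ref{lem:lift-ldp} uses.
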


\begin{proof}
    For integers $p \ge 2$, consider the test function
    \[
    p_{\epsilon}^{\otimes p}(x^1 , \dots , x^p) = \int_{\mathbb{R}^d} p_\epsilon(x^1 - x) p_\epsilon(x^2 - x) \dots p_\epsilon(x^p - x) \mathrm{d} x.
    \]
    Clearly, $p_{\epsilon}^{\otimes p} \in \mathcal{F}_p$ and so $\xi^{\otimes p} \mapsto \Lambda( p_{\epsilon}^{\otimes p} , \xi^{\otimes p} )$ and $\xi \mapsto\| \xi \ast p_{\epsilon} \|_p^p = \Lambda_p ( p_{\epsilon}^{\otimes p}, \xi)$ are continuous. For real-valued $q > 1$, it suffices to consider sequences $\widetilde{\mu}_n \in \widetilde{\mathcal{M}}_1(\mathbb{R}^d)$ converging to some $\xi \in \widetilde{\mathcal{X}}(\mathbb{R}^d)$. We know that there exists a decomposition $\mu_n = \sum_{i \in I} \alpha_{n ,i} + \beta_n$, where $\widetilde{\alpha}_{n, i} \to \widetilde{\alpha}_i$ in $\widetilde{\mathcal{M}}_1$ and $\beta_n$ totally disintegrates. If we let $p = \lceil q \rceil$, then
    \begin{equation*}
        \| \alpha_{n, i} \ast p_{\epsilon} \|_p \to \| \alpha_i \ast p_{\epsilon} \|_p, \quad \| \alpha_{n, i} \ast p_{\epsilon} \|_1  =\alpha_{n, i}(\mathbb{R}^d) \to \alpha_{i}(\mathbb{R}^d) = \| \alpha_i\ast p_{\epsilon} \|_1 , \quad \| \beta_n \ast p_{\epsilon} \|_p \to 0, \quad \| \beta_n \ast p_{\epsilon} \|_1 \le 1.
    \end{equation*}
    Hence by interpolation, we may conclude that $\| \widetilde{\mu}_n \ast p_{\epsilon} \|_q \to \| \xi \ast p_{\epsilon} \|_q$. Therefore, $\| \cdot \ast p_{\epsilon} \|_q$ is continous for any $q > 1$.
\end{proof}

\subsection{LDP for occupation measures}
\label{subsec:MV-top-ldp}

In this section, we prove Proposition~\ref{prop:brownian-ldp}. We make use of the Mukherjee-Varadhan LDP in $\widetilde{\mathcal{X}}_{\le 1}(\mathbb{R}^d)$ \cite{MukherjeeVaradhan2016} and the Donsker-Varadhan weak LDP in $(\mathcal{M}_1(\mathbb{R}^d))^p$ \cite{DonskerVaradhan1975, DonskerVaradhan1976, DonskerVaradhan1983}. To this end, define the map $\pi^{\otimes p} = (\pi^1 , \dots , \pi^p) : \widetilde{\mathcal{X}}^{\otimes p} \to (\widetilde{\mathcal{X}})^p$ defined by
\[  
\pi(\{ \widetilde{\alpha}_i^{\otimes p} \}_{i \in I}) = (\pi^1(\{ \widetilde{\alpha}^{\otimes p}_i \}) , \dots , \pi^p(\{ \widetilde{\alpha}_i^{\otimes p} \})= (\{ \widetilde{\alpha}_i^1\}_{i \in I}, \dots, \{ \widetilde{\alpha}_i^p\}_{i \in I}) 
\]
In other words, $\pi$ is a projection that forgets the joint diagonal shift of the measures, and maps each coordinate to its own equivalence class in $\widetilde{\mathcal{X}}$ (and deleting all zero measures). We shall also use the shorthand $\xi^j = \pi^j(\xi^{\otimes p})$ to denote each marginal, i.e., $\pi(\xi^{\otimes p}) = (\xi^1 , \dots ,\xi^p)$. Note that for singletons $\widetilde{\mu}^{\otimes p} \in \widetilde{\mathcal{M}}_1^{\otimes p}$, this gives the usual quotient map onto the marginals, $(\widetilde{\mu}^1, \dots , \widetilde{\mu}^p)$.

\begin{lemma}
\label{lem:projection}
    The map $\pi$ is a continuous surjection. Moreover, $\mathcal{I}(\xi^{\otimes p}) = \sum_{j=1}^p \mathcal{I}(\xi^j)$ and $\mathcal{I}(\xi^{\otimes p})$ is lower semicontinuous.
\end{lemma}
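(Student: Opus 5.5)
We prove the three assertions in turn: continuity of $\pi$, surjectivity of $\pi$, and the additivity plus lower semicontinuity of $\mathcal{I}$.

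\emph{Continuity.} It suffices to check continuity of each coordinate $\pi^j$ into $(\widetilde{\mathcal{X}}, \mathbf{D})$. By definition of the topology on $\widetilde{\mathcal{X}}$, this means showing that $\xi^{\otimes p} \mapsto \Lambda_k(f, \pi^j(\xi^{\otimes p}))$ is continuous for every $k$ and every $f \in \mathcal{F}_k$. Take the multi-index $\mathbf{k}$ with $k_j = k$ and $k_{j'} = 0$ for $j' \ne j$, and regard $f$ as an element of $\mathcal{F}_{|\mathbf{k}|}$. Then
\[
\Lambda_{\mathbf{k}}(f, \xi^{\otimes p}) = \sum_{i \in I} \int f \, \mathrm{d}(\alpha_i^j)^{\otimes k} = \Lambda_k(f, \xi^j).
\]
Tuples whose $j$-th entry is zero contribute nothing to either side, which is exactly why $\pi^j$ may delete zero measures. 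Since $\Lambda_{\mathbf{k}}(f,\cdot)$ is continuous on $\widetilde{\mathcal{X}}^{\otimes p}$ by construction, $\pi^j$ is continuous.

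\emph{Surjectivity.} Given $(\xi^1, \dots, \xi^p)$ with $\xi^j = \{\widetilde{\alpha}_i^j\}_{i \in I_j}$, form the multiset of tuples $(0, \dots, \alpha_i^j, \dots, 0)$, with $\alpha_i^j$ in slot $j$, over all $j$ and $i \in I_j$. This lies in $\widetilde{\mathcal{X}}^{\otimes p}$, and its image under $\pi$ is $(\xi^1, \dots, \xi^p)$. Total masses are preserved, so this also maps $\widetilde{\mathcal{X}}_{\le 1}^{\otimes p}$ onto $(\widetilde{\mathcal{X}}_{\le 1})^p$.

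\emph{Additivity of $\mathcal{I}$.} This is a rearrangement of the nonnegative double sum in \eqref{eq:rate-function}:
\[
\sum_i \sum_j \tfrac12 \|\nabla \psi_i^j\|_2^2 = \sum_j \sum_i \tfrac12 \|\nabla \psi_i^j\|_2^2 .
\]
Zero entries have $\psi_i^j = 0 \in H^1$ and contribute zero. Moreover, $\mathcal{I}(\xi^{\otimes p}) < \infty$ if and only if every $\psi_i^j \in H^1$ with finite total sum, which holds if and only if each $\mathcal{I}(\xi^j) < \infty$. So the identity holds in $[0, \infty]$, with $\mathcal{I}(\xi^j)$ the $p = 1$ Mukherjee--Varadhan rate function.

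\emph{Lower semicontinuity.} Combine the two previous parts: $\mathcal{I}(\xi^{\otimes p}) = \sum_j \mathcal{I}(\pi^j(\xi^{\otimes p}))$. Each $\mathcal{I}$ on $\widetilde{\mathcal{X}}$ is lower semicontinuous, and each $\pi^j$ is continuous. A composition of a lower semicontinuous function with a continuous map is lower semicontinuous, and a finite sum of such functions is lower semicontinuous.

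The main obstacle is the lower semicontinuity of the one-coordinate rate function. On $\widetilde{\mathcal{X}}_{\le 1}$ this is the $p = 1$ case, established in \cite{MukherjeeVaradhan2016} as part of their goodness statement, and we would invoke it there. One has to check that their argument does not use total mass at most one. If it does, we would instead prove it directly via the variational formula
\[
\mathcal{I}(\xi) = \sup_{\phi}\, \sum_i \Bigl(-\int \tfrac{\Delta u}{u}\, \mathrm{d}\alpha_i\Bigr),
\]
localised as in their paper.
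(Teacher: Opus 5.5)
Your proof is correct. For surjectivity, additivity of $\mathcal{I}$, and lower semicontinuity it matches the paper. The paper calls the first two trivial and gets the third exactly as you do, by composing the Mukherjee--Varadhan lower semicontinuity with the continuous maps $\pi^j$. Your worry about masses exceeding one does not arise on $\widetilde{\mathcal{X}}_{\le 1}^{\otimes p}$, which is the only place the lemma is used.

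The continuity argument, however, takes a genuinely different route. The paper restricts to sequences $\widetilde{\mu}_n^{\otimes p}\in\widetilde{\mathcal{M}}_1^{\otimes p}$ converging to $\xi^{\otimes p}\in\widetilde{\mathcal{X}}_{\le 1}^{\otimes p}$. It extracts joint profile decompositions as in Corollary~\ref{cor:profile-decomposition} and observes that these split into profile decompositions of each marginal, so $\widetilde{\mu}_n^j\to\xi^j$. Extending from the dense subspace to arbitrary points then needs an implicit diagonal argument.

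You instead note that with $\mathbf{k}=(0,\dots,0,k,0,\dots,0)$ ($k$ in slot $j$) one has $\Lambda_k(f,\pi^j(\cdot))=\Lambda_{\mathbf{k}}(f,\cdot)$. So every defining functional of the target topology pulls back to a defining functional of the source topology. What your route buys:
\begin{itemize}
\item it needs no concentration-compactness input;
\item it works at every point of $\widetilde{\mathcal{X}}^{\otimes p}$, including elements with total mass above one, which the paper's density argument, as written, does not cover;
\item it shows exactly why the paper's convention makes $\pi$ continuous, namely keeping tuples with zero entries and allowing multi-indices with vanishing components. In the product-measure topology of \cite{Mukherjee2017}, every test functional integrates against all coordinates at once, so no such pullback exists.
\end{itemize}
What the paper's route offers instead is a concrete description of the marginal limits: they are read off as the coordinates of the limiting tuples in a joint profile decomposition.
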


\begin{proof}
Surjectivity and $\mathcal{I}(\xi^{\otimes p}) = \sum_{j=1}^p \mathcal{I}(\xi^j)$ are trivial. To prove continuity, it suffices to consider sequences $\widetilde{\mu}_n^{\otimes p} \in \widetilde{\mathcal{M}}_1^{\otimes p}$ converging to some $\xi^{\otimes p} \in \widetilde{\mathcal{X}}_{\le 1}^{\otimes p}$. At this point, we may observe the proof of Corollary~\ref{cor:profile-decomposition} to see that when $\widetilde{\mu}_n^{\otimes p} \to \xi^{\otimes p}$ in $\widetilde{\mathcal{X}}^{\otimes p}$, each component $\widetilde{\mu}_n^j$ also converges to $\xi^j$ in $\widetilde{\mathcal{X}}$. Therefore we may deduce that the maps $\pi^j : \xi \mapsto \xi^j$ are continuous, and thus so is $\pi$. We know from \cite{MukherjeeVaradhan2016} that each $\mathcal{I}(\xi^j)$ is lower semicontinuous, so $\mathcal{I}(\xi^{\otimes p})$ is also lower semicontinuous.
\end{proof}

\begin{remark*}
    This lemma is another reason we chose to divert from the definition of \cite{Mukherjee2017} and \cite{ErhardPoisat2026}. Indeed, with the definition used there, the map $\pi$ does not satisfy any of the three properties (continuity, surjectivity, and $\mathcal{I}(\xi^{\otimes p}) = \sum \mathcal{I}(\xi^j)$) as it loses too much information.
\end{remark*}

\begin{lemma}
    For any closed set $F \subseteq \widetilde{\mathcal{X}}_{\le 1}^{\otimes p}$,
    \[
    \limsup_{t \to \infty} \frac{1}{t} \log \mathbb{P}( \widetilde{L}_t^{\otimes p} \in F) \le - \inf_{ \xi^{\otimes p} \in F} \mathcal{I}(\xi^{\otimes p}).
    \]
\end{lemma}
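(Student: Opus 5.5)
We prove the upper bound by reducing to the one-dimensional Mukherjee--Varadhan upper bound through the projection $\pi$ of Lemma~\ref{lem:projection}, combined with independence and a covering argument. Since $\widetilde{\mathcal{X}}_{\le 1}^{\otimes p}$ is a compact metric space, any closed $F$ is compact. By the standard reduction (cover $F$ by finitely many small balls), it suffices to prove the local estimate
\[
\lim_{\delta \to 0} \limsup_{t \to \infty} \frac{1}{t} \log \mathbb{P}\bigl( \widetilde{L}_t^{\otimes p} \in B(\xi^{\otimes p}, \delta) \bigr) \le -\mathcal{I}(\xi^{\otimes p})
\]
for every $\xi^{\otimes p} \in \widetilde{\mathcal{X}}_{\le 1}^{\otimes p}$. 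When $\mathcal{I} = \infty$, this should be read with $-M$ for arbitrary $M$ on the right.

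To get the local estimate, we use continuity of $\pi$. Given $\eta > 0$, we choose neighborhoods $U^j$ of $\xi^j$ in $\widetilde{\mathcal{X}}_{\le 1}(\mathbb{R}^d)$. Lower semicontinuity of $\mathcal{I}$ on $\widetilde{\mathcal{X}}_{\le 1}$ from \cite{MukherjeeVaradhan2016} gives $\inf_{\overline{U^j}} \mathcal{I} \ge \mathcal{I}(\xi^j) - \eta$, or $\ge M$ if $\mathcal{I}(\xi^j) = \infty$. By continuity of $\pi$, there is $\delta > 0$ with $\pi(B(\xi^{\otimes p}, \delta)) \subseteq \prod_j U^j$. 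On the event $\widetilde{L}_t^{\otimes p} \in B(\xi^{\otimes p},\delta)$, each marginal satisfies $\widetilde{L}_t^j = \pi^j(\widetilde{L}_t^{\otimes p}) \in U^j$. Here we use that for singletons $\pi$ is just the marginal quotient map. Since $W^1, \dots, W^p$ are independent,
\[
\mathbb{P}\bigl(\widetilde{L}_t^{\otimes p} \in B(\xi^{\otimes p},\delta)\bigr) \le \prod_{j=1}^p \mathbb{P}\bigl(\widetilde{L}_t^j \in \overline{U^j}\bigr).
\]
We then apply the closed-set upper bound of the $p=1$ Mukherjee--Varadhan LDP to each factor. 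This gives a limsup bounded by $-\sum_j \inf_{\overline{U^j}} \mathcal{I} \le -\sum_j \mathcal{I}(\xi^j) + p\eta$. By the identity $\mathcal{I}(\xi^{\otimes p}) = \sum_j \mathcal{I}(\xi^j)$ from Lemma~\ref{lem:projection}, this equals $-\mathcal{I}(\xi^{\otimes p}) + p\eta$. Letting $\eta \to 0$ gives the local bound. The finite cover of $F$ then gives
\[
\limsup_{t\to\infty} \tfrac{1}{t}\log \mathbb{P}(\widetilde{L}_t^{\otimes p} \in F) \le -\min_k \mathcal{I}(\xi_k^{\otimes p}) + p\eta \le -\inf_F \mathcal{I} + p\eta.
\]

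The main point requiring care is that $\pi$ be genuinely continuous at every point of the compact space, not merely along sequences from $\widetilde{\mathcal{M}}_1^{\otimes p}$. Lemma~\ref{lem:projection} only checks sequences in $\widetilde{\mathcal{M}}_1^{\otimes p}$. Here the measures being projected are the singletons $\widetilde{L}_t^{\otimes p} \in \widetilde{\mathcal{M}}_1^{\otimes p}$, so I would argue by contradiction instead of invoking continuity on all of $\widetilde{\mathcal{X}}_{\le 1}^{\otimes p}$. Suppose that for every $\delta$ some $\widetilde{\mu}^{\otimes p} \in \widetilde{\mathcal{M}}_1^{\otimes p} \cap B(\xi^{\otimes p},\delta)$ has $\widetilde{\mu}^j \notin U^j$. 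Letting $\delta \to 0$ yields a sequence in $\widetilde{\mathcal{M}}_1^{\otimes p}$ converging to $\xi^{\otimes p}$ whose $j$-th marginals stay outside $U^j$. This contradicts the sequential statement proved in Lemma~\ref{lem:projection}. So the inclusion $\pi^j(B(\xi^{\otimes p},\delta)\cap \widetilde{\mathcal{M}}_1^{\otimes p}) \subseteq U^j$ holds for small $\delta$, which is all the argument needs.
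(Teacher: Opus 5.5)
Your proof is correct and follows the paper's argument. Like the paper, you pull back a product of marginal neighborhoods through $\pi$, use independence of $W^1,\dots,W^p$ to factor the probability, apply the $p=1$ Mukherjee--Varadhan upper bound to each factor together with $\mathcal{I}(\xi^{\otimes p})=\sum_j \mathcal{I}(\xi^j)$, and conclude by compactness; your handling of closures, infinite rate values, and the fact that continuity of $\pi$ is only needed at points of $\widetilde{\mathcal{M}}_1^{\otimes p}$ just makes explicit details the paper leaves implicit.
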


\begin{proof}
    Since $\widetilde{\mathcal{X}}_{\le 1}^{\otimes p}$ is compact, it suffices to show that for any $\xi^{\otimes p} \in \widetilde{\mathcal{X}}_{\le 1}^{\otimes p}$ and $\epsilon > 0$, there exists some open neighborhood $U_{\epsilon}(\xi^{\otimes p})$ of $\xi^{\otimes p}$ such that
    \[  
    \limsup_{t \to \infty} \frac{1}{t} \log \mathbb{P}(\widetilde{L}_t^{\otimes p} \in U_{\epsilon}(\xi^{\otimes p})) \le -\mathcal{I}(\xi^{\otimes p}) + \epsilon.
    \]
    To this end, define neighborhoods $U^j_{\epsilon}(\xi^j)$ of $\xi^j$ in $\widetilde{\mathcal{X}}_{\le 1}$ such that
    \[
    \limsup_{t \to \infty} \frac{1}{t} \log \mathbb{P}(\widetilde{L}_t^j \in U^j_{\epsilon}(\xi^j)) \le -\mathcal{I}(\xi^j) + \frac{\epsilon}{p}.
    \]
    Such sets exist since we have an LDP for single Brownian motions in $\widetilde{\mathcal{X}}_{\le 1}(\mathbb{R}^d)$ as shown by \cite{MukherjeeVaradhan2016}. Now if we take $U_{\epsilon}(\xi^{\otimes p}) (\pi^{\otimes p})^{-1}(U^1_{\epsilon}(\xi^1) \times \dots \times U^p_{\epsilon}(\xi^p))$ (which is open since $\pi$ is continous), then
    \begin{align*}
        \mathbb{P}(\widetilde{L}_t^{\otimes p} \in U_{\epsilon}(\xi^{\otimes p})) &\le \mathbb{P}(\pi(\widetilde{L}_t^{\otimes p}) \in \pi_{\epsilon}(U(\xi^{\otimes p})) \\
        &= \mathbb{P}( (\widetilde{L}_t^1 , \dots , \widetilde{L}_t^p) \in U^1_{\epsilon} \times \dots \times U^p_{\epsilon}) \\
        &= \mathbb{P} (\widetilde{L}_t^1 \in U^1_{\epsilon}) \times \dots \times \mathbb{P} (\widetilde{L}_t^p \in U^p_{\epsilon})
    \end{align*}
    and hence
    \[  
    \limsup_{t \to \infty} \frac{1}{t} \log \mathbb{P}(\widetilde{L}_t^{\otimes p} \in U_{\epsilon}(\xi^{\otimes p})) \le - \sum_{j=1}^p \mathcal{I}(\xi^j) + \epsilon = -\mathcal{I}(\xi^{\otimes p}) + \epsilon.
    \]
\end{proof}

\begin{lemma}
    For any open set $G \subseteq \widetilde{\mathcal{X}}_{\le 1}^{\otimes p}$,
    \[  
    \liminf_{t \to \infty} \frac{1}{t} \log \mathbb{P}(\widetilde{L}_t^{\otimes p} \in G) \ge - \inf_{\xi^{\otimes p} \in G} \mathcal{I}(\xi^{\otimes p}).
    \]
\end{lemma}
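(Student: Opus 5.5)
It suffices to show that for every $\xi^{\otimes p} \in G$ with $\mathcal{I}(\xi^{\otimes p}) < \infty$ and every $\eta > 0$,
\[
\liminf_{t \to \infty} \frac{1}{t} \log \mathbb{P}(\widetilde{L}_t^{\otimes p} \in G) \ge -\mathcal{I}(\xi^{\otimes p}) - \eta .
\]
Unlike the upper bound, we cannot simply pass through the projection $\pi$. Preimages under $\pi$ of neighborhoods of the marginals are too large, because the joint diagonal shifts are forgotten. Instead we approximate $\xi^{\otimes p}$ from inside $\widetilde{\mathcal{M}}_1^{\otimes p}$ and then invoke the Donsker--Varadhan weak LDP lower bound in $(\mathcal{M}_1(\mathbb{R}^d))^p$, i.e.\ the product of $p$ independent copies of \eqref{eq:DV-LDP}. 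Its lower bound holds for weakly open sets, with rate $\frac12\sum_j \|\nabla \sqrt{d\mu^j/dx}\|_2^2$.

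The first step is to truncate. Write $\xi^{\otimes p} = \{\widetilde{\alpha}_i^{\otimes p}\}_{i\in I}$ with $\psi_i^j \in H^1$. Since $\sum_i \mathcal{I}(\widetilde\alpha_i^{\otimes p})<\infty$ and the masses are summable, we pick a finite subset $I_0 \subset I$. The tail $\{\widetilde\alpha_i^{\otimes p}\}_{i\notin I_0}$ has small mass, so $\mathbf{D}^{\otimes p}$ changes only slightly when we drop it. The reason is that each $\Lambda_{\mathbf{k}}(f,\cdot)$ is controlled by $\|f\|_{\sup}$ times the mass of the discarded pieces.

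The second step is to build, as in \eqref{eq:profile-decomposition}, a tuple of probability densities
\[
\mu_n^j = \sum_{i \in I_0} \alpha_i^j \ast \delta_{x_i^n} + \beta_n^j,
\]
where the $x_i^n$ are widely separated and $\beta_n^j$ is a spread-out bump carrying the missing mass $1-\sum_{i\in I_0}\alpha_i^j(\mathbb{R}^d)$. For instance, take $\beta_n^j$ to have density $c\,\phi(\cdot/n)^2 n^{-d}$ with $\phi$ smooth and compactly supported, so its Dirichlet energy is $O(n^{-2})$. To keep the energy of the sum close to $\sum_{i\in I_0}\frac12\|\nabla\psi_i^j\|_2^2$, we first replace each $\psi_i^j$ by a compactly supported $H^1$ approximation, cut off smoothly. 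This costs at most $\eta$ in energy and a small $\mathbf{D}^{\otimes p}$-perturbation. After this replacement, the square root of $\mu_n^j$ on each disjoint support is exactly the corresponding piece; only the $\beta_n^j$ part needs care.

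To handle $\beta_n^j$, we choose its support disjoint from the translated compact supports. Alternatively, we use $\sqrt{a+b}$ estimates, or place $\beta_n^j$ far away. This gives
\[
\frac12\sum_j\Big\|\nabla\sqrt{\mu_n^j}\Big\|_2^2 \le \mathcal{I}(\xi^{\otimes p}) + 2\eta
\]
for large $n$. By Lemma~\ref{lem:decomposition-convergence}, $\widetilde{\mu}_n^{\otimes p} \to$ the truncated $\xi^{\otimes p}$. So for $n$ large, $\widetilde{\mu}_n^{\otimes p}$ lies in $G$; fix such an $n$.

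The third step uses that $G \cap \widetilde{\mathcal{M}}_1^{\otimes p}$ is open in the quotient weak topology, since $\widetilde{\mathcal{M}}_1^{\otimes p}$ is a topological subspace. Its preimage $U$ under the quotient map $(\mathcal{M}_1)^p \to \widetilde{\mathcal{M}}_1^{\otimes p}$ is then weakly open and contains $\mu_n^{\otimes p}$. The Donsker--Varadhan lower bound for the independent $L_t^1,\dots,L_t^p$ gives
\[
\liminf_{t\to\infty} \frac1t\log \mathbb{P}(L_t^{\otimes p}\in U) \ge -\frac12\sum_j \Big\|\nabla\sqrt{\mu_n^j}\Big\|_2^2 .
\]
Since $\{L_t^{\otimes p}\in U\}\subseteq\{\widetilde L_t^{\otimes p}\in G\}$, the claim follows after letting $\eta\to0$.

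The main obstacle is the second step: making sure the square-root energy of the glued density does not exceed the sum of the pieces' energies, given that $\sqrt{\cdot}$ is not additive. The disjoint-support reduction via compactly supported approximants, together with the density of such approximants in the $\mathbf{D}^{\otimes p}$ topology, is the approach I would try first.
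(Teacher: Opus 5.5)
Your proposal is correct, and its skeleton matches the paper's. In both, you approximate $\xi^{\otimes p}$ by the profile-decomposition sequence $\mu_n^j=\sum_i \alpha_i^j\ast\delta_{x_i^n}+\beta_n^j$ of Lemma~\ref{lem:decomposition-convergence}. You then show its energy is at most $\mathcal{I}(\xi^{\otimes p})+o(1)$ and invoke the Donsker--Varadhan lower bound on $(\mathcal{M}_1(\mathbb{R}^d))^p$.

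The difference is in how you handle the step you flagged as the main obstacle. The paper does not truncate, cut off, or separate supports. It uses that $\mu\mapsto\frac12\|\nabla\sqrt{\mathrm{d}\mu/\mathrm{d}x}\|_2^2=\frac18\int|\nabla f|^2/f$ is subadditive on $\mathcal{M}_{\le 1}$. This holds because the functional is convex and positively $1$-homogeneous; pointwise, $|a+c|^2/(b+d)\le |a|^2/b+|c|^2/d$ by Cauchy--Schwarz. Hence $\mathcal{I}(\mu_n^j)\le\sum_i\mathcal{I}(\alpha_i^j)+\mathcal{I}(\beta_n^j)\le \mathcal{I}(\xi^j)+C/n$, even when the pieces overlap, the index set is infinite, and $\beta_n^j$ is a Gaussian of variance $n$ lying on top of everything. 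Lower semicontinuity then gives $\mathcal{I}(\widetilde\mu_n^{\otimes p})\to\mathcal{I}(\xi^{\otimes p})$.

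Your route replaces this single inequality with three approximation steps, each of which is valid:
\begin{itemize}
\item finite truncation, controlled by the mass bound on $\Lambda_{\mathbf{k}}$;
\item compactly supported cutoffs of the $\psi_i^j$;
\item placing $\beta_n^j$ far from the translated pieces.
\end{itemize}
This is more elementary, since it needs no convexity of the Fisher information, but it is longer.

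Your explicit pull-back of $G\cap\widetilde{\mathcal{M}}_1^{\otimes p}$ to a weakly open subset of $(\mathcal{M}_1)^p$, combined with independence of the $L_t^j$ over basic open rectangles, also spells out a step the paper leaves as ``a direct consequence'' of the weak LDP.
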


\begin{proof}
    We claim that any $\xi^{\otimes p}$ can be approximated by a sequence $\widetilde{\mu}_n^{\otimes p} \in \widetilde{\mathcal{M}}_1^{\otimes p}$ such that $\widetilde{\mu}_n^{\otimes p} \to \xi^{\otimes p}$ and $\mathcal{I}(\widetilde{\mu}_n^{\otimes p}) \to \mathcal{I}(\xi^{\otimes p})$. Recall the construction at the end of Lemma~\ref{lem:decomposition-convergence}. That is,
    \[  
    \mu_n^j = \sum_{i \in I} \alpha_i^j \ast \delta_{x_i^n} + \beta_n^j,
    \]
    where $\beta_n$ (after normalization) is distributed as a Gaussian with variance $n$. Since $\mathcal{I}(\cdot)$ is subadditive on $\mathcal{M}_{\le 1}$, this gives
    \[
        \mathcal{I}(\mu_n^j) \le \sum_{i \in I} \mathcal{I}(\alpha_i^j) + \mathcal{I}(\beta_n^j) \le \mathcal{I}(\xi^j) + \frac{C}{n}
    \]
    and so $\limsup_{n \to \infty} \mathcal{I}(\widetilde{\mu}_n^{\otimes p}) \le \mathcal{I}(\xi^{\otimes p})$. Combined with the fact that $\mathcal{I}$ is lower semicontinuous, this implies our claim.

    Therefore, we may restrict to $\widetilde{\mathcal{M}}_1^{\otimes p}$ and reduce our lemma to proving
    \[  
    \liminf_{t \to \infty} \frac{1}{t} \log \mathbb{P}(\widetilde{L}_t^{\otimes p} \in G) \ge - \inf \{ \mathcal{I}(\mu_n^{\otimes p}) : \mu_n^{\otimes p} \in (\mathcal{M}_1)^p, \; \widetilde{\mu}_n^{\otimes p} \in G \},
    \]
    which is a direct consequence of the classical Donsker-Varadhan weak LDP on $(\mathcal{M}_1(\mathbb{R}^d)^p$.
\end{proof}

\section{LDP for transformed measures: Proof of Theorems~\ref{thm:self-cond-ldp} - \ref{thm:mutual-tilted-ldp} and Proposition~\ref{prop:intersection-measure-LDP}}
\label{sec:main-proof}

In this section, we prove Theorems~\ref{thm:self-cond-ldp}--\ref{thm:mutual-tilted-ldp} along with Proposition~\ref{prop:intersection-measure-LDP}. Our main tool is the exponential approximation technique described in Section~4.2 of \cite{DemboZeitouni2010}. We state the relevant facts below (rephrased to match our notation) for easy reference.

\begin{definition}[{\cite[Definition~4.2.14]{DemboZeitouni2010}}]
\label{def:exp-approx}
    Let $(\mathcal{Y}, d)$ be a metric space and $Z_t$ a $\mathcal{Y}$-valued random variable. The family $Z_{t, \epsilon}$ of $\mathcal{Y}$-valued variables is an \emph{exponentially good approximation} of $Z_t$ if, for every $\lambda > 0$,
    \[  
    \lim_{\epsilon \to 0} \limsup_{t \to \infty} \frac{1}{t} \log \mathbb{P} (d(Z_t, Z_{t, \epsilon}) > \lambda) = - \infty.
    \]
\end{definition}

\begin{lemma}[{\cite[Theorem~4.2.23]{DemboZeitouni2010}}]
\label{lem:exp-approx}
    Let $\{ \mu_t \}$ be a family of probability measures that satisfy the LDP with a good rate function $\mathcal{I}(\cdot)$ on a Hausdorff topological space $\mathcal{X}$, and for $\epsilon > 0$ let $f_{\epsilon}: \mathcal{X} \to \mathcal{Y}$ be continuous functions, with $(\mathcal{Y}, d)$ a metric space. Assume there exists a measurable map $f : \mathcal{X} \to \mathcal{Y}$ such that for every $\lambda < \infty$,
    \[  
    \limsup_{\epsilon \to 0} \sup_{\{x : \mathcal{I}(x) \le \lambda \}} d(f_{\epsilon} (x), f(x)) = 0.
    \]
    Then any family of probability measures $\mu_t'$ for which $\mu_t \circ f_{\epsilon}^{-1}$ are exponentially good approximations satisfies the LDP in $\mathcal{Y}$ with the good rate function $\mathcal{I}'(y) = \inf \{ \mathcal{I}(x) : y = f(x) \}$.
\end{lemma}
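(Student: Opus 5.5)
The plan is to prove the statement directly from the contraction principle and the definition of exponentially good approximations. Write $X_t \sim \mu_t$ and $Z_t \sim \mu_t'$, coupled so that $d(Z_t, f_\epsilon(X_t))$ is superexponentially small in the sense of Definition~\ref{def:exp-approx}. Since each $f_\epsilon$ is continuous, the contraction principle gives an LDP for $\mu_t \circ f_\epsilon^{-1}$ with good rate function
\[
\mathcal{I}_\epsilon(y) = \inf\{\mathcal{I}(x) : f_\epsilon(x) = y\}.
\]
The proof then transfers this family of LDPs to $\mu_t'$ and identifies the limiting rate function as $\mathcal{I}'$. The only input beyond this is the hypothesis that $f_\epsilon \to f$ uniformly on each level set $K_\alpha := \{\mathcal{I} \le \alpha\}$.

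\emph{Step 1: $\mathcal{I}'$ is a good rate function.} Each $K_\alpha$ is compact because $\mathcal{I}$ is good. On $K_\alpha$ the map $f$ is a uniform limit of the continuous maps $f_\epsilon$, so $f|_{K_\alpha}$ is continuous. Hence $f(K_\alpha)$ is compact, and in particular closed. The infimum defining $\mathcal{I}'(y)$ is attained whenever it is finite, because the set $\{x \in K_\alpha : f(x) = y\}$ is compact. This gives $\{\mathcal{I}' \le \alpha\} = f(K_\alpha)$, which is compact.

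\emph{Step 2: upper bound.} Fix a closed set $F$ and $\delta > 0$. Let $F^\delta$ be the closed $\delta$-neighborhood of $F$. Then
\[
\mathbb{P}(Z_t \in F) \le \mathbb{P}(f_\epsilon(X_t) \in F^\delta) + \mathbb{P}\bigl(d(Z_t, f_\epsilon(X_t)) > \delta\bigr).
\]
By the contraction principle, the first term has exponential rate at most $-\inf\{\mathcal{I}(x) : f_\epsilon(x) \in F^\delta\}$. Fix $\alpha$ and take $\epsilon$ small enough that $\sup_{K_\alpha} d(f_\epsilon, f) < \delta$. Any $x$ with $f_\epsilon(x) \in F^\delta$ then either has $\mathcal{I}(x) > \alpha$ or satisfies $f(x) \in F^{2\delta}$. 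So the infimum is at least $\min\{\alpha, \inf_{F^{2\delta}} \mathcal{I}'\}$. Letting $\epsilon \to 0$ makes the second term superexponentially negligible. It remains to let $\delta \to 0$ and then $\alpha \to \infty$, using
\[
\lim_{\delta \to 0} \inf_{F^{2\delta}} \mathcal{I}' = \inf_F \mathcal{I}'.
\]
This limit is the step I expect to be the main (though standard) obstacle. I would prove it by contradiction with goodness. Suppose points $y_\delta \in F^{2\delta}$ satisfy $\mathcal{I}'(y_\delta) \le \beta < \inf_F \mathcal{I}'$. They all lie in the compact set $\{\mathcal{I}' \le \beta\}$, so along a subsequence they converge to some $y$. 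Then $y \in F$ because $F$ is closed, and $\mathcal{I}'(y) \le \beta$ by lower semicontinuity. This contradicts $\beta < \inf_F \mathcal{I}'$.

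\emph{Step 3: lower bound.} Fix an open set $G$, a point $y \in G$ with $\mathcal{I}'(y) < \infty$, and an $x$ with $f(x) = y$ and $\mathcal{I}(x) = \mathcal{I}'(y)$ (it exists by Step 1). Choose $\delta$ with $B(y, 2\delta) \subseteq G$. For $\epsilon$ small we have $d(f_\epsilon(x), y) < \delta$, so the open ball $B(y, \delta)$ contains $f_\epsilon(x)$. The LDP lower bound for $\mu_t \circ f_\epsilon^{-1}$ gives
\[
\liminf_{t \to \infty} \frac{1}{t} \log \mathbb{P}\bigl(f_\epsilon(X_t) \in B(y,\delta)\bigr) \ge -\mathcal{I}(x).
\]
We also have
\[
\mathbb{P}(Z_t \in G) \ge \mathbb{P}\bigl(f_\epsilon(X_t) \in B(y,\delta)\bigr) - \mathbb{P}\bigl(d(Z_t, f_\epsilon(X_t)) > \delta\bigr).
\]
For $\epsilon$ small, the subtracted term decays at a rate strictly faster than $\mathcal{I}(x)$. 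Therefore
\[
\liminf_{t \to \infty} \frac{1}{t} \log \mathbb{P}(Z_t \in G) \ge -\mathcal{I}'(y),
\]
and optimizing over $y \in G$ finishes the proof.
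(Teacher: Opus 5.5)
Your argument is correct. The paper gives no proof of this lemma; it only cites Dembo--Zeitouni, Theorem~4.2.23, and your proof uses the same ingredients as that textbook argument: the contraction principle for each $f_\epsilon$, continuity of $f$ on the compact level sets $\{\mathcal{I}\le\alpha\}$ (which gives goodness of $\mathcal{I}'$ and attainment of the infimum), uniform closeness of $f_\epsilon$ to $f$ on those level sets, and the exponential approximation to transfer the bounds. The difference is only in packaging: Dembo--Zeitouni pass through their general transfer theorem for exponentially good approximations (Theorem~4.2.16), whereas you carry out the corresponding closed-set and open-set estimates directly, which is equally valid.
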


Note that Lemma~\ref{lem:exp-approx} does not depend on the values $f$ takes when $\mathcal{I}(x) = \infty$.

\subsection{LDP for conditional measures}

Now we prove Theorem~\ref{thm:self-cond-ldp} (and Theorem~\ref{thm:mutual-cond-ldp}, which follows a similar scheme). Our strategy is to first lift $\widetilde{L}_t$ into the product space $\widetilde{\mathcal{X}}_{\le 1} \times [0, \infty]$ via the map $Z_t = (\widetilde{L}_t , t^{-1}\| \ell_t \|_q)$. These variables can be approximated with $Z_{t, \epsilon} = (\widetilde{L}_t, t^{-1} \| \ell_{t, \epsilon} \|_q)$. After establishing an LDP for $Z_t$, we may simply restrict to the subset $\widetilde{\mathcal{X}}_{\le 1} \times [1, \infty]$ to prove Theorem~\ref{thm:self-cond-ldp}.

\begin{lemma}
    \label{lem:lift-ldp}
    The distributions of $Z_t = (\widetilde{L}_t , t^{-1}\| \ell_t \|_q)$ satisfy an LDP in $\widetilde{\mathcal{X}}_{\le 1} \times [0, \infty]$ with rate function
    \[  
    \mathcal{I}^{\times}(\xi, y) := \begin{cases}
        \mathcal{I}(\xi) & \text{if } \| \xi \|_q = y \\
        \infty & \text{otherwise}.
    \end{cases}
    \]
\end{lemma}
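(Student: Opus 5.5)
We apply Lemma~\ref{lem:exp-approx} with $\mathcal{X} = \widetilde{\mathcal{X}}_{\le 1}$ (carrying the LDP of Proposition~\ref{prop:brownian-ldp} with $p=1$, i.e.\ the Mukherjee--Varadhan LDP), $\mathcal{Y} = \widetilde{\mathcal{X}}_{\le 1}\times[0,\infty]$, and metric $\mathbf{D} + |\cdot|$ on the second coordinate; we may compactify $[0,\infty]$ by an arctan-type metric, which is dominated by $|\cdot|$ on finite values. We take
\[
f_\epsilon(\xi) = (\xi, \|\xi\ast p_\epsilon\|_q), \qquad f(\xi) = (\xi, \|\xi\|_q),
\]
where $\|\xi\|_q^q = \sum_i\|\psi_i\|_{2q}^{2q}$ if $\mathcal{I}(\xi)<\infty$ and $f(\xi)=(\xi,\infty)$ otherwise. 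This choice is irrelevant by the remark after Lemma~\ref{lem:exp-approx}. Measurability of $f$ follows since it is a pointwise limit of the continuous $f_\epsilon$ on $\{\mathcal{I}<\infty\}$. Each $f_\epsilon$ is continuous by Corollary~\ref{cor:conti-functional}. Since $f$ is injective in the first coordinate, the resulting rate function $\inf\{\mathcal{I}(\xi): f(\xi)=(\xi,y)\}$ is exactly $\mathcal{I}^\times$.

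\emph{Step 1: exponential approximation.} Since $t^{-1}\ell_{t,\epsilon}$ is the density of $L_t\ast p_\epsilon$, we have $f_\epsilon(\widetilde L_t) = Z_{t,\epsilon} = (\widetilde L_t, t^{-1}\|\ell_{t,\epsilon}\|_q)$. The distance from $Z_t$ is at most $t^{-1}\|\ell_t-\ell_{t,\epsilon}\|_q$. Chebyshev's inequality with Proposition~\ref{prop:self-exp-approx} gives, for any $\delta,\lambda>0$,
\[
\tfrac1t\log\mathbb{P}\bigl(\|\ell_t-\ell_{t,\epsilon}\|_q>\delta t\bigr) \le -\lambda\delta + \tfrac1t\log\mathbb{E}e^{\lambda\|\ell_t-\ell_{t,\epsilon}\|_q}.
\]
Taking $\limsup_t$, then $\epsilon\to0$, then $\lambda\to\infty$ yields $-\infty$. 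This is exactly Definition~\ref{def:exp-approx}.

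\emph{Step 2: uniform convergence on level sets.} This is the main obstacle. We must show
\[
\sup_{\mathcal{I}(\xi)\le\lambda}\bigl|\|\xi\ast p_\epsilon\|_q - \|\xi\|_q\bigr| \to 0 .
\]
By the reverse triangle inequality in the $\ell^q(I;L^q)$ norm, it suffices to bound
\[
\Bigl(\sum_i\|\psi_i^2 - \psi_i^2\ast p_\epsilon\|_q^q\Bigr)^{1/q}.
\]
For each $i$, interpolate the $L^q$ norm between $L^1$ and $L^r$ for some $r>q$ (in $d=1$, any $r<\infty$ works). In $L^1$, use $\|g-g\ast p_\epsilon\|_1\le C\sqrt\epsilon\|\nabla g\|_1$ with $g=\psi_i^2$, and $\|\nabla\psi_i^2\|_1\le 2\|\psi_i\|_2\|\nabla\psi_i\|_2$. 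In $L^r$, the Gagliardo--Nirenberg inequality \eqref{eq:GN-inequality} gives $\|\psi_i^2\|_r\le C\|\nabla\psi_i\|_2^{a}\|\psi_i\|_2^{2-a}$, and convolution does not increase the norm. Combining these, each term is bounded by
\[
C\epsilon^{c}\,\|\nabla\psi_i\|_2^{b}\,\|\psi_i\|_2^{e}
\]
for some $c>0$. The step requiring care is the summation over $i$. We need the total homogeneity $q(b+e)\ge 2$ with $e\ge0$, so that Hölder's inequality gives the bound
\[
C\epsilon^{cq}\Bigl(\sum_i\|\nabla\psi_i\|_2^2\Bigr)^{\cdot}\Bigl(\sum_i\|\psi_i\|_2^2\Bigr)^{\cdot} \le C\epsilon^{cq}\lambda^{\cdot},
\]
using $\sum_i\|\psi_i\|_2^2\le1$. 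If the exponent bookkeeping fails, we fall back to a cruder split: handle the finitely many $i$ with $\|\psi_i\|_2^2\ge\eta$ using the estimate above, and bound the rest uniformly by $\sup_i\|\psi_i\|_{2q}^{2q-2}\cdot\sum_i\|\psi_i\|_{2q}^2$ via Gagliardo--Nirenberg. Since $d(q-1)<2q$ is automatic here, this tail is small as $\eta\to0$, uniformly on the level set.

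With Steps 1 and 2, Lemma~\ref{lem:exp-approx} yields the LDP with good rate function $\mathcal{I}^\times$.
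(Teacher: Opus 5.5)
Your proposal is correct and is essentially the paper's proof. The paper likewise gets the LDP for $Z_{t,\epsilon}=f_\epsilon(\widetilde L_t)$ from Corollary~\ref{cor:conti-functional}, exponential goodness from Proposition~\ref{prop:self-exp-approx} via Chebyshev, and uniform closeness on level sets of $\mathcal{I}$ from the $L^1$--$L^r$ interpolation of Lemma~\ref{lem:rate-approx}, all fed into Lemma~\ref{lem:exp-approx}. The summation over $i$ that you flag as the main obstacle is harmless: each per-component bound is homogeneous of degree $2$, hence at most $C\epsilon^{c}(\|\nabla\psi_i\|_2^2+\|\psi_i\|_2^2)$ by Young's inequality, and the $\ell^q$ sum is dominated by the $\ell^1$ sum, giving $C\epsilon^{c}(2\lambda+1)$ exactly as in \eqref{eq:self-rate-bound}, so your fallback split is unnecessary.
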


\begin{proof}
    We define the smoothed approximations $Z_{t, \epsilon} = (\widetilde{L}_t , t^{-1}\| \ell_{t, \epsilon} \|_q)$. Clearly, $Z_{t, \epsilon}$ is the continuous image of $\widetilde{L}_t$ under the map $\xi \mapsto \| \xi \ast p_{\epsilon} \|_q$. Therefore, the contraction principle \cite[Theorem~4.2.1]{DemboZeitouni2010} gives an LDP for $Z_{t, \epsilon}$ with rate function \[  
    \mathcal{I}^{\times}_{\epsilon}(\xi, y) := \begin{cases}
        \mathcal{I}(\xi) & \text{if } \| \xi \ast p_{\epsilon} \|_q = y \\
        \infty & \text{otherwise}.
    \end{cases}
    \]
    Equipped with the product metric $\mathbf{D}^{\times}$ on $\widetilde{\mathcal{X}} \times \mathbb{R}$, Proposition~\ref{prop:self-exp-approx} implies
    \begin{align*}
    \limsup_{\epsilon \to 0} \limsup_{t \to \infty} \frac{1}{t} \log \mathbb{P} (\mathbf{D}^{\times}(Z_t, Z_{t, \epsilon}) > \lambda) &= \limsup_{\epsilon \to 0} \limsup_{t \to \infty} \frac{1}{t} \log \mathbb{P}(\| \ell_t - \ell_{t, \epsilon}\|_q > \lambda t) = - \infty \\
    &\le \limsup_{\epsilon \to 0} \limsup_{t \to \infty} \frac{1}{t} \log \frac{\mathbb{E} \exp \{ M\| \ell_t - \ell_{t, \epsilon} \|_q \}}{e^{M\lambda t}} \\
    &\le \limsup_{\epsilon \to 0} \limsup_{t \to \infty} \frac{1}{t} \log \mathbb{E} \exp \{ M\| \ell_t - \ell_{t, \epsilon} \|_q \} - M\lambda \\
    &= - M \lambda.
    \end{align*}
    Since this holds for all $M > 0$, we can conclude that
    \[  
    \limsup_{\epsilon \to 0} \limsup_{t \to \infty} \frac{1}{t} \log \mathbb{P} (\mathbf{D}^{\times}(Z_t, Z_{t, \epsilon}) > \lambda) = - \infty.
    \]
    Therefore, $Z_{t, \epsilon}$ is an exponentially good approximation of $Z_t$. Now by \eqref{eq:self-rate-approx} below, we have
    \begin{equation}
    \begin{aligned}
    \label{eq:self-rate-bound}
    \limsup_{\epsilon \to 0} \sup_{\mathcal{I}(\xi) \le \lambda} \| \xi - \xi \ast p_{\epsilon } \|_q  &\le \limsup_{\epsilon \to 0} \sup \Bigl\{ \sum_{i \in I} \| \psi_i^2 - \psi_i^2 \ast p_{\epsilon} \|_q : \mathcal{I}(\xi) \le \lambda \Bigr\} \\
    &\le \limsup_{\epsilon \to 0}C \sqrt{\epsilon} \sup \Bigl\{ \sum_{i \in I} (\| \nabla \psi_i \|_2^2 + \| \psi_i \|_2^2) : \mathcal{I}(\xi) \le \lambda \Bigr\} \\
    &\le \limsup_{\epsilon \to 0} C \sqrt{\epsilon} (2\lambda + 1) \\
    &= 0.
    \end{aligned}
    \end{equation}
    In other words, $\widetilde{L}_t$ with the maps $\xi \mapsto (\xi, \| \xi \|_q)$ and $\xi \mapsto \| \xi \ast p_{\epsilon}\|_q$ satisfy the conditions of Lemma~\ref{lem:exp-approx}. Therefore, $Z_t$ satisfies an LDP with good rate function $\mathcal{I}^{\times}(\xi, y)$.
\end{proof} 

\begin{lemma}
\label{lem:rate-approx}
    For any $\psi \in H^1(\mathbb{R}^d)$ and $q > 1$ such that $d(q-1) < 2q$, there exists some $\theta \in (0, 1)$ such that
    \begin{equation}  
    \label{eq:self-rate-approx}
    \| \psi^2 \ast p_{\epsilon} - \psi^2 \|_q \le C \epsilon^{\theta/2} \| \nabla \psi \|_2^{2 - \theta} \| \psi \|_2^{\theta} \le C \epsilon^{\theta/2} ( \| \nabla \psi \|_2^2 + \| \psi \|_2^2).
    \end{equation}
    Similarly, for $\psi^1 , \dots , \psi^p \in H^1(\mathbb{R}^d)$ such that $d(p-1) < 2p$,
    \begin{equation}
        \label{eq:mutual-rate-approx}
        \bigg\| \Big(\prod_{j=1}^p \psi^j\Big)^2 - \Big(\prod_{j=1}^p (\psi^j)^2 \ast p_{\epsilon} \Big) \bigg\|_1 \le C \epsilon^{\theta/2} \Bigl(\sum_{j=1}^p ( \| \nabla \psi^j \|_2^2 + \| \psi^j \|_2^2 ) \Bigr)^p.
    \end{equation}
\end{lemma}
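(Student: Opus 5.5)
Both estimates reduce to the heat-semigroup smoothing bound combined with Gagliardo--Nirenberg. For \eqref{eq:self-rate-approx}, set $g = \psi^2$, so that $g \ast p_\epsilon - g = \int_0^\epsilon \tfrac12 \Delta (g \ast p_s)\,\mathrm{d}s$. Rather than working with second derivatives, I would use the first-order representation
\[
g\ast p_\epsilon - g = \mathbb{E}\bigl[g(\cdot - \sqrt{\epsilon} Z) - g\bigr],
\qquad
|g(x-h)-g(x)| \le |h|\int_0^1 |\nabla g(x-uh)|\,\mathrm{d}u ,
\]
which gives $\|g\ast p_\epsilon - g\|_r \le C\sqrt{\epsilon}\,\|\nabla g\|_r$ for any $r\ge 1$ by Minkowski. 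Since $\nabla g = 2\psi\nabla\psi$ only lies in $L^r$ for $r\le 1$ through H\"older ($\|\psi\nabla\psi\|_1 \le \|\psi\|_2\|\nabla\psi\|_2$), I would apply this with $r=1$:
\[
\|g\ast p_\epsilon - g\|_1 \le C\sqrt{\epsilon}\,\|\psi\|_2\|\nabla\psi\|_2 .
\]

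Next I would interpolate against a crude $L^{r}$ bound with $r>q$. Pick $r$ slightly larger than $q$ but still with $d(r-1)<2r$; this is possible because the condition is open. Young's inequality gives $\|g\ast p_\epsilon - g\|_r \le 2\|g\|_r = 2\|\psi\|_{2r}^2$, and \eqref{eq:GN-inequality} bounds the right side by $C\|\nabla\psi\|_2^{d(r-1)/r}\|\psi\|_2^{2-d(r-1)/r}$. Writing $1/q = (1-\eta) + \eta/r$ and using H\"older interpolation,
\[
\|g\ast p_\epsilon - g\|_q \le C\epsilon^{(1-\eta)/2}\,(\text{product of powers of } \|\nabla\psi\|_2,\|\psi\|_2),
\]
where the total homogeneity is $2$. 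Setting $\theta = 1-\eta\in(0,1)$ gives the $\epsilon^{\theta/2}$ prefactor. The exponents of $\|\nabla\psi\|_2$ and $\|\psi\|_2$ may not exactly match the form $\|\nabla\psi\|_2^{2-\theta}\|\psi\|_2^{\theta}$ displayed in the statement. What is actually used in \eqref{eq:self-rate-bound} is only the second inequality, and that follows from Young's inequality $a^{s}b^{2-s}\le a^2+b^2$. So I would prove the final bound and, if needed, adjust $\theta$ (this bookkeeping is the mildly delicate step).

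For \eqref{eq:mutual-rate-approx}, set $g_j=(\psi^j)^2$ and telescope:
\[
\prod_j g_j - \prod_j (g_j\ast p_\epsilon) = \sum_{j_0} \prod_{j<j_0}(g_j\ast p_\epsilon)\,(g_{j_0}-g_{j_0}\ast p_\epsilon)\prod_{j>j_0} g_j .
\]
I would bound each term in $L^1$ by generalized H\"older with all exponents equal to $p$: the factors are at most $\|g_j\|_p$, since convolution with $p_\epsilon$ is an $L^p$ contraction, and the middle factor is handled by the first part with $q=p$. This is legitimate because $d(p-1)<2p$ is exactly the hypothesis needed there. Then $\|g_j\|_p=\|\psi^j\|_{2p}^2\le C(\|\nabla\psi^j\|_2^2+\|\psi^j\|_2^2)$ by \eqref{eq:GN-inequality}, and multiplying the $p$ factors gives at most $C\epsilon^{\theta/2}\bigl(\sum_j(\|\nabla\psi^j\|_2^2+\|\psi^j\|_2^2)\bigr)^p$.

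The main obstacle is choosing the interpolation endpoint $r>q$ while staying in the subcritical range, so that \eqref{eq:GN-inequality} still controls $\|\psi\|_{2r}$ by the $H^1$ norm. This is where $d(q-1)<2q$ enters. For $d=1$ any $r$ works.
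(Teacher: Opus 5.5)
Your proposal is correct and follows the paper's proof essentially step for step: the $L^1$ translation bound $\|\psi^2\ast p_\epsilon-\psi^2\|_1\le C\sqrt{\epsilon}\,\|\psi\|_2\|\nabla\psi\|_2$, a crude $L^r$ bound for some $r>q$ via Sobolev/Gagliardo--Nirenberg, H\"older interpolation between the two, and then the telescoping sum with H\"older (all exponents $p$) for \eqref{eq:mutual-rate-approx}. Your hedge on the first displayed inequality is justified and is more than bookkeeping: testing it on $\psi(b\,\cdot)$ shows it can hold for all $\epsilon>0$ only with $\theta=1-\frac{d(q-1)}{2q}$, which interpolating your $L^1$ bound against a subcritical $L^r$ bound never produces (the paper obtains that form via $\|\psi\|_{2p}^2\le C\|\nabla\psi\|_2^2$, which is false for $d=1,2$), so adjusting $\theta$ will not recover it---but the inhomogeneous bound you do prove is exactly what \eqref{eq:self-rate-bound} and \eqref{eq:mutual-rate-bound} use.
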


\begin{proof}
    These inequalities are standard corollaries of the Sobolev embedding theorem. We first prove \eqref{eq:self-rate-approx}. Choose some $p > q$ such that there is a continuous embedding $H^1(\mathbb{R}^d) \hookrightarrow L^{2p}(\mathbb{R}^d)$; this is always possible in the regime $d(q-1) < 2q$. For instance, we may take $p = 2q$ when $d = 1, 2$, and $p = 3$ when $d = 3$. By choosing $\theta \in (0, 1)$ such that $q = \theta + (1 - \theta)p$, we may interpolate to get
    \[
    \| \psi^2 \ast p_{\epsilon} - \psi^2 \|_q \le \| \psi^2 \ast p_{\epsilon} - \psi^2 \|_1^{\theta} \| \psi^2 \ast p_{\epsilon} - \psi^2 \|_p^{1 - \theta}.
    \]
    The second term is bounded by the Sobolev embedding theorem,
    \[  
    \| \psi^2 \ast p_{\epsilon} - \psi^2 \|_p \le C \| \psi ^2 \|_p = C \| \psi \|_{2p}^2 \le C \| \nabla \psi \|_2^2.
    \]
    Furthermore,
    \begin{align*}
        \| \psi^2 \ast p_{\epsilon} - \psi^2 \|_1 &\le \int_{\mathbb{R}^d}  p_{\epsilon}(y)\int_{\mathbb{R}^d} |\psi^2(x - y) - \psi^2(x)| \mathrm{d} x \mathrm{d} y \\
        &\le \int_{\mathbb{R}^d} p_{\epsilon}(y) \| \psi^2 \ast \delta_y - \psi^2 \|_1 \mathrm{d} y \\
        &\le C \sqrt{\epsilon} \| \nabla (\psi^2) \|_1 \\
        &= C \sqrt{\epsilon} \| \psi \nabla \psi \|_1 \\
        &\le C \sqrt{\epsilon} \| \psi \|_2 \| \nabla \psi \|_2,
    \end{align*}
    which proves the first inequality. The second is immediate since $\| \nabla \psi \|_2^{2 - \theta} \| \psi \|_2^{\theta} \le \max \{ \| \nabla \psi \|_2^2, \| \psi \|_2^2 \}$. Now to show \eqref{eq:mutual-rate-approx}, simply note that
    \begin{align*}
        \bigg\| \Big(\prod_{j=1}^p \psi^j\Big)^2 - \Big(\prod_{j=1}^p (\psi^j)^2 \ast p_{\epsilon} \Big) \bigg\|_1 &\le \sum_{j_0=1}^p \Bigg\| \Big[ \big( \psi^{j_0})^2 - (\psi^{j_0})^2 \ast p_{\epsilon} \Big] \prod_{j=1} ^{j_0 - 1}(\psi^j)^2 \prod_{j = j_0 + 1}^p \Big[ (\psi^j)^2 \ast p_{\epsilon} \Big] \Bigg\|_1 \\
        &\le \sum_{j_0=1}^p \Big\| \big( \psi^{j_0})^2 - (\psi^{j_0})^2 \ast p_{\epsilon} \Big\|_p \prod_{j=1} ^{j_0 - 1} \big\| (\psi^j)^2 \big\|_p \prod_{j = j_0 + 1}^p \Big\| (\psi^j)^2 \ast p_{\epsilon} \Big\|_p \\
        &\le C \epsilon^{\theta/2}\sum_{j_0=1}^p (\|\nabla \psi^{j_0} \|_2^2 + \| \psi^{j_0} \|_2^2) \prod_{j \ne j_0}  \|\psi^j \|_{2p}^2 \\
        &\le  C \epsilon^{\theta/2}\sum_{j_0=1}^p (\|\nabla \psi^{j_0} \|_2^2 + \| \psi^{j_0} \|_2^2) \prod_{j \ne j_0}  \| \nabla \psi^j \|_2^2,
    \end{align*}
where the third line comes from \eqref{eq:self-rate-approx}.
\end{proof}

Given the above LDP, the proof of Theorems~\ref{thm:self-cond-ldp} is straightforward, as we describe below.

\begin{proof}[Proof of Theorem~\ref{thm:self-cond-ldp}]
Let $A := \{ \| \ell_t \|_q \ge t \} = \{Z_t \in \widetilde{\mathcal{X}}_{\le 1} \times [1, \infty] \}$. Clearly, $\widetilde{\mathcal{X}}_{\le 1} \times [1, \infty]$ has interior $\widetilde{\mathcal{X}}_{\le 1} \times (1, \infty]$. By the contraction principle applied to the projection $(\xi, y) \mapsto y$,
\[  
- \inf \{ \mathcal{I}(\xi): \| \xi \|_q > 1 \} \le \lim_{t \to \infty}  \frac{1}{t} \log \mathbb{P}(\| \ell_t \|_q > 1) \le  \lim_{t \to \infty}  \frac{1}{t} \log \mathbb{P}(\| \ell_t \|_q \ge 1) \le - \inf \{ \mathcal{I}(\xi) : \| \xi \|_q \ge 1 \}.
\]
and both sides converge to $-\Theta_{1, q}$. For any closed set $F \subseteq \widetilde{\mathcal{X}}_{\le 1}$,
\begin{align*}
    \lim_{t \to \infty} \frac{1}{t} \log \mathbb{P}\left(\widetilde{L}_t \in F \; | \; \beta([0, t]^q) \ge t^q \right) &= \lim_{t \to \infty} \frac{1}{t} \log \frac{\mathbb{P}((F \times [0, \infty]) \cap A)}{\mathbb{P}(A)} \\
    &= \lim_{t \to \infty} \frac{1}{t} \left(\log \mathbb{P}( (F \times[1, \infty]) - \log \mathbb{P}(A)\right) \\
    &\le - \inf_{\xi \in F} \left\{ \mathcal{I}(\xi) - \Theta_{1, q}  :  \| \xi \|_q \ge 1 \right\}.
\end{align*}
Similarly for any open $G \subseteq \widetilde{\mathcal{X}}_{\le 1}$,
\begin{align*}
    \lim_{t \to \infty} \frac{1}{t} \log \mathbb{P}\left(\widetilde{L}_t \in G \; | \; \beta([0, t]^q) \ge t^q \right) &\ge \lim_{t \to \infty} \frac{1}{t} \left(\log \mathbb{P}( G \times (1,\infty]) - \log \mathbb{P}(A)\right) \\
    &\ge - \inf_{\xi \in G} \left\{ \mathcal{I}(\xi)- \Theta_{1, q} :  \| \xi \|_q > 1 \right\}.
\end{align*}
We may change $ \| \xi \|_q > 1 $ to $ \| \xi \|_q \ge 1$ since $\| \cdot \|_q$ is continuous on finite sub-level sets of $\mathcal{I}(\cdot)$, and hence the proof is complete.
\end{proof}

\begin{proof}[Proof of Theorem~\ref{thm:mutual-cond-ldp}]
    The proof is almost identical to that of Theorem~\ref{thm:self-cond-ldp} once we replace $(\widetilde{L}_t, t^{-1}\| \ell_t \|_q)$ with $(\widetilde{L}_t^{\otimes p}, t^{-p}\ell_{t, \epsilon}^{\otimes p}(\mathbb{R}^d))$. The only nontrivial part is proving the conditions of Lemma~\ref{lem:exp-approx}. This follows from \eqref{eq:mutual-rate-approx}, namely by

    \begin{equation}
    \label{eq:mutual-rate-bound}
    \begin{aligned}
        \limsup_{\epsilon \to 0} \sup_{\mathcal{I}(\xi^{\otimes p}) \le \lambda} \{ t^{-p}| \ell_t^{\otimes p}(\mathbb{R}^d)- \ell_{t, \epsilon}^{\otimes p}(\mathbb{R}^d)| \} &\le \limsup_{\epsilon \to 0} \sup_{\mathcal{I}(\xi^{\otimes p}) \le \lambda } \biggl\{ \sum_{i \in I} \biggl\| \Bigl(\prod_{j=1}^p \psi^j\Bigr)^2 - \Bigl(\prod_{j=1}^p (\psi^j)^2 \ast p_{\epsilon} \Bigr) \bigg\|_1 \biggr\}\\
        &\le \limsup_{\epsilon \to 0} \sup_{\mathcal{I}(\xi^{\otimes p}) \le \lambda } \biggl\{C \epsilon^{\theta/2} \sum_{i, j} \Bigl(\| \nabla \psi_i^j \|_2^2 + \| \psi_i^j \|_2^2\Bigr)^p \biggr\} \\
        &\le C\epsilon^{\theta/2} (2 \lambda + p)^p.
    \end{aligned}
    \end{equation}
\end{proof}

\subsection{LDP for tilted measures}

Now we prove the LDP for tilted measures, i.e,. Theorems~\ref{thm:self-tilted-ldp} and \ref{thm:mutual-tilted-ldp}. Since the proofs are almost identical, we only present the proof for Theorem~\ref{thm:self-tilted-ldp}. We first strengthen Proposition~\ref{prop:self-exp-approx} into the following lemma.

\begin{lemma}
\label{lem:self-tilt-approx}
    For any $\lambda > 0$ and $1 < \gamma < \frac{2q}{q-1}$,
    \[  
    \limsup_{\epsilon \to 0}\limsup_{t \to \infty} \frac{1}{t} \log \mathbb{E} \exp \Big\{ \lambda t^{1-\gamma} \big|\| \ell_t \|_q^{\gamma} - \| \ell_{t, \epsilon} \|^{\gamma}_q \big| \Big\} = 0.
    \]
\end{lemma}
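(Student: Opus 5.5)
Set $a = t^{-1}\|\ell_t\|_q$, $b = t^{-1}\|\ell_{t,\epsilon}\|_q$ and $\delta = t^{-1}\|\ell_t - \ell_{t,\epsilon}\|_q$, so that $|a-b| \le \delta$. The exponent becomes $\lambda t\,|a^\gamma - b^\gamma|$. Since $\gamma > 1$, the mean value theorem gives
\[
|a^\gamma - b^\gamma| \le \gamma |a-b| \max\{a,b\}^{\gamma-1} \le \gamma\, \delta\, (b+\delta)^{\gamma-1}.
\]
Using $(b+\delta)^{\gamma-1} \le C(b^{\gamma-1} + \delta^{\gamma-1})$ and Young's inequality with exponents $\gamma$ and $\gamma/(\gamma-1)$, we get, for any $\eta>0$,
\[
\lambda t\,|a^\gamma - b^\gamma| \le C_\eta \lambda^{\gamma} t\,\delta^\gamma + \eta\, t\, b^\gamma + C\lambda t\,\delta^\gamma .
\]
Note that $t b^\gamma = t^{1-\gamma}\|\ell_{t,\epsilon}\|_q^\gamma$ is essentially the Gibbs exponent. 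We then apply Cauchy--Schwarz (or Hölder) to split $\mathbb{E}\exp\{\cdot\}$ into two factors: $\mathbb{E}\exp\{C_{\eta,\lambda}\, t^{1-\gamma}\|\ell_t-\ell_{t,\epsilon}\|_q^\gamma\}$ and $\mathbb{E}\exp\{2\eta\, t^{1-\gamma}\|\ell_{t,\epsilon}\|_q^\gamma\}$. The proof then has two steps.

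\emph{Step 1: the error term.} We show that for every $M>0$,
\[
\limsup_{\epsilon\to0}\limsup_{t\to\infty}\frac1t\log\mathbb{E}\exp\{M t^{1-\gamma}\|\ell_t-\ell_{t,\epsilon}\|_q^\gamma\}=0.
\]
We expand the exponential in its Taylor series and use the moment bound of Corollary~\ref{cor:self-approx-momemt} with $m$ replaced by $\gamma n$ (fractional moments are handled by Hölder, as in Corollary~\ref{cor:sketch-exp-approx}). This bounds the $n$-th term by
\[
\frac{M^n t^{(1-\gamma)n}}{n!}\, C^{\gamma n}\epsilon^{\theta\gamma n}(\gamma n)!\Bigl(\frac{t^{\gamma n}}{(\gamma n)!}\Bigr)^{\frac{q+1}{2}-(q-1)\theta}.
\]
Counting powers of $t$ gives $t^{n[(1-\gamma)+\gamma(\frac{q+1}{2}-(q-1)\theta)]}$. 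Counting factorials gives, up to $C^n$, roughly $(n!)^{\gamma - 1 - \gamma(\frac{q+1}{2}-(q-1)\theta)}$. The exponent of $t$ is $n(1+\kappa)$ and the factorial exponent is $-(1+\kappa)$, where
\[
\kappa = \gamma\Bigl(\tfrac{q+1}{2}-(q-1)\theta\Bigr)-\gamma .
\]
Hence the sum is $\sum_n \bigl((C\epsilon^{c}t)^{n}/n!\bigr)^{1+\kappa}$ up to constants, which is $\exp\{O(\epsilon^{c'}t)\}$ provided $1+\kappa>0$. Since $\gamma\frac{q-1}{2} < q$ by hypothesis, we can check that $1+\kappa>0$ for small $\theta$: indeed $1+\kappa = \gamma\frac{q-1}{2}+1-\gamma(q-1)\theta >0$. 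Thus this factor has exponential rate $O(\epsilon^{c'})\to0$.

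\emph{Step 2: the main term, which I expect to be the main obstacle.} We need
\[
\limsup_{\eta\to0}\sup_{\epsilon}\limsup_{t}\frac1t\log\mathbb{E}\exp\{2\eta t^{1-\gamma}\|\ell_{t,\epsilon}\|_q^\gamma\}=0 .
\]
Since $\|\ell_{t,\epsilon}\|_q\le\|\ell_t\|_q$ by Young's convolution inequality, it suffices to bound $\mathbb{E}\exp\{2\eta t^{1-\gamma}\|\ell_t\|_q^\gamma\}$. By the same Taylor expansion, now using Chen's moment bounds $\mathbb{E}\|\ell_t\|_q^{m}\le C^m m!\,(t^m/m!)^{(q+1)/2}$ (the $\epsilon$-free analogue of the computation above, which also follows from Lemma~\ref{lem:self-approx-moment} with $\delta$ in place of $\Delta_\epsilon$), we obtain a bound of the form $\exp\{C\eta^{1/(1+\kappa_0)}t\}$. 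Here $1+\kappa_0=\gamma\frac{q-1}{2}+1-\gamma+\gamma>0$, again by the condition $\gamma<\frac{2q}{q-1}$. This gives rate $C\eta^{c}\to0$.

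Finally, to conclude, we take $\limsup_{\epsilon}$ and then let $\eta\to0$. The lower bound $\ge 0$ is trivial, since the exponent is nonnegative. The delicate point is bookkeeping the factorial exponents so that the subcritical condition $\gamma<2q/(q-1)$ exactly ensures summability. If the mean-value splitting turns out too lossy, I would instead split on the event $\{\|\ell_t\|_q\le Kt\}$ and use the large-deviation tail from \eqref{eq:self-var-problem} together with scaling on its complement.
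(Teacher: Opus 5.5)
Your proof is correct once you fix the moment exponent (see below), but it combines the estimates differently from the paper.

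Both arguments start from the same mean-value bound $\bigl|\|\ell_t\|_q^\gamma-\|\ell_{t,\epsilon}\|_q^\gamma\bigr|\le\gamma\|\ell_t-\ell_{t,\epsilon}\|_q(\|\ell_t\|_q+\|\ell_{t,\epsilon}\|_q)^{\gamma-1}$. They also use the same two inputs: moments of $\|\ell_t-\ell_{t,\epsilon}\|_q$ and moments of $\|\ell_t\|_q$.

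The paper keeps the product intact. It applies H\"older with exponents $\gamma$ and $\gamma/(\gamma-1)$ to each $m$-th moment and gets the single bound $C^m\epsilon^{\theta m}m!\,(t^m/m!)^{\frac{2q-\gamma(q-1)}{2q}-(q-1)\theta}$. Summing the Taylor series then gives a rate $O(\epsilon^{c})$ directly, with no auxiliary parameter.

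You instead split pointwise with Young's inequality and a weight $\eta$, and decouple the exponentials by Cauchy--Schwarz. The error term then needs only difference moments, with an arbitrary multiplier. The main term needs only that $\frac1t\log\mathbb{E}\exp\{\eta t^{1-\gamma}\|\ell_t\|_q^\gamma\}\to0$ as $\eta\to0$, uniformly in $\epsilon$ because $\|\ell_{t,\epsilon}\|_q\le\|\ell_t\|_q$. This costs an extra limit $\eta\to0$ and the explicit rate in $\epsilon$. In exchange it is more modular: you never need a separate moment bound for $\|\ell_{t,\epsilon}\|_q$, and the main-term estimate could equally be read off from the known upper tail of $\beta([0,1]^q)$ plus Brownian scaling.

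The correction: the bound $\mathbb{E}\|\ell_t\|_q^m\le C^m m!\,(t^m/m!)^{(q+1)/2}$ that you quote is false. At $t=1$ its right-hand side is $C^m(m!)^{-(q-1)/2}\to0$, while $\mathbb{E}\|\ell_1\|_q^m\ge\mathbb{P}(\|\ell_1\|_q\ge1)>0$.

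Since $\|\ell_t\|_q\overset{d}{=}t^{\frac{q+1}{2q}}\|\ell_1\|_q$, the right exponent is $\frac{q+1}{2q}$. It comes out of Lemma~\ref{lem:self-approx-moment} once you note that the $m$-th moment of $\|\cdot\|_q^q$ is the $qm$-th moment of $\|\cdot\|_q$. It is also the form the paper itself uses when proving this lemma; the $\frac{q+1}{2}$ printed in Corollary~\ref{cor:self-approx-momemt} should be read this way.

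With the correct exponent, your quantities become $1+\kappa=\frac{2q-\gamma(q-1)}{2q}-O(\theta)$ and $1+\kappa_0=\frac{2q-\gamma(q-1)}{2q}$. These are positive precisely because $\gamma<\frac{2q}{q-1}$. As you wrote them, $1+\kappa=1+\gamma\frac{q-1}{2}-\dots$ is positive for every $\gamma$, so the hypothesis was never actually used. That is the symptom of the slip. After the fix, both steps and the final limits ($\epsilon\to0$ first, then $\eta\to0$) go through as you describe.
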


\begin{proof}
    We wish to prove a moment bound of the form 
    \[  
    \mathbb{E} \Bigl( t^{1-\gamma} \bigl| \| \ell_t \|_q^{\gamma} - \| \ell_{t, \epsilon} \|_q^{\gamma} \bigr| \Bigr)^m \le C^m m! \Bigl( \frac{t^m}{m!} \Bigr)^{\frac{2q - \gamma(q-1)}{2q} - (q-1)\theta}
    \]
    where $C$ may depend on $\gamma$. When $\gamma \le 1$, this is immediate from Corollary~\ref{cor:self-approx-momemt} and the inequality $|a^{\gamma} - b^{\gamma}| \le C |a-b|^{\gamma}$. Now for $\gamma > 1$, note that
    \[  
    \| \ell_t \|_q^{\gamma} - \| \ell_{t, \epsilon} \|_q^{\gamma} \le \gamma (\| \ell_t \|_q - \| \ell_{t, \epsilon}\|_q) (\| \ell_t \|_q + \| \ell_{t, \epsilon}\|_q)^{\gamma - 1}.
    \]
    Therefore, it suffices to bound the moments of the right-hand side. We know from Corollary~\ref{cor:self-approx-momemt} that
    \[  
    \mathbb{E} \| \ell_t - \ell_{t, \epsilon} \|_q^m \le C^m \epsilon^{\theta m} m! \left( \frac{t^m}{m!} \right)^{\frac{q + 1}{2q} - (q-1)\theta}
    \]
    for some small $\theta > 0$. Furthermore, a simple modification of Lemma~\ref{lem:self-approx-moment} by replacing $\Delta_{\epsilon}$ to $\delta$ or $p_{\epsilon}$ yields
    \[  
    \mathbb{E} \| \ell_t \|_q^{m} , \mathbb{E} \| \ell_{t, \epsilon} \|_q^{m} \le C^m m! \left( \frac{t^m}{m!} \right)^{\frac{q+1}{2q}}.
    \]
    More specifically, when $q$ is an integer, one may repeat \eqref{eq:self-approx-riesz} except replacing $| \cdot |^{-2/3}$ with $\delta$ or $p_{\epsilon}$ and $| \cdot |^{-1/3}$ with $|\cdot|^{-1/2}$. Generalizing to fractional $q$ can be done along the lines of Corollary~\ref{cor:self-approx-momemt}. Now by H\"{o}lder's inequality,
     \begin{align*}
         \mathbb{E} t^{(1 - \gamma)m} \big|\| \ell_t \|_q^{\gamma} - \| \ell_{t, \epsilon} \|^{\gamma}_q \big|^m &\le C^m t^{(1 - \gamma)m} \left( \mathbb{E} \| \ell_t - \ell_{t, \epsilon} \|_q^{\gamma m} \right)^{1/\gamma} \left(\mathbb{E} \| \ell_t \|_q^{\gamma m} + \mathbb{E} \| \ell_t \|_q^{\gamma m}  \right)^{(\gamma - 1)/\gamma} \\
         &\le C^m  t^{(1 - \gamma)m}  \times \epsilon^{\theta m} m! \left( \frac{t^m}{m!} \right)^{\frac{q + 1}{2q} - (q-1)\theta} \times (m!)^{\gamma -1} \left( \frac{t^m}{m!} \right)^{(\gamma - 1)\frac{q + 1}{2q}} \\
         &= C^m m! \left( \frac{t^m}{m!} \right)^{\gamma \frac{q + 1}{2q} - (q-1)\theta - \gamma + 1} \\
         &= C^m m! \left( \frac{t^m}{m!} \right)^{\frac{2q - \gamma(q-1)}{2q} - (q-1)\theta}.
     \end{align*}
     By choosing $\theta$ to be sufficiently small, we may assume the exponent on the very right is positive. Hence, we may repeat the proof of Corollary~\ref{cor:sketch-exp-approx} to complete the proof.
\end{proof}

\begin{lemma}
\label{lem:self-tilted-ldp}
    For any closed set $F \subseteq \widetilde{\mathcal{X}}_{\le 1}$,
    \[  
    \limsup_{t \to \infty} \frac{1}{t}\log \int_F \exp \{ t^{1-\gamma} \| \ell_t \|_q^{\gamma}\} \mathrm{d} \mathbb{Q}_t \le \sup_{\xi \in F} \left\{ \| \xi \|_q^{\gamma}- \mathcal{I}(\xi)\right\}.
    \]
    Similarly, for any open set $G \subseteq \widetilde{\mathcal{X}}_{\le 1}$,
    \[  
    \limsup_{t \to \infty} \frac{1}{t}\log \int_G \exp \{ t^{1-\gamma} \| \ell_t \|_q^{\gamma}\} \mathrm{d} \mathbb{Q}_t \ge \sup_{\xi \in G} \left\{ \| \xi \|_q^{\gamma}- \mathcal{I}(\xi)\right\}.
    \]
\end{lemma}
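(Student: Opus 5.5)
This is a Varadhan-type statement: the functional $\xi \mapsto \|\xi\|_q^\gamma$ is unbounded and discontinuous. Here $\mathbb{Q}_t$ is the law of $\widetilde{L}_t$ on $\widetilde{\mathcal{X}}_{\le 1}$, and we have $t^{1-\gamma}\|\ell_t\|_q^\gamma = t\,\|L_t\|_q^\gamma$. The plan is to replace this functional by its smoothed version $t\,\|L_t \ast p_\epsilon\|_q^\gamma$, which is a bounded continuous function of $\widetilde{L}_t$ by Corollary~\ref{cor:conti-functional}. Boundedness holds since $\|\xi \ast p_\epsilon\|_q \le \|p_\epsilon\|_q$ when $\xi$ has mass at most one. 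We then apply Varadhan's lemma in the form valid for closed and open sets to the compact LDP of Proposition~\ref{prop:brownian-ldp}, and remove $\epsilon$ using Lemma~\ref{lem:self-tilt-approx} (together with the moment bound from Corollary~\ref{cor:self-approx-momemt} when $\gamma \le 1$) and \eqref{eq:self-rate-approx}.

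\textbf{Upper bound.} Fix $r>1$ and $s$ with $1/r+1/s=1$. By H\"older,
\[
\int_F e^{t\|L_t\|_q^\gamma}\,\mathrm{d}\mathbb{Q}_t \le \Bigl(\int_F e^{rt\|L_t\ast p_\epsilon\|_q^\gamma}\,\mathrm{d}\mathbb{Q}_t\Bigr)^{1/r}\Bigl(\mathbb{E}\, e^{s t^{1-\gamma}|\|\ell_t\|_q^\gamma-\|\ell_{t,\epsilon}\|_q^\gamma|}\Bigr)^{1/s}.
\]
The second factor contributes $o(1)$ as $t\to\infty$ and then $\epsilon\to0$, by Lemma~\ref{lem:self-tilt-approx}. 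For the first factor, the upper half of Varadhan's lemma for a bounded continuous functional on a closed set of a compact LDP space (\cite[Lemma~4.3.6]{DemboZeitouni2010}) gives
\[
\frac1r\sup_{\xi\in F}\{r\|\xi\ast p_\epsilon\|_q^\gamma-\mathcal{I}(\xi)\}.
\]
Next we send $\epsilon\to0$. On a sublevel set $\{\mathcal{I}\le\lambda\}$, \eqref{eq:self-rate-bound} gives $\|\xi\ast p_\epsilon\|_q\to\|\xi\|_q$ uniformly. Points with large $\mathcal{I}$ do not contribute, because Gagliardo--Nirenberg \eqref{eq:GN-inequality} gives $\|\xi\ast p_\epsilon\|_q^\gamma \le \sum_i\|\psi_i\|_{2q}^{2\gamma}\le C\,\mathcal{I}(\xi)^{\gamma(q-1)/(2q)}$, with exponent $<1$ since $\gamma<2q/(q-1)$. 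Hence the supremum may be restricted to a fixed sublevel set, uniformly in $\epsilon$. Finally we let $r\downarrow1$; the same sublinear growth bound makes the dependence on $r$ continuous.

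\textbf{Lower bound.} It suffices to treat a point $\xi\in G$ with $\mathcal{I}(\xi)<\infty$ and a small neighborhood $U\subset G$ of it. We use the reverse H\"older inequality. Write $X=t\|L_t\ast p_\epsilon\|_q^\gamma$ and $Y=t^{1-\gamma}|\|\ell_t\|_q^\gamma-\|\ell_{t,\epsilon}\|_q^\gamma|$. Then $e^{X}\mathbf 1_U \le (e^{t\|L_t\|_q^\gamma}\mathbf 1_U)^{1/r}(e^{(s/r)Y})^{1/s}$ for suitable conjugate exponents, so
\[
\int_U e^{t\|L_t\|_q^\gamma}\,\mathrm{d}\mathbb{Q}_t \ge \Bigl(\int_U e^{X/r}\,\mathrm{d}\mathbb{Q}_t\Bigr)^{r}\Big/\bigl(\mathbb{E}e^{cY}\bigr)^{r/s}.
\]
The numerator is handled by the continuous lower Varadhan bound on open sets, which gives at least $r\bigl(\|\xi\ast p_\epsilon\|_q^\gamma/r-\mathcal{I}(\xi)\bigr)$. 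The denominator again vanishes at exponential scale by Lemma~\ref{lem:self-tilt-approx}. Letting $\epsilon\to0$ pointwise via \eqref{eq:self-rate-approx}, and then $r\to1$, yields $\|\xi\|_q^\gamma-\mathcal{I}(\xi)$.

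The main obstacle is the interchange of $\epsilon\to0$ with the supremum over $F$ in the upper bound. Uniform convergence holds only on sublevel sets of $\mathcal{I}$, so we need the coercivity estimate above, based on Gagliardo--Nirenberg and the condition $\gamma<2q/(q-1)$, to confine near-optimizers to a compact sublevel set independent of $\epsilon$ and $r$. A secondary point is that Lemma~\ref{lem:self-tilt-approx} is stated for $\gamma>1$. For $\gamma\le1$ we instead use $|a^\gamma-b^\gamma|\le|a-b|^\gamma$ together with the moment bounds of Corollary~\ref{cor:self-approx-momemt}, as indicated in its proof.
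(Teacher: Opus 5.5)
Your proposal is correct and follows essentially the same route as the paper: split off the smoothed functional $t\|\widetilde{L}_t\ast p_\epsilon\|_q^\gamma$, use H\"older together with Lemma~\ref{lem:self-tilt-approx} to kill the error term, apply Varadhan's lemma to the bounded continuous smoothed functional on the compact LDP, and then send $\epsilon\to0$ followed by the H\"older parameter to its trivial value. Your $r\downarrow1$ is the paper's $\theta\to0$, and your reverse-H\"older lower bound matches the paper's sketch for open sets. Your explicit Gagliardo--Nirenberg coercivity step (with exponent $\gamma(q-1)/(2q)<1$) fills in what the paper leaves implicit when it interchanges the limits with the supremum over $F$. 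One correction there: the intermediate bound $\|\xi\ast p_\epsilon\|_q^\gamma\le\sum_i\|\psi_i\|_{2q}^{2\gamma}$ can fail when $\gamma>q$. The final bound $C\,\mathcal{I}(\xi)^{\gamma(q-1)/(2q)}$ still holds, because $\sum_i\|\psi_i\|_{2q}^{2q}\le\kappa_{1,q}^{2q}\bigl(\sum_i\|\nabla\psi_i\|_2^2\bigr)^{(q-1)/2}$ when $\sum_i\|\psi_i\|_2^2\le1$.
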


\begin{proof}
    We know that
    \[  
    t^{1-\gamma}  \| \ell_t \|_q^{\gamma}\le t^{1-\gamma} \left| \| \ell_t \|_q^{\gamma} - \| \ell_{t, \epsilon} \|_q^{\gamma} \right|  + t^{1 - \gamma} \| \ell_{t, \epsilon} \|_q^{\gamma} = t^{1-\gamma} \left| \| \ell_t \|_q^{\gamma} - \| \ell_{t, \epsilon} \|_q^{\gamma} \right|  + t\| \widetilde{L}_t \ast p_{\epsilon}\|_q^{\gamma}.
    \]
    By H\"{o}lder's inequality, we have
    \begin{multline*}
    \limsup_{t \to \infty} \frac{1}{t}\log \int_F \exp \{ t^{1-\gamma} \| \ell_t \|_q^{\gamma}\} \mathrm{d} \mathbb{Q}_t \\
    \le \theta\limsup_{t \to \infty} \frac{1}{t}\log \int_F \exp \left\{ \frac{t^{1-\gamma}}{\theta}  \left| \| \ell_t \|_q^{\gamma} - \| \ell_{t, \epsilon} \|_q^{\gamma} \right| \right\} \mathrm{d} \mathbb{Q}_t + (1 - \theta)\limsup_{t \to \infty} \frac{1}{t}\log \int_F \exp \frac{t}{1-\theta}\| \widetilde{L}_t \ast p_{\epsilon}\|_q^{\gamma} \mathrm{d} \mathbb{Q}_t
    \end{multline*}
    for any $\theta \in (0, 1)$. Lemma~\ref{lem:self-tilt-approx} shows that
    \[
    \lim_{\epsilon \to 0} \limsup_{t \to \infty} \frac{1}{t} \log \int_F \exp \left\{ \frac{t^{1-\gamma}}{\theta}  \left| \| \ell_t \|_q^{\gamma} - \| \ell_{t, \epsilon} \|_q^{\gamma} \right| \right\} \mathrm{d} \mathbb{Q}_t \le 0,
    \]
    while Varadhan's lemma implies
    \[
    \limsup_{t \to \infty} \frac{1}{t} \log \int_F \exp \left\{\frac{t}{1-\theta} \| \widetilde{L}_t \ast p_{\epsilon} \|_q^{\gamma} \right\} \mathrm{d} \mathbb{Q}_t \le \sup_{\xi \in F} \left\{ \frac{1}{1-\theta}\| \xi \ast p_{\epsilon} \|_q^{\gamma} - \mathcal{I}(\xi) \right\}.
    \]
    Therefore, by taking $\epsilon \to 0$ followed by $\theta \to 0$, we have
    \begin{align*}
    \limsup_{t \to \infty} \frac{1}{t}\log \int_F \exp \{ t^{1-\gamma} \| \ell_t \|_q^{\gamma}\} \mathrm{d} \mathbb{Q}_t &\le \lim_{\theta \to 0}\limsup_{\epsilon \to 0} \sup_{\xi \in F} \left\{ \frac{1}{1-\theta}\| \xi \ast p_{\epsilon} \|_q^{\gamma} - \mathcal{I}(\xi) \right\} \\
    &= \sup_{\xi \in F} \{ \| \xi \|_q^{\gamma} - \mathcal{I}(\xi) \}.
    \end{align*}
    The last line is justified by \eqref{eq:self-rate-bound}, which shows convergence as $\epsilon \to 0$ for sub-level sets of $\mathcal{I}(\cdot)$.

    The proof for open sets is almost identical, except that we use the inequality
    \[  
    t\| \widetilde{L}_t \ast p_{\epsilon} \|_q^{\gamma} \le t^{1-\gamma}  \| \ell_t \|_q^{\gamma} + t^{1 - \gamma} \left| \| \ell_t \|_q^{\gamma} - \| \ell_{t, \epsilon} \|_q^{\gamma} \right|.
    \]
\end{proof}

\begin{proof}[Proof of Theorem~\ref{thm:self-tilted-ldp}]
For any set $A \subseteq \widetilde{\mathcal{X}}_{\le 1}$, its probability under the Gibbs measure is given by
\[  
\widehat{\mathbb{Q}}_t(A) = \frac{\mathbb{E}^{\mathbb{Q}_t} [\exp t^{1 - \gamma} \|\ell_t \|_q^{\gamma} \mathbf{1}_{\widetilde{L}_t \in A}]}{\mathbb{E}^{\mathbb{Q}_t} [\exp t^{1 - \gamma} \|\ell_t \|_q^{\gamma}]} = \frac{\int_A \exp \{ t^{1 - \gamma} \| \ell_t \|_q^{\gamma} \} \mathrm{d} \mathbb{Q}_t}{\int_{\widetilde{\mathcal{X}}_{\le 1}} \exp \{ t^{1 - \gamma} \| \ell_t \|_q^{\gamma} \} \mathrm{d} \mathbb{Q}_t}.
\]
Since we already have Lemma~\ref{lem:self-tilted-ldp}, the only remaining step is to show that the total mass is given by
\[  
\limsup_{t \to \infty} \frac{1}{t} \log \int_{\widetilde{\mathcal{X}}_{\le 1}} \exp \{ t\| \xi \|_q^{\gamma} \} \mathrm{d} \mathbb{Q}_t = \rho_{1, q, \gamma}.
\]
 By taking $F = G = \widetilde{\mathcal{X}}_{\le 1}$ in Lemma~\ref{lem:self-tilted-ldp}, this is reduced to showing that the supremum
 \[ 
 \rho_{1, q, \gamma} = \sup_{\xi \in \widetilde{\mathcal{X}}_{\le 1}} \{ \| \xi \|_q^{\gamma} - \mathcal{I}(\xi) \}
 \]
 is finite and obtained when $\xi$ is a singleton. We defer this proof to Lemma~\ref{lem:self-tilt-var} of the appendix, where we also show that the solution is unique and given by the optimizer of the Gagliardo-Nirenberg inequality.
\end{proof}

\begin{proof}[Proof of Theorem~\ref{thm:mutual-tilted-ldp}]

For reasons similar to the self-intersecting case, it suffices to show that
\begin{equation}
\label{eq:mutual-tilted-approx}
\limsup_{\epsilon \to 0} \limsup_{t \to \infty} \frac{1}{t} \log \mathbb{E} \exp \{ 
\lambda t^{1-\gamma} |\langle \mathbf{1}, \ell_t^{\otimes p} - \ell_{t, \epsilon}^{\otimes p} \rangle|^{\gamma / p}\}=0.
\end{equation}
To this end, recall the moment bound of Lemma~\ref{lem:mutual-approx-moment}, which implies
\[  
\mathbb{E} t^{(1 - \gamma)m}|\langle f, \ell_t^{\otimes p} - \ell_{t, \epsilon}^{\otimes p} \rangle|^{\gamma m/p} \le C^m \epsilon^{\theta\gamma m/p} \times m! \times \left( \frac{t^m}{m!} \right)^{\frac{\gamma}{p} \left( \frac{2p - d(p-1)}{2} - \theta \right) - \gamma + 1}
\]
where we may take $\theta$ to be arbitrarily small. Since the exponent on the very right simplifies to $\frac{2p - \gamma d(p-1)}{2p} - \frac{\gamma \theta}{p}$, we may choose some $\theta > 0$ so that it is positive for a given $\gamma < \frac{2p}{d(p-1)}$. Hence, we may take $f = \lambda^{p/\gamma}$ and repeat the proof of Corollary~\ref{cor:sketch-exp-approx} to show \eqref{eq:mutual-tilted-approx}.
\end{proof}

\subsection{Proof of Proposition~\ref{prop:intersection-measure-LDP}}

Now we prove Proposition~\ref{prop:intersection-measure-LDP}. As we've already established that $\ell_{t, \epsilon}^{\otimes p}$ is an exponentially good approximation of $\ell_t^{\otimes p}$, the rest is fairly standard. A similar argument was also done in \cite[Section~3]{Mukherjee2017}. Our strategy is to view $t^{-p} \widetilde{\ell}_t^{\otimes p}$ as the image of $\widetilde{L}_t^{\otimes p}$ under the map $\Gamma : \widetilde{\mathcal{X}}^{\otimes p} \to \widetilde{\mathcal{X}}$ defined by
    \[  
    \Gamma(\xi^{\otimes p}) = \begin{cases}
        \{ \widetilde{\gamma}_i \}_{i \in I}, \quad \gamma_i(\mathrm{d} x) = \big( \prod_{j=1}^p (\psi_i^j)^2(x) \big)\mathrm{d} x & \text{if } \mathcal{I}(\xi^{\otimes p}) < \infty \\
        \emptyset & \text{otherwise}.
    \end{cases}
    \]
    Since $\Gamma$ is not continuous, we also define approximations $\Gamma_{\epsilon} : \widetilde{\mathcal{X}}^{\otimes p} \to \widetilde{\mathcal{X}}$ defined by
    \[
    \Gamma_{\epsilon}(\{ \widetilde{\alpha}_i^{\otimes p} \}_{i \in I}) = \{ \widetilde{\gamma}_{i, \epsilon}\}_{i \in I}, \quad \gamma_{i, \epsilon}(\mathrm{d} x) = \Big( \prod_{j=1}^p (\alpha_i^j \ast p_{\epsilon})(x) \Big) \mathrm{d} x.
    \]
    and show that they are exponentially good approximations of $\Gamma$. We remark that it is \emph{not} true that $\Gamma(\widetilde{L}_t^{\otimes p}) = t^{-p} \widetilde{\ell}_t^{\otimes p}$ unless $\mathcal{I}(\widetilde{L}_t^{\otimes p}) < \infty$ (which $\widetilde{L}_t^{\otimes p}$ almost surely is not). However, because $\Gamma_{\epsilon}(\widetilde{L}_t^{\otimes p}) = t^{-p} \widetilde{\ell}_{t, \epsilon}^{\otimes p}$ and we have exponentially good approximations, we can still retrieve the LDP as if $\Gamma(\widetilde{L}_t) = t^{-p} \widetilde{\ell}_{t, \epsilon}^{\otimes p}$ were true everywhere.
\begin{lemma}
\label{lem:smooth-intersection-ldp}
    For any $\epsilon > 0$, the distributions $t^{-p} \widetilde{\ell}_{t, \epsilon}^{\otimes p}$ satisfy an LDP in $\widetilde{\mathcal{X}}^{\otimes p}$ with good rate function
    \[  
    \mathcal{I}_{\epsilon}^{\ell} (\zeta) = \inf\{ \mathcal{I}(\xi^{\otimes p}): \xi^{\otimes p} \in \widetilde{\mathcal{X}}_{\le 1}^{\otimes p} , \; \Gamma_{\epsilon}(\xi^{\otimes p}) = \zeta\}.
    \]
\end{lemma}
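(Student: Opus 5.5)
The plan is to apply the contraction principle \cite[Theorem~4.2.1]{DemboZeitouni2010} to the full LDP of Proposition~\ref{prop:brownian-ldp} on the compact space $(\widetilde{\mathcal{X}}_{\le 1}^{\otimes p}, \mathbf{D}^{\otimes p})$. The first observation is the identity $\Gamma_{\epsilon}(\widetilde{L}_t^{\otimes p}) = t^{-p}\widetilde{\ell}_{t,\epsilon}^{\otimes p}$. It holds because $\ell^j_{t,\epsilon} = t\,(L_t^j \ast p_\epsilon)$ pointwise. Moreover, a diagonal shift of the tuple $L_t^{\otimes p}$ shifts the product density $\prod_j (L_t^j\ast p_\epsilon)$ by the same vector, so $\Gamma_\epsilon$ is well defined on equivalence classes. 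Given this identity, it suffices to show that $\Gamma_\epsilon : \widetilde{\mathcal{X}}_{\le 1}^{\otimes p} \to \widetilde{\mathcal{X}}$ is continuous. Since $\mathcal{I}$ is good and $\widetilde{\mathcal{X}}_{\le 1}^{\otimes p}$ is compact, the contraction principle then yields the LDP with good rate function $\mathcal{I}^{\ell}_\epsilon(\zeta)=\inf\{\mathcal{I}(\xi^{\otimes p}) : \Gamma_\epsilon(\xi^{\otimes p})=\zeta\}$. One small point: tuples whose smoothed product vanishes identically, for instance those containing a zero marginal, map to the zero measure and should simply be deleted in $\widetilde{\mathcal{X}}$. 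This convention must be checked to be consistent.

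For continuity, I would test against the defining functionals of $\mathbf{D}$. Fix $k\ge 1$ and $g\in\mathcal{F}_k(\mathbb{R}^d)$. Then
\[
\Lambda_k(g,\Gamma_\epsilon(\xi^{\otimes p})) = \sum_{i\in I}\int g(y_1,\dots,y_k)\prod_{l=1}^k\prod_{j=1}^p (\alpha_i^j\ast p_\epsilon)(y_l)\,\mathrm{d}y_1\cdots\mathrm{d}y_k .
\]
Unfolding each convolution, this equals $\Lambda_{\mathbf{k}}(G,\xi^{\otimes p})$ with $\mathbf{k}=(k,\dots,k)$, where $G$ is the function of the $kp$ variables $x^j_l$ given by
\[
G(\{x^j_l\}) = \int g(y_1,\dots,y_k)\prod_{l,j}p_\epsilon(x^j_l-y_l)\,\mathrm{d}\mathbf{y}.
\]
This is the same device used for $p_\epsilon^{\otimes p}$ in Corollary~\ref{cor:conti-functional}. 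Three properties of $G$ need to be verified:
\begin{itemize}
\item $G$ is continuous and bounded by $\|g\|_{\sup}\,\|p_\epsilon\|_{\sup}^{k(p-1)}$.
\item $G$ is diagonally shift-invariant, by the change of variables $y_l\mapsto y_l+x$.
\item $G$ vanishes as $\max|x^{j}_{l}-x^{j'}_{l'}|\to\infty$.
\end{itemize}
For the third property, split into two cases. If two points with the same $l$ separate, the factor $\int p_\epsilon(x-y)p_\epsilon(x'-y)\,\mathrm{d}y = p_{2\epsilon}(x-x')$ (together with boundedness of the other factors) forces decay. Otherwise, all groups stay clustered but the groups separate from each other. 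In that case the $y_l$ are forced far apart and $g\to0$, and dominated convergence applies. Hence $G\in\mathcal{F}_{kp}$, so $\xi^{\otimes p}\mapsto\Lambda_k(g,\Gamma_\epsilon(\xi^{\otimes p}))$ is continuous for each test function. Since $\mathbf{D}$ is a weighted sum of such functionals with uniformly bounded terms, continuity of $\Gamma_\epsilon$ follows.

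The main obstacle is the uniformity needed when summing the metric series. The bound $|\Lambda_k(g,\Gamma_\epsilon(\xi))|\le \|g\|_{\sup}C_\epsilon^{k}$, with $C_\epsilon=\|p_\epsilon\|_{\sup}^{p-1}$, grows geometrically in $k$, whereas $\mathbf{D}$ only weights by $(2p)^{-k}$. The remedy I would try first is that total masses on $\widetilde{\mathcal{X}}_{\le1}^{\otimes p}$ are at most $1$. Consequently $\Gamma_\epsilon(\xi)$ has total mass at most $C_\epsilon$, so its $k$-fold functionals are bounded by $\|g\|_{\sup} C_\epsilon^k$ independently of $\xi$. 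Term-by-term convergence of the series then follows by dominated convergence provided the weights beat $C_\epsilon^k$. If they do not, I would argue instead with the topology itself rather than the specific metric: $\widetilde{\mathcal{X}}$ carries the weak topology of the $\Lambda_k(g,\cdot)$. Alternatively, one can rescale and note that the image lies in a set of bounded mass, on which the topology is generated by finitely many test functions at each level of precision. Either way, continuity in the topology suffices for the contraction principle.
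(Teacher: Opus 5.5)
Your proposal is correct and follows the paper's proof essentially step for step: the identity $\Gamma_\epsilon(\widetilde{L}_t^{\otimes p}) = t^{-p}\widetilde{\ell}_{t,\epsilon}^{\otimes p}$, the unfolding $\Lambda_k(g,\Gamma_\epsilon(\xi^{\otimes p})) = \Lambda_{(k,\dots,k)}(G,\xi^{\otimes p})$ with $G\in\mathcal{F}_{kp}$ (the paper's $f_\epsilon^{\otimes p}$), and then the contraction principle. On summability of $\mathbf{D}$, your second remedy is the one that works and is what the paper implicitly uses: continuity into the initial topology generated by the functionals $\Lambda_k(g,\cdot)$. Your first remedy alone does not close the gap for small $\epsilon$, because $C_\epsilon=(2\pi\epsilon)^{-d(p-1)/2}$ eventually exceeds the geometric weights $2^{k}$.
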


\begin{proof}
    Clearly, $\Gamma_{\epsilon}(\widetilde{L}_t^{\otimes p}) = t^{-p} \widetilde{\ell}_{t, \epsilon}^{\otimes p}$. To see that $\Gamma_{\epsilon}$ is continuous, take any $f \in \mathcal{F}_k$ and observe that
    \begin{align*}
    \Lambda_k( f, \Gamma_{\epsilon}(\xi^{\otimes p}) ) &= \sum_{i \in I} \int f(x_1 , \dots ,x_k) \prod_{r=1}^k \Big( \big( \prod_{j=1}^p (\alpha_i^j \ast p_{\epsilon})(x_r) \big) \mathrm{d} x_r \Big)\\
    &= \sum_{i \in I} \int f(x_1, \dots, x_k) \Big(\prod_{r=1}^k \prod_{j=1}^p  p_{\epsilon}(x_r - y_r^j) \Big) \mathrm{d} x_1 \dots \mathrm{d} x_k \mathrm{d} y_1^1 \dots \mathrm{d} y_k^p \\
    &= \Lambda_{(k, \dots , k)}( f_{\epsilon}^{\otimes p}, \xi^{\otimes p}),
    \end{align*}
    where
    \[  
    f^{\otimes p}_{\epsilon} (y_1^1, \dots ,y_1^p, \dots, y_k^p) = \int_{\mathbb{R}^{dk}} f(x_1 , \dots , x_k) p_{\epsilon}(x_1 - y_1^1) \dots p_{\epsilon}(x_1 - y_1^p) \dots p_{\epsilon}(x_k - x_k^p) \mathrm{d} x_1 \dots \mathrm{d} x_k.
    \]
    Since $f_{\epsilon}^{\otimes p}$ is an element of $\mathcal{F}_{kp}$, we can deduce that the maps $\xi^{\otimes p} \mapsto \Lambda_k( f, \Gamma_{\epsilon}(\xi^{\otimes p}) )$ are continuous for every $f \in \mathcal{F}_k$. Therefore, $\Gamma_{\epsilon}$ is also continuous and our claim follows from the contraction principle.
\end{proof}

\begin{proof}[Proof of Proposition~\ref{prop:intersection-measure-LDP}]
    By Lemma~\ref{lem:exp-approx}, it suffices to show that
    \[  
    \limsup_{\epsilon \to 0} \sup_{\mathcal{I}(\xi^{\otimes p}) \le \lambda} \mathbf{D}( \Gamma_{\epsilon}(\xi^{\otimes p}), \Gamma(\xi^{\otimes p})) = 0
    \]
    for any $\lambda < \infty$.

    Recall the proof of Lemma~\ref{lem:smooth-intersection-ldp}. For any $\xi^{\otimes p}$ with $\mathcal{I}(\xi^{\otimes p}) < \infty$, we may write the densities of $\gamma_i - \gamma_{i, \epsilon}$ as
    \[  
    \prod_{j=1}^p (\psi_i^j)^2(x) - \prod_{j=1}^p \bigl((\psi_i^j)^2 \ast p_{\epsilon}\bigr)(x) = \sum_{j_0 = 1}^p \Big( (\psi_i^{j_0})^2 - (\psi_i^{j_0})^2 \ast p_{\epsilon})\Big)(x) \prod_{j < j_0} (\psi_i^j)^2(x) \prod_{j > j_0} (\psi_i^j)^2 \ast p_{\epsilon}(x).
    \]
    Therefore,
    \begin{align*}
        |\Lambda_k(f, \Gamma&(\xi^{\otimes p})) - \Lambda_k(f, \Gamma_{\epsilon} (\xi^{\otimes p}))| \\
        &= \sum_{i \in I}\left| \int f(x_1 , \dots, x_k) \gamma_i (\mathrm{d} x_1) \dots \gamma_i(\mathrm{d} x_k) - \int f(x_1 , \dots, x_k) \gamma_{i, \epsilon} (\mathrm{d} x_1) \dots \gamma_{i, \epsilon}(\mathrm{d} x_k) \right| \\
        &\le \sum_{i \in I}\sum_{r_0 = 1}^k  \left| \int f(x_1 , \dots , x_k) (\gamma_i - \gamma_{i, \epsilon})(\mathrm{d} x_{r_0}) \prod_{r < r_0} \gamma_i(\mathrm{d} x_r) \prod_{r > r_0} \gamma_{i, \epsilon} (\mathrm{d} x_r) \right| \\
        &\le \sum_{i \in I}\sum_{r_0 = 1}^k \bigg\| \int f(x_1 , \dots , x_k) \prod_{r < r_0} \gamma_i(\mathrm{d} x_r) \prod_{r > r_0} \gamma_{i, \epsilon} (\mathrm{d} x_r)\bigg\|_{\infty} \bigg\| \big(\prod_{j=1}^p \psi_i^j\big)^2 - \big(\prod_{j=1}^p (\psi_i^j)^2 \ast p_{\epsilon} \big) \bigg\|_1.
    \end{align*}
    The last line is H\"{o}lder's inequality, where the first term is a function of $x_{r_0}$ and the second term comes from the distribution of $\gamma_i - \gamma_{i, \epsilon}$. We can further bound the first term by $\| f \|_{\sup}$ since each $\psi_i^j$ satisfies $\| \psi_i^j \|_2 \le 1$. The second term is bounded by \eqref{eq:mutual-rate-bound}. Therefore we have
    \[
    |\Lambda_k(f, \Gamma (\xi^{\otimes p})) - \Lambda_k(f, \Gamma_{\epsilon} (\xi^{\otimes p}))| \le  k \| f \|_{\sup} C (2\mathcal{I}(\xi^{\otimes p}) + p)^p \epsilon^{\theta/2}
    \]
    for some small $\theta > 0$. Plugging this into \eqref{eq:MV-metric}, we obtain our desired result.
\end{proof}

\appendix

\section{Weak convergence: Proof of Theorems~\ref{thm:self-cond-conv}--\ref{thm:mutual-tilted-conv}}
\label{sec:variational-problem}

\begin{lemma}[{\cite[Theorem~B]{Weinstein1982/83}}]
\label{lem:gagliardo-nirenberg}
    For any $d$ and $q > 1$ such that $d(q-1) < 2q$, there exists a constant $\kappa_{d, q}$ such that
    \[  
    \| \psi \|_{2q} \le \kappa_{d, q} \| \nabla \psi \|_2^{\frac{d(q-1)}{2q}} \| \psi \|_2^{1 - \frac{d(q-1)}{2q}} \quad \text{for all } \psi \in H^1(\mathbb{R}^d).
    \]
    Moreover, there exists a unique positive, radially symmetric function $\psi_0 \in H^1(\mathbb{R}^d)$ that satisfies the equality with $\| \psi_0 \|_2 = \| \nabla \psi_0 \|_2 = 1$. All other solutions are obtained by the following operations:
    \begin{enumerate}
        \item spatial shifts: $\psi(\cdot -x)$
        \item vertical scaling: $c \psi$
        \item horizontal scaling: $\psi(cx)$.
    \end{enumerate}
\end{lemma}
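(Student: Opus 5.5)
This is the classical Gagliardo--Nirenberg inequality together with Weinstein's characterization of its extremals. I would prove it in three steps: establish the inequality, show an optimizer exists, and then classify all optimizers.

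\emph{Inequality.} The constant $\kappa_{d,q}$ is finite by Sobolev embedding plus interpolation. Write $\theta = \frac{d(q-1)}{2q}$; the hypothesis $d(q-1)<2q$ says exactly $\theta<1$. For $d\ge 3$ we have $2q<2^*=\frac{2d}{d-2}$. Interpolating $\|\psi\|_{2q}$ between $L^2$ and $L^{2^*}$ and using $\|\psi\|_{2^*}\le C\|\nabla\psi\|_2$ gives the inequality with exactly the exponent $\theta$. The exponent is forced by the scaling $\psi\mapsto\psi(c\,\cdot)$. For $d=1,2$ I would instead apply the Nash/Ladyzhenskaya-type inequalities, proved via the Fourier transform.

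\emph{Existence of an optimizer.} The inequality is invariant under the three operations listed in the statement. I would therefore maximize the Weinstein functional $J(\psi)=\|\psi\|_{2q}^{2q}\big/\big(\|\nabla\psi\|_2^{2q\theta}\|\psi\|_2^{2q(1-\theta)}\big)$ over the normalized set $\|\psi\|_2=\|\nabla\psi\|_2=1$. Replacing $\psi$ by $|\psi|$ does not decrease $J$, since $\|\nabla|\psi|\|_2\le\|\nabla\psi\|_2$. Replacing it further by the symmetric decreasing rearrangement $\psi^*$ also does not decrease $J$, by the P\'olya--Szeg\H{o} inequality. So I may take a maximizing sequence of radial, nonnegative, nonincreasing functions. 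Such a sequence is bounded in $H^1$. By Strauss' radial lemma (for $d\ge2$), or by the monotonicity bound $\psi(x)^2\le C|x|^{-d}\|\psi\|_2^2$ (for all $d$), the embedding of this class into $L^{2q}$ is compact, since $2<2q<2^*$. A weak limit $\psi_0$ then attains $\|\psi_0\|_{2q}=\lim\|\psi_n\|_{2q}$ and, by weak lower semicontinuity of the norms, satisfies $\|\nabla\psi_0\|_2\le1$ and $\|\psi_0\|_2\le1$. This forces $J(\psi_0)\ge\sup J$, so $\psi_0$ is an optimizer; after rescaling, both of its norms equal $1$.

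\emph{Characterization of all optimizers.} This is the main obstacle. First I reduce an arbitrary optimizer $\psi$ to a positive radial solution of a fixed ODE.
\begin{itemize}
\item \emph{Positivity.} $|\psi|$ is also an optimizer, and equality in $\|\nabla|\psi|\|_2\le\|\nabla\psi\|_2$ forces $\psi=e^{i\phi}|\psi|$ with $\phi$ constant.
\item \emph{Euler--Lagrange equation.} After applying the three normalizing operations, $|\psi|$ solves $-\Delta u+u=u^{2q-1}$ up to constants. By the strong maximum principle, $|\psi|>0$.
\item \emph{Radial symmetry.} The moving-plane method of Gidas--Ni--Nirenberg shows that any positive decaying solution is radially symmetric about some point. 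This produces the spatial shift in the statement.
\end{itemize}
The remaining and hardest point is uniqueness of positive radial solutions of $u''+\frac{d-1}{r}u'-u+u^{2q-1}=0$ decaying at infinity. For this I would invoke Kwong's theorem on the uniqueness of the ground state, which is exactly what Weinstein uses. In $d=1$ the question is elementary: the first integral $\tfrac12 u'^2=\tfrac12u^2-\tfrac1{2q}u^{2q}$ determines the solution explicitly as a $\mathrm{sech}^{1/(q-1)}$ profile. The vertical and horizontal scaling freedoms correspond exactly to the two constants absorbed into the Euler--Lagrange equation, so every optimizer is obtained from $\psi_0$ by the operations (1)--(3).
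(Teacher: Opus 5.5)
The paper gives no proof of this lemma; it only cites Weinstein's Theorem~B. Your outline is the standard argument behind that citation: maximize the Weinstein functional over symmetric decreasing functions, pass to a limit by compactness, then classify optimizers via the Euler--Lagrange equation, Gidas--Ni--Nirenberg and Kwong. The inequality and existence steps are fine, including your use of the monotonicity bound in $d=1$, where Strauss' lemma is unavailable for merely even functions. One step in the classification, however, is justified by an argument that does not work.

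\textbf{The positivity step.} In this paper the functions are real-valued ($\psi^2$ is a density). For real $\psi\in H^1$ one has $|\nabla|\psi||=|\nabla\psi|$ a.e., so equality in $\|\nabla|\psi|\|_2\le\|\nabla\psi\|_2$ holds for \emph{every} $\psi$, sign-changing or not. It therefore rules nothing out. Even for complex $\psi$, equality only makes the phase locally constant on $\{\psi\neq0\}$, which could be disconnected.

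The argument has to run in the opposite order:
\begin{enumerate}
\item $\psi$ and $|\psi|$ are both optimizers with identical values of $\|\cdot\|_2$, $\|\nabla\cdot\|_2$ and $\|\cdot\|_{2q}$. So after the same rescaling both solve $-\Delta u+u=|u|^{2q-2}u$.
\item Elliptic regularity makes $\psi$ continuous.
\item The strong maximum principle applied to $|\psi|$ gives $|\psi|>0$ everywhere.
\item A continuous nowhere-vanishing function on the connected space $\mathbb{R}^d$ has constant sign (constant phase in the complex case). Hence $\psi=\pm|\psi|$, and the Gidas--Ni--Nirenberg/Kwong classification of $|\psi|$ transfers to $\psi$.
\end{enumerate}

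\textbf{Attribution.} Your remark that Kwong's theorem is ``exactly what Weinstein uses'' is anachronistic. Weinstein's paper predates Kwong's uniqueness theorem (1989) by several years. The classification of \emph{all} optimizers in the generality stated here therefore rests on Gidas--Ni--Nirenberg plus Kwong, not on Weinstein's Theorem~B alone. This affects attribution, not your mathematics.
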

Note that the two scaling operations can be used to obtain functions $\psi_{a, b}(x) = a \psi(bx)$ which satisfy
\[  
\| \psi_{a, b} \|_2 = ab^{-d/2} \| \psi \|_2, \quad \| \psi_{a, b} \|_{2q} = a b^{-d/2q} \| \psi \|_q, \quad \| \nabla \psi_{a,b} \|_2 = ab^{-d/2 + 1} \| \nabla \psi \|_2.
\]
Hence by altering $a$ and $b$, we can choose an optimal function to the Gagliardo-Nirenberg inequality while choosing two values out of $\| \psi\|_2, \|\psi\|_{2q}, \| \nabla \psi \|_2$.

\begin{lemma}
    \label{lem:self-var-problem}
    The optimization problem
    \[  
    \Theta_{1, q} = \inf_{\xi \in \widetilde{\mathcal{X}}_{\le 1}(\mathbb{R})} \left\{ \mathcal{I}(\xi) : \| \xi \|_q = 1 \right\} = \frac{1}{2}\kappa_{d, q}^{- \frac{2q}{d(q-1)}}
    \]
    has a unique solution which is an element of $\widetilde{\mathcal{M}}_1(\mathbb{R^d})$.
\end{lemma}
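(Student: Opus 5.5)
The plan is to reduce the problem on $\widetilde{\mathcal{X}}_{\le 1}$ to the classical Gagliardo--Nirenberg problem for a single function, and then to use strict subadditivity to rule out splitting into several pieces or losing mass. Take $\xi=\{\widetilde\alpha_i\}_{i\in I}$ with $\mathcal{I}(\xi)<\infty$ and write $a_i=\|\psi_i\|_2^2$, $g_i=\|\nabla\psi_i\|_2^2$, $n_i=\|\psi_i\|_{2q}^{2q}$, so that $\sum_i a_i\le 1$ and the constraint reads $\sum_i n_i=1$. Set $\eta:=\frac{d(q-1)}{2q}<1$ (here $d=1$, although the argument works for any $d$ with $d(q-1)<2q$). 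By Lemma~\ref{lem:gagliardo-nirenberg},
\[
n_i\le \kappa^{2q}\, g_i^{q\eta}\, a_i^{q(1-\eta)}, \qquad q\eta=\tfrac{d(q-1)}{2}.
\]
Since $d(q-1)<2q$ we have $q\eta<q$. The natural scaling is $\mathcal{I}\sim g$, so on each piece I will lower-bound $g_i$ in terms of $n_i$ and $a_i$:
\[
g_i\ge \kappa^{-2/\eta}\, n_i^{1/(q\eta)}\, a_i^{-(1-\eta)/\eta}.
\]

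Summing over $i$ gives
\[
2\mathcal{I}(\xi)\ge \kappa^{-2/\eta}\sum_i n_i^{1/(q\eta)} a_i^{-(1-\eta)/\eta}.
\]
We have $a_i\le 1$, so $a_i^{-(1-\eta)/\eta}\ge 1$. Moreover $1/(q\eta)=\frac{2}{d(q-1)}$, and this exponent is $>1$ exactly when $d(q-1)<2$. For $d=1$, however, $q$ can be large, so $1/(q\eta)\le 1$ is possible and superadditivity of $x\mapsto x^{1/(q\eta)}$ cannot be used directly. I will therefore handle the two factors jointly. With $r:=1/(q\eta)$ and $s:=(1-\eta)/\eta$, I want to minimize $\sum_i n_i^r a_i^{-s}$ subject to $\sum_i n_i=1$ and $\sum_i a_i\le1$. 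The function $(n,a)\mapsto n^r a^{-s}$ is homogeneous of degree $r-s$, and
\[
r-s=\frac{1}{q\eta}-\frac{1-\eta}{\eta}=\frac{1-q+q\eta}{q\eta}.
\]
This is negative (equivalently $q\eta<q-1$, i.e. $d<2$), so for $d=1$ the degree is $r-s<0$. For a function $h$ homogeneous of degree $\delta<0$ that is positive on the region in question, the minimum of $h$ over a fixed total is attained by putting everything into a single piece: $h(\lambda v)=\lambda^{\delta}h(v)$ with $\lambda\le1$ gives $h(\lambda v)\ge h(v)$, so splitting $v=\sum\lambda_i v_i$ only increases the sum. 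More carefully, I will show $\sum_i n_i^r a_i^{-s}\ge (\sum n_i)^r(\sum a_i)^{-s}$ by a weighted H\"older inequality. Granting this, $2\mathcal{I}(\xi)\ge \kappa^{-2/\eta}(\sum a_i)^{-s}\ge \kappa^{-2/\eta}$, with equality only if $\sum a_i=1$, there is a single piece (strictness needs $r-s<0$ or strict H\"older), and the GN inequality is an equality. This gives the value $\Theta_{1,q}=\frac12\kappa^{-2q/(d(q-1))}$, since $2/\eta=\frac{4q}{d(q-1)}$; I will double-check the exponent against the stated formula, which presumably uses the squared-norm convention.

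Existence and uniqueness then follow from Lemma~\ref{lem:gagliardo-nirenberg}: an extremal $\psi$ with $\|\psi\|_2=1$ and $\|\psi\|_{2q}=1$ exists by choosing the parameters $a$ and $b$ of the two scalings, and it is unique up to shifts, so $\widetilde\mu_{1,q}$, with density $\psi^2$, is the unique minimizer and lies in $\widetilde{\mathcal{M}}_1$. The main obstacle is the inequality $\sum_i n_i^r a_i^{-s}\ge(\sum n_i)^r(\sum a_i)^{-s}$ together with its equality case. My first attempt would be to write $n_i=(n_i^r a_i^{-s})^{1/r}\,a_i^{s/r}$ and apply H\"older with exponents $r$ and $r/(r-1)$ when $r>1$. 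In the case $r\le1$ I would instead use reverse H\"older, or directly use concavity and convexity of the perspective function $a\,\phi(n/a)$ with $\phi(x)=x^r$ after rewriting in terms of $n_i/a_i$. Strictness should come from strict convexity unless only one $i$ is present.
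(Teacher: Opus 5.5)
Your reduction is the same as the paper's: Gagliardo--Nirenberg on each piece gives $2\mathcal{I}(\xi)\ge\kappa^{-2/\eta}\sum_i n_i^{r}a_i^{-s}$. Everything then rests on $\sum_i n_i^ra_i^{-s}\ge(\sum_i n_i)^r(\sum_i a_i)^{-s}$ and its equality case. That inequality is the whole content of the lemma, and you leave it open, with a case split whose two halves are the wrong way round.

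For $r>1$, superadditivity gives $\sum_i n_i^r\le(\sum_i n_i)^r$, which is an upper bound. It is $r\le1$ that is trivial: your own observation $a_i^{-s}\ge1$, together with $\sum_i n_i^r\ge(\sum_i n_i)^r$, finishes it. (For $r=1$, equality forces all $a_i=1$, hence one piece.) The perspective tool you name for $r\le1$, namely $a\phi(n/a)=n^ra^{1-r}$ with $\phi(x)=x^r$, is jointly concave and one-homogeneous there. It is therefore superadditive, so it bounds $\sum_i n_i^ra_i^{1-r}$ from above, not below.

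Your H\"older step for $r>1$ does work, but only with a step you did not state. It leaves the factor $(\sum_i a_i^{s/(r-1)})^{(r-1)/r}$, which is at most $(\sum_i a_i)^{s/r}$ because $s/(r-1)\ge1$, i.e.\ $r-s=1-2/d\le1$.

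The negative-degree heuristic is also not the mechanism, since the pieces need not be parallel to the total. The inequality, with the same equality case, holds for $d=2,3$, where $r-s\ge0$. So the claim that strictness needs $r-s<0$ is off.

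The paper avoids all of this by passing through $n_i^{1/q}$. Since $n_i^{1/q}=(n_i^ra_i^{-s})^{\eta}a_i^{1-\eta}$, H\"older with exponents $1/\eta$ and $1/(1-\eta)$ gives
\[
\Bigl(\sum_i n_i^{1/q}\Bigr)^{1/\eta}\le\Bigl(\sum_i n_i^ra_i^{-s}\Bigr)\Bigl(\sum_i a_i\Bigr)^{s},
\]
while concavity of $x\mapsto x^{1/q}$ gives $\sum_i n_i^{1/q}\ge(\sum_i n_i)^{1/q}=1$. This argument is uniform in $q>1$ and in every $d$ with $d(q-1)<2q$.

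The equality case is then immediate:
\begin{itemize}
\item all $n_i>0$ because the pieces are nonzero, so strict concavity forces $|I|=1$;
\item then $\sum_i a_i=1$;
\item then equality holds in Gagliardo--Nirenberg, and the two normalizations $\|\psi\|_2=\|\psi\|_{2q}=1$ pin down both scalings.
\end{itemize}

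Finally, your value $\frac12\kappa_{d,q}^{-4q/(d(q-1))}$ is what the Gagliardo--Nirenberg inequality as stated gives. The exponent $-\frac{2q}{d(q-1)}$ in the statement is the one for $\|\nabla\psi\|_2$, not for $\|\nabla\psi\|_2^2$.
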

\begin{proof}
    Take any $\xi = \{ \widetilde{\alpha}_i \}_{i \in I}$ with $\mathcal{I}(\xi) < \infty$ and denote denote $m_i = \| \psi_i \|_2^2$, $p_i = \| \psi_i \|_{2q}^{2q}$. It is clearly optimal to choose each $\psi_i$ to be solutions to \eqref{eq:GN-inequality} so that $\| \nabla \psi_i \|_2 = (\kappa_{d, q})^{-\frac{2q}{d(q-1)}}\| \psi_i \|_{2q}^{\frac{2q}{d(q-1)}} \| \psi_i \|_2^{1 - \frac{2q}{d(q-1)}}$. Therefore, the variational problem is reduced to solving
\[  
\inf \left\{ \frac{1}{2} \kappa_{d, q}^{-\frac{2q}{d(q-1)}}\sum_{i \in I} p_i^{\frac{2}{d(q-1)}} m_i^{1 - \frac{2q}{d(q-1)}} : \sum_{i \in I} p_i \ge 1, \sum_{i \in I} m_i \le 1 \right\}.
\]
This is bounded by
\begin{align*}
    \sum p_i^{\frac{2}{d(q-1)}} m_i^{1 - \frac{2q}{d(q-1)}} &\ge \left( \sum p_i^{\frac{2}{d(q-1)}} m_i^{1 - \frac{2q}{d(q-1)}} \right) \left( \sum m_i \right)^{\frac{2q - d(q-1)}{d(q-1)}}\ge \left( \sum p_i^{1/q} \right)^{\frac{2q}{d(q-1)}} \ge \left( \sum p_i \right)^{\frac{2}{d(q-1)}} \ge 1.
\end{align*}
The first inequality uses $\sum m_i \le 1$, the second is H\"{o}lder's inequality with weights $\frac{d(q-1)}{2q}$ and $\frac{2q - d(q-1)}{2q}$, the third uses $\sum x_i^q \le (\sum x_i)^q$, and the fourth comes from $\sum p_i \ge 1$. The equality conditions require that $I = \{ i \}$ is a singleton and $m_i = p_i = 1$.
\end{proof}

\begin{lemma}
    \label{lem:self-tilt-var}
    Suppose $d \ge 1$, $q > 1$, and $0 < \gamma$ such that $d(q-1) < 2q$ and $\gamma \le \frac{2q}{d(q-1)}$. Then the variational problem
    \[
    \rho_{d, q, \gamma} = \sup \{ \| \xi \|_q^{\gamma} - \mathcal{I}(\xi) : \xi \in \widetilde{\mathcal{X}}_{\le 1}(\mathbb{R}^d) \} = \left(\frac{2q - \gamma d(q-1)}{2q} \right) \left( \frac{\gamma d(q-1)}{q} \right)^{\frac{\gamma d(q-1)}{2q - \gamma d(q-1)}} \kappa_{d, q}^{\frac{4\gamma q}{2q - \gamma d(q-1)}}
    \]
    has a unique solution, which is an element of $\widetilde{\mathcal{M}}_{1}(\mathbb{R}^d)$.
\end{lemma}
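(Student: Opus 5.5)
The plan mirrors the proof of Lemma~\ref{lem:self-var-problem}. We reduce to a finite-dimensional optimization over the masses and $L^q$-contents of the profiles. Then we show that spreading mass over several profiles is strictly suboptimal. Finally we solve a one-variable problem for a single profile.

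\emph{Step 1: reduction via Gagliardo--Nirenberg.} Take $\xi=\{\widetilde\alpha_i\}_{i\in I}$ with $\mathcal I(\xi)<\infty$. Set $m_i=\|\psi_i\|_2^2$ and $p_i=\|\psi_i\|_{2q}^{2q}$, so that $\|\xi\|_q^\gamma=(\sum_i p_i)^{\gamma/q}$. Write $a=\frac{d(q-1)}{2q}\in(0,1)$. By Lemma~\ref{lem:gagliardo-nirenberg}, for fixed $m_i,p_i$ the quantity $\frac12\|\nabla\psi_i\|_2^2$ is minimized exactly by a rescaled and shifted GN optimizer, with value
\[
\tfrac12\kappa_{d,q}^{-2/a}\,p_i^{1/(qa)}\,m_i^{1-1/a}.
\]
Hence
\[
\rho_{d,q,\gamma}=\sup\Big\{\big(\textstyle\sum p_i\big)^{\gamma/q}-\tfrac12\kappa_{d,q}^{-2/a}\sum p_i^{1/(qa)}m_i^{1-1/a} : \sum m_i\le 1\Big\}.
\]

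\emph{Step 2: concentration on one profile.} The Hölder chain from Lemma~\ref{lem:self-var-problem} gives
\[
\sum_i p_i^{1/(qa)}m_i^{1-1/a}\ \ge\ \Big(\sum_i p_i^{1/q}\Big)^{1/a}\Big(\sum m_i\Big)^{1-1/a}\ \ge\ \Big(\sum_i p_i\Big)^{1/(qa)} .
\]
In the last step we use $\sum m_i\le1$ (the exponent $1-1/a$ is negative) and $\sum p_i^{1/q}\ge(\sum p_i)^{1/q}$. Setting $P=\sum p_i$, the objective is therefore bounded by
\[
g(P)=P^{\gamma/q}-\tfrac12\kappa_{d,q}^{-2/a}P^{1/(qa)}.
\]
This bound is attained by a single profile with $m=1$ and $p=P$. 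Equality in the subadditivity step $\sum p_i^{1/q}\ge(\sum p_i)^{1/q}$ with $q>1$ forces all but one $p_i$ to vanish. A profile with $p_i=0$ is the zero measure, which is excluded. So $I$ is a singleton. Equality in the mass step forces $m=1$, so the optimizer lies in $\widetilde{\mathcal M}_1(\mathbb R^d)$.

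\emph{Step 3: solving the scalar problem.} Maximize $g$ over $P\ge0$. When $\gamma<\frac{2q}{d(q-1)}=1/a$, we have $\gamma/q<1/(qa)$. Hence $g$ is positive for small $P$ and tends to $-\infty$ as $P\to\infty$. It has a unique critical point, found by setting $g'(P)=0$, and substituting it back should give the stated closed form; the bookkeeping of exponents is routine. Uniqueness of the maximizing $P$, combined with the uniqueness up to shifts in Lemma~\ref{lem:gagliardo-nirenberg}, gives uniqueness of $\xi$ in $\widetilde{\mathcal X}_{\le1}$. This requires the scaling $a\psi_0(bx)$ to be pinned down by the two constraints $m=1$ and $p=P$, which it is by the displayed scaling relations.

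\emph{Main obstacle.} The delicate points are the equality cases in Step 2, and the borderline $\gamma=\frac{2q}{d(q-1)}$. In that case $g(P)=cP^{\gamma/q}$ is linear in the power $P^{\gamma/q}$. Depending on the sign of $1-\frac12\kappa_{d,q}^{-2/a}$, the supremum is then either $0$ and not attained except trivially, or $+\infty$. So I would first check whether the boundary case must be excluded or treated separately, and restrict the uniqueness claim to $\gamma<\frac{2q}{d(q-1)}$ if necessary. I also need the exponent $1-1/a$ to be strictly negative, so that putting mass $m<1$ is strictly suboptimal. This holds since $a<1$.
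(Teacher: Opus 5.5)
Your proposal is correct and follows the paper's argument. It uses the same Gagliardo--Nirenberg reduction, the same three inequalities (subadditivity of $x\mapsto x^{1/q}$, H\"older, and $\sum m_i\le 1$) and the same one-variable maximization. The only difference is bookkeeping: you keep $p_i=\|\psi_i\|_{2q}^{2q}$ as the free variable and bound the gradient term from below, whereas the paper keeps $k_i=\|\nabla\psi_i\|_2^2$ and bounds the $L^{2q}$ term from above. With $a=\frac{d(q-1)}{2q}$, the substitution $K=\kappa_{d,q}^{-2/a}P^{1/(qa)}$ turns your $g(P)$ into the paper's $\kappa_{d,q}^{2\gamma}K^{a\gamma}-K/2$.

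Your worry about the endpoint is justified. The paper's own proof uses $\gamma d(q-1)<2q$ strictly, and the closed form has a vanishing denominator at $\gamma=\frac{2q}{d(q-1)}$. The theorems that invoke this lemma only need the strict range, so the ``$\le$'' in the statement should be read as ``$<$''.
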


\begin{proof}
Let $\xi \in \tilde{\mathcal{X}}$ such that $\mathcal{I}(\xi) < \infty$ and denote $m_i = \| \psi_i \|_2^2$ and $k_i = \| \nabla \psi_i \|_2^2$. By Lemma~\ref{lem:gagliardo-nirenberg}, $\| \psi_i \|_{2q}^{2q} \le \kappa_{d, q}^{2q} \; k_i^{\frac{d(q-1)}{2}} m_i^{q - \frac{d(q-1)}{2}}$ and there exists functions $\psi_i$ that achieve equality. Therefore, the optimization problem reduces to solving
\[
    \sup \left\{  \left(\kappa_{d, q}^{2q} \sum_{i \in I} k_i^{\frac{d(q-1)}{2}} m_i^{q - \frac{d(q-1)}{2}}\right)^{\gamma/q} - \frac{1}{2}\sum_{i \in I} k_i : \sum_{i \in I} m_i \le 1 \right\}.
\]
This can be further bounded by
\begin{align*}
    \kappa_{d, q}^{2 \gamma} \left(\sum_{i \in I} k_i^{\frac{d(q-1)}{2}} m_i^{q - \frac{d(q-1)}{2}}\right)^{\gamma/q} - \frac{1}{2}\sum_{i \in I} k_i &\le \kappa_{d, q}^{2 \gamma}\left(\sum_{i \in I} \left( k_i^{\frac{d(q-1)}{2}} m_i^{q - \frac{d(q-1)}{2}}\right)^{1/q}\right)^{\gamma} - \frac{1}{2}\sum_{i \in I} k_i \\
    &= \kappa_{d, q}^{2 \gamma}\left(\sum_{i \in I} k_i^{\frac{d(q-1)}{2q}} m_i^{1 - \frac{d(q-1)}{2q}}\right)^{\gamma} -\frac{1}{2} \sum_{i \in I} k_i \\
    &\le \kappa_{d, q}^{2 \gamma}\left( \left(\sum_{i \in I} k_i \right)^{\frac{d(q-1)}{2q}} \left( \sum_{i \in I}m_i \right)^{1 - \frac{d(q-1)}{2q}} \right)^{\gamma} - \frac{1}{2}\sum_{i \in I} k_i \\
    &\le \kappa_{d, q}^{2 \gamma}\left(\sum_{i \in I} k_i \right)^{\frac{\gamma d(q-1)}{2q}} - \frac{1}{2}\sum_{i \in I} k_i \\
    &= \sup_{y \ge 0} \left\{\kappa_{d, q}^{2 \gamma} y^{\frac{\gamma d(q-1)}{2q}} - \frac{1}{2} y\right\} \\
    &= \left(\frac{2q - \gamma d(q-1)}{2q} \right) \left( \frac{\gamma d(q-1)}{q} \right)^{\frac{\gamma d(q-1)}{2q - \gamma d(q-1)}} \kappa_{d, q}^{\frac{4\gamma q}{2q - \gamma d(q-1)}}.    
\end{align*}
The first inequality uses $(\sum x_i)^{1/q} \le \sum x_i^{1/q}$. The third line is H\"{o}lder'is inequality, and the fourth line uses $\sum m_i \le 1$. The last line is simple calculus, and uses the fact that $\gamma d(q-1) < 2q$ to ensure that the supremum is unique. By the equality conditions, the equality holds exactly when $\xi$ is a singleton with $\psi$ satisfying the Gagliardo-Nirenberg equality condition with
\[  
\| \psi \|_2 = 1, \quad \| \nabla \psi \|_2 = \left( \frac{\gamma d(q-1)}{q} \right)^{\frac{2q}{2q - \gamma d(q-1)}} \kappa_{d, q}^{\frac{4\gamma q}{2q - \gamma d(q-1)}}.
\]
Therefore, the problem has a unique maximizer in $\widetilde{\mathcal{M}}_1 \subseteq \widetilde{\mathcal{X}}_{\le 1}$.
\end{proof}

\begin{proof}[Proof of Theorems~\ref{thm:mutual-cond-conv},~\ref{thm:mutual-tilted-conv}]
    By H\"{o}lder's inequality,
    \[  
    \bigg\| \prod_{j=1}^p \psi_i^j \bigg\|_2^2 = \bigg\| \prod_{j=1}^p (\psi_i^j)^2 \Bigg\|_1 \le \prod_{j=1}^p \big\| \psi_i^j \big\|_{2p}^{2p} \le \frac{1}{p} \sum_{j=1}^p \big\| \psi_i^j \big\|_{2p}^{2p}
    \]
    where the equality holds if and only if $\psi_i^1 = \dots = \psi_i^p$. From here, we may repeat the proof of Theorems~\ref{thm:self-cond-conv} and \ref{thm:self-tilted-conv}.
\end{proof}

\bibliographystyle{abbrvurl}
\bibliography{ref}

\end{document}